\documentclass[12pt]{amsart}

\usepackage[utf8]{inputenc}
\usepackage[OT2,T1]{fontenc}
\usepackage{ae}
\usepackage{aecompl}
\usepackage{stmaryrd} 

\usepackage{amsmath,amsthm,amssymb,mathrsfs,calligra,dsfont,mathtools}
\usepackage[final, babel=true]{microtype}
\usepackage{caption}
\usepackage{subcaption}
\usepackage{eucal}
\usepackage{verbatim}
\usepackage{centernot}
\usepackage{enumitem}
\usepackage{graphicx}
\usepackage{tikz}
\usetikzlibrary{matrix,arrows,cd}
\usepackage[all]{xy}
\usepackage{hyperref}
\usepackage{xcolor}
\usepackage{url}
\definecolor{darkgreen}{rgb}{0,0.45,0}
\usepackage{booktabs}

\hypersetup{
	colorlinks,
	linktoc=all,
	linkcolor={darkgreen},
	citecolor={blue!50!black},
	urlcolor={blue!80!black}
}

\DeclareFontFamily{U}{mathx}{\hyphenchar\font45}
\DeclareFontShape{U}{mathx}{m}{n}{
	<5> <6> <7> <8> <9> <10>
	<10.95> <12> <14.4> <17.28> <20.74> <24.88>
	mathx10
}{}
\DeclareSymbolFont{mathx}{U}{mathx}{m}{n}
\DeclareFontSubstitution{U}{mathx}{m}{n}
\DeclareMathAccent{\widecheck}{0}{mathx}{"71}
\DeclareMathAccent{\wideparen}{0}{mathx}{"75}

\newcommand\I{\mathbf{I}}

\newcommand\bC{\mathbb{C}}
\newcommand\N{\mathbb{N}}
\newcommand\Z{\mathbb{Z}}
\newcommand\Q{\mathbb{Q}}
\newcommand\F{\mathbb{F}}

\newcommand\T{\mathbf{T}}
\newcommand\bS{\mathbf{S}}
\newcommand\M{\mathbf{M}}
\newcommand\K{\mathbf{K}}

\newcommand\Zp{{\mathbb{Z}_p}}
\newcommand\Qp{{\mathbb{Q}_p}}

\newcommand\cC{\mathcal{C}}
\newcommand\cD{\mathcal{D}}
\newcommand\cE{\mathcal{E}}
\newcommand\cF{\mathcal{F}}
\newcommand\cG{\mathcal{G}}

\newcommand\cK{\mathcal{K}}
\newcommand\cL{\mathcal{L}}

\newcommand\cO{\mathcal{O}}

\newcommand\cW{\mathcal{W}}
\newcommand\cX{\mathcal{X}}

\newcommand\cZ{\mathcal{Z}}

\newcommand\ga{\mathfrak{a}}

\newcommand\gm{\mathfrak{m}}

\newcommand\gp{\mathfrak{p}}

\newcommand{\ob}[1]{\mkern 1.5mu\overline{\mkern-1.5mu#1\mkern-1.5mu}\mkern 1.5mu}

\newcommand{\Ind}{\mathrm{Ind}}

\newcommand{\res}{\mathrm{res}}

\newcommand{\ord}{\mathrm{ord}}

\renewcommand{\char}{\textrm{char}}

\DeclareMathOperator{\Gal}{Gal}

\DeclareMathOperator{\Ann}{Ann}

\DeclareMathOperator{\Hom}{Hom}

\DeclareMathOperator{\coker}{coker}
\DeclareMathOperator{\Frac}{Frac}

\DeclareMathOperator{\ad}{ad}
\DeclareMathOperator{\tr}{tr}
\DeclareMathOperator{\End}{End}

\DeclareMathOperator{\Symm}{Symm}

\DeclareMathOperator{\Cl}{Cl}
\DeclareMathOperator{\rk}{rk}
\DeclareMathOperator{\spe}{sp}
\DeclareMathOperator{\Frob}{Frob}

\DeclareMathOperator{\cyc}{cyc}
\DeclareMathOperator{\GL}{GL}

\DeclareMathOperator{\length}{length}
\DeclareMathOperator{\Spec}{Spec}

\DeclareMathOperator{\spl}{spl}
\DeclareMathOperator{\lcm}{lcm}
\DeclareMathOperator{\den}{den}
\DeclareMathOperator{\CM}{CM}
\DeclareMathOperator{\EmbDim}{EmbDim}

\newcommand{\rM}{\mathrm{M}}
\newcommand{\rD}{\mathrm{D}}

\newcommand{\HH}{\mathrm{H}}
\newcommand{\ff}{\mathrm{f}}

\DeclareSymbolFont{cyrletters}{OT2}{wncyr}{m}{n}
\DeclareMathSymbol{\Sha}{\mathalpha}{cyrletters}{"58}

\newcommand{\lsem}{\llbracket}
\newcommand{\rsem}{\rrbracket}

\makeatletter
\@namedef{subjclassname@2020}{\textup{2020} Mathematics Subject Classification}
\makeatother

\theoremstyle{defstyle}
\newtheorem{definition}[subsubsection]{Definition}

\newtheorem{remark}[subsubsection]{Remark}

\newtheorem*{assumption*}{Assumption}

\theoremstyle{thmstyle}
\newtheorem{theorem}[subsubsection]{Theorem}
\newtheorem{proposition}[subsubsection]{Proposition}
\newtheorem{lemma}[subsubsection]{Lemma}
\newtheorem{corollary}[subsubsection]{Corollary}
\newtheorem{conjecture}[subsubsection]{Conjecture}
\newtheorem*{conjecture*}{Conjecture}

\tikzset{
	symbol/.style={
		draw=none,
		every to/.append style={
			edge node={node [sloped, allow upside down, auto=false]{$#1$}}}
	}
}

\author{Alexandre Maksoud}
\address{MPIM, Vivatsgasse 7, 53111 Bonn, Germany}
\email{maksoud.alexandre@gmail.com}
\urladdr{https://sites.google.com/view/alexandre-maksoud/}
\subjclass[2020]{11R23, 11F33.}
\keywords{Hecke algebras, Eisenstein series, modularity theorems, Sharifi's conjecture, $p$-adic $L$-functions, Gross--Stark conjecture}

\title{On Hecke algebras at exceptional Eisenstein primes}
\begin{document}
\maketitle
\begin{abstract}
	We study the structure over $\Zp$ of the Hecke algebra acting on ordinary cusp forms in the Eisenstein, non-$p$-distinguished case. Our results and methods draw on four complementary approaches: deformation theory, through $R=\T$ theorems; Iwasawa theory, through connections to Sharifi's conjecture; $p$-adic $L$-functions, through an exceptional zero formula; and computational experiments using Magma. 
	We also obtain new results regarding the Coleman--Greenberg conjecture.
\end{abstract}

\bigskip 
\bigskip
\section{Introduction}
\subsection{Structure of Hecke algebras in Mazur's setting}\label{sec:mazur}
Let $p,N$ be two prime numbers with $p\geq5$ and $p\mid(N-1)$, and consider the weight $2$ and level $\Gamma_0(N)$ Eisenstein series
\begin{equation}\label{eq:Mazur_eisenstein_series}
	E_2^{(N)}(q) = \dfrac{N-1}{24} + \sum_{n\geq 1}\sum_{\substack{d|n \\ N\nmid d}}d\,q^n.
\end{equation}
 In his influential work \cite{mazur1977}, Mazur studied the structure of the localization $\T_2^{(N)}$ of the Hecke algebra acting on $S_2(\Gamma_0(N),\Zp)$ at the maximal ideal corresponding to $E_2^{(N)}(q)$. 
 The existence of this maximal ideal relies on the assumption $p\mid(N-1)$, which  ensures that $E_2^{(N)}$ behaves like a cusp form modulo $p$, giving rise to congruences between $E_2^{(N)}$ and weight $2$ cusp forms modulo $p$. 

For comparison with the weight $1$ setting, we focus on two fundamental properties of $\T_2^{(N)}$, established by Mazur \cite[Corollaries 16.2 and 16.3]{mazur1977} and Merel \cite[Théorème 2]{merel}, respectively. 

\bigskip
\bigskip
\begin{theorem}[Mazur, Merel]\label{thm:intro_mazur_merel}\ 
	\begin{enumerate}
		\item[(Mazur)] $\T_2^{(N)}$ is Gorenstein. More precisely, it admits a presentation of the form
		\[\T_2^{(N)} \simeq \Zp\lsem x\rsem/(c_1x+c_2x^2+\ldots+x^r).\]
		\item[(Merel)] The $\Zp$-rank of $\T_2^{(N)}$ is $1$ if and only if 
		\[ \lambda^{(N)}\left(\prod_{k=1}^{\frac{N-1}{2}} k^k\right) \not\equiv 0 \mod p, \]
		where $\lambda^{(N)} \colon (\Z/N\Z)^\times \twoheadrightarrow \Z/p\Z$ is any surjective group homomorphism, equivalently a discrete logarithm modulo $p$.
	\end{enumerate}
\end{theorem}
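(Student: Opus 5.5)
Both parts rest on Mazur's Eisenstein ideal theory, and I would prove them in that order, the second building on the first.

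\emph{Gorenstein presentation.} Let $I$ be the Eisenstein ideal of $\T_2^{(N)}$, generated by $T_\ell-1-\ell$ for $\ell\neq N$ together with $U_N-1$. The basic input is Mazur's computation $\T_2^{(N)}/I\simeq \Z_p/(N-1)$, which comes from the constant term $(N-1)/24$ of $E_2^{(N)}$ and the $q$-expansion principle. Next I would show that $I$ is \emph{principal}, generated by a single good Eisenstein element $\eta_\ell=T_\ell-1-\ell$ for a suitable auxiliary prime $\ell$. Mazur proves this by comparing $I/I^2$ with the $p$-part of the Shimura subgroup, or of the cuspidal subgroup, of $J_0(N)$. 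With $x:=\eta_\ell$ and $\T:=\T_2^{(N)}$, we then have $\T/x\T\simeq\Z_p/(N-1)$, a quotient of $\Z_p$. Since $\T$ is finite free over $\Z_p$ and local, Nakayama's lemma gives a surjection $\Z_p\lsem x\rsem\to\T$. Its kernel is generated by the characteristic polynomial of multiplication by $x$, because $\T$ is monogenic and free of rank $r$. This polynomial can be made monic of degree $r$ with coefficients in $p\Z_p$, hence distinguished.

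The constant term vanishes, i.e.\ the polynomial is of the form $c_1x+\dots+x^r$, because $x$ has no constant term. To see this directly: $x$ annihilates $E_2^{(N)}$ in the full Hecke algebra $\tilde\T$. On the cuspidal quotient, the relation $c_0\neq0$ would contradict the surjection $\tilde\T\to\Z_p$ given by the Eisenstein eigenvalues. Gorensteinness is then automatic, since any quotient $\Z_p\lsem x\rsem/(f)$ is a complete intersection.

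\emph{Merel's criterion.} Rank $1$ means $\T\simeq\Z_p$, equivalently $I=(x)$ satisfies $I/I^2$ of order equal to $|\Z_p/(N-1)|$, i.e.\ $I^2=p^{v_p(N-1)}I$. Using the presentation, rank $1$ holds iff $r=1$. Equivalently, $x\notin p\T+x^2\T$, i.e.\ $\dim_{\F_p}\T/(p,I^2)=2$ rather than larger; equivalently $I/(I^2+pI)$ has dimension $1$ and $\T/(p,I)$-length considerations force $e=1$. The useful reformulation is Mazur's: $\T\neq\Z_p$ iff there is a mod $p$ cusp form $f\equiv E_2^{(N)}$ whose first-order deformation $E_2^{(N)}+\varepsilon g$ exists mod $(p,\varepsilon^2)$. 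Such a $g$ would be a modular form mod $p$ with eigenvalue derivatives $T_\ell g=(1+\ell)g+\mathrm{(something)}E$.

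To turn this into a computable number, I would follow Merel. Pair the winding/Eisenstein element of the modular symbols $H_1(X_0(N),\text{cusps},\Z_p)$ against the Eisenstein class, and express the obstruction to $I^2\neq pI$ as a cup product. Concretely, one computes the image of the Eisenstein element under the map $I/I^2\to J_0(N)$-Shimura $\otimes$, evaluated through Manin symbols $\sum_{k}k\,\lambda^{(N)}(k)$. The sum $\sum_{k=1}^{(N-1)/2}k\log k$ is exactly $\lambda^{(N)}\bigl(\prod k^k\bigr)$.

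\emph{Main obstacle.} The main obstacle is this last explicit modular-symbol computation: identifying the second-order Eisenstein obstruction with the pairing of the Eisenstein boundary symbol against Manin symbols $\{0,1/k\}$ weighted by discrete logarithms, and then simplifying the resulting sum to $\prod k^k$. I would first write the Eisenstein class in $H^1(\Gamma_0(N),\F_p)$ via the Dedekind-sum-type homomorphism $\Gamma_0(N)\to\Z/p$. Then I would evaluate its cup product with $\lambda^{(N)}\circ d$ on Manin generators, where $d$ is the lower-right entry.
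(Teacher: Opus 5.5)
Your Mazur part follows the logic of the results the paper cites (Mazur's Corollaries 16.2--16.3). Granting that $I$ is principal, generated by $\eta_\ell$ for a good prime $\ell$, your Nakayama and Cayley--Hamilton steps correctly give $\T\simeq\Zp\lsem x\rsem/(P(x))$, with $P$ the distinguished characteristic polynomial of $x=\eta_\ell$; Gorensteinness follows.

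\textbf{The claim $P(0)=0$ is false.} It contradicts your own input. Since $\T/x\T\simeq\Zp/(N-1)$ is finite, $x$ is a non-zero-divisor and $v_p(P(0))=\length_{\Zp}\T/x\T=v_p(N-1)\geq 1$. Your justification conflates $\T=\T_2^{(N)}$ with the full Hecke algebra $\tilde\T$ acting on $M_2(\Gamma_0(N),\Zp)$. The Eisenstein character $\tilde\T\to\Zp$ does not factor through the cuspidal quotient; only its reduction modulo $N-1$ does, through $\T/I$. So nothing is contradicted.

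No argument can give a vanishing constant term for the cuspidal algebra in general. Any ring $\Zp\lsem x\rsem/(c_1x+\cdots+x^r)$ has the $\Zp$-point $x\mapsto 0$, and $\T_2^{(N)}$ need not have one. For $N=31$, $p=5$ one has $\T_2^{(N)}\simeq\Z_5[\sqrt5]$, and $\eta_2=3-T_2$ has characteristic polynomial $y^2-5y+5$.

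The constant-term-free presentation is really a statement about $\tilde\T$, which carries the point given by $E_2^{(N)}$; this is the true analogue of $\T_1$ with its point $f$. There, $\ker(\tilde\T\to\Zp)$ equals $I\times 0$ inside $\T\times\Zp$, so it is generated by $\eta_\ell$. Your argument then yields $\tilde\T\simeq\Zp\lsem x\rsem/(xP(x))$ with $v_p(c_1)=v_p(P(0))=v_p(N-1)$. For $\T_2^{(N)}$ itself, the constant term must be kept.

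\textbf{Merel's part has a genuine gap.} First, your reformulations are not correct:
\begin{itemize}
\item Since $I=x\T$ with $x$ a non-zero-divisor, $I/I^2\simeq\T/I$ and $I/(I^2+pI)\simeq\T/(p,I)$ hold unconditionally, so those conditions carry no information.
\item The condition $x\notin p\T+x^2\T$ characterizes rank $\geq 2$, not rank $1$.
\item $\dim_{\F_p}\T/(p,I^2)=\min(r,2)$, never larger than $2$.
\end{itemize}
What the presentation actually gives is: rank $1$ iff $x\in p\T$ iff $I=p^{v_p(N-1)}\T$.

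Second, and decisively, nothing links this condition to $\lambda^{(N)}\bigl(\prod_k k^k\bigr)$. The ``map $I/I^2\to J_0(N)$-Shimura'' is not a defined object. The claim that the obstruction is a pairing of an Eisenstein class with Manin symbols weighted by $\lambda^{(N)}$ is exactly the content of Merel's theorem, which you defer as the main obstacle. Two ingredients are missing.
\begin{enumerate}
\item[(i)] A criterion turning the rank into a computable pairing. Let $\mathfrak{P}=(p,I)$. By multiplicity one, $J_0(N)[\mathfrak{P}]=C[p]\oplus\Sigma[p]$, where $C$ and $\Sigma$ are the cuspidal and Shimura subgroups. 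Hecke operators are self-adjoint for the Weil pairing, so $e_p$ vanishes on $J_0(N)[\mathfrak{P}]$ when $r\geq2$ (then $J_0(N)[\mathfrak{P}]=x^{r-1}J_0(N)[p]_{\mathfrak{P}}$) and is perfect when $r=1$.
\item[(ii)] Merel's explicit evaluation, via Manin symbols, of the Weil pairing between $C$ and $\Sigma$. This is where $\prod_{k=1}^{(N-1)/2}k^k$ arises.
\end{enumerate}
The paper simply cites Merel for this. As written, your second half locates the difficulty but does not resolve it.
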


Many other authors have greatly contributed to the understanding of $\T_2^{(N)}$, using various tools such as modularity theorems as in the work of Calegari--Emerton \cite{calegariemerton}, higher Eisenstein elements as in Lecouturier's work \cite{lecouturier}, or Galois cohomology as in works by Wake and Wang-Erickson \cite{WWE}. 

Hecke algebras at Eisenstein primes have also been studied in broader settings, allowing weights $k>2$ or non-prime levels $N$ \cite{harderpink,kuriharaJNT,skinnerwilesPNAS,ohta2005,wakeJEMS,WWEadvances,deoJussieu}. In weight one, by contrast, the Eisenstein localization has received comparatively little attention. Although Calegari and Geraghty, and subsequently Calegari, established integral modularity theorems for weight $1$ Katz modular forms \cite{calegarigeraghty,calegariCrelle}, their results concern non-Eisenstein maximal ideals. We also mention the work of Berger and Klosin \cite{bergerklosin}, which studies weight $1$ Hecke algebras in the non-exceptional Eisenstein setting, complementary to the exceptional case considered in this paper. 

\subsection{The exceptional weight $1$ setting}
Throughout the paper, $p$ will denote an odd prime, and $N\geq3$ is any integer.  Let $\chi \colon (\Z/N\Z)^\times \to \bC^\times$ be a primitive Dirichlet character. We assume the following:
\begin{assumption*}
	$p$ does not divide $N$, $\chi$ is odd, and has prime-to-$p$ order.
\end{assumption*}
Fix an embedding $\iota_p \colon \ob{\Q} \hookrightarrow \ob{\Q}_p$, and an isomorphism $\ob{\Q}_p\simeq \bC$. We see $\chi$ as a $p$-adic character, and we let $\cO$ denote the finite unramified extension of $\Zp$ generated by the values of $\chi$. Consider the $p$-stabilized weight $1$ and level $\Gamma_1(Np)$ Eisenstein series
\begin{equation}
	 f(q)=E_1^{(p)}(1,\chi)=\frac{1}{2}(1-\chi(p))L(\chi,0)+ \sum_{n\geq 1} \sum_{\substack{d|n \\ p\nmid d}}\chi(d)\, q^n.
\end{equation}
The constant term of $f$ is one half of the value at $s=0$ of the Kubota--Leopoldt $p$-adic $L$-function $L_p(\chi,s)$ of $\chi$. It turns out that $f(q)$ becomes cuspidal \emph{as an overconvergent $p$-adic} modular form when $L_p(\chi,s)$ has an exceptional zero at $s=0$, that is:
\begin{assumption*}[Exceptional case]
	We have $\chi(p)=1$.
\end{assumption*}
As $f$ is ordinary at $p$, it gives rise to a maximal ideal $\gm_1$ of the Hecke algebra $\T_1(N)$ acting on the $\cO$-module $S_1^{p-\ord}(N,\cO)$ of weight $1$ overconvergent $p$-ordinary cusp forms of tame level $N$ and Fourier coefficients in $\cO$. 

We let 
\[\T_1=\T_1(N)_{\gm_1}\]
be the localization of $\T_1(N)$ at $\gm_1$. It can also be obtained as the quotient $\T/X\T$ of a local factor $\T$ of the Hida--Hecke algebra of tame level $N$, where $X$ is the variable in a power series ring $\Lambda\simeq \cO\lsem X\rsem$ giving the action via diamond operators (see \S\ref{sec:Hecke_algebras_and_eigencurve} for details).

We now present the main results of this article, starting with the case where $\chi^2=1$ and $\cO=\Zp$. The reason to distinguish the two cases is threefold. First, many of our results are simpler to state in the quadratic case. Second, some of the  arguments actually require separate ideas in the quadratic and the non-quadratic cases. Third, the part concerning Coleman--Greenberg's conjecture focuses on the quadratic case, whereas the exceptional zero formula focuses on the non-quadratic one. 

\subsection{Structure of Hecke algebras in the quadratic weight $1$ case}\label{sec:intro_chi_quadratic}
Our first result characterizes precisely when $\T_1$, or equivalently $\T$, is Gorenstein.

\begin{theorem}[Theorems \ref{thm:modularity} and \ref{thm:converse_gorenstein}]\label{thm:intro_Gorenstein_quadratic}
	Assume $\chi^2=1$ and let $K$ be the imaginary quadratic field attached to $\chi$. Let $A_K$ denote the $p$-primary part of the class group of $K$, and let $A'_K$ be the quotient of $A_K$ by the subgroup generated by the ideal classes of primes of $K$ above $p$.
	\begin{enumerate}
		\item If $A'_K=\{0\}$, then $\T_1$ is Gorenstein; in fact, it is a local complete intersection.
		\item Conversely, if $A'_K\neq\{0\}$, then $\T_1$ is not Gorenstein. 
	\end{enumerate}
\end{theorem}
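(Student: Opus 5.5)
For part (1), I will prove an $R=\T$ theorem and read off the complete intersection property from $R$. Let $\bar\rho = 1\oplus\chi$ be the residual pseudo-representation attached to $\gm_1$. Let $R$ be the universal ordinary deformation ring of this pseudo-representation, in the sense of Wake--Wang-Erickson: $p$-ordinary, unramified outside $Np$, with determinant whose restriction to inertia at $p$ is fixed by the weight-one (or Hida-family) condition. Since $\chi(p)=1$, the two characters $1$ and $\chi$ agree on $G_{\Q_p}$, so we are in the non-$p$-distinguished case. Ordinary deformations must therefore be handled through a Cayley--Hamilton or Ordinary-pseudorepresentation formalism rather than by fixing a line.

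The first step is to construct the surjection $R\twoheadrightarrow \T$. Interpolating Galois representations over the Hida family gives a pseudo-representation valued in $\T$ lifting $\bar\rho$, and it satisfies ordinarity at $p$. The Hecke algebra is generated by the traces, so the map is surjective.

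Next I will compute tangent spaces. The reducibility ideal and the tangent space of $R$ are controlled by the Galois cohomology groups $H^1(\Z[1/Np],\chi^{\pm1})$ with local conditions at $p$. By class field theory and Kummer theory, these relate to the $\chi$-part of the $p$-class group of $K$ and to units and $p$-units. The quotient $A'_K$ appears precisely as the obstruction from Selmer groups with conditions relaxed at $p$, because $p$-units supply the extra classes. When $A'_K=0$, I expect the relevant extension groups to be one-dimensional on each side. This yields a presentation of $R$ over $\Lambda$ with as many generators as relations, of the form $\Lambda\lsem x\rsem/(f)$ or a two-variable version with a single relation in the reducible locus. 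Combining this with a numerical criterion (comparing the Eisenstein congruence ideal, i.e. $L_p(\chi,s)$ and its derivative through the exceptional zero, with $R/I^{red}$) gives $R=\T$. Then $\T$ is a complete intersection, and so is $\T_1=\T/X\T$, because $X$ is a nonzerodivisor on the $\Lambda$-flat algebra $\T$.

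For part (2), suppose $A'_K\ne0$. Then the cohomology group $H^1$ attached to $\chi$ (unramified away from $p$, with $p$-unit contributions removed) has dimension at least $2$. I will show that the reducible quotient $\T/I$, where $I$ is the Eisenstein ideal, has Eisenstein ideal needing at least two generators, and that $I\otimes\T/\gm$ has dimension $\ge 2$ by Ribet-style lattice arguments on $\T$-modules. I then plan to use a Wiles/Ohta-type criterion: if $\T$ were Gorenstein, the duality between $I$ and $\T/I$ via the cuspidal module would force $I$ to be principal, which contradicts the dimension count. Concretely, I would find the rank of the cuspidal quotient of the $\Lambda$-adic cohomology and show that the socle of $\T_1\otimes\F_p$ has dimension $\ge 2$.

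I expect the main obstacle to be the non-$p$-distinguished ordinarity condition. Without $p$-distinguishedness, the usual ordinary deformation problem is not representable in a clean way, and the tangent-space computation must account for how the local conditions at $p$ are encoded. This is exactly where the subgroup generated by the primes above $p$ is quotiented out. I would first attempt to handle this using Wake--Wang-Erickson's ordinary Cayley--Hamilton algebras, and then verify the local conditions by hand for both extension classes.
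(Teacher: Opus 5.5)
Your proposal has genuine gaps in both parts, and neither part would go through as written.

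\textbf{Part (1).} The framework you choose is exactly the one that is unavailable here. Wake--Wang-Erickson's ordinary pseudo-deformations and Cayley--Hamilton algebras require $\ob{\rho}|_{G_{\Qp}}$ to be multiplicity-free, whereas $\chi(p)=1$ makes it trivial. For the same reason your surjectivity step (``$\T$ is generated by traces'') is unjustified. The operator $U_p$ is a root of the characteristic polynomial of $\rho(\phi)$, whose two residual roots coincide, so it cannot be extracted from a pseudo-character.

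The missing idea is a Calegari--Emerton rigidification. One deforms pairs $(\rho,L)$ with $L$ an ordinary line in generic position with respect to the $\rho(\tau)$-eigenbasis. This functor is representable, and $U_p$ becomes the eigenvalue of $\phi$ on $V/L$.

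You also never say where $A'_K=0$ is needed to \emph{construct} $R\to\T$. In the paper it is used in Lemma~\ref{lem:weak_ordinarity} to show that the ordinary line of the Hida-family representation is neither of the two $\tau$-eigenlines, so the family defines a point of $\cD^\ord(\T)$.

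Your tangent-space expectation is also off. With $A'_K=0$ one gets $\Hom_{\Zp}(C^1(\pi_1),\rD)\simeq\Zp/(\alpha)\times\Zp/(\beta)$ with $\alpha=\gcd(\lambda(u_\chi),\ord_p(u_\chi))$ (Proposition~\ref{prop:length_C1_R}). So the embedding dimension is $2$ whenever $p\mid\alpha$ (e.g.\ $N=107$, $p=3$), and in any case no relation count can be read off tangent spaces.

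What actually closes the argument is the Wiles--Lenstra criterion applied at the augmentation $\pi_1\colon\T_1\to\Zp$ given by $f$ itself. It needs the \emph{exact} length $v_p(\cL_\chi L(\chi,0)^2)$ of $C^0(\pi_1)$. Eisenstein congruences controlled by $L_p(\chi,s)$ and Ferrero--Greenberg only give a lower bound. Even the refined bound of Theorem~\ref{thm:congruence_bound}(2) falls short unless $\lambda(u_\chi)$ or $L(\chi,0)$ is a unit, because the missing factor $L(\chi,0)$ is the contribution of CM forms. The paper obtains it from the congruence ideal of the CM Hida family through $f$. That ideal is computed by $h_K$ times Katz's $p$-adic $L$-function, which is evaluated at $f$ via Katz's $p$-adic Kronecker limit formula.

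\textbf{Part (2).} The principle you invoke, that Gorensteinness of $\T$ forces the Eisenstein ideal to be principal, is false in general. In the Gorenstein ring $k\lsem x,y,z\rsem/(xy-z^2)$, the height-one ideal $(x,z)$ has Gorenstein quotient $k\lsem y\rsem$ but needs two generators. Implications of this kind in Hecke-theoretic settings require extra input, such as McCallum--Sharifi surjectivity (cf.\ \S\ref{sec:sharifi}), which is conjectural. You also give no mechanism turning $A'_K\neq0$ into a bound on the number of generators of the Eisenstein ideal or on the socle of $\T_1/p$.

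The paper's argument is different and unconditional:
\begin{enumerate}
\item A nontrivial character $\psi$ of $A'_K$ is residually trivial and satisfies $\psi(\gp_K)=\psi(\ob{\gp}_K)=1$.
\item Hence its weight-one theta series has a unique, $p$-irregular $p$-stabilization congruent to $f$, which gives a height-one prime of $\T$.
\item By Betina--Dimitrov, the eigencurve is not Gorenstein at such a point.
\item Gorensteinness passes to localizations and completions, so $\T$ is not Gorenstein.
\item Since $X$ is a nonzerodivisor on $\T$, neither is $\T_1=\T/X\T$.
\end{enumerate}
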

The Cohen--Lenstra heuristics predict that $A'_K=\{0\}$ for a positive proportion of imaginary quadratic fields $K$ in which $p$ splits. This is known unconditionally for $p=3$ (see \S\ref{sec:numerical_data}). Our numerical computations illustrate this phenomenon: for $p=3$ and $N\leq 1\,000\,000$, we find that $\T_1$ is Gorenstein for approximately $87.79\%$ of the imaginary quadratic fields $K$ in which $3$ splits. 

In order to state an analogue of Merel's result, we let 
\[\lambda = \log_p(1+p)^{-1} \cdot \log_p \colon \Q_p^\times \to \Zp \]
be what we call the \emph{$p$-normalized} Iwasawa $p$-adic logarithm. The factor $\log_p(1+p)$ is added so that $\lambda$ surjects onto $\Zp$, and we see $\lambda$ as the correct $p$-adic analogue of the map $\lambda^{(N)}$ of Theorem \ref{thm:intro_mazur_merel}.

\begin{theorem}[Corollary \ref{coro:structure_T_1_chi_quadratic}]\label{thm:intro_presentation_main}
	Assume $\chi^2=1$ and $A'_K=\{0\}$. Let $\gp_K$ be the prime of $K$ above $p$ singled out by $\iota_p$, and let $u_K$ be any generator of $\gp_K^{h_K}$, where $h_K$ is the class number of $K$. 
	\begin{enumerate}
		\item The $\Zp$-rank of $\T_1$ is $1$ if and only if 
		\[\lambda(u_K)\not\equiv0 \mod p \quad \mbox{and} \quad L(\chi,0)\not\equiv0 \mod p.\]
		\item $\T_1$ admits a presentation $\T_1\simeq \Zp\lsem x\rsem/(c_1x+c_2x^2+\ldots x^r)$ if and only if
		\[\lambda(u_K)\not\equiv0 \mod p \quad \mbox{or} \quad L(\chi,0)\not\equiv0 \mod p.\]
		\item In general, $\T_1$ admits a presentation $\T_1\simeq \Zp\lsem x,y\rsem/(g_1,g_2)$ for some $g_i$ with $g_i(0,0)=0$.
	\end{enumerate}
\end{theorem}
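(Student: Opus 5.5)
We would deduce all three statements from the $R=\T$ theorem (Theorem \ref{thm:modularity}) together with an explicit computation of the tangent space and of the relations. Under $A'_K=\{0\}$, the universal ordinary pseudo-deformation ring $R$ of the residual pseudo-character $1\oplus\chi$ (with the ordinarity and determinant conditions appropriate to weight $1$) is isomorphic to $\T$. It is also a complete intersection, flat over $\Lambda$. Reducing modulo $X$ gives $\T_1\simeq R/XR$, a complete intersection of dimension one, finite and torsion-free over $\Zp$. So the whole problem is to determine the embedding dimension of $\T_1$ and the shape of the relations.

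The first step is to bound the reduced tangent space. We compute $\dim_{\F_p}\gm_1/(\gm_1^2,p)$ via reducible deformations. The generators come from extension classes in $H^1(\Q,\chi)$ and $H^1(\Q,\chi^{-1})=H^1(\Q,\chi)$, subject to local conditions at $p$; the exceptional condition $\chi(p)=1$ relaxes these local conditions. Global class field theory over $K$ and $A'_K=\{0\}$ should show that each relevant Selmer group is at most one-dimensional over $\F_p$. The candidate classes are the Kummer class of $u_K$ (a $\gp_K$-unit) and the class coming from the ordinary-at-$\iota_p$ condition. This gives at most two generators, so $\T_1\simeq\Zp\lsem x,y\rsem/I$. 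Since $\T_1$ is a complete intersection of Krull dimension one, $I=(g_1,g_2)$ with $g_i(0,0)=0$, which proves (3).

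For (1) and (2) we identify when a generator can be eliminated, that is, when some relation has a nonzero linear term modulo $p$. The plan is to compute the first-order obstructions via cup products or Massey-type pairings of the two classes. We expect these linear coefficients to be, up to units, $\lambda(u_K)$, which arises from the local component of the Kummer class at $\gp_K$ paired with the cyclotomic character, and $L(\chi,0)$, which arises from the constant term of $f$ and equals $h_K$ up to units. The latter enters through the congruence module of $f$, equivalently through the residue of $L_p(\chi,s)$, which governs whether $f$ can be lifted in the $\Lambda$-adic family.

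If exactly one of the two quantities is a $p$-adic unit, then one of $g_1,g_2$ has a unit linear coefficient in one variable. Eliminating that variable gives a presentation $\Zp\lsem x\rsem/(g)$, and flatness forces $g$ to be a monic polynomial of the shape $c_1x+\cdots+x^r$ after Weierstrass preparation. If both are units, then both variables are eliminated and $\T_1=\Zp$, which has rank $1$. For the converses we compute $\dim\gm_1/(\gm_1^2,p)$ exactly. If both quantities vanish mod $p$, the embedding dimension is $2$, so no one-variable presentation exists. If one of them vanishes, the rank is at least $2$, because both the Eisenstein quotient and the cuspidal contributions survive and $\T_1/p$ is not $\F_p$.

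The main obstacle is the precise identification of these two linear coefficients with $\lambda(u_K)$ and $L(\chi,0)$ modulo $p$, and in particular the exact Selmer computation at $p$ in the exceptional case. We would first try doing this on an explicit reducible Galois deformation, with the upper-triangular entry given by $u_K$ and the local $\log_p$ computation, comparing it against the $q$-expansion of $f$ along the Hida family.
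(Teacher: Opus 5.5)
Your top-level plan matches the paper's. You use Theorem \ref{thm:modularity} to get $R^\ord_1\simeq\T_1$ l.c.i., then read (1)--(3) off the $\Zp$-module $\gp/\gp^2$, $\gp=\ker\pi_1$. Its minimal number of generators equals $\dim_{\F_p}\gm_1/(\gm_1^2,p)$ and the minimal number of variables in an augmented presentation. But the whole content of the statement is the exact structure of $\gp/\gp^2$. You leave it at ``should show'' and ``we expect'', and the tools you name cannot produce it.

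\begin{itemize}
\item The linear coefficients of the relations are not first-order obstructions. They are the elementary divisors of $\gp/\gp^2$, detected by the tangent module with $\rD$- (or $\Z/p^r$-)coefficients. Cup or Massey products of mod-$p$ tangent classes only see the quadratic and higher terms of the relations.
\item $L(\chi,0)$ does not enter through the constant term of $f$ or the congruence module. $C^0(\pi_1)$ only gives the total length $v_p(\lambda(u_K))+v_p(L(\chi,0))$ (Theorem \ref{thm:congruence_bound}(1)). That cannot tell $\Zp/(p)\times\Zp/(p)$ from $\Zp/(p^2)$, so it cannot decide (2).
\item The bound in (3) does not come from each Selmer group being at most one-dimensional. $\HH^1_{\ff,p}(\Q,\F_p(\chi))=\Hom(\cX_K[\chi],\F_p)$ is two-dimensional whenever $p$ divides both $\lambda(u_K)$ and $L(\chi,0)$ (e.g.\ $N=107$, $p=3$). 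The bound comes from the ordinary line coupling the two off-diagonal entries $b,c$, so the tangent space embeds into this single group.
\item That line is also why the ring is not a pseudo-deformation ring. The paper notes that ordinary pseudo-deformations are unavailable since $\ob{\rho}_{|G_{\Qp}}$ is trivial. It uses $R^\ord$, parametrizing pairs $(\rho,L)$ with $L$ in generic position.
\end{itemize}

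The missing step is Proposition \ref{prop:length_C1_R} with $\chi=\chi^{-1}$. By Lemma \ref{lem:description_C1_in_weight_one}, $\Hom(\gp/\gp^2,\rD)$ consists of cocycles $(\begin{smallmatrix} s\lambda & b\\ c & -s\lambda\end{smallmatrix})$ with $b,c\in\Hom(\cX_K[\chi],\rD)$ satisfying $b(\gamma)=-c(\gamma)=-s$ and $b(\phi)=c(\phi)$. By class field theory (Lemma \ref{lem:description_Xvarphi}), when $A'_K=\{0\}$ the module $\cX_K[\chi]$ is generated by $\gamma,\phi$ subject only to $x\gamma+y\phi=0$, where $x=\lambda(u_\chi)$ and $y=\ord_p(u_\chi)$. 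So $u_K$ enters as a relation in $\cX_K$, not as a Kummer class.

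The tangent module is therefore the kernel on $\rD^2$ of $(\begin{smallmatrix} x & y\\ -x & y\end{smallmatrix})$. This gives $\gp/\gp^2\simeq\Zp/(\alpha)\times\Zp/(\beta)$ with $\alpha=\gcd(x,y)$, $\beta=2xy/\alpha$, and $\alpha\mid\beta$. Taking $u_\chi=u_K/\ob{u}_K$ gives $x=2\lambda(u_K)$, and $y\sim L(\chi,0)$ by Lemma \ref{lem:calcul_ord} and Theorem \ref{thm:refined_CNF}. Then:
\begin{itemize}
\item $\T_1=\Zp$ iff $\gp/\gp^2=0$ iff $x$ and $y$ are both units. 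This is (1), with no appeal to ``Eisenstein quotients''.
\item $\gp/\gp^2$ is cyclic iff $\alpha$ is a unit iff $x$ or $y$ is a unit. This is (2).
\item $\gp/\gp^2$ always has at most two generators. This is (3).
\end{itemize}
With this in hand, the rest of your plan goes through: l.c.i.\ forces exactly $n$ relations in an $n$-variable augmented presentation, and Weierstrass preparation gives the shape of $g$.
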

We also prove that the quantity $c_f:=\lambda(u_K)\cdot L(\chi,0)$, well-defined up to a unit in $\Zp$, is the congruence number of $f$ (see Definition \ref{def:congruence_number_of_f}). It measures the existence of congruences mod $p^r$ between $f$ and other overconvergent forms in $S_1^{p-\ord}(N,\Zp)$. The assumption $A_K'=\{0\}$ is not necessary for its computation (see Theorem \ref{thm:congruence_bound}(1)).

The $L$-value $L(\chi,0)$ has, by the analytic class number formula, the same $p$-adic valuation as $\#A_K$. It measures the contribution of eigenforms with \emph{complex multiplication} to $\T$: the closed locus $\Spec(\T^{\CM})$ of $\Spec(\T)$ of Hida families with complex multiplication is a local complete intersection which admits an explicit description depending on $A_K$ (\S\ref{sec:CM}). 

By contrast, the $p$-adic valuation of $\lambda(u_K)$ controls Hida families in $\Spec(\T)$ without CM: under the hypotheses of Theorem \ref{thm:intro_presentation_main}, we have $\T\neq\T^{\CM}$ if and only if $p\mid \lambda(u_K)$ (see Proposition \ref{prop:modularity_1_spl}).

\subsection{Structure of Hecke algebras in the general weight $1$ case}
The focus on the case where $\chi$ is not quadratic has the advantage of unraveling certain connections between the structure of $\T_1$ and the Gross--Stark conjecture for Dirichlet characters. We will consider a $p$-normalized version 
\[\cL_{\chi}=-\dfrac{\lambda(u_{\chi^{-1}})}{\ord_p(u_{\chi^{-1}})} \in \Frac(\cO)^\times\]
of the $\cL$-invariant of $\chi$, where $u_{\chi^{-1}}$ is a \emph{$p$-normalized Gross--Stark unit}: it is any generator of the $\chi^{-1}$-isotypic component of the group $\cO_K[1/p]^\times \otimes_\Z \cO$ of $p$-units of $K$. The map $\ord_p$ is induced by the $p$-adic valuation map on $K\subset \ob{\Q}_p$.

\begin{conjecture}[Conjectures \ref{conj:converse_gorenstein} and \ref{conj:length_C0}]\label{conj:intro_main}
	Denote by $A'_K$ the $p$-split quotient of the $p$-class group of $K$ as before, and let $A'_K[\chi]$ and $A'_K[\chi^{-1}]$ respectively be the $\chi$- and $\chi^{-1}$-isotypic components of $A'_K\otimes_{\Zp}\cO$. 
	\begin{enumerate}
		\item If $A'_K[\chi]=A'_K[\chi^{-1}]=\{0\}$, then $\T_1$ is a local complete intersection, hence Gorenstein.
		\item Conversely, if $\T_1$ is Gorenstein, then $A'_K[\chi]=A'_K[\chi^{-1}]=\{0\}$.
		\item $\T_1$ has $\cO$-rank $1$ if and only if 
		\begin{equation}\label{eq:conjectural_congruence_number}
			c_f:=(\cL_{\chi}+\cL_{\chi^{-1}}) \cdot L(\chi,0)\cdot L(\chi^{-1},0)
		\end{equation}
		is a unit in $\cO$. In fact, $c_f$ is the congruence number of $f$.
	\end{enumerate}
\end{conjecture}
We establish the above conjecture when $\chi^2=1$, which is essentially the content of the results presented in \S\ref{sec:intro_chi_quadratic}. 
For non-quadratic $\chi$, our final contribution is to provide further theoretical and numerical evidence in support of Conjecture \ref{conj:intro_main}. We refer to the main body of the paper for precise statements. 

\begin{theorem}[Theorems \ref{thm:modularity}, \ref{thm:sharifi} and \ref{thm:trivial_zero_formula}]\label{thm:intro_non_quadratic}Assume $\chi^2\neq1$.
\begin{enumerate}
	\item If $\lambda(u_{\chi})$ or $\lambda(u_{\chi^{-1}})$ is a unit, then Conjecture \ref{conj:intro_main}(1) holds.
	\item If McCallum--Sharifi's Conjecture \ref{conj:mccallum_sharifi} holds for both $\chi$ and $\chi^{-1}$, then Conjecture \ref{conj:intro_main}(2) holds as well.
	\item Conjecture \ref{conj:intro_main}(3) implies Conjecture \ref{conj:intro_main}(1). It is also compatible with the exceptional zero formula for the adjoint $p$-adic $L$-function of $f$, with its ``motivic'' normalization.
\end{enumerate}
\end{theorem}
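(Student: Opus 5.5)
This theorem collects results proved later, so the plan is to prove its three parts separately. The common input is an $R=\T$ theorem for the pseudo-deformation ring of the reducible pseudo-representation $\bar\rho=1\oplus\chi$ of $G_{\Q,Np}$. Concretely, I would take ordinary deformations that are $p$-ordinary with respect to the line on which $G_{\Qp}$ acts trivially. Since $\chi(p)=1$, the residual representation is not $p$-distinguished. This is exactly why the ordinary condition has to be imposed through a Cayley--Hamilton algebra in the style of Wake--Wang-Erickson, rather than through a representation.

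\textbf{Part (1).} First I would compute the reducibility ideal $J$ of the universal ordinary Cayley--Hamilton algebra. This involves two extension groups:
\begin{itemize}
\item $B$, built from $H^1(\Z[1/Np],\chi)$ with the ordinary condition at $p$;
\item $C$, built from $H^1(\Z[1/Np],\chi^{-1})$ with the ordinary condition at $p$.
\end{itemize}
By class field theory these are governed by $A'_K[\chi^{\pm1}]$ together with the Gross--Stark units $u_{\chi^{\pm1}}$. The assumption $A'_K[\chi]=A'_K[\chi^{-1}]=0$ should make $B$ and $C$ cyclic. Then $J/J^2$ is cyclic, generated by $B\otimes C$.

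The reducible quotient $R/J$ is an Iwasawa-type ring, basically a tensor product of two Kubota--Leopoldt quotients of $\Lambda$. Combining this with the unit hypothesis on $\lambda(u_\chi)$ or $\lambda(u_{\chi^{-1}})$, I expect a presentation of $R$ over $\Lambda$ with $\le 2$ generators and $\le 2$ relations. I would then compare $R$ with $\T$ using the Eisenstein ideal, whose quotient is given by $L_p(\chi,\cdot)L_p(\chi^{-1},\cdot)$, and the numerical criterion applied along the reducible locus. This gives $R\simeq\T$ as complete intersections, and reducing mod $X$ yields the claim for $\T_1$.

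The main obstacle I expect is the non-$p$-distinguished ordinary condition. The relevant local cohomology at $p$ is one dimension too large, and the unit $\lambda(u)$ is precisely what cuts it down. So the key step is checking that the unit hypothesis makes the tangent space of $R/X$ at most $2$-dimensional, with the relation count matching.

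\textbf{Part (2).} I would argue by contrapositive, following Ohta and Wake--Wang-Erickson. Gorensteinness of $\T_1$, and hence of $\T$, should imply that the Eisenstein ideal is principal. Principality forces the cuspidal group and the $\T$-module $H^1$ to have controlled structure. That structure is then transferred to Iwasawa modules through Fukaya--Kato's results on Sharifi's map $\varpi$. The McCallum--Sharifi conjecture, that the cup product pairing is nondegenerate in the relevant eigenspaces, is what allows one to deduce $A'_K[\chi^{\pm1}]=0$ from the cyclicity of $J/J^2$.

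\textbf{Part (3).} The first claim follows from Nakayama and a length count. If $c_f$ equals the congruence number, then the cotangent length of $\T_1$ at $f$ is controlled by $c_f$. Comparing with the lengths of $B$ and $C$ computed in Part (1) forces $A'_K[\chi^{\pm1}]$ to be trivial whenever needed, so $\T_1$ is complete intersection by Wiles' numerical criterion.

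The compatibility with the exceptional zero formula is a separate computation. I would write the derivative of the adjoint $p$-adic $L$-function at the Eisenstein point, factor it as $L_p(\chi)L_p(\chi^{-1})$ times an $\cL$-invariant term, and apply Gross--Stark (Dasgupta--Darmon--Pollack). The $\cL$-invariant term then identifies with $\cL_\chi+\cL_{\chi^{-1}}$ after the $p$-normalisation.
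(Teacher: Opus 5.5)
\textbf{Part (1): the deformation ring.} Your overall shape (a numerical criterion in weight one, then lifting to $\T$ by Nakayama) matches the paper, but the deformation ring you propose does not exist in this setting. Since $\chi(p)=1$ and $\chi$ is unramified at $p$, $\ob{\rho}|_{G_{\Qp}}$ is \emph{trivial}. So there is no distinguished ``line on which $G_{\Qp}$ acts trivially''. The Wake--Wang-Erickson ordinary Cayley--Hamilton condition is unavailable precisely because it requires $\ob{\rho}|_{G_{\Qp}}$ to be multiplicity-free, so you have the logic reversed.

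The paper instead rigidifies in the style of Calegari--Emerton. It deforms pairs $(\rho,L)$ with $L$ an ordinary line that is residually not $G_\Q$-stable, and the Eisenstein basis makes this functor representable. The weight-one tangent space at $\rho_1=\chi\oplus1$ is then computed directly from the conditions $a+b=c+d$ on $G_{\Qp}$ and $b=d$ on $I_p$. It is an extension of $\cO/(\alpha)\times\cO/(\beta)$ by $A'_K[\chi]\times A'_K[\chi^{-1}]$. Hence, once $A'_K[\chi^{\pm1}]=0$, it is at most $2$-generated of length $v_p((\cL_\chi+\cL_{\chi^{-1}})L(\chi,0)L(\chi^{-1},0))$, with \emph{no} unit hypothesis. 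You have therefore misplaced where that hypothesis enters.

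\textbf{Part (1): the Hecke side.} Both hypotheses are used on the Hecke side, which your proposal never addresses.
\begin{enumerate}
\item $A'_K[\chi^{\pm1}]=0$ is what forces, by a GMA argument, the ordinary line of the Hida-family representation into generic position. This is what produces the surjection $R^\ord\twoheadrightarrow\T$.
\item The numerical criterion needs a \emph{lower bound} on the congruence module $C^0(\pi_1)$ of $\T_1$, whose finiteness comes from the étaleness of the eigencurve at $f$. The paper constructs it by differentiating $\cF-\cE_{1,\chi}$ and $\cF-\cG_{1,\chi}$ at $X=0$. Here $\cG_{1,\chi}$ is Lafferty's cusp form congruent to $\cE_{1,\chi}$ modulo $\zeta_\chi$, and Ferrero--Greenberg is used. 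This yields a cusp form killed by the idempotent of $f$ and congruent to $f$ modulo an explicit $c$. The unit hypothesis on $\lambda(u_{\chi})$ or $\lambda(u_{\chi^{-1}})$ is exactly what makes $v_p(c)$ equal to the tangent-space length.
\end{enumerate}
Your ``compare with the Eisenstein ideal along the reducible locus'' step produces no such congruences.

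\textbf{Part (2).} The actual mechanism is missing from your sketch. It runs as follows.
\begin{enumerate}
\item Gorensteinness gives, by Ohta, that the ordinary cohomology localized at $\gm^*$ is free of rank $2$ over $\T^*$.
\item Combined with Lafferty's $\T/I\simeq\Lambda/(\zeta_{\chi^{-1}})$, this makes $\HH^-_\theta(1)/I^*\HH^-_\theta(1)\simeq\Lambda/(\xi)$ \emph{cyclic}.
\item Characteristic ideals agree, by Mazur--Wiles and the sequence $0\to X'_\chi(1)\to S_\theta\to(\Lambda/(X))(1)\to0$. So the surjection $\varpi_\theta$ given by McCallum--Sharifi is an isomorphism, and $S_\theta$ is cyclic.
\item Iwasawa adjoints give $(A_\chi)^\vee\simeq E^1(S_\theta(-1))$, which is therefore cyclic.
\item Since $A_\chi^\Gamma\simeq E/\cO\times A'_K[\chi]$ already carries a divisible part of corank one from the trivial zero, cyclicity forces $A'_K[\chi]=0$.
\end{enumerate}
Neither ``principality of the Eisenstein ideal'' nor ``cyclicity of $J/J^2$'' is what is used. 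In particular, you do not explain why one gets $A'_K$ rather than $A_K$.

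\textbf{Part (3).} The first claim is just the numerical criterion, with Conjecture (3) supplying $\length C^0(\pi_1)$. Here $A'_K[\chi^{\pm1}]=0$ is the hypothesis of Conjecture (1), not something to be forced.

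Your exceptional-zero argument is wrong as stated, because you cannot ``factor'' $L_p(\ad f)$. The paper starts from Dasgupta's weight $k\ge2$ identity $L'_p(\ad\cF,1)|_g=u\cdot a'_p(\cF)/a_p(\cF)|_g$ and lets $k\to1$ along the étale parametrization. It then uses Betina--Dimitrov--Pozzi's formula $a'_p(1)=\cL(\chi)\cL(\chi^{-1})/(\cL(\chi)+\cL(\chi^{-1}))$. So $\cL(\chi)+\cL(\chi^{-1})$ sits in the \emph{denominator} and is cancelled by $c_f$, leaving $(1-p^{-1})\cL(\chi)\cL(\chi^{-1})L(\chi,0)L(\chi^{-1},0)$. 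This matches $\tfrac{\mathrm{d}}{\mathrm{d}s}\bigl(\zeta_p(s)L_p(\chi\omega,1-s)L_p(\chi^{-1}\omega,1-s)\bigr)$ at $s=1$, using $\res_{s=1}\zeta_p=1-p^{-1}$ and Ferrero--Greenberg. The $\cL$-term therefore does not ``identify with $\cL_\chi+\cL_{\chi^{-1}}$''.
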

Furthermore, we obtain a conditional analogue of Theorem \ref{thm:intro_presentation_main} (see Corollary \ref{coro:structure_T_1_chi_non_quadratic}). 
The McCallum–Sharifi conjecture is part of Sharifi's broader conjectural framework relating modular symbols to cyclotomic class groups \cite{mccallumsharifi,sharifi}. We refer to \S\ref{sec:sharifi} for further discussion.

In Theorem \ref{thm:intro_non_quadratic}(3), we define the adjoint $p$-adic $L$-function $L_p(\ad f,s)$ of $f$ by specialization of Hida--Schmidt's adjoint $p$-adic $L$-function $L_p(\ad \cF,s)$ attached to the unique Hida family $\cF$ passing through $f$ \cite{schmidt,hidaGL3}. As the semisimple Galois representation attached to the adjoint motive of $f$ is
\[\ad\rho_f = 1\oplus \chi \oplus \chi^{-1},\]
one may expect that $L_p(\ad f,s)$ factors as the product of three Kubota--Leopoldt $p$-adic $L$-functions, following a $p$-adic Artin formalism.
An observation recorded in Hida's monograph \cite{hidagenuine} states that, however, the weight $k\geq2$ specializations of $L_p(\ad \cF,s)$ should be slightly renormalized so that they fit to the general conjectures of $p$-adic $L$-functions attached to motives formulated by Greenberg, Coates and Perrin-Riou \cite{greenberg1989iwasawa,coatesperrinriou,perrin1995fonctions}. Indeed, the automorphic periods involved in the interpolation formula for $L_p(\ad \cF,s)$ differ from Deligne's motivic period by a factor which should essentially be a congruence number (\S\ref{sec:genuine}). 

We prove an exceptional zero formula for 
\[L^\textrm{mot}_p(\ad f,s):=c_f \cdot L_p(\ad f,s), \]
with $c_f$ as in \eqref{eq:conjectural_congruence_number}, which perfectly fits with $p$-adic Artin formalism, yielding further evidence for Conjecture \ref{conj:intro_main}(3). Our formula is an Eisenstein analogue of an exceptional zero formula for weight $1$ cusp forms proven by Rivero--Rotger \cite[Theorem A]{riveroadjoint}. It implies a new Eisenstein case of a conjecture due to Darmon--Lauder--Rotger \cite{DLR2}.

We mention, in passing, that Conjecture \ref{conj:intro_main}(1) is relevant to a recent work by Shih--Wang \cite[Theorem 1.2]{shihwang} that establishes a $p$-adic Artin formalism for the standard $\GL_2$ $p$-adic $L$-function of $f$, assuming $\T$ is Gorenstein. 

\subsection{The Coleman--Greenberg conjecture}
Given a classical newform $g$ of weight $k\geq2$ and $2$-dimensional $p$-adic Galois representation $\rho \colon \Gal(\ob{\Q}/\Q)\to \GL_2(\ob{\Q}_p)$, the restriction $\rho_{|\Gal(\ob{\Q}_p/\Qp)}$ is reducible when $g$ is $p$-ordinary. When $g$ has, moreover, complex multiplication by an imaginary quadratic field, $\rho_{|\Gal(\ob{\Q}_p/\Qp)}$ even splits as the sum of two characters. 

A question attributed to R. Greenberg and independently raised by R. Coleman asks whether the converse holds, namely, for $p$-ordinary $g$,
\[\rho_{|\Gal(\ob{\Q}_p/\Qp)}\simeq \psi_1 \oplus \psi_2 \overset{?}{\implies} \textrm{ $g$ has complex multiplication.} \]
Different approaches have been used to tackle this question: via Serre--Tate theory and its generalizations when $k=2$ \cite{serre89,hidaJAMS,zhao}, or via Hida theory and deformation theory when the residual Galois representation $\ob{\rho}$ satisfies a Taylor--Wiles hypothesis \cite{ghatevatsal,ghatevatsalcanadian,castellaWE}. We refer to \cite[\S1.1]{castellaWE} for an overview on that question.

Our main result regarding the Coleman--Greenberg conjecture is the following.

\begin{theorem}[Corollary \ref{coro:CG}]\label{thm:intro_CG}
	Assume $\chi^2=1$ and $A_K'=\{0\}$, and let $g$ be an ordinary overconvergent cuspidal eigenform of arbitrary $p$-adic weight and $p$-adic Galois representation $\rho$. Assume that:
	\begin{enumerate}
		\item $g$ has tame level $N$,
		\item $\tr(\ob{\rho})=\ob{\chi}+1$, and
		\item $\rho_{|\Gal(\ob{\Q}_p/\Qp)}$ is strongly split. 
	\end{enumerate}
	Then $g$ has complex multiplication by $K$.
\end{theorem}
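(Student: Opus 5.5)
We would prove Theorem \ref{thm:intro_CG} using Hida theory and the $R=\T$ theorem in the quadratic case (Theorem \ref{thm:intro_Gorenstein_quadratic}(1), i.e.\ Theorem \ref{thm:modularity}). The idea is to show that every point of $\Spec(\T)$ whose Galois representation is strongly split at $p$ already lies on the CM locus $\Spec(\T^{\CM})$.

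First we place $g$ in $\Spec(\T)$. Hypotheses (1) and (2) say that $g$ has tame level $N$ and residual trace $\ob{\chi}+1$, the trace of $\ob{\rho}_f$. By Hida's control theorem, $g$ is then a specialization of a Hida family in the local factor $\T$, so it gives an $\ob{\Q}_p$-point $x_g$ of $\Spec(\T)$.

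Next we use the hypothesis $A'_K=\{0\}$ and the modularity theorem to identify $\T$ with a universal ordinary pseudo-deformation (or Cayley--Hamilton) ring $R$. The condition ``the restriction to $\Gal(\ob{\Q}_p/\Qp)$ is strongly split'' should cut out a closed subscheme of $\Spec(R)$: it is an ordinary deformation problem with an extra vanishing condition on the local extension class. Call the corresponding quotient $R^{\spl}$. We then aim to show
\[R^{\spl}=\T^{\CM},\]
most likely through Proposition \ref{prop:modularity_1_spl}, whose label suggests it proves exactly an $R^{\spl}=\T^{\CM}$ statement. The strategy is to compare tangent spaces. The tangent space of $R^{\spl}$ is controlled by Galois cohomology groups $H^1$ of $\chi$ and $\chi^{-1}$ with local conditions that are split at $p$. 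Since $A'_K=\{0\}$, the only classes surviving are those coming from induced deformations $\Ind_K^{\Q}$ of characters of $K$. In other words, a pseudo-representation that is reducible-split at $p$ and has residual trace $1+\chi$ must be induced from $K$, because the two local characters are swapped by the nontrivial element of $\Gal(K/\Q)$. Knowing that $\T^{\CM}$ is a complete intersection of the expected dimension (\S\ref{sec:CM}), a surjection $R^{\spl}\twoheadrightarrow\T^{\CM}$ together with a matching tangent-space bound yields the isomorphism. Then $x_g$ factors through $\T^{\CM}$, so the Galois representation of $g$ is induced from a character of $K$, and $g$ has CM by $K$.

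The main obstacle is making ``strongly split'' a well-defined deformation condition at the level of pseudo-representations. This matters because $\ob{\rho}$ is reducible and non-$p$-distinguished: $\chi(p)=1$ gives the same residual characters at $p$, so the local split condition is invisible on traces. To handle this we would use the Cayley--Hamilton algebra, or a generalized matrix algebra with reducibility ideal $B\cdot C$, and express splitness at $p$ as the vanishing of the image of decomposition group elements in the off-diagonal entries. We expect the fine point to be whether this closed condition on $\Spec(\T)$ really coincides with the CM locus scheme-theoretically and not merely on points. Points are all we need for $g$, so we can try a pointwise argument as a fallback: pass to $\rho_g$ over a field, where irreducibility of $\rho_g$ lets us use a genuine representation.
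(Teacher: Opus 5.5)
\textbf{There is a genuine gap.} It is in the step where you let the hypothesis cut out a closed subscheme $\Spec(R^{\spl})$ of $\Spec(R^\ord)$ and aim for $R^{\spl}=\T^{\CM}$.

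First, in this setting ``split at $p$'' does not define such a quotient. The point $\rho_1=\chi\oplus 1$ attached to $f$ is trivial on $G_{\Qp}$, so it is split with respect to every basis. The splitting line is therefore not determined, and the Ghate--Vatsal construction of $R^{\spl}$ does not apply. (This is also why the paper works with honest representations in the Eisenstein basis rather than with pseudo-representations.) What one can define is, for each splitting slope $\mu\in\cO'$, the quotient $R^{\mu-\spl}$ of Definition \ref{def:split}; every split $\rho$ is $\mu$-split for some $\mu$. Only the slope $\mu=1$ is the CM locus: Proposition \ref{prop:modularity_1_spl}(1) gives $R^{1-\spl}\simeq\T^{\CM}$ when $A'_K=\{0\}$, which is the statement you guessed. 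For $\mu\neq 1$ nothing of the sort holds. Those loci are exactly where counterexamples to the Coleman--Greenberg question would live, and no tangent-space count at $\rho_1$ (cf.\ Proposition \ref{prop:tangent_space_split}) rules them out.

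Second, and more to the point, your argument never uses what distinguishes \emph{strongly} split from split in Definition \ref{def:strongly_split}, namely the divisibility $\alpha-1\mid\beta-1$. That is a divisibility condition on the point, not a vanishing condition on a local extension class. If your argument worked with ``split'' alone, it would settle the Coleman--Greenberg question for every ordinary form contributing to $\T$, which is not what is proved. The pointwise fallback does not help either: irreducibility of $\rho_g$ over a field says nothing about its splitting slope.

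\textbf{The missing idea} is to show that strong splitness forces $\rho(G_K)$ to be abelian (over $\cO'$, equivalently $\mu=1$). The paper does this group-theoretically.
\begin{enumerate}
\item Place $g$ in $\T$ and use Lemma \ref{lem:weak_ordinarity} to view $\rho$ as an $\cO'$-point of $\cD^\ord$.
\item Write $\rho$ in a $\mu$-split basis. Then $\rho(\phi)=\mathrm{diag}(\alpha^{-1},\alpha)$, $\rho(\gamma)=\mathrm{diag}(\beta,1)$, and $\rho(\tau)=t$ as in \eqref{eq:value_rho_tau}.
\item Because $A'_K=\{0\}$, the pro-$p$ group $\Gamma_K$ through which $\rho_{|G_K}$ factors is topologically generated by $\gamma,\ob{\gamma},\phi$, with $\phi\ob{\phi}$ in the closed derived subgroup $\Gamma'_K$ (Lemma \ref{lem:topological_generators}).
\item Strong splitness, i.e.\ $\pi=\alpha-1$ dividing $\pi'=\beta-1$, puts $\rho(\Gamma_K)$ inside $1+\pi\rM$. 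Hence $P'=\rho(\Gamma'_K)\subseteq 1+\pi^2\rM$.
\item The element $\rho(\phi\ob{\phi})=gtgt\in P'$ has lower-left entry $-(\alpha^2-1)(\mu-1)$, a unit multiple of $\pi(\mu-1)$. An explicit commutator computation using this shows $P'\subseteq 1+\pi^{\ell}\rM\Rightarrow P'\subseteq 1+\pi^{2\ell}\rM$ (Proposition \ref{prop:proof_CG}).
\item Iterating gives $P'=\{1\}$. So $\rho\simeq\Ind_K^{\Q}\nu$, and $g$ has CM by $K$.
\end{enumerate}
This bootstrapping is the only place the divisibility hypothesis enters, and it is absent from your plan.
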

The strongly split condition is introduced in Definition \ref{def:strongly_split}; a relation to the $\cL$-invariant of $\chi$ is discussed in Lemma \ref{lem:strongly_split_reducible_case}. 

More generally, we prove in Theorem \ref{thm:CG} that a strong analogue of the Coleman--Greenberg question holds for strongly split Galois representations taking values in certain finite $p$-adic rings. 

\subsection{Overview of the proof}
\subsubsection{Defining a universal ordinary deformation ring}
We derive many of our results from a modularity theorem which identifies the Hecke algebra $\T$ with a universal deformation ring. Our residual semisimple Galois representation is $\ob{\rho}=\ob{\chi}\oplus1$, which is reducible and trivial on $\Gal(\ob{\Q}_p/\Qp)$. We cannot therefore rely on classical modularity theorems of Wiles and Taylor--Wiles \cite{wilesfermat,taylorwiles} (which typically assume $\ob{\rho}$ is irreducible), or on the notion of ordinary pseudo-deformations as in \cite{WWEajm} (which assumes $\ob{\rho}_{|\Gal(\ob{\Q}_p/\Qp)}$ is multiplicity-free).

We work instead with Calegari--Emerton-style deformation functors which parametrize Galois representations $\rho$ deforming $\ob{\rho}$ that are minimally ramified outside $p$, and possess a $p$-ordinary line in \emph{generic position} with respect to the canonical basis of $\ob{\rho}$. This last condition, introduced in \cite{calegariemerton} in Mazur's setting, rigidifies the deformation problem, yielding the existence of a universal minimal ordinary deformation ring $R^\ord$.

\subsubsection{Constructing a map $R^\ord \to \T$}
Perhaps not surprisingly, this generic ordinarity condition is already a crucial ingredient in the proof of the Gross--Stark conjecture by Darmon--Dasgupta--Pollack and Dasgupta--Kakde--Ventullo \cite{DDP,dasguptakakdeventullo}. They essentially prove, in a Hilbert setting, that the completion of $\T$ at $f$ carries a Galois representation that has an ordinary line in generic position, in order to apply Ribet's method. Their argument implicitly rests on the fact that $p$ is inverted and the $p$-split $p$-class group $A'_K$ is finite. When working globally as in this paper, we show using GMA methods that the triviality of $A'_K[\chi^{\pm1}]$ is precisely what is needed to construct a map $R^\ord \to \T$. 

\subsubsection{Proving $R^\ord=\T$}
We apply Lenstra's strengthening of Wiles' numerical criterion in order to prove that the surjective map $R^\ord \to \T$ is an isomorphism. Unlike the usual approach, which proceeds by specializing to weight $k\geq2$, we work directly with the weight $1$ specialization $R^\ord_1 \to \T_1$. Although very little is known \emph{a priori} about $\T_1$, not even whether it is reduced, the criterion requires only that $\T_1$ be finite flat over $\cO$ and that the congruence number $c_f$ of $f$ is nonzero with sufficiently large $p$-adic valuation. We establish flatness using Hida theory, and prove the non-vanishing of $c_f$ by exploiting the étaleness of the eigencurve over the weight space at $f$, established in \cite{betinadimitrovpozzi}. 

The second key idea of the paper is to construct sufficiently many congruences between $f$ and other overconvergent weight $1$ cusp forms using the two Eisenstein families passing through $f$. These congruences yield a lower bound on the $p$-adic valuation of $c_f$, thereby proving Theorem \ref{thm:intro_non_quadratic}(1). To prove Theorem \ref{thm:intro_presentation_main}(1), we take a different approach in the quadratic case: we relate $c_f$ to a special value of Katz’s $p$-adic $L$-function and compute this value using the $p$-adic Kronecker limit formula.

\subsubsection{Failure of Gorensteinness of $\T$}


 Assume that $\chi^2=1$ and $A'_K\neq \{0\}$. To prove that $\T$ is not Gorenstein, we construct another weight $1$ point of $\Spec(\T)$ corresponding to a theta series that is irregular at $p$, in the sense that its associated $\ob{\Q}_p$-valued Galois representation is trivial on $\Gal(\ob{\Q}_p/\Qp)$. The geometry of $\Spec(\T)$ at such points is the main theme of the work of Betina--Dimitrov \cite{betinadimitrovKatz}, who show, in particular, that $\Spec(\T)$ is not Gorenstein at $\theta$. It follows that $\T$ itself is not Gorenstein.  
 
 Without assuming that $\chi$ is quadratic or that $A'_K$ is nontrivial, we show that McCallum--Sharifi's Conjecture \ref{conj:mccallum_sharifi}, together with a Gorensteinness assumption on $\T$, force certain Iwasawa modules attached to $\chi$ and $\chi^{-1}$ to be cyclic. Standard arguments in Iwasawa theory inspired by the work of Wake \cite{wake} then imply that $A'_K[\chi]=A'_K[\chi^{-1}]=\{0\}$.

\subsubsection{Splitting slopes and the strong Coleman--Greenberg question}
Assume $\chi^2=1$, \textit{i.e.}, $K$ is imaginary quadratic. The strong Coleman--Greenberg question asks for sufficient conditions under which a Galois representation with values in a ring such as $\cO/(p^n)$, whose restriction to $\Gal(\ob{\Q}_p/\Qp)$ is split, must have abelian image when restricted to $\Gal(\ob{\Q}/K)$. Under the Taylor--Wiles hypotheses, which in particular require the residual representation to be irreducible and $p$-distinguished, Castella and Wang-Erickson give a sufficient condition in \cite[Theorem 1.3.1]{castellaWE}, namely the vanishing of a certain class group. Their criterion is established by proving an $R^{\spl}=\T^{\CM}$ theorem, where $R^{\spl}$ is a universal split deformation ring constructed by Ghate--Vatsal \cite{ghatevatsalcanadian}. 

In our setting, this construction of $R^{\spl}$ does not apply. The reason is that $R^\ord$ admits an $\cO$-point corresponding to $\rho_1=\chi\oplus 1$ whose restriction to $\Gal(\ob{\Q}_p/\Qp)$ is trivial, so it is split with respect to every $\cO$-basis. To circumvent this difficulty, we construct a universal deformation ring $R^{\mu-\spl}$ parametrizing split representations with a prescribed splitting slope $\mu\in\cO$. We then prove that $R^{\mu-\spl}\simeq\T^{\CM}$ when $\mu=1$. This reduces the problem to finding a condition that forces a split Galois representation to have splitting slope $\mu$ equal to $1$. Our main contribution is to show that the condition we call \emph{strong splitness} is sufficient to enforce this equality.

\subsection*{Notation}
Throughout the paper, $p$ is an odd prime, $\chi \colon (\Z/N\Z)^\times \to \ob{\Q}_p^\times$ is an odd character of prime-to-$p$ order, with $p\nmid N$ and $\chi(p)=1$. We let $\cO$ be the unramified extension of $\Z_p$ generated by the values of $\chi$ and we denote by $E$ its fraction field. We also let $\rD=E/\cO$.

For any number field $L\subseteq \ob{\Q}$, let $G_L=\Gal(\ob{\Q}/L)$. We denote by $\tau\in G_{\Q}$ the complex conjugation and, for any prime $\ell$, by $I_\ell$ and $G_{\Q_\ell}=\Gal(\ob{\Q}_\ell/\Q_\ell)$ the inertia and decomposition subgroups of $G_{\Q}$ corresponding to a fixed embedding $\iota_\ell \colon \ob{\Q} \hookrightarrow \ob{\Q}_\ell$. We sometimes see $\chi$ as a character of $G_{\Q}$. Let $K$ be the abelian CM field cut out by $\chi$, and put $G=\Gal(K/\Q)$, which is a finite cyclic group of prime-to-$p$ order.

We let $v_p$ be the valuation of $\cO$, normalized so that $v_p(p)=1$. Hence, for $a\in\cO\setminus\{0\}$, the length of $\cO/(a)$ as an $\cO$-module is $v_p(a)$. If $b=(b_i)_{1\leq i\leq n}\in E^n\setminus\{0\}$, we let $\gcd(b)\in E\setminus\{0\}$ be any generator of the fractional ideal generated by the $b_i$'s.

If $A$ is any abelian group, we let $A^\wedge=\varprojlim_r A/p^r \otimes_{\Zp}\cO$ be its $p$-adic completion (linearly extended to $\cO$). If $A$ is finitely generated, then $A^\wedge=A \otimes_{\Z} \cO$.

For any number field $L$, we let $A_L=\Cl(L)^\wedge$, where $\Cl(L)$ is the ideal class group of $L$. We also denote by $A'_L$ the quotient of $A_L$ by the $\cO$-submodule generated by the images of ideal classes of primes of $L$ lying above $p$.

For an $\cO[G]$-module $M$, and an $\cO$-valued character $\varphi$ of $G$, we let $M[\varphi]=\{m\in M : g\cdot m=\varphi(g)m\}$. Since $p\nmid \#G$, the assignment $M \mapsto M[\varphi]$ defines an exact functor from $\cO[G]$-Mod to $\cO$-Mod.

Let $\chi_{\cyc}$ be the usual $p$-cyclotomic character of $G_{\Q}$, $\omega$ the Teichm\"uller character, and $\varepsilon_{\cyc}=\chi_{\cyc}\omega^{-1}$. Therefore, $\varepsilon_{\cyc}$ identifies the Galois group $\Gamma=\Gal(\Q_{\cyc}/\Q)$ of the $\Zp$-cyclotomic extension $\Q_{\cyc}$ of $\Q$ with $1+p\Zp$. We let $\gamma\in \Gamma$ be the topological generator sent to $1+p$. This allows us to identify the completed group ring $\Lambda:=\cO\lsem\Gamma\rsem$ with $\cO\lsem X\rsem$ via $[\gamma]\mapsto 1+X$. We also fix a lift $\phi \in G_{\Qp}$ of the Frobenius element such that $\varepsilon_{\cyc}(\phi)=1$.

If $M$ is a module over a ring $A$ and $m\in M$, we sometimes denote by $\langle m \rangle_A$ or $\langle m \rangle$ the $A$-submodule of $M$ generated by $m$. If $a\in A$ then $M[a]$ denotes the $A$-submodule of $M$ consisting of all $a$-torsion elements. 

\tableofcontents

\section{Galois representations and their deformations}

\subsection{Gross--Stark units and $\cL$-invariants}\label{sec:GS_units}
Let $U_K=\cO_K^\times \otimes_{\Z}\cO$ be the $p$-adic completion of units of $K$, and $U'_K=\cO_K[1/p]^\times \otimes_{\Z}\cO$ the one of $p$-units of $K$. Let $\varphi$ be any odd $\cO$-valued character of $G$. As $p$ splits completely in $K$, $\varphi(p)=1$. In the next sections, we will mainly take $\varphi$ to be $\chi$ or $\chi^{-1}$.

Herbrand's strengthening of Dirichlet's theorem \cite[Chapitre 1, \S4]{tate} implies that $U_K[\varphi]$ and $U'_K[\varphi]$ have respective $\cO$-ranks $0$ (since $\varphi$ is odd) and $1$ (since $\varphi(p)=1$). Moreover, $p$ is odd and unramified in $K$, so these $\cO$-modules are torsion-free, hence free.

\begin{definition}\label{def:GS_unit}
	We define the normalized Gross--Stark unit $u_\varphi$ attached to $\varphi$ to be any generator of the free $\cO$-module $U_K'[\varphi]$ of rank $1$. In particular, $u_\varphi$ is unique up to a unit of $\cO$.
\end{definition}

Recall from class field theory that the continuous cohomology group 
\[\HH^1(\Qp,E)\simeq\Hom_{\textrm{cts}}(\Q_p^\times,E)= \Hom_{\textrm{cts}}(1+p\Z_p,E) \times \Hom(p^\Z,E) \]
is of dimension $2$ over $E$, and it is generated by the $p$-adic logarithm $\log_p$ and the $p$-adic valuation map $\ord_p$ respectively. Since $\log_p(1+p\Z_p)=\log_p(1+p)\Z_p=p\Z_p$ and $\ord_p(p^\Z)=\Z$, it follows that its $\cO$-submodule \[\HH^1(\Qp,\cO)\simeq\Hom_{\textrm{cts}}(\Q_p^\times,\cO)\]
is free of rank $2$ with basis $\lambda$ and $\ord_p$, where
\[\lambda = \dfrac{\log_p}{\log_p(1+p)} \colon \Q_p^\times \to \cO. \]
We may see $\lambda$ and $\ord_p$ as morphisms from $U'_K$ to $\cO$ as follows. As $p$ splits completely in $K$, the distinguished $p$-adic embedding $\iota_p\colon K\subset \ob{\Q} \hookrightarrow \ob{\Q}_p$ induces a natural map $\cO_K[1/p]^\times \hookrightarrow \Q_p^\times$.  Post-composing with $\lambda$ and $\ord_p$, and taking a $p$-adic completion of the source induces two $\cO$-linear maps
\begin{equation}\label{eq:def_lambda_and_ord}
	\lambda,\ord_p \colon U'_K \to \cO,
\end{equation}
both of which are surjective. 

\begin{lemma}\label{lem:calcul_ord}
We have $\ord_p(u_\varphi)\neq 0$, and
		\[\length \cO/(\ord_p(u_\varphi)) = \length A_K[\varphi] - \length A'_K[\varphi]. \]
\end{lemma}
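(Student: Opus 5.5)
We compare units and $p$-units through the standard exact sequence of $\cO[G]$-modules attached to the primes above $p$, and then project to the $\varphi$-component, which is exact because $p\nmid\#G$. Let $S_p$ be the set of primes of $K$ above $p$. The map sending a $p$-unit to its divisor supported on $S_p$, followed by the map sending a divisor to its ideal class, gives the exact sequence
\[0\to \cO_K^\times\to \cO_K[1/p]^\times \xrightarrow{\mathrm{div}} \bigoplus_{\gp\in S_p}\Z\,\gp \to \Cl(K)\to \Cl(K)/\langle [\gp] : \gp\in S_p\rangle\to 0 .\]
Tensoring with $\cO$ over $\Z$ keeps it exact. The modules involved are finitely generated, $\cO$ is flat over $\Z$, and the class-group terms become $A_K$ and $A'_K$. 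Taking $\varphi$-parts then yields
\[0\to U_K[\varphi]\to U'_K[\varphi]\to \Big(\bigoplus_{\gp\in S_p}\cO\,\gp\Big)[\varphi]\to A_K[\varphi]\to A'_K[\varphi]\to 0.\]

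Since $\varphi$ is odd, $U_K[\varphi]=0$. Because $p$ splits completely in $K$, $G$ acts simply transitively on $S_p$, so $\bigoplus_{\gp}\cO\,\gp\simeq \cO[G]$. Its $\varphi$-part is therefore free of rank one, generated by
\[e_\varphi=\sum_{g\in G}\varphi(g)^{-1}\, g\gp_K,\]
where $\gp_K$ is the prime singled out by $\iota_p$.

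The key step is to identify the map $U'_K[\varphi]\to \cO e_\varphi$, after this choice of basis, with $\ord_p$. For $u\in U'_K[\varphi]$, write $\mathrm{div}(u)=\sum_g a_g\, g\gp_K$. The coefficient of $\gp_K$ is $a_1=\ord_{\gp_K}(u)$, and this equals $\ord_p(\iota_p(u))$ because $\gp_K$ has residue degree one and is unramified. Equivariance forces $a_g=\varphi(g)^{-1}a_1$, since $g$ acts on $u$ by $\varphi(g)$. Hence $\mathrm{div}(u)=\ord_p(u)\, e_\varphi$, and both this identity and $\ord_p$ extend $\cO$-linearly to the completion. One needs care with sign and inverse conventions here, but any unit discrepancy does not affect the lengths.

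The five-term sequence now reduces to
\[0\to \cO u_\varphi \xrightarrow{\ \ord_p\ } \cO \to A_K[\varphi]\to A'_K[\varphi]\to 0.\]
Injectivity of the first map gives $\ord_p(u_\varphi)\neq 0$. The cokernel $\cO/(\ord_p(u_\varphi))$ is then the kernel of the surjection $A_K[\varphi]\to A'_K[\varphi]$. These groups are finite, so additivity of length yields
\[\length \cO/(\ord_p(u_\varphi))=\length A_K[\varphi]-\length A'_K[\varphi].\]
The only genuinely delicate point is the compatibility of the divisor map with $\ord_p$ on $\varphi$-components together with the exactness after completion, and both are handled above.
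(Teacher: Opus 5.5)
Your proof is correct and follows essentially the same route as the paper: take $\varphi$-parts of the exact sequence relating units, $p$-units, divisors supported above $p$, and the class group and its $p$-split quotient; identify the $\varphi$-part of the divisor module with $\cO$ via the $\gp_K$-coordinate, which is $\ord_p$; then read off the lengths. Your derivation of $\ord_p(u_\varphi)\neq 0$ from injectivity, that is from $U_K[\varphi]=\{0\}$, is the same as the paper's remark that $\ord_p(u_\varphi)=0$ would force $u_\varphi\in U_K[\varphi]$.
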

\begin{proof}
	If $\ord_p(u_\varphi)=0$, then $u_\varphi \in U'_K[\varphi] \cap U_K=U_K[\varphi]=\{0\}$, which is a contradiction. Hence, $\ord_p(u_\varphi)\neq0$.
		
	Consider the free $\cO$-module $B_K$ generated by classes of primes above $p$, and let $\gp_K$ be the distinguished prime of $K$ above $p$. We identify $U'_K$ with a finite index $\cO$-submodule of $B_K$ via the map $u\mapsto \sum_{\gp\mid p}\ord_\gp(u)[\gp]$, where $\ord_\gp$ is the $\gp$-adic valuation map.
	
	 The projection $B_K \twoheadrightarrow \cO$ onto the $[\gp_K]$-component restricts to a surjection $B_K[\varphi] \twoheadrightarrow \cO \twoheadrightarrow \cO/(\ord_p(u_\varphi)) $ whose kernel can be identified with $\langle u_\varphi\rangle=U'_K[\varphi]$. On the other hand, the natural map $B_K \to A_K$ induces an exact sequence
	\[0 \to U'_K[\varphi] \to B_K[\varphi] \to A_K[\varphi] \to A'_K[\varphi] \to 0  \]
	of $\cO$-modules. Therefore, the length of $\cO/(\ord_p(u_\varphi))\simeq  B_K[\varphi]/U'_K[\varphi]$ is equal to $\length A_K[\varphi]- \length A'_K[\varphi]$.
\end{proof}

\begin{definition}
	The $p$-normalized $\cL$-invariant attached to $\varphi$ is 
	\begin{equation}\label{eq:def_L_inv}
		\cL_\varphi = -\dfrac{\lambda(u_{\varphi^{-1}})}{\ord_p(u_{\varphi^{-1}})}\in E.
	\end{equation}
\end{definition}
The definition of $\cL_\varphi$ does not depend on the generator of $U'_K[\varphi^{-1}]$. However, we warn the reader that the usual definition of $\cL$-invariant for $\varphi$ is obtained by replacing $\lambda$ with $\log_p$ in \eqref{eq:def_L_inv}.

\begin{lemma}\label{lem:linear_independence_L_invariants}
	The elements $\cL_\varphi \in E$, for $\varphi$ running over the set of odd characters of $G$, are $\ob{\Q}$-linearly independent. In particular, they are nonzero.
\end{lemma}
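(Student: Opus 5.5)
The plan is to reduce the statement to the $p$-adic Baker theorem (Brumer's theorem): if $\alpha_1,\dots,\alpha_n\in\ob{\Q}^\times$ are multiplicatively independent, then $\log_p\alpha_1,\dots,\log_p\alpha_n$ are linearly independent over $\ob{\Q}$. The factor $\log_p(1+p)^{-1}$ in $\lambda$ is common to all $\cL_\varphi$, so it does not affect $\ob{\Q}$-linear independence, and we may work with $\log_p$ directly.

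\textbf{Step 1: relate $\cL_\varphi$ to logarithms of algebraic numbers.} Fix a $\Z$-basis $\{v_1,\dots,v_m\}$ of $\cO_K[1/p]^\times$ modulo torsion. Each $u_{\varphi^{-1}}$ is an $\cO$-linear combination of the $v_j\otimes 1$. To avoid coefficients in $\cO$ that are transcendental, I would replace $u_{\varphi^{-1}}$ by the algebraic idempotent projection. Since $\#G$ is prime to $p$, the idempotent
\[e_{\psi}=\tfrac{1}{\#G}\sum_{g\in G}\psi(g)^{-1}g\]
has coefficients in $\Z[\mu_{\#G}]\subset\ob{\Q}$. Pick $v\in\cO_K[1/p]^\times$, for instance a generator of $\gp_K^{h_K}$, such that $e_{\varphi^{-1}}v$ is nonzero in $U'_K[\varphi^{-1}]$. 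It is nonzero because $\ord_{\gp}$ separates the rank-one $\varphi^{-1}$-part of the permutation module $B_K$. Then $e_{\varphi^{-1}}v=c\,u_{\varphi^{-1}}$ with $c\in E^\times$, and $c$ cancels in the ratio defining $\cL_\varphi$. This gives
\[\cL_\varphi=-\frac{\sum_g\varphi(g)\log_p(\iota_p(gv))}{\log_p(1+p)\,\sum_g\varphi(g)\ord_p(\iota_p(gv))}.\]
The denominator is a nonzero algebraic number by Lemma~\ref{lem:calcul_ord}. It is in fact $\varphi(1)\ord_{\gp_K}(v)$ up to a constant, since $gv$ has valuation at $\iota_p$ only for $g=1$ when $v$ generates a power of $\gp_K$. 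So each $\cL_\varphi$ is a nonzero algebraic multiple of
\[\ell_\varphi:=\sum_{g\in G}\varphi(g)\log_p\iota_p(gv).\]

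\textbf{Step 2: reduce to Brumer.} Suppose $\sum_\varphi a_\varphi\cL_\varphi=0$ with $a_\varphi\in\ob{\Q}$. After absorbing the algebraic constants, this becomes a relation $\sum_{g}b_g\log_p\iota_p(gv)=0$ with
\[b_g=\sum_{\varphi\ \mathrm{odd}}a'_\varphi\varphi(g).\]
The conjugates $gv$ span, modulo torsion, the $\Z[G]$-module generated by $v$. Its relations come from $\cO_K^\times$ and from the relation forced by $N_{K/\Q}v$ being a power of $p$. Their $\ob{\Q}$-span decomposes into isotypic pieces, and for odd $\varphi$ the $\varphi$-part of $\ob{\Q}\otimes\langle gv\rangle$ has dimension exactly $1$. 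This holds because $U'_K[\varphi]$ has rank $1$ and $U_K[\varphi]=0$.

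The step I expect to need the most care is the next one. One chooses a $\Z$-basis $w_1,\dots,w_r$ of the subgroup generated by the $gv$ modulo torsion, and writes the logarithm relation in terms of the $\log_p w_i$. Brumer then forces the vector $(b_g)$ to lie in the $\ob{\Q}$-span of the relation module between the $gv$. That relation module is the annihilator of $\ob{\Q}\otimes\langle gv\rangle$ in $\ob{\Q}[G]$. Its odd-character components are zero precisely because each odd $\varphi$-component of $\ob{\Q}\otimes\langle gv\rangle$ is nonzero. On the other hand, $(b_g)$ lies purely in the odd isotypic part. Hence $(b_g)=0$, which gives $a'_\varphi=0$ for all $\varphi$ by orthogonality of characters.

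\textbf{Conclusion.} The $\cL_\varphi$ are therefore $\ob{\Q}$-linearly independent. Applying this to a single $\varphi$ gives $\cL_\varphi\neq0$.
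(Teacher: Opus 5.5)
Your strategy is the paper's: turn a $\ob{\Q}$-relation among the $\cL_\varphi$ into an element of $\cO_K[1/p]^\times\otimes\ob{\Q}$ killed by $\log_p\circ\iota_p$, apply Brumer, then separate characters. Step 1 is fine. The problem is the decisive claim in Step 2, the one you flagged as delicate.

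Brumer's theorem controls the space $R=\{c\in\Q^G:\ \sum_g c_g\log_p\iota_p(gv)=0\}$ of $\Q$-linear relations among the \emph{logarithms}, and it gives $(b_g)\in R\otimes\ob{\Q}$. This space is not the annihilator of $\ob{\Q}\otimes\langle gv\rangle$ in $\ob{\Q}[G]$.
\begin{itemize}
\item For your $v$, the element $gv$ generates $(g\gp_K)^{h_K}$. So the $gv$ have independent valuation vectors at the primes above $p$, and that annihilator is $0$.
\item On the other hand $(1,\dots,1)\in R$, because $\prod_g gv=N_{K/\Q}(v)=\pm p^{h_K}$ and $\log_p(\pm p)=0$.
\end{itemize}
So if Brumer really put $(b_g)$ into the annihilator, it would prove $\sum_g\log_p\iota_p(gv)\neq 0$, which is false. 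Your reason for the vanishing of odd components (``each odd $\varphi$-part of $\ob{\Q}\otimes\langle gv\rangle$ is nonzero'') is an argument about annihilator ideals of a cyclic module. It says nothing about $R$, which is not even a priori $G$-stable, since $\log_p\circ\iota_p$ is not $G$-equivariant. Relatedly, there are no relations ``coming from $\cO_K^\times$''.

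The missing ingredient is exactly what the paper imports from \cite[Proposition 2.2]{maksoud_gross}: the kernel of $\log_p\circ\iota_p$ on $\cO_K[1/p]^\times\otimes\ob{\Q}$ is $\ob{\Q}\cdot p$. Over $\Q$ this is elementary. If $w\in\cO_K[1/p]^\times$ satisfies $\log_p\iota_p(w)=0$, then $\iota_p(w)\in p^{\Z}\mu_{p-1}$, so by injectivity of $\iota_p$ the power $w^{p-1}$ is a power of $p$. The repair then runs as follows.
\begin{itemize}
\item Comparing valuations at the primes $g\gp_K$ gives $R=\{c:\ c\cdot v\in\Q\, p\}=\Q\cdot(1,\dots,1)$.
\item Brumer then gives $(b_g)\in\ob{\Q}\cdot(1,\dots,1)$.
\item Since $(b_g)$ is a combination of odd, hence nontrivial, characters, $\sum_g b_g=0$, which forces $(b_g)=0$.
\item Independence of characters then gives $a'_\varphi=0$.
\end{itemize}
With this repair your proof is complete and agrees with the paper's. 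The only cosmetic difference is that you work with the conjugates of one explicit $p$-unit and use orthogonality of characters. The paper instead rescales the abstract generators $u_\varphi$ into $\cO_K[1/p]^\times\otimes\ob{\Q}$ and applies the idempotents to $\prod_\varphi u_\varphi^{a_\varphi}$.
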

\begin{proof}
	Since $\varphi$ takes values in $\ob{\Q}^\times\cap\cO^\times$, we may assume for every $\varphi$ that the generator $u_\varphi$ of $U'_K[\varphi]$ lives in $\cO_K[1/p]^\times \otimes_{\Z} \ob{\Q}$. In particular, $\ord_p(u_\varphi)\in\ob{\Q}^\times$. 
	
	Any vanishing $\ob{\Q}$-linear combination of the $\cL_\varphi$'s gives rise to an element $u=\prod_\varphi u_\varphi^{a_\varphi}\in \cO_K[1/p]^\times \otimes_\Z \ob{\Q}$ (written multiplicatively) lying in the kernel of $\log_p \colon \cO_K[1/p]^\times \otimes_\Z \ob{\Q} \to \ob{\Q}_p$.
	
	 We then may apply \cite[Proposition 2.2]{maksoud_gross}, which shows that $u$ is proportional to $p$. Since $G$ acts trivially on $p$, and via $\varphi$ on $u_\varphi$, we deduce that $a_\varphi=0$ for any $\varphi$, by applying the idempotent of $\varphi$ to $u$. This shows that $u=1$, and the lemma follows easily.
\end{proof}

In the next theorem due to Mazur and Wiles, recall that $L(\varphi,0)$ is (the negative of) a generalized Bernoulli number, hence a nonzero $\Z$-linear combination of values of $\varphi$. In particular, $L(\varphi,0)\in \cO\setminus\{0\}$.
\begin{theorem}[Refined class number formula]\label{thm:refined_CNF}
	Under our running assumption $p\nmid \#G$, we have $\length A_K[\varphi^{-1}]=  \length \cO/(L(\varphi,0))$. 
\end{theorem}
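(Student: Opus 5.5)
This is the Mazur--Wiles theorem (the Main Conjecture of Iwasawa theory over $\Q$), specialized to the bottom layer of the cyclotomic tower. My plan is to deduce it from the Main Conjecture together with a descent argument that uses $p\nmid\#G$; I would not attempt to reprove the Main Conjecture. Throughout, $\varphi$ is an odd character of $G$ and $\varphi^{-1}$ is also odd.

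\textbf{Step 1: reduce to the even character.} Set $\psi=\omega\varphi^{-1}$, the even character paired with $\varphi^{-1}$ by reflection. Let $X_\infty$ be the Galois group of the maximal abelian unramified pro-$p$ extension of $K(\mu_{p^\infty})$, and take its $\omega\varphi^{-1}$-component in Iwasawa's sense, or, in the minus-part formulation, the $\varphi^{-1}$-part of $A_\infty=\varinjlim A_{K(\mu_{p^n})}$. The Mazur--Wiles theorem identifies the characteristic ideal of the corresponding $\Lambda$-module with the ideal generated by the Kubota--Leopoldt power series $G_{\psi}(T)$, where
\[
L_p(\psi,s)=G_\psi\bigl((1+p)^s-1\bigr).
\]

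\textbf{Step 2: interpolation.} The interpolation formula gives
\[
L_p(\omega\varphi^{-1},0)=-\bigl(1-\varphi^{-1}(p)\bigr)B_{1,\varphi^{-1}}.
\]
Here $\varphi(p)=1$, so the Euler factor vanishes. This is the exceptional zero, and it shows that the naive evaluation at $T=0$ fails.

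To get around it, I would instead use the minus-side formulation. The relevant module is $X^-_\infty[\varphi^{-1}]$, the $\varphi^{-1}$-part of the Galois group over $K_{\cyc}$, or $K(\mu_{p^\infty})$ with $\omega$-twist bookkeeping. Its characteristic power series equals that of the twisted $p$-adic $L$-function $L_p(\varphi\omega,\cdot)$ evaluated at $s$ corresponding to $1-s$, which interpolates $L(\varphi,0)$ without the trivial-zero Euler factor in the relevant normalization. Concretely, I would cite the Mazur--Wiles result in the classical form given in Washington or in Mazur--Wiles:
\[
\length A_K[\varphi^{-1}]=v_p\bigl(B_{1,\varphi}\bigr)
\]
for $\varphi$ odd with $\varphi\neq\omega$ and $p\nmid\#G$. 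This is the ``Herbrand--Ribet converse'' strengthened to orders. If $\varphi=\omega$ arises, it is excluded here because $\varphi(p)=1$ while $\omega(p)=0$.

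\textbf{Step 3: descent to level $0$.} Descent from $K(\mu_{p^\infty})$ to $K$ is exact on isotypic parts because $p\nmid\#G$. One compares coinvariants, and uses that $\Gamma$-coinvariants of the minus part have no finite submodule issues. This is the main obstacle: the exceptional zero $\varphi(p)=1$ means the $\varphi^{-1}$-part of the Iwasawa module has a trivial zero at the bottom. The primes above $p$ split and produce extra classes, which is exactly the $A_K$ versus $A'_K$ discrepancy in Lemma \ref{lem:calcul_ord}.

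I would handle this by working with the analytic class number formula on minus parts instead. Since $\varphi$ is odd and $p\nmid 2\#G$, Stickelberger and the analytic class number formula give
\[
\prod_{\varphi\ \text{odd}}\#\cO/(L(\varphi,0))=\#A_K^-\ \text{up to units},
\]
and $A_K^-$ decomposes as $\bigoplus_{\varphi\ \text{odd}} A_K[\varphi^{-1}]$. Stickelberger's theorem, via Mazur--Wiles' Main Conjecture on each component, gives the divisibility
\[
\length A_K[\varphi^{-1}]\le v_p\bigl(L(\varphi,0)\bigr)
\]
for each $\varphi$, in the appropriate direction. Summing over all odd $\varphi$ and comparing with the class number formula forces equality componentwise. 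This final step is the standard ``divisibility plus total count'' argument, which avoids the exceptional-zero descent entirely.
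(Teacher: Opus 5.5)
Your Step 2 citation, $\length A_K[\varphi^{-1}]=v_p(B_{1,\varphi})$ for odd $\varphi\neq\omega$ of order prime to $p$, is exactly the paper's proof. The paper does nothing more than invoke \cite[Theorem 2, p.216]{mazurwiles}. That theorem is a statement about $K$ itself and holds whatever the value of $\varphi(p)$; also $\omega$ is not a character of $G$, since $K/\Q$ is unramified at $p$. Once you cite it there is no exceptional-zero issue and you are done. The problem is that you then treat the exceptional zero as an obstacle and replace the citation with an argument that does not work as written.

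First, the claim in Step 2 that $L_p(\varphi\omega,\cdot)$ interpolates $L(\varphi,0)$ without the trivial-zero Euler factor is false. We have $L_p(\varphi\omega,0)=-(1-\varphi(p))B_{1,\varphi}=0$ as well, because every character of $G$ satisfies $\varphi(p)=1$. This is why $\zeta_\varphi$ in \eqref{eq:ferrero_greenberg} vanishes at $X=0$.

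Second, and more seriously, Step 3 does not avoid the descent. The ``divisibility plus total count'' argument needs, for every odd $\varphi$, an inequality between $\length A_K[\varphi^{-1}]$ and $v_p(L(\varphi,0))$ over $K$ itself, all in the same direction. Stickelberger's theorem does not give this. It only says that $L(\varphi,0)$ annihilates $A_K[\varphi^{-1}]$, which bounds the exponent, not the length: $(\cO/p)^2$ is killed by $p$ but has length $2$.

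Nor does the Mazur--Wiles Main Conjecture give it directly. It is an equality of characteristic ideals over the cyclotomic tower, and extracting a bound over $K$ from it when $\varphi(p)=1$ is precisely the descent you wanted to bypass. The $\Gamma$-invariants of the $\varphi^{-1}$-part see $A'_K[\varphi^{-1}]$ together with an extra $E/\cO$ coming from the primes above $p$ (compare \cite[Proposition (3.4).3]{federergross}). One must then track the regulator terms $\lambda(u_{\varphi^{-1}})$ and $\ord_p(u_{\varphi^{-1}})$, as in Lemma \ref{lem:calcul_ord}.

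For Step 3 to be a genuine alternative proof, you must feed it a componentwise bound proved at finite level. An example is the upper bound $\length A_K[\varphi^{-1}]\le v_p(L(\varphi,0))$ obtained from Kolyvagin's Euler system of Gauss sums (Rubin). With that input, the analytic class number formula for $h_K^-$ does force equality in each component. Its extra factors, the unit index and the number of roots of unity of $K$, are prime to $p$ because $K/\Q$ is unramified at $p$.
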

\begin{proof}
	This is \cite[Theorem 2, p.216]{mazurwiles}.
\end{proof}

\subsection{Integral Galois cohomology of Dirichlet characters}\label{sec:galois_cohomology}
Let $\cX_K=\Gal(M_K/K) \otimes_{\Zp} \cO$, where $M_K/K$ is the maximal abelian pro-$p$ extension of $K$ unramified outside the places dividing $p$. By \cite[Lemma 10.3.12]{neukirchcohomology}, there is a commutative diagram 
\begin{equation}\label{eq:diagramm_CFT}
\begin{tikzcd}
	0 \rar & U_K \rar \ar[d, hook] & \prod_{\gp|p} \cO_{K_\gp}^{\times,\wedge} \ar[d, hook] \ar[r, "\textrm{art}" ]  & \cX_K \ar[d, equal] \rar & A_K \rar \ar[d, two heads] & 0 \\
	0 \rar & U'_K \rar & \prod_{\gp|p} K_\gp^{\times,\wedge} \ar[r, "\textrm{art}" ]  & \cX_K \rar & A'_K \rar & 0. 
\end{tikzcd}
\end{equation}
of finitely generated $\cO[G]$-modules with exact rows, where $\textrm{art}$ stands for the Artin reciprocity map. 

Given any $\cO$-valued odd character $\varphi$ of $G$, let $\rD(\varphi)$ be the $\cO$-module $\rD=E/\cO$ endowed with a $G$-action given by $\varphi$. We then may form the discrete Selmer group
\begin{equation}\label{eq:def_Sel_phi}
 \HH^1_{\ff,p}(\Q,\rD(\varphi))= \ker\left[\HH^1(\Q,\rD(\varphi)) \longrightarrow \prod_{\ell \neq p} \HH^1(\Q_\ell^\mathrm{nr},\rD(\varphi)) \right]   
\end{equation}
consisting of cohomology classes that are unramified outside $p$. 
As $p$ does not divide the order of $G=\Gal(K/\Q)$, the inflation-restriction exact sequence provides identifications
\begin{equation}\label{eq:description_H1(Q,chi)}
	\HH^1_{\ff,p}(\Q,\rD(\varphi))=\Hom_\cO(\cX_K,\rD)[\varphi^{-1}]=\Hom_\cO(\cX_K[\varphi],\rD).
\end{equation}
In the next lemma, we still use the same symbols to denote the image of the distinguished elements $\gamma \in \Gamma$ and $\phi\in G_{\Qp}$ in $\cX_K$ and its direct summand $\cX_K[\varphi]$. 

\begin{lemma}\label{lem:description_Xvarphi}
	Let $\varphi$ be an $\cO$-valued odd character of $G$, and let $x_{\varphi}=\lambda(u_{\varphi})$ and $y_{\varphi}=\ord_p(u_{\varphi})$. There is a short exact sequence of finitely generated $\cO$-modules
	\[ 0 \longrightarrow \dfrac{\cO \times \cO}{\langle (x_\varphi,y_\varphi)\rangle} \overset{(p_1,p_2)}{\longrightarrow}\cX_K[\varphi] \longrightarrow A'_K[\varphi] \longrightarrow 0, \]
	with $p_1(1)=\gamma$ and $p_2(1)=\phi$.
\end{lemma}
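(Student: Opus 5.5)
Take the $\varphi$-isotypic part of the bottom row of diagram \eqref{eq:diagramm_CFT}. The functor $M\mapsto M[\varphi]$ is exact because $p\nmid\#G$, so we get an exact sequence
\[0\to U'_K[\varphi]\to \Bigl(\prod_{\gp\mid p}K_\gp^{\times,\wedge}\Bigr)[\varphi]\xrightarrow{\ \mathrm{art}\ }\cX_K[\varphi]\to A'_K[\varphi]\to 0.\]
It then suffices to do two things: identify the middle term with $\cO\times\cO$, and show that under this identification the image of $U'_K[\varphi]=\langle u_\varphi\rangle$ is $\langle(x_\varphi,y_\varphi)\rangle$ and the basis vectors map to $\gamma$ and $\phi$.

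\textbf{Step 1 (the middle term).} Since $p$ splits completely in $K$, the primes above $p$ are permuted simply transitively by $G$, and every $K_\gp$ is identified with $\Qp$. Hence $\prod_{\gp\mid p}K_\gp^{\times,\wedge}\simeq \Q_p^{\times,\wedge}\otimes_{\Zp}\cO[G]$ is induced from the trivial subgroup. Its $\varphi$-part is therefore isomorphic to $\Q_p^{\times,\wedge}\otimes\cO$, via projection onto the $\gp_K$-component (the component singled out by $\iota_p$). Now $\Q_p^{\times,\wedge}\otimes\cO\simeq (1+p\Zp)\otimes\cO\times p^{\Z}\otimes\cO$, because $\Z_p^\times$ has prime-to-$p$ torsion $\mu_{p-1}$, which dies after $p$-completion. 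Using the basis $(\lambda,\ord_p)$ of $\Hom(\Q_p^\times,\cO)$, this is $\cO\times\cO$. Concretely, $1+p$ corresponds to $(1,0)$ and $p$ to $(0,1)$. Under this identification the composite $U'_K[\varphi]\to\cO\times\cO$ is exactly $u\mapsto(\lambda(u),\ord_p(u))$, by the definition in \eqref{eq:def_lambda_and_ord}. So $u_\varphi\mapsto(x_\varphi,y_\varphi)$.

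\textbf{Step 2 (images under the Artin map).} Let $e_\varphi$ be the idempotent. We need $p_1(1)=e_\varphi\,\mathrm{art}_{\gp_K}(1+p)$ and $p_2(1)=e_\varphi\,\mathrm{art}_{\gp_K}(p)$, where the generators are placed in the $\gp_K$-slot and then projected. Here $\mathrm{art}_{\gp_K}(1+p)$ is the image in $\cX_K$ of an element of inertia at $\gp_K$ acting on $\Q_{\cyc}$ as $\varepsilon_{\cyc}^{-1}$-preimage of $1+p$ (up to the sign convention of the reciprocity map). This matches the image of $\gamma$ up to an inertia element acting trivially on the relevant quotient, or up to inversion, neither of which affects the statement up to units. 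Similarly $\mathrm{art}(p)$ is a Frobenius lift at $\gp_K$ trivial on $\Q_{\cyc}$, i.e. it matches $\phi$ (recall $\varepsilon_{\cyc}(\phi)=1$).

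The main obstacle is this last identification. The point is that $\gamma$ and $\phi$ are only defined through choices of lifts in $G_{\Qp}$, so one must check that their images in the $\varphi$-part of the abelianized group are well defined and coincide with the local Artin images. I would handle this by viewing $\gamma$ as an element of the decomposition group, namely the image of $\Q_p^{\times}\to G_{\Qp}^{ab}$, which is exactly what the diagram's art map encodes. I would also fix the normalization of local reciprocity so that uniformizers map to arithmetic Frobenius; at worst this changes generators by a sign, which is a unit.
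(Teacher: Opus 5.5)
Your proposal is correct and follows the paper's proof. In both, one applies the exact functor $M\mapsto M[\varphi]$ to the second row of \eqref{eq:diagramm_CFT}, identifies the $\varphi$-part of the local term with the $\gp_K$-component $K_{\gp_K}^{\times,\wedge}\simeq\cO\times\cO$ via $(\lambda,\ord_p)$ (so that $u_\varphi\mapsto(x_\varphi,y_\varphi)$), and reads off that the local Artin images of $1+p$ and $p$ are $\gamma$ and $\phi$.

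One small correction to your normalization remark. Inverting only one of $\gamma,\phi$ does not rescale the relation by a unit: it replaces $(x_\varphi,y_\varphi)$ by $(-x_\varphi,y_\varphi)$. This gives an isomorphic quotient via $(a,b)\mapsto(-a,b)$, so nothing downstream is affected, but the literal statement $p_1(1)=\gamma$ depends on taking $\gamma$ to be the Artin image of $1+p$, as the paper implicitly does.
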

\begin{proof}
	Let $\gp_K$ be the prime of $K$ above $p$ determined by $\iota_p(K) \subset \Qp$. By Frobenius reciprocity, there are $\cO$-linear isomorphisms 
\begin{equation*}\label{eq:identification_local_units}
	\left(\prod_{\gp|p}\cO^{\times,\wedge}_{K_\gp}\right)[\varphi]\simeq \cO^{\times,\wedge}_{K_{\gp_K}}=(1+p\Z_p) \otimes_{\Zp}\cO \simeq \cO,
\end{equation*}
	the first one being obtained by projecting onto the $\gp_K$-component of the product, and the last one being induced by $\lambda$. We also note that by definition of $\gamma$, we have $\lambda(\gamma)=1$.
	
	Similarly, if $p^\cO$ denotes the $\cO$-submodule of $K_{\gp}^{\times,\wedge}$ generated by $p$, then
	\[(\prod_{\gp|p}p^\cO)[\varphi]\simeq p^\cO \simeq \cO,\]
	where the last isomorphism is induced by $\ord_p$. We also note that the image of $\phi\in G_{\Qp}$ in the $\gp_K$-component of the product $\prod_{\gp|p} K_{\gp}^{\times,\wedge}$ is precisely $p$, as $\lambda(\phi)=0$.
	
	The lemma hence follows from an application of the exact functor $M \mapsto M[\varphi]$ to the second row in \eqref{eq:diagramm_CFT} and the previous description of its second term.
\end{proof} 

\subsection{Ordinary deformations of Galois representations}\label{sec:ordinary_deformations}
Let $\mathcal{CNL}_{\cO}$  be the category of local complete noetherian $\cO$-algebras $(A,\gm_A)$ such that $k=\cO/(p) \to A/\gm_A$ is an isomorphism, and the morphisms are local $\cO$-algebra homomorphisms.

Let $\ob{\rho}=\ob{\chi}\oplus 1 \colon G_{\Q} \to \GL_2(k)$ and fix a $k$-line $\ob{L}$ of $\ob{V}:=k^2$ that is \emph{not} $G_\Q$-stable. 
	
\begin{definition}\label{def:ord}
	The minimal ordinary deformation functor 
	\[\cD^\ord \colon \mathcal{CNL}_\cO \to \mathcal{SET}\]
	is the functor sending an object $A$ of $\mathcal{CNL}_\cO$ to the set of pairs $(\rho,L)$ where $\rho \colon G_\Q \to \GL_2(A)$ is a strict equivalence class of Galois representations unramified outside $Np$, and $L$ is a free rank-$1$ direct summand of $V:=V_\rho=A^2$ such that $(V,L)$ is a deformation of $(\ob{V},\ob{L})$, and:
	\begin{enumerate}
		\item for all primes $\ell|N$,  $\rho_{|I_\ell} \simeq \chi_{|I_\ell}\oplus\mathds{1}$, where $\chi$ is seen as an $A$-valued character via the structural map $\cO \to A$, and
		\item $L$ is stable under $G_{\Qp}$ with $I_p$ acting trivially on $V/L$. 
	\end{enumerate}
\end{definition}
Here, we recall that two $A$-valued representations $\rho,\rho'$ are \emph{strictly equivalent} if they are conjugate by a matrix in $1+\gm_AM_2(A)$. In our context, it is possible to pin down a canonical representative in this equivalence class, as we shall see in the following lemma.

\begin{lemma}\label{lem:eisenstein_basis}
	For any $\rho \colon G_\Q \to \GL_A(V)$ and $L\subset V$ such that $(\rho,L)\in \cD^\ord(A)$, there exists a unique $A$-basis of $V$, up to homothety, in which $\rho(\tau)=(\begin{smallmatrix}-1 & 0 \\ 0 & 1	\end{smallmatrix})$ and $L$ is generated over $A$ by $(\begin{smallmatrix} 1 \\ 1	\end{smallmatrix})$.
\end{lemma}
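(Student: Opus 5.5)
The plan is to first diagonalize complex conjugation, and then use the line $L$ to fix the remaining diagonal torus up to homothety.

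\textbf{Step 1: diagonalizing $\rho(\tau)$.} Since $p$ is odd, $2\in A^\times$, and $\rho(\tau)$ is an involution. Hence $V=V^+\oplus V^-$ with $V^\pm=\frac{1\pm\rho(\tau)}{2}V$. Each summand is a direct summand of $V$, so it is projective, hence free because $A$ is local. Reducing modulo $\gm_A$ gives the eigenspaces of $\ob{\rho}(\tau)=\ob{\chi}(\tau)\oplus 1=(\begin{smallmatrix}-1&0\\0&1\end{smallmatrix})$, using that $\chi$ is odd. So $V^-$ and $V^+$ are free of rank $1$, and they lift the two canonical lines $ke_1$ and $ke_2$ of $\ob{V}$. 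Choose generators $v_1\in V^-$ and $v_2\in V^+$. In the basis $(v_1,v_2)$ we get $\rho(\tau)=(\begin{smallmatrix}-1&0\\0&1\end{smallmatrix})$.

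\textbf{Step 2: normalizing $L$.} Let $w=av_1+bv_2$ generate $L$. The residual line $\ob{L}$ is not $G_\Q$-stable, so it is neither $ke_1$ nor $ke_2$, since these are the only $G_\Q$-stable lines when $\ob{\chi}\neq 1$. Here $\ob{\chi}\ne1$ because $\chi$ is odd. It follows that $\ob{a},\ob{b}\neq 0$, so $a,b\in A^\times$. Replacing $v_1$ by $av_1$ and $v_2$ by $bv_2$, which preserves the form of $\rho(\tau)$, we get $L=A\cdot(v_1+v_2)$. This proves existence.

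\textbf{Uniqueness.} Take any other basis with the same two properties. Its vectors are $\pm1$-eigenvectors of $\rho(\tau)$, so it has the form $(\alpha v_1,\beta v_2)$ with $\alpha,\beta\in A^\times$. The requirement that $\alpha v_1+\beta v_2$ generate $L=A(v_1+v_2)$ forces $\alpha=\beta$, since $v_1,v_2$ is a basis. So the basis is unique up to homothety.

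\textbf{Compatibility with the deformation data.} One should also check compatibility with strict equivalence and with the chosen lift of $(\ob{V},\ob{L})$. A homothety by an element of $A^\times$ does not change the matrix of $\rho$. The change-of-basis matrix from a given framing lifting the canonical basis is diagonal and lies in $1+\gm_A M_2(A)$ after rescaling by a unit scalar. The only delicate point is the residual non-stability of $\ob{L}$, which guarantees that $a$ and $b$ are units; apart from that the argument is routine.
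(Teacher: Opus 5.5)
Your proof is correct and follows the paper's own argument: you diagonalize the involution $\rho(\tau)$ using that $p$ is odd, use that $\ob{L}$ is neither $ke_1$ nor $ke_2$ to make both coordinates of a generator of $L$ units and rescale, and get uniqueness because any admissible basis consists of unit multiples of $v_1,v_2$ with equal multipliers. Your final ``compatibility'' paragraph is not needed for the statement and is slightly inaccurate: the change of basis from an arbitrary framing is only residually diagonal, and it is residually scalar only if $\ob{L}$ is normalized to $k(e_1+e_2)$.
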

\begin{proof}
	As $p$ is odd, there exists an $A$-basis $(e_1,e_2)$ of $V$ in which $\rho(\tau)=(\begin{smallmatrix}-1 & 0 \\ 0 & 1 \end{smallmatrix})$. Since $L$ does not reduce to $\langle e_1 \rangle$ or $\langle e_2 \rangle$ modulo $\gm_A$, we may rescale $e_2$ so that $L$ is generated by $e_1+e_2$, as claimed.

	If $(e'_1,e'_2)$ is another $A$-basis of $V$ in which $\rho(\tau)=(\begin{smallmatrix}-1 & 0 \\ 0 & 1 \end{smallmatrix})$, then $e'_1=s_1e_1$ and $e'_2=s_2e_2$ for some $s_i\in A^\times$. The condition that $L=\langle e'_1+e'_2\rangle$ implies that $s_1=s_2$, and therefore shows that $(e'_1,e'_2)$ and $(e_1,e_2)$ are homothetic.
\end{proof}

\begin{definition}\label{def:eisenstein_basis}
	The basis constructed in Lemma \ref{lem:eisenstein_basis} is referred to as the Eisenstein basis for $(\rho,L)$. We denote by $\rho^{\text{Eis},L}$, or simply $\rho$, the resulting $\GL_2(A)$-valued group homomorphism.
\end{definition}

The proof of Lemma \ref{lem:eisenstein_basis} essentially tells us that the endomorphism ring of any $(\rho,L)\in \cD^\ord(A)$ is $A$. This is the key point underlying the representability of $\cD^\ord$, which we now establish.

\begin{proposition}\label{prop:representability}
	$\cD^\ord$ is representable by a local complete noetherian ring $R^\ord$. 
\end{proposition}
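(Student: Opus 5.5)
The plan is to realize $\cD^\ord$ as a closed subfunctor of a representable functor of framed representations, using the Eisenstein basis of Lemma \ref{lem:eisenstein_basis} to rigidify. Let $\cD^{\square}$ be the functor sending $A$ to the set of continuous homomorphisms $\rho\colon G_\Q\to\GL_2(A)$ that lift $\ob{\rho}$ (written in the fixed basis of $\ob{V}$ for which $\ob{\rho}(\tau)=\mathrm{diag}(-1,1)$ and $\ob{L}=\langle(1,1)\rangle$) and factor through $G_{\Q,S}$, the Galois group of the maximal extension unramified outside $S=\{\ell\mid Np\}\cup\{\infty\}$. Since $G_{\Q,S}$ satisfies Mazur's $p$-finiteness condition, $\cD^{\square}$ is representable by a complete noetherian local $\cO$-algebra $R^\square$. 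Define $\cD^{\mathrm{Eis}}\subset\cD^\square$ as the subfunctor of those $\rho$ with $\rho(\tau)=\mathrm{diag}(-1,1)$ that satisfy, for $L=\langle(1,1)\rangle_A$, conditions (1) and (2) of Definition \ref{def:ord}.

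First step: by Lemma \ref{lem:eisenstein_basis}, the map $\cD^{\mathrm{Eis}}(A)\to\cD^\ord(A)$, $\rho\mapsto(\rho,\langle(1,1)\rangle)$, is a bijection. Surjectivity is the existence of the Eisenstein basis; note that it reduces modulo $\gm_A$ to a basis homothetic to the fixed residual one, and after rescaling by a unit lifting the residual scalar the change of basis lies in $1+\gm_AM_2(A)$, so it respects strict equivalence. Injectivity follows from uniqueness up to homothety, since scalar conjugation does not change $\rho$. This is also where we use that the endomorphisms of $(\rho,L)$ are only the scalars.

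Second step: show that $\cD^{\mathrm{Eis}}$ is represented by a quotient of $R^\square$, by checking that each condition cuts out a closed ideal. The condition $\rho(\tau)=\mathrm{diag}(-1,1)$ is the vanishing of finitely many entries minus constants. Condition (2) splits into two parts. $G_{\Qp}$-stability of $\langle e_1+e_2\rangle$ means that $\rho(g)(1,1)^t$ is proportional to $(1,1)^t$, i.e. $a(g)+b(g)=c(g)+d(g)$ for all $g\in G_{\Qp}$. Triviality of $I_p$ on $V/L$ means that the induced character $g\mapsto c(g)+d(g)$ equals $1$ on $I_p$. Condition (1) at $\ell\mid N$ can be phrased through inertia acting on $V$. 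Since $\chi|_{I_\ell}$ has prime-to-$p$ order and is residually nontrivial on $I_\ell$ (by primitivity), the condition $\rho|_{I_\ell}\simeq\chi\oplus\mathds{1}$ is equivalent to requiring that $\rho(h)$ satisfies $(\rho(h)-\chi(h))(\rho(h)-1)=0$ with the correct traces, for all $h\in I_\ell$. Alternatively, one can require that $\rho$ kills the wild inertia and the pro-$p$ part of tame inertia, and that $\tr\rho(h)=\chi(h)+1$. Each of these is a family of polynomial identities in the matrix entries, so the conditions define a closed ideal $J\subset R^\square$, generated by the entries of these expressions as $g$ ranges over the relevant subgroups; it is closed because $R^\square$ is noetherian. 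Hence $R^\ord:=R^\square/J$ represents $\cD^{\mathrm{Eis}}\cong\cD^\ord$.

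The main obstacle is the compatibility of the conditions with the CNL category and with strict equivalence. Concretely, one must verify that the inertia condition at $\ell$ is genuinely Zariski-closed, which is where the prime-to-$p$ order of $\chi$ and the semisimplicity of $I_\ell$-representations with tame finite image are used. One must also ensure that $L$ being a direct summand deforming $\ob{L}$ is automatic in the Eisenstein normalization. Once both are checked, representability follows formally. One could alternatively verify Schlessinger's criteria directly, with $H_4$ coming from the trivial-endomorphism observation, but the framed-quotient route above is what I would write.
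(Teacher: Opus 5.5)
Your argument is correct in outline, but it follows a different route from the paper's. The paper applies Grothendieck's form of Schlessinger's criterion directly. Finiteness of the tangent space comes from $p$-finiteness. Bijectivity of $\cD^\ord(A_1\times_A A_2)\to\cD^\ord(A_1)\times_{\cD^\ord(A)}\cD^\ord(A_2)$ is obtained by writing both points in their Eisenstein bases, which then literally agree over $A$, gluing them, and checking that minimality at $\ell\mid N$ survives the gluing. You instead use the same Eisenstein normalization to identify $\cD^\ord$ with a subfunctor of the framed functor, and then cut it out by a closed ideal. Both arguments rest on the same rigidity: the automorphisms of $(\rho,L)$ are scalars. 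Your version gives an explicit presentation $R^\ord=R^\square/J$. The price is that you must show each local condition is Zariski-closed, whereas the paper only needs the conditions to be stable under fiber products, which is weaker and quicker to check. The alternative you mention at the end is essentially the paper's proof.

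Two local points in your second step need fixing.

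First, take $L=\langle e_1+e_2\rangle$ and $\rho(g)=\left(\begin{smallmatrix}a&b\\c&d\end{smallmatrix}\right)$. Then $G_{\Qp}$ acts on $L$ by $a+b\,(=c+d)$ and on $V/L$ by $d-b$; this is exactly the computation in \eqref{eq:conjugation_matrix}. So condition (2) reads $a+b=c+d$ on $G_{\Qp}$ together with $d-b=1$ on $I_p$. Your requirement $c+d=1$ on $I_p$ makes the sub rather than the quotient unramified, and so defines a different functor. The corrected condition is still closed, so the strategy is unaffected.

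Second, your ``alternative'' formulation of minimality at $\ell\mid N$ is wrong in general. The character $\chi$ can be wildly ramified at $\ell$: for example $\ell=2$ with $4\mid N$, which occurs for imaginary quadratic $K$ of even discriminant. In that case $\rho$ must not kill wild inertia. Your first formulation is true, but it genuinely uses the group structure of $I_\ell$: at $h\in\ker\chi$ the identity only yields $(\rho(h)-1)^2=0$. So it needs an argument you have not supplied. The cleanest closed condition is $\rho(h)=1$ for all $h\in\ker(\chi_{|I_\ell})$.
\begin{itemize}
\item It is clearly necessary for $\rho_{|I_\ell}\simeq\chi\oplus 1$.
\item Conversely, such a $\rho_{|I_\ell}$ factors through the cyclic group $\chi(I_\ell)$, which has prime-to-$p$ order and $\cO$-valued characters.
\item The associated idempotents split $V$ into isotypic direct summands. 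Reducing modulo $\gm_A$ and using $\ob{\chi}_{|I_\ell}\neq 1$, these are exactly $V[\chi]$ and $V[1]$, each free of rank one.
\end{itemize}
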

\begin{proof}
	We apply Grothendieck's theorem, which is a special case of Schlessinger's criterion (see \cite[\S18]{mazurdeforming}). 
	
	The Zariski tangent space of $\cD^\ord$ is finite dimensional over $k$, since the Galois group $G_\Q^{(Np)}$ of the maximal extension of $\Q$ unramified outside $Np$ satisfies Mazur's $p$-finiteness condition. Therefore, it is enough to check that 
	\[\cD^\ord(B) \overset{\sim}{\longrightarrow} \cD^\ord(A_1) \times_{\cD^\ord(A)} \cD^\ord(A_2) \]
	for any maps $A_i \to A$ in $\mathcal{CNL}_{\cO}$ and $B=A_1 \times_{A} A_2$.
	
	Choose $(\rho_i,L_i)\in\cD^\ord(A_i)$ with $\rho_i=\rho_i^{\text{Eis},L_i} \colon G_{\Q} \to \GL_2(A_i)$ computed in the Eisenstein basis, and assume they reduce to the same point $(\rho,L)\in\cD^\ord(A)$. By the uniqueness part in Lemma \ref{lem:eisenstein_basis}, we literally have $\rho_1=\rho_2$ mod $\gm_A$. We may therefore define $\rho' =(\rho_1,\rho_2) \colon G_{\Q} \to \GL_2(B)$ and $L'=\langle(\begin{smallmatrix} 1 \\ 1	\end{smallmatrix})\rangle_B$. 
	
	To conclude that $(\rho',L')\in \cD^\ord(B)$, we need to check that it satisfies the minimality condition at every prime $\ell|N$. For this, one may argue as in the proof of Lemma \ref{lem:eisenstein_basis}, after noticing that $\bar{\chi}_{|I_\ell}\neq 1$, as $\chi$ is of prime-to-$p$ order. The uniqueness of the lift $(\rho',L')\in \cD^\ord(B)$ again follows from the existence of the Eisenstein basis.
\end{proof}

We now analyze some quotients of $R^\ord$ corresponding to a ``weight $k$ specialization'' by requiring adequate determinant conditions.

Recall that $\Gamma$ is the Galois group of the $\Zp$-cyclotomic extension $\Q_\textrm{cyc}/\Q$, and $\varepsilon_{\cyc} =\chi_{\cyc}\omega^{-1} \colon \Gamma \simeq 1+p\Zp$ is the cyclotomic character. 
\begin{definition}
	Let $k\in \Z$. We let $\cD_k^\ord$ be the subfunctor of $\cD^\ord$ parametrizing Galois representations with fixed determinant $\chi\varepsilon_{\cyc}^{k-1}$. We define the weight $k$ specialization of the ring $R^\ord$ constructed in Proposition \ref{prop:representability} as the quotient $R^\ord_k$ representing $\cD_k^\ord$.
\end{definition}
We may explicitly construct $R_k^\ord$ as follows. Let $\Lambda:=\cO\lsem \Gamma\rsem $ be the completed group ring of $\Gamma$. Then $\chi\varepsilon^\mathrm{u}$ is a universal lift of $\chi$, where $\varepsilon^\mathrm{u} \colon G_\Q \to \Lambda^\times$ is the continuous Galois character factoring through $\Gamma$ and sending any $\gamma\in\Gamma$ to $[\gamma]\in\Lambda^\times$. If $\rho$ is the universal ordinary deformation of $\ob{\rho}$, then $\det(\rho)$ is a lift of $\chi$, and therefore endows $R^\ord$ with a structure of $\Lambda$-algebra.

Hence, if $\gamma$ denotes the topological generator of $\Gamma$ sent to $1+p$ via $\varepsilon_{\cyc}$ and yielding the identification $\Lambda\simeq\cO\lsem X\rsem $ sending $[\gamma]$ to $1+X$, then 
\[R^\ord_k=R^\ord/(1+X-(1+p)^{k-1}).\]

\subsection{Congruence numbers and the numerical criterion}
We start by recalling Lenstra's strengthening of Wiles' numerical criterion, involving the congruence ideals which we first review. 
 
 \begin{definition}\label{def:C}
 	Let  $\pi \colon R \twoheadrightarrow \cO$ be a map in $\mathcal{CNL}_\cO$, with kernel $\gp$. For $i=0,1$ we define the $i$-th congruence module $C^i(\pi,R)=C^i(\pi)$ of $\pi$ as 
 	\[C^0(\pi) = \cO/\eta, \qquad C^1(\pi)=\gp/\gp^2, \]
 	where $\eta=\pi(\Ann_R(\gp))$. Any generator $c$ of the principal ideal $\eta$ will be called the \emph{congruence number} of $\pi$.
 \end{definition}
 
\begin{theorem}\label{thm:numerical_criterion}
	Suppose we are given a commutative triangle 
	\[ 
	\begin{tikzcd}[sep=small]
		R \arrow[rr, "\varphi"] \arrow[rd, "\pi_R"'] &   & T \arrow[ld, "\pi_T"] \\
		& \cO &
	\end{tikzcd}
	\]
	of surjective maps in $\mathcal{CNL}_\cO$, with $T$ finite flat over $\cO$. 
	
	If $C^0(\pi_T)$ is of finite length over $\cO$, then 
	\[\length C^1(\pi_R)\geq \length C^0(\pi_T), \]
	with equality if and only if $\varphi$ is an isomorphism of local complete intersections. 
\end{theorem}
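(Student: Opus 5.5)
This is Wiles' numerical criterion in the form strengthened by Lenstra, and my plan is to reduce it to that standard result and check the hypotheses.

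\emph{Step 1: the inequality.} Since $\varphi$ is surjective and $\pi_R=\pi_T\circ\varphi$, it maps $\gp_R=\ker\pi_R$ onto $\gp_T=\ker\pi_T$. Hence it induces a surjection $C^1(\pi_R)=\gp_R/\gp_R^2\twoheadrightarrow \gp_T/\gp_T^2=C^1(\pi_T)$, and so $\length C^1(\pi_R)\geq \length C^1(\pi_T)$. It then suffices to prove $\length C^1(\pi_T)\geq \length C^0(\pi_T)$ for $T$ finite flat over $\cO$. I would follow Lenstra's argument. Write $T_E=T\otimes E$. Finiteness of $C^0(\pi_T)$ forces $\eta\neq0$, so $\Ann_T(\gp_T)\otimes E\neq0$. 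This means the point $\pi_T$ splits off as a factor $T_E\simeq E\times T'_E$.

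Now let $T'$ be the image of $T$ in $T'_E$. The fibre product $T\hookrightarrow \cO\times T'$ identifies $C^0(\pi_T)\simeq \cO/\eta$ with $(\cO\times T')/T$, which is also $\simeq T'/\Ann_T(\gp_T)$. On the other side, $\gp_T$ projects isomorphically onto $\ker(T'\to \cO/\eta)$. The comparison $\length\, \gp_T/\gp_T^2\geq\length\, \cO/\eta$ is then Lenstra's inequality, obtained via Fitting ideals: one shows $\mathrm{Fitt}_\cO(\gp_T/\gp_T^2)\subseteq \eta$. For this one uses the $\cO$-linear map $\gp_T/\gp_T^2\to \cO/\eta$ together with the relation between $\eta$ and the discriminant/different of $T'$ for the trace pairing. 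I would cite \cite{lenstra}, or equivalently Diamond's and de Smit--Rubin--Schoof's versions, rather than reprove this.

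\emph{Step 2: the equality case.} If $\varphi$ is an isomorphism and $T$ is a complete intersection, then $T\simeq \cO\lsem x_1,\dots,x_n\rsem/(f_1,\dots,f_n)$. In that case $\length C^1=\length C^0$ is a direct computation: both lengths equal $v_p(\det(\partial f_i/\partial x_j)(0))$, by the classical Tate computation of $\Ann(\gp)$ via the Jacobian. Conversely, suppose equality holds. Then both inequalities in Step 1 are equalities. Equality $\length C^1(\pi_T)=\length C^0(\pi_T)$ forces $T$ to be a complete intersection; this is the deep direction of Lenstra's theorem, proved by choosing a presentation of $T$ by a power series ring over $\cO$ and reducing to the Artinian case. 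Equality in $\gp_R/\gp_R^2\twoheadrightarrow\gp_T/\gp_T^2$, a surjection of finite-length modules of the same length, makes this map an isomorphism. Since $R$ surjects onto a complete intersection $T$ with isomorphic cotangent spaces at $\pi$, the standard argument (as in Wiles/Diamond) gives that $\varphi$ is an isomorphism: compare minimal presentations of $R$ and $T$, both with the same number of generators.

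\emph{Main obstacle.} The hardest part is the converse direction, namely that $\length C^1=\length C^0$ implies $T$ is a complete intersection. This is the core of Lenstra's and Wiles' work, and I expect simply to invoke it (Lenstra's appendix, or \cite[Theorem 2.4]{dia} style), since the paper presents this theorem as a recalled result.
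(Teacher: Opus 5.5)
Your proposal is correct and follows the same route as the paper, which simply cites \cite[Criterion 1]{desmitrubinschoof}, i.e.\ Lenstra's strengthening of Wiles' criterion. Your decomposition into $\length C^1(\pi_R)\geq\length C^1(\pi_T)$ plus Lenstra's theorem for $T$, with the equality case closed by a Nakayama argument on presentations with the same number of variables, is the standard proof of that criterion. One harmless slip: Goursat's lemma gives $(\cO\times T')/T\simeq T'/\gp'_T$, where $\gp'_T$ is the image of $\gp_T$ in $T'$. It does not give $T'/\Ann_T(\gp_T)$, since $\Ann_T(\gp_T)=\eta\times 0$ lies in the other factor; in any case this identification is never used.
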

\begin{proof}
	This is \cite[Criterion 1]{desmitrubinschoof}.
\end{proof}
In the above theorem, we recall that a finite local $\cO$-algebra $T$ is a local complete intersection if there exists an integer $n\geq0$ such that $T$ is of the form $\cO\lsem x_1,\ldots,x_n\rsem/(f_1,\ldots,f_n)$. Moreover, $f_1,\ldots,f_n$ is necessarily a regular sequence.

We conclude our commutative algebra discussion with the proof of elementary facts about the congruence module $C^0$. In both Theorem \ref{thm:numerical_criterion} and Lemma \ref{lem:congruence_module_facts} below, $T$ is not assumed to be reduced.

\begin{lemma}\label{lem:congruence_module_facts}
	Let $\pi\colon T \twoheadrightarrow \cO$ be a map in $\mathcal{CNL}_\cO$ with $T$ finite flat over $\cO$. Let $\gp=\ker\pi$, and assume that $T$ is étale over $\cO$ at $\gp$. 
	\begin{enumerate}
		\item $C^0(\pi)$ has finite length over $\cO$.
		\item  There exists an idempotent $e\in T \otimes_{\cO}E$ such that the map $e(T \otimes_{\cO}E)\subseteq T \otimes_{\cO}E \to E$ induced by $\pi$ is an isomorphism.
		\item Let $S=\Hom_{\cO}(T,\cO)$ be the dualizing module of $T$. Then $\length C^0(\pi)$ is the largest integer $n\geq0$ such that there exists $\pi'\in S$ with $e(\pi')=0$ and $\pi \equiv \pi' \!\!\mod p^nS$.
	\end{enumerate}
\end{lemma}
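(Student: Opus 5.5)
The plan is to work over $E$ first, using the structure of finite $E$-algebras, and then descend to $\cO$ by duality. Since $T$ is finite flat over $\cO$, the algebra $T_E:=T\otimes_\cO E$ is a finite-dimensional commutative $E$-algebra. It therefore decomposes as a finite product of local Artinian $E$-algebras, and the factors correspond to the maximal ideals of $T_E$, which are the same as the minimal primes of $T$.

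\textbf{Part (2).} The map $\pi$ extends to a surjection $\pi_E\colon T_E\to E$. Its kernel $\gp_E=\gp T_E$ is one of these maximal ideals. Étaleness of $T$ over $\cO$ at $\gp$ means that $\gp_E/\gp_E^2=0$, which I would deduce from the vanishing of $\Omega_{T/\cO}$ localized at $\gp$. Hence the local factor $(T_E)_{\gp_E}$ is a local Artinian ring with trivial cotangent space, and is therefore equal to its residue field $E$. Let $e$ be the idempotent cutting out this factor. Then $eT_E\simeq E$ via $\pi$, which proves (2). Note also that $(1-e)T_E$ is the product of the other factors, so $\pi_E(1-e)=0$ and $\pi_E(e)=1$.

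\textbf{Part (1).} First I claim that $\Ann_{T_E}(\gp_E)=eT_E$. One inclusion holds because $\gp_E\subseteq(1-e)T_E$. For the other, $\gp_E$ contains $1-e$, so anything killing $\gp_E$ is killed by $1-e$. Now $\Ann_T(\gp)=T\cap \Ann_{T_E}(\gp_E)$, because $T$ is torsion-free and $\gp$ generates $\gp_E$. Since $T$ is a lattice, some $p^m e$ lies in $T$. This element lies in $\Ann_T(\gp)$, and its image is $\pi(p^me)=p^m\neq0$. Thus $\eta\neq0$, so $C^0(\pi)=\cO/\eta$ has finite length, which proves (1).

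\textbf{Part (3).} Here I would translate $\eta$ through the duality pairing $T\times S\to\cO$. Write $\eta=(p^n)$, so that $\eta=\pi(T\cap eT_E)=\{a\in\cO : ae\in T\}$ and $p^ne$ is a primitive element of $T$ along the line $Ee$. Regard $\pi\in S$; then $\pi'\in S$ with $e(\pi')=0$ means $\pi'$ vanishes on $eT_E$, that is, $\pi'(e)=0$. I would then check both directions.
\begin{enumerate}
\item Suppose $\pi-\pi'=p^m s$ with $s\in S$ and $\pi'(e)=0$. Evaluating at $p^ne\in T$ gives $p^n=p^{m}s(p^ne)\in p^m\cO$, so $m\le n$.
\item For the converse, $T\cap Ee=\cO\cdot p^ne$ is a saturated rank-one submodule, hence a direct summand of $T$. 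Choose a complement $T=\cO p^ne\oplus T'$. Define $s\in S$ by $s(p^ne)=1$ and $s|_{T'}=0$, and set $\pi'=\pi-p^ns$. Then $\pi'(p^ne)=p^n-p^n=0$, so $\pi'(e)=0$.
\end{enumerate}
Hence $n$ is maximal, which proves (3).

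I expect the main obstacle to be making the étaleness hypothesis at $\gp$ precise, namely getting from it to the statement that the local factor of $T_E$ at $\gp_E$ is exactly $E$, since $T$ is not assumed reduced. The rest is linear algebra over $\cO$, using saturation of $T\cap Ee$ to obtain the direct-summand decomposition.
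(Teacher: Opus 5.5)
Your proof is correct, and it follows the same skeleton as the paper's argument. You split $T\otimes_\cO E=E\times X$ using étaleness at $\gp$, identify $\Ann_T(\gp)$ with $T\cap Ee$, and read off $\eta$ as the index of $T\cap Ee$ in $Ee$.

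The step you flag as the main obstacle is routine. $T_\gp$ is a local étale $E$-algebra with residue field $E$, hence equal to $E$; the paper states exactly this in one line. Possible non-reducedness of $T$ away from $\gp$ plays no role.

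The differences from the paper are local:
\begin{itemize}
\item For $\Ann_T(\gp)=T\cap Ee$, you use $\gp\otimes_\cO E=(1-e)(T\otimes_\cO E)$ directly. The paper instead produces an element $(0,p^{n'})\in\gp$ and uses that $p$ is regular on $X$. Your version is slightly cleaner.
\item For (3), the paper passes through the pairing $e(S)\times e(T)/T_1\to E/\cO$. It obtains $e(S)/S_1\simeq e(T)/T_1\simeq C^0(\pi)$ and characterizes $n$ as the largest integer with $\pi\in p^n e(S)$. You instead evaluate at the primitive vector $p^ne$ to get the upper bound. For the lower bound, you split off the saturated line $\cO p^ne$ as a direct summand of $T$ and build $\pi'$ explicitly.
\end{itemize}

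Your route avoids checking that the induced pairing has left kernel exactly $S_1$ and yields the isomorphism $e(S)/S_1\simeq e(T)/T_1$. It also makes the optimal $\pi'$ explicit. The paper's route has the advantage of identifying the congruence module on the dual side, $e(S)/S_1\simeq C^0(\pi)$. That is the form reused in the proof of Theorem \ref{thm:congruence_bound}(1), with $\bS_\I$ in place of $S$.
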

\begin{proof}
	Let $\eta=\pi(\Ann_{T}(\gp))\subseteq \cO$. By assumption, $T_\gp$ is an étale $E$-algebra. It is local and with residue field $E$, so $T_\gp=E$. Therefore, we have an $E$-algebra decomposition $T \otimes_{\cO}E =E \times X$ for some artinian $E$-algebra $X$, where the projection to the first factor is induced by $\pi$. This already proves (2), with $e=(1,0)$.
	
	Since $T$ is $\cO$-flat, we may inject it into $E \times X$, and $\Ann_{T}(\gp)\supseteq T\cap(E\times\{0\})$. For $n\geq0$ large enough, $t:=(p^n,0)$ is in $T$, and $\pi(t)=p^n$ is a nonzero element of $\eta$. This proves (1).
	
	For the last point, let $T_1=T\cap(E\times\{0\})=T \cap e(T)$. We first note that the inclusion $\Ann_{T}(\gp)\supseteq T_1$ noticed above is, in fact, an equality. Indeed, we may take $n'$ large enough so that $t'=(0,p^{n'})$ lies in $T$, and hence in $\gp$. Since $p$ is a regular element of the $E$-algebra $X$, any $a\in T$ annihilating $t'$ must be of the form $(a',0)$. In particular, $\pi$ identifies $e(T)/T_1$ with $\cO/\pi(\Ann_T(\gp))=C^0(\pi)$. 
	
	We now consider $S_1=S\cap e(S)$, the intersection being taken inside $S \otimes_{\cO} E$. It is an $\cO$-submodule of $e(S\otimes_{\cO} E)=E\pi$, and it is, in fact, generated by $\pi$. On the other hand, the canonical perfect pairing $S \times T \to \cO$ induces a pairing $e(S) \times e(T)/T_1 \to E/\cO$ whose left kernel is $S_1$. We obtain isomorphisms $e(S)/S_1\simeq e(T)/T_1\simeq C^0(\pi)$ of finite cyclic $\cO$-modules. Hence, $\length C^0(\pi)$ is the largest integer $n\geq0$ such that there exists $\pi_1\in S$ with $p^n e(\pi_1)=\pi$. Letting $\pi'=\pi-p^n \pi_1$, we have $e(\pi')=0$ and the equivalence in (3) easily follows.
\end{proof}

\subsection{Computation of tangent spaces}
In the rest of the paragraph, we consider the $\cO$-valued lift
\begin{equation}\label{eq:rho_f}
	\rho_1=\begin{pmatrix}\chi & 0 \\ 0 & 1\end{pmatrix} 
\end{equation}
of $\ob{\rho}$, together with its $\cO$-line $L_1$ generated by $(\begin{smallmatrix} 1 \\ 1	\end{smallmatrix})$. Note that $L_1$ (as any other line) is stable under the action of $G_{\Qp}$, because $\rho_{1|G_{\Qp}}$ is trivial.

\begin{lemma}\label{lem:description_C1_in_weight_one}
	 Let $\pi_1 \colon R_1^\ord \twoheadrightarrow \cO$ be the morphism induced by the point $(\rho_1,L_1)\in\cD_1^\ord(\cO)$, and call $\pi$ the composition $R^\ord \twoheadrightarrow R_1^\ord \twoheadrightarrow \cO$. 

	\begin{enumerate}
	\item There is a natural isomorphism 
	\[\Hom_\cO(C^1(\pi),\rD)\simeq \HH^1_\ord(\Q,\ad \rho_1 \otimes_{\cO}\rD), \]
	where the right-hand side is the $\cO$-submodule of $\HH^1(\Q,\ad \rho_1 \otimes_{\cO}\rD)$ consisting of cohomology classes of cocycles $\xi=(\begin{smallmatrix}a & b \\ c & d	\end{smallmatrix})$ unramified outside $p$ and such that
	\begin{enumerate}
		\item[(i)] $a+b=c+d$ on $G_{\Qp}$, and
		\item[(ii)] $b=d$ on $I_p$.
	\end{enumerate}	
	\item The isomorphism in (1) induces by restriction an isomorphism $\Hom_\cO(C^1(\pi_1),\rD) \simeq \HH^1_\ord(\Q,\ad^0 \rho_1 \otimes_{\cO}\rD)=\HH^1_\ord(\Q,\ad \rho_1 \otimes_{\cO}\rD)^{\tr=0}$. 
	\end{enumerate}
\end{lemma}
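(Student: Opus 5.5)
The plan is the standard identification of the dual of a relative cotangent space with tangent vectors, computed with the Eisenstein basis of Lemma \ref{lem:eisenstein_basis}. Since $\gp=\ker\pi$ and $R^\ord/\gp=\cO$, we have
\[\Hom_\cO(\gp/\gp^2,\rD)=\varinjlim_n \Hom_\cO(\gp/\gp^2,\tfrac{1}{p^n}\cO/\cO)\simeq \varinjlim_n \Hom_{\mathcal{CNL}_\cO}(R^\ord,\cO[\varepsilon_n]),\]
where $\cO[\varepsilon_n]=\cO\oplus \varepsilon\,\cO/p^n$ with $\varepsilon^2=0$, and the last set consists of maps lifting $\pi$. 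Indeed, an $\cO$-algebra map $R^\ord\to\cO\oplus\varepsilon M$ lifting $\pi$ is the same as an $\cO$-linear map $\gp/\gp^2\to M$. The ring $\cO[\varepsilon_n]$ is not in $\mathcal{CNL}_\cO$ in the strict sense, since it is not complete local noetherian with residue field $k$ in a problematic way. It is, however, local with residue field $k$ and finite over $\cO$, so it is harmless. By Proposition \ref{prop:representability}, these maps correspond to deformations $(\rho,L)\in\cD^\ord(\cO[\varepsilon_n])$ reducing to $(\rho_1,L_1)$.

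Next, we write each such deformation in its Eisenstein basis. Lemma \ref{lem:eisenstein_basis} gives a canonical matrix representative, with $\rho(\tau)=\mathrm{diag}(-1,1)$ and $L=\langle(1,1)^t\rangle$. The reduction to $\cO$ is then literally $\rho_1$, since the Eisenstein basis of $(\rho_1,L_1)$ is the standard one. We write $\rho=(1+\varepsilon\xi)\rho_1$ with $\xi\colon G_\Q\to\ad\rho_1\otimes \tfrac{1}{p^n}\cO/\cO$. The homomorphism property makes $\xi$ a $1$-cocycle. The residual ambiguity is conjugation by $1+\varepsilon X$ preserving $\rho(\tau)$ and $L$. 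By the uniqueness in Lemma \ref{lem:eisenstein_basis}, this is exactly scalar $X$, which changes $\xi$ by the coboundary of a scalar, and that coboundary is zero. Conversely, any cocycle differs from one normalized at $\tau$ and at $L$ by a coboundary. This is the standard argument: first diagonalize at $\tau$ using $p$ odd, then adjust by a diagonal matrix to fix $L$. So deformations correspond exactly to classes in $\HH^1$.

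Then we translate the local conditions. Minimality at $\ell\mid N$ should force $\xi_{|I_\ell}$ to be a coboundary, hence give a class unramified at $\ell$ after modification. The reason is that $\ad\rho_1$ restricted to $I_\ell$ has components $1,\chi,\chi^{-1}$, and since $p\nmid\#I_\ell$-image, the condition $\rho_{|I_\ell}\simeq\chi\oplus 1$ means $\xi$ is cohomologically trivial on $I_\ell$. Here I must check that minimality is equivalent to unramifiedness for the classes. I expect this to be the main subtle point: wild inertia acts through a pro-$\ell$ group, and tame inertia acts through $\chi$. It probably needs the observation that unipotent ramification would have to be $p$-adic through the tame quotient, and that this is excluded by $\rho_{|I_\ell}\simeq\chi_{|I_\ell}\oplus1$.

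For ordinarity, $g\in G_{\Qp}$ acts trivially via $\rho_1$. Hence $\rho(g)(1,1)^t=(1,1)^t+\varepsilon\,(a+b,c+d)^t$, and stability of $L$ means $a+b=c+d$ on $G_{\Qp}$, which is condition (i). The action on $V/L$ is given by the second coordinate minus the first coefficient. Computing the quotient character gives $1+\varepsilon(d-b)$, because $V/L$ is generated by the image of $e_2$ and $\rho(g)e_2=e_2+\varepsilon(b e_1+d e_2)\equiv e_2+\varepsilon(d-b)e_2$. So triviality on $I_p$ is condition (ii). Passing to the limit over $n$ gives (1).

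For (2), the determinant of $\rho$ is $\chi(1+\varepsilon\tr\xi)$. Fixing it to be $\chi$, which is the weight $1$ condition $\det=\chi\varepsilon_{\cyc}^0$, is exactly $\tr\xi=0$. On the ring side, $R_1^\ord$ is a quotient of $R^\ord$, so the maps to $\cO[\varepsilon_n]$ factoring through $R_1^\ord$ form the subset with this determinant. This gives the restriction of the isomorphism. I should also note that the trace-zero condition is meant at the level of cocycles in the Eisenstein normalization. This is compatible with the cohomology classes because scalars are central and $\ad\rho_1=\ad^0\rho_1\oplus\cO$, since $p$ is odd.
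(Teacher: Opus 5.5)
Your proposal is correct and is essentially the paper's argument: you identify $\Hom_\cO(C^1(\pi),\rD)$ with first-order deformations $(1+\varepsilon\xi)\rho_1$ with line $\langle(\begin{smallmatrix}1\\1\end{smallmatrix})\rangle$, read off (i) and (ii) from $G_{\Qp}$-stability of the line and from the character on $V/L$ (equivalent to the paper's conjugation by $(\begin{smallmatrix}1&0\\1&1\end{smallmatrix})$), and obtain $\tr\xi=0$ from $\det=\chi$. Your Eisenstein normalization only makes explicit the injectivity into $\HH^1$ that the paper takes from Mazur; the step your write-up glosses is that two cohomologous normalized cocycles differ by the coboundary of an $X$ commuting with $\rho_1(\tau)$, hence diagonal, hence with zero coboundary, and coboundaries vanish on $G_{\Qp}$, so (i) and (ii) are well defined on classes.

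The point you flag as subtle at $\ell\mid N$ is in fact formal and needs no wild/tame analysis: an isomorphism $(1+\varepsilon\xi)\rho_{1|I_\ell}\simeq\rho_{1|I_\ell}$ over $\cO[\varepsilon_n]$ reduces to an automorphism of $\rho_{1|I_\ell}$, and after composing with its inverse it is conjugation by some $1+\varepsilon Y$, which says exactly that $\xi(g)=\ad\rho_1(g)Y-Y$ for $g\in I_\ell$. Also, $\cO[\varepsilon_n]$ is a genuine object of $\mathcal{CNL}_\cO$, so that caveat is unnecessary.
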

\begin{proof}
	We first note that, as $\ad \rho_1$ is trivial on $G_{\Qp}$, the conditions on $\xi$ at $p$ do not depend  on the representative $\xi$. 
	
	Write $\rD$ as $\cup_r k_r$ with $k_r=p^{-r}\cO/\cO$. The natural $\cO$-linear injection from $\Hom_\cO(C^1(\pi),\rD)$ to $\HH^1(\Q,\ad \rho_1 \otimes_{\cO}\rD)$ is described in \cite[\S28]{mazurdeforming}; its image consists of cohomology classes $[\xi]$ such that 
	\[\rho':=(1+\varepsilon\xi)\cdot \begin{pmatrix}\chi & 0 \\ 0 & 1	\end{pmatrix}, \]
	together with the line spanned by $(\begin{smallmatrix}1 \\ 1	\end{smallmatrix})$, defines a point of $\cD^\ord(\cO \oplus \varepsilon k_r)$ for some $r$. Here, the multiplication in $\cO \oplus \varepsilon k_r$ is given by the formula $(s+\varepsilon s')(t+\varepsilon t')=st+\varepsilon(s't+st')$. 
	
	It is clear from the definition that $\xi$ is unramified outside $Np$, and one easily checks that the minimality condition at $\ell|N$ for $\rho'$ is equivalent to $\xi_{|I_\ell}$ being a coboundary, i.e., $[\xi]$ is unramified at $\ell$.
	
	Write $\xi$ as $(\begin{smallmatrix}a & b \\ c & d	\end{smallmatrix})$. Since $\chi_{|G_{\Qp}}=1$, the condition satisfied by $\rho'$ at $p$ is
	\begin{equation}\label{eq:conjugation_matrix}
		\begin{array}{rl}
		\begin{pmatrix} 1 & 0 \\ -1 & 1	\end{pmatrix} \left(1+\varepsilon \begin{pmatrix} a & b \\ c & d	\end{pmatrix}\right) \begin{pmatrix} 1 & 0 \\ 1 & 1	\end{pmatrix}&=1+\varepsilon\begin{pmatrix} a+b & b \\ c+d-a-b & d-b \end{pmatrix}   \\ & = \begin{pmatrix} * & * \\ 0 & \mathbf{unr}	\end{pmatrix} \quad \textrm{on $G_{\Qp}$},
	\end{array} 
	\end{equation}
	where $\mathbf{unr}$ is an unramified character. This proves (1).
	
	The point (2) follows from (1). Indeed, the additional condition that $\det(\rho')=\chi$ in weight $1$ shows that $1=\det(1+\varepsilon\xi)=1+\varepsilon\tr(\xi)$, so it is equivalent to $\tr(\xi)=0$.
\end{proof}

We now combine Lemma \ref{lem:description_C1_in_weight_one} with the preliminary calculations of \S\S\ref{sec:GS_units}-\ref{sec:galois_cohomology} to determine much of the structure of $C^1(\pi_1)$.

\begin{proposition}\label{prop:length_C1_R}
	With the notation of Lemma \ref{lem:description_C1_in_weight_one}, there is a short exact sequence of $\cO$-modules
	\[ 0 \to A'_K[\chi] \times A'_K[\chi^{-1}] \to \Hom_\cO(C^1(\pi_1),\rD)  \to \cO/(\alpha) \times \cO/(\beta) \to 0,  \]
	where 
\begin{equation}\label{eq:alpha}
		\alpha=\gcd(\lambda(u_{\chi}),\lambda(u_{\chi^{-1}}),\ord_p(u_{\chi}),\ord_p(u_{\chi^{-1}})), 
\end{equation}
	and 
	\begin{equation}\label{eq:beta}
		\beta=\frac{(\cL_{\chi}+\cL_{\chi^{-1}})\cdot \ord_p(u_{\chi})\cdot \ord_p(u_{\chi^{-1}})}{\alpha}. 
	\end{equation}
	In particular, $C^1(\pi_1)$ has a finite length given by the $p$-adic valuation of $(\cL_{\chi}+\cL_{\chi^{-1}})L(\chi,0)L(\chi^{-1},0)$.
\end{proposition}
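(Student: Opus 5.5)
By Lemma \ref{lem:description_C1_in_weight_one}(2), I will compute the group $\HH^1_\ord(\Q,\ad^0\rho_1\otimes\rD)$ of trace-zero cocycles $\xi=(\begin{smallmatrix}a & b\\ c & -a\end{smallmatrix})$ that are unramified outside $p$ and satisfy (i) and (ii). Since $\ad\rho_1=1\oplus\chi\oplus\chi^{-1}$, the entries separate. The entry $a$ is a class in $\HH^1(\Q,\rD)$. The entry $b$ is a class in $\HH^1(\Q,\rD(\chi))$ and $c$ is one in $\HH^1(\Q,\rD(\chi^{-1}))$: conjugation by $\rho_1$ multiplies the upper right entry by $\chi$ and the lower left one by $\chi^{-1}$. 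All three classes are unramified outside $p$.

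Because $\ad\rho_1$ is trivial on $G_{\Qp}$, each restriction to $G_{\Qp}$ is a genuine homomorphism $G_{\Qp}^{ab}\to\rD$. By \eqref{eq:description_H1(Q,chi)} I regard $b$ as an element of $\Hom_\cO(\cX_K[\chi^{-1}],\rD)$ and $c$ as one of $\Hom_\cO(\cX_K[\chi],\rD)$; which index goes with which only swaps the two factors. The class $a$ is unramified outside $p$, so it factors through $\Gamma$ and is determined by $a(\gamma)$.

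The conditions read $b=-a$ on $I_p$ from (ii), and $a+b=c-a$ on $G_{\Qp}$ from (i). I evaluate them on the generators $\gamma$ (inertial, with $\lambda(\gamma)=1$) and $\phi$ (Frobenius, with $a(\phi)=0$):
\[ b(\gamma)=-a(\gamma),\quad c(\gamma)=a(\gamma),\quad b(\phi)=c(\phi). \]
Here $c(\gamma)=2a(\gamma)+b(\gamma)=a(\gamma)$. So the parameters are $s:=a(\gamma)$ and $t:=b(\phi)=c(\phi)$, together with the values of $b$ and $c$ on the $A'_K$ quotients.

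Next I apply $\Hom(-,\rD)$ to Lemma \ref{lem:description_Xvarphi}. This gives exact sequences
\[ 0\to \Hom(A'_K[\varphi],\rD)\to\Hom(\cX_K[\varphi],\rD)\to\{(u,v)\in\rD^2 : x_\varphi u+y_\varphi v=0\}\to0, \]
with $u=$ value on $\gamma$ and $v=$ value on $\phi$. Surjectivity holds because $\rD$ is injective.

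The conditions only involve the local values, so $\HH^1_\ord$ is an extension of the module
\[ M=\{(s,t)\in\rD^2 : -x_{\chi^{-1}}s+y_{\chi^{-1}}t=0,\ x_\chi s+y_\chi t=0\} \]
by $\Hom(A'_K[\chi^{-1}],\rD)\times\Hom(A'_K[\chi],\rD)$. The second group is noncanonically isomorphic to $A'_K[\chi]\times A'_K[\chi^{-1}]$, since these are finite $\cO$-modules.

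The module $M$ is the Pontryagin dual of $\cO^2/\mathrm{Im}(B)$, where $B$ is the $2\times2$ matrix with rows $(-x_{\chi^{-1}},y_{\chi^{-1}})$ and $(x_\chi,y_\chi)$. By the Smith normal form, $\cO^2/\mathrm{Im}(B)\simeq\cO/(\alpha)\times\cO/(\det B/\alpha)$, with $\alpha$ the gcd of the four entries as in \eqref{eq:alpha}. The determinant is $-x_{\chi^{-1}}y_\chi-y_{\chi^{-1}}x_\chi$. Up to sign this equals $(\cL_\chi+\cL_{\chi^{-1}})\,y_\chi y_{\chi^{-1}}$, since $\cL_\chi=-x_{\chi^{-1}}/y_{\chi^{-1}}$. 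So the second factor is $\cO/(\beta)$. Finite cyclic modules are self-dual, which gives the claimed exact sequence.

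Finiteness needs $\det B\neq0$. The factors $y_\varphi$ are nonzero by Lemma \ref{lem:calcul_ord}. The sum $\cL_\chi+\cL_{\chi^{-1}}$ is nonzero by Lemma \ref{lem:linear_independence_L_invariants}, even when $\chi=\chi^{-1}$, where it equals $2\cL_\chi$ and $p$ is odd.

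The length is
\[ \length A'_K[\chi]+\length A'_K[\chi^{-1}]+v_p(\cL_\chi+\cL_{\chi^{-1}})+v_p(y_\chi)+v_p(y_{\chi^{-1}}). \]
By Lemma \ref{lem:calcul_ord}, $v_p(y_\varphi)+\length A'_K[\varphi]=\length A_K[\varphi]$. By Theorem \ref{thm:refined_CNF}, $\length A_K[\chi]+\length A_K[\chi^{-1}]=v_p(L(\chi,0)L(\chi^{-1},0))$. Summing gives the stated valuation.

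The main obstacle is the sign and index bookkeeping: matching each off-diagonal entry to the correct $\cX_K[\chi^{\pm1}]$ and getting the signs of $x,y$ right, so that the determinant really gives $\cL_\chi+\cL_{\chi^{-1}}$ rather than a difference. I would check this directly against the conventions in \eqref{eq:description_H1(Q,chi)} and Lemma \ref{lem:description_Xvarphi}.
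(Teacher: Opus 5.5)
Your proposal is correct and follows the paper's proof essentially step by step. You reduce via Lemma \ref{lem:description_C1_in_weight_one}(2) to trace-zero classes, impose the local conditions at $\gamma$ and $\phi$, dualize Lemma \ref{lem:description_Xvarphi}, solve the resulting $2\times 2$ system in $\rD^2$ by Smith normal form, and conclude with Lemma \ref{lem:calcul_ord} and Theorem \ref{thm:refined_CNF}. The one slip is that \eqref{eq:description_H1(Q,chi)} places $b\in\HH^1(\Q,\rD(\chi))$ in $\Hom_\cO(\cX_K[\chi],\rD)$, not $\Hom_\cO(\cX_K[\chi^{-1}],\rD)$. As you suspected, this is harmless: the correct assignment changes your matrix $B$ only by a row swap and the substitution $s\mapsto -s$, which leave both the gcd of the entries and $\det B$ unchanged.
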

\begin{proof}
	By Lemma \ref{lem:description_C1_in_weight_one}(2), $\Hom_\cO(C^1(\pi_1),\rD)\simeq \HH^1_\ord(\Q,\ad^0 \rho_1 \otimes_{\cO}\rD)$, and we describe the latter module.
	First note that, as $\cO[G_{\Q}]$-modules and in matrix notation, 
	\[\ad \rho_1 \otimes_{\cO} \rD = \begin{pmatrix} \rD & \rD(\chi) \\ \rD(\chi^{-1}) & \rD	\end{pmatrix}. \]
	Moreover, elements $a\in \HH^1(\Q,\rD)=\Hom_{\textrm{cts}}(G_{\Q},\rD)$ unramified outside $p$ induce homomorphisms from $\Gamma\simeq 1+p\Zp$ to $\rD$, hence they are of the form $a=s\lambda$ for some $s\in \rD$. 
	
	In light of \eqref{eq:description_H1(Q,chi)}, $\HH^1_\ord(\Q,\ad^0 \rho_1 \otimes_{\cO}\rD)$ is identified with quadruples 
\begin{equation}\label{eq:xi}
		 \xi= \begin{pmatrix}
		a & b \\ c & d
	\end{pmatrix}=
	\begin{pmatrix}
		s\lambda & b \\ c &-s\lambda
	\end{pmatrix},
\end{equation}
	where $s\in\rD$, $b\in\Hom_{\cO}(\cX_K[\chi],\rD)$, $c\in\Hom_{\cO}(\cX_K[\chi^{-1}],\rD)$, such that
	$a+b=c+d$ on $G_{\Qp}$ and $b=d$ on $I_p$. In particular, $a$ and $d$ are determined by $b$, and $\xi$ by $b$ and $c$.
	
	We now make use of Lemma \ref{lem:description_Xvarphi} and its notation. Write $x=x_\chi$, $x'=x_{\chi^{-1}}$, and $y=y_\chi$, $y'=y_{\chi^{-1}}$ for short. Denote also by $\gamma$ and $\phi$ the distinguished elements seen in $\cX=\cX_K[\chi]$, and denote them by $\gamma',\phi'$ when seen in $\cX'=\cX_K[\chi^{-1}]$. In particular, $\gamma,\phi$ generate the $\cO$-submodule $\cZ=(\cO \times \cO)/\langle(x,y)\rangle$ of $\cX$, and likewise for $\cZ'\subseteq\cX'$ and $\gamma',\phi'$. Since $\lambda(\gamma)=1$ and $\lambda(\phi)=0$, the conditions at $p$ satisfied by $\xi$ are
	\[b(\gamma)=-c(\gamma)=-s, \quad b(\phi)=c(\phi). \]
	
	We now apply the (exact) Pontryagin functor $M^\vee=\Hom_{\cO}(M,\rD)$ to the exact sequence of Lemma \ref{lem:description_Xvarphi} for $\varphi=\chi$ and $\varphi=\chi^{-1}$. The map $\xi \mapsto (b,c)$ injects $\HH:=\HH^1_\ord(\Q,\ad^0 \rho_1 \otimes_{\cO}\rD)$ into $\cX^\vee \times \cX^{'\vee}$, and its image contains $A'_K[\chi]^\vee \times A'_K[\chi^{-1}]^\vee$, which is isomorphic to $A'_K[\chi] \times A'_K[\chi^{-1}]$ since $M\simeq M^\vee$ if $M$ is finite. 
	
	To determine the image of $\HH$ in $\cZ^\vee \times \cZ^{'\vee}$, we first remark that $\cZ^\vee=\{(u,v) \in \rD^2 \; : \; xu+yv=0\}$, and likewise for $\cZ^{'\vee}$. Hence, the image of $\HH$ consists of quadruples $(b(\gamma),c(\gamma),b(\phi),c(\phi))=(u,u',v,v')\in \rD^4$ such that $xu+yv=x'u'+y'v'=0$, $u=-u'$, and $v=v'$. In other words, it is the $\cO$-submodule of $\rD^2$ of all pairs $(u,v)$ such that
	\[ \begin{pmatrix}
		x & y \\ -x' & y'
	\end{pmatrix}\begin{pmatrix}
	u \\ v
	\end{pmatrix}= \begin{pmatrix}
	0 \\ 0
	\end{pmatrix} \qquad \textrm{in $\rD^2$.} \]
	The square matrix has coefficients in $\cO$ which is a PID, so we can solve the equation by writing it in its Smith normal form, which is given by $(\begin{smallmatrix} \gcd(x,x',y,y') & 0 \\ 0 & (xy'+x'y)/\gcd(x,x',y,y') \end{smallmatrix})=(\begin{smallmatrix} \alpha & 0 \\ 0 & -\beta \end{smallmatrix})$. The set of solutions $(u,v)$ is therefore isomorphic to $\rD[\alpha] \times \rD[\beta]\simeq \cO/(\alpha) \times \cO/(\beta)$, which establishes the short exact sequence.
	
	The last statement follows by combining the first one, the nonvanishing of $\cL_{\chi}+\cL_{\chi^{-1}}$ (which follows from Lemma \ref{lem:linear_independence_L_invariants}), Lemma \ref{lem:calcul_ord} and Theorem \ref{thm:refined_CNF}. 
\end{proof}

\subsection{Split and CM representations}\label{sec:split_CM}
We introduce and study in this paragraph a subfunctor of $\cD^\ord$ parametrizing split representations. The definitions and results of this paragraph will only be relevant for \S\ref{sec:CG}.

Let $\cO'\in\mathcal{CNL}_\cO$ be a finite totally ramified extension of $\cO$, let $\mu\in\cO'$ and put
\[
P_\mu=\begin{pmatrix} 1 & 0 \\ 1 & 1 \end{pmatrix}
\begin{pmatrix}1 & -\mu \\ 0& 2\end{pmatrix}
= \begin{pmatrix}1 & -\mu \\ 1 & 2-\mu \end{pmatrix}
\in \GL_2(\cO').
\]
The reader may assume $\cO'=\cO$ in what follows, but allowing $\cO'$ to be arbitrary large will later be useful in this paper.
\begin{definition}\label{def:split}
	We let $\cD_{\cO'}^{\mu-\spl}$ be the subfunctor of $\cD^\ord \otimes_{\cO}\cO'$ defined as follows. Let $A\in \mathcal{CNL}_\cO$ an $\cO'$-algebra and let $(\rho,L)\in \cD^\ord(A)$, with $\rho$ computed in the Eisenstein basis of Definition \ref{def:eisenstein_basis}. Then $(\rho,L)$ defines an $A$-point of $\cD_{\cO'}^{\mu-\spl}$ if $P^{-1}_\mu \rho_{|G_{\Qp}}P_\mu$ is of the form $(\begin{smallmatrix} \eta_1& 0 \\ 0 &\eta_2 \end{smallmatrix})$, where $\eta_i$ are $A$-valued characters of $G_{\Qp}$.
\end{definition}
In the above definition, we note that $\eta_2$ is unramified. More precisely, letting $\rho^\ord=(\begin{smallmatrix}	1 & 0 \\ -1 & 1\end{smallmatrix})\cdot \rho \cdot (\begin{smallmatrix}	1 & 0 \\ 1 & 1\end{smallmatrix})$, one then has $\rho^\ord_{|G_{\Qp}}=(\begin{smallmatrix}\eta_1 & \kappa \\ 0 & \eta_2 \end{smallmatrix})$ with $\eta_2$ unramified, and the $\mu$-split condition reads $2\kappa=\mu(\eta_1-\eta_2)$ on $G_{\Qp}$. 
We also remark that, for any $\rho\in\cD^{\ord}(A)$ with $A$ a quotient of $\cO'$, if $\rho_{|G_{\Qp}}$ is split, then there exists $\mu\in\cO'$ such that $\rho$ is $\mu$-split. 

If $\rho$ is taken to be the universal ordinary deformation, we see that the quotient
\[R_{\cO'}^{\mu-\spl}:= (R^\ord\otimes_{\cO}\cO')/(2\kappa(\gamma)-\mu(\eta_1-\eta_2)(\gamma),2\kappa(\phi)-\mu(\eta_1-\eta_2)(\phi))\]
represents $\cD_{\cO'}^{\mu-\spl}$. By slight abuse of notation, we simply write $\cD^{\mu-\spl}$ and $R^{\mu-\spl}$ for $\cD_{\cO'}^{\mu-\spl}$ and $R_{\cO'}^{\mu-\spl}$ when the choice of $\cO'$ is clear from the context.

As in \S\ref{sec:ordinary_deformations}, we denote by $\cD^{\mu-\spl}_k$ and $R^{\mu-\spl}_k$ the weight $k\in\Z$ specializations of $\cD^{\mu-\spl}$ and $R^{\mu-\spl}$ respectively.

We note that, if we let $\rho^{\spl}=P^{-1}_\mu\cdot  \rho \cdot P_\mu$, then 
\begin{equation}\label{eq:conjugation_tau_split}
	\rho^{\spl}(\tau)=\begin{pmatrix} \mu-1 & \mu(2-\mu) \\ 1 & 1-\mu\end{pmatrix}.
\end{equation}
In particular, if $\mu=1$, then $\rho^{\spl}(\tau)=(\begin{smallmatrix}0 & 1 \\ 1 & 0\end{smallmatrix})$. As we shall see in Proposition \ref{prop:modularity_1_spl}, Galois representations attached to CM forms are $1$-split, and our normalization ensures that the ``splitting basis'' and the dihedral basis match in the CM case.

An example of a $\mu$-split representation of weight $k=1$ is given by $\rho_1$ defined in \eqref{eq:rho_f}. We describe its congruence module in the next proposition.

\begin{proposition}\label{prop:tangent_space_split}
	Let $\pi_1 \colon R^{\mu-\spl}_1 \twoheadrightarrow \cO'$ be the map induced by $\rho_1$. Let $A'_{\cO'}=A'_K \otimes_{\cO} \cO'$ and $\rD'=\rD \otimes_{\cO} \cO'$. There is a short exact sequence of $\cO'$-modules
	\[ 0 \to A'_{\cO'}[\chi] \times A'_{\cO'}[\chi^{-1}] \to \Hom_{\cO'}(C^1(\pi_1,R^{\mu-\spl}_1),\rD')  \to \cO'/(\delta) \to 0,  \]
	where 
	\[\delta=\gcd(\mu-1,\ord_p(u_{\chi}),\ord_p(u_{\chi^{-1}})).\]
\end{proposition}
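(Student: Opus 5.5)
We follow the proof of Proposition \ref{prop:length_C1_R}, adding the $\mu$-split condition. First, as in Lemma \ref{lem:description_C1_in_weight_one}(2), we identify $\Hom_{\cO'}(C^1(\pi_1,R^{\mu-\spl}_1),\rD')$ with the submodule of $\HH^1_\ord(\Q,\ad^0\rho_1\otimes_\cO\rD')$ of classes $[\xi]$ such that $\rho'=(1+\varepsilon\xi)\rho_1$ (with line $\langle(\begin{smallmatrix}1\\1\end{smallmatrix})\rangle$) is $\mu$-split. The tangent-space description carries over verbatim after base change to $\cO'$: $R^{\mu-\spl}_1$ is a quotient of $R^\ord_1\otimes_\cO\cO'$ by the two explicit equations, and $\rho_1$ is in the Eisenstein basis. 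Since $\rho_1$ is trivial on $G_{\Qp}$, conjugating by $(\begin{smallmatrix}1&0\\1&1\end{smallmatrix})$ as in \eqref{eq:conjugation_matrix} gives $\rho'^{\ord}|_{G_{\Qp}}=1+\varepsilon(\begin{smallmatrix}a+b&b\\0&d-b\end{smallmatrix})$. Hence $\eta_1-\eta_2=\varepsilon(a+2b-d)$ and $\kappa=\varepsilon b$, and the $\mu$-split condition $2\kappa=\mu(\eta_1-\eta_2)$ becomes $2b=\mu(a+2b-d)$ on $G_{\Qp}$; by the remark after Definition \ref{def:split} it suffices to test this on $\gamma$ and $\phi$.

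Next we reuse the parametrization \eqref{eq:xi}: $a=s\lambda$, $d=-s\lambda$, $(u,v)=(b(\gamma),b(\phi))$, with $u'=c(\gamma)=-u$, $v'=c(\phi)=v$, and $s=-u$ from the ordinary condition $b(\gamma)=-s$. On $\phi$ we have $a(\phi)=d(\phi)=0$, so the split condition reads $2v=2\mu v$, i.e. $(\mu-1)v=0$ since $2$ is a unit. On $\gamma$ we have $a(\gamma)=s=-u$ and $d(\gamma)=u$, so $a+2b-d=-u+2u-u=0$, and the condition reads $2u=0$, i.e. $u=0$. The image of the Selmer group in $\cZ^\vee\times\cZ'^\vee$ is therefore the set of $(u,v)\in\rD'^2$ satisfying
\[u=0,\quad yv=0,\quad y'v=0,\quad (\mu-1)v=0,\]
together with $xu+yv=x'u'+y'v'=0$, which reduce to $yv=y'v=0$ once $u=0$. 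This solution set is $\rD'[\delta]\simeq\cO'/(\delta)$ with $\delta=\gcd(\mu-1,y,y')$, using $y=\ord_p(u_\chi)$, $y'=\ord_p(u_{\chi^{-1}})$ and that $\cO'$ is a DVR.

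Finally, we must check that the piece $A'_{\cO'}[\chi]^\vee\times A'_{\cO'}[\chi^{-1}]^\vee$ (the classes $(b,c)$ vanishing on $\cZ,\cZ'$) still lies in the image. For such classes $u=v=0$, $s=0$, and all conditions at $p$ hold trivially, since $\gamma,\phi$ generate the image of $G_{\Qp}$ in $\cX$. Exactness of Pontryagin duality applied to Lemma \ref{lem:description_Xvarphi} tensored with $\cO'$ (flat) then yields the short exact sequence. The main obstacle is the first step: justifying that testing $\mu$-splitness of the infinitesimal deformation on $\gamma$ and $\phi$ suffices, and that the relevant Selmer module for $R^{\mu-\spl}_1$ is indeed this subspace (for instance that $b,c$ factor through $\cX$, so their restrictions to $G_{\Qp}$ factor through $\langle\gamma,\phi\rangle$). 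We handle this by noting that all of $a,b,c,d$ restricted to $G_{\Qp}$ factor through the abelianization, whose image is topologically generated by $\gamma$ and $\phi$.
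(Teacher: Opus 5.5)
Your proposal is correct and follows the paper's proof essentially step for step. You cut out the $\mu$-split tangent space inside the ordinary one of Proposition~\ref{prop:length_C1_R} via the conjugation \eqref{eq:conjugation_matrix}. You then get $s=-b(\gamma)=0$ from the inertial generator $\gamma$ and $(\mu-1)b(\phi)=0$ from $\phi$, and read off $\rD'[\delta]$ as the image in $\cZ^\vee\times(\cZ')^\vee$, with the $A'$-part as its kernel. You are somewhat more explicit than the paper about two points it leaves implicit: the base change to $\cO'$, and why it suffices to check the local conditions on $\gamma$ and $\phi$.
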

\begin{proof}
	To make the notation less heavy, we will assume $\cO'=\cO$. The proof in the general case is nearly identical.
	
	Since $\cD^{\mu-\spl}_1$ is a subfunctor of $\cD^\ord_1$, there is a natural inclusion $\HH':=\Hom_\cO(C^1(\pi_1,R^{\mu-\spl}_1),\rD) \subseteq \HH:=\Hom_\cO(C^1(\pi_1,R^{\ord}_1),\rD)$ of tangent spaces. We rely on the arguments of the proof of Proposition \ref{prop:length_C1_R}, and we freely use the notation introduced there. 
	
	Recall that $\HH$ consists of cocycles $\xi=(\begin{smallmatrix}s\lambda & b \\ c & -s\lambda\end{smallmatrix})$ as in \eqref{eq:xi} with $s\in\rD$, $b\in\cX^{\vee}$, $c\in\cX^{',\vee}$, such that $c-b=2s\lambda$ on $G_{\Qp}$ and $b=-s\lambda$ on $I_p$. Using the matrix identity \eqref{eq:conjugation_matrix}, the condition that $\xi$ belongs to $\HH'$ is equivalent to the local condition $b=\mu(s\lambda+b)$ on $G_{\Qp}$. Restricting this condition to $I_p$ gives that $s=0$, and evaluating at $\phi$ gives $(\mu-1)b(\phi)=0$. 
	
	We have proven so far that $\HH'$ consists of cocycles $\xi=(\begin{smallmatrix}0 & b \\ c & 0 \end{smallmatrix})$, $(b,c)\in \cX^{\vee}\times \cX^{',\vee}$, such that $b,c$ are unramified, $b(\phi)=c(\phi)$ and $(\mu-1)b(\phi)=0$. In particular, $\HH'$ contains $A':=A'_K[\chi^{-1}]^\vee \times A'_K[\chi]^\vee$. Moreover, $A'$ is the kernel of the map 
	\[\HH'\to \cZ^\vee \times\cZ^{'\vee}=\{(u,v,u',v') \in \rD^4 \; : \; xu+yv=x'u'+y'v'=0\}\]
	sending $\xi$ to $(b(\gamma),b(\phi),c(\gamma),c(\phi))=(0,b(\phi),0,b(\phi))$. The image of this map is clearly isomorphic to $\rD[\delta]$, where $\delta=\gcd(\mu-1,y,y')$. This proves that $\HH'/A'\simeq \rD[\delta]\simeq \cO/(\delta)$.
\end{proof}

We conclude this paragraph with the following simple lemma. 

\begin{lemma}\label{lem:lift_characters}
	Assume $\chi$ is quadratic (so $\cO=\Zp$) and $A'_K=\{0\}$. Call $\gp_K$ the distinguished prime of $K$ above $p$, and let $\ob{\gp}_K$ be its complex conjugate. Denote by $\cX_K'=\cX_K/\langle \gamma\rangle$ the quotient of $\cX_K$ which is the Galois group of the maximal abelian pro-$p$ extension of $K$ unramified outside $\ob{\gp}_K$. Let $x=\lambda(u_\chi)$ and $y=\ord_p(u_\chi)$, and put $\ob{\gamma}=\tau\gamma\tau\in\cX_K$ and $\ob{\phi}=\tau\phi\tau\in\cX_K$.
	
	\begin{enumerate}
		\item $\cX_K$ is a $\Zp$-module of rank $2$ generated by $\gamma,\phi,\ob{\gamma},\ob{\phi}$ with relations $\gamma^x\ob{\gamma}^{-x}\phi^y\ob{\phi}^{-y}=1$ and $\phi\ob{\phi}=1$. 
		Alternatively, it is generated by $\gamma,\phi,\ob{\gamma}$, with unique relation $\gamma^x\ob{\gamma}^{-x}\phi^{2y}=1$.
		\item $\cX'_K$  is a $\Zp$-module of rank $1$ generated by $\ob{\gamma},\phi$ with unique relation $\ob{\gamma}^{x}=\phi^{2y}$.
	\end{enumerate}
\end{lemma}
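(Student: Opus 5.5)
Since $\chi$ is quadratic, $G=\{1,\tau\}$ and $p$ is odd, so the idempotents $e_\pm=(1\pm\tau)/2$ split $\cX_K$ as $\cX_K=\cX_K[1]\oplus\cX_K[\chi]$. I will write $\cX_K$ additively. The plan is to describe each summand with the exact sequences already available, and then glue them back together in terms of $\gamma,\phi,\ob\gamma,\ob\phi$.

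\emph{The $\chi$-part.} Since $A'_K=\{0\}$, Lemma~\ref{lem:description_Xvarphi} with $\varphi=\chi$ gives
\[\cX_K[\chi]\simeq(\Zp\times\Zp)/\langle(x,y)\rangle,\]
generated by the $\chi$-projections $e_-\gamma=(\gamma-\ob\gamma)/2$ and $e_-\phi=(\phi-\ob\phi)/2$. Here $\tau$ acts on $\cX_K$ by conjugation, so $\ob\gamma$ and $\ob\phi$ are exactly the corresponding elements attached to the prime $\ob\gp_K$. The single relation is therefore
\[x(\gamma-\ob\gamma)+y(\phi-\ob\phi)=0.\]

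\emph{The trivial part.} The same argument as in Lemma~\ref{lem:description_Xvarphi} applies to the trivial character, with $K$ replaced by $\Q$. Here $A'_\Q=0$, and $\Z[1/p]^\times\otimes\Zp$ is generated by $p$, with $\lambda(p)=0$ and $\ord_p(p)=1$. This gives $\cX_K[1]\simeq\Gamma\simeq\Zp$, generated by $e_+\gamma$, with $e_+\phi=0$. In other words, $\phi+\ob\phi=0$, which is just the statement that $\phi$ acts trivially on $\Q_{\cyc}$ because $\varepsilon_{\cyc}(\phi)=1$.

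\emph{Gluing.} Since $2$ is invertible, $\gamma,\phi,\ob\gamma,\ob\phi$ generate the same $\Zp$-module as $\gamma\pm\ob\gamma$ and $\phi\pm\ob\phi$. Consider the map $\Zp^4\to\cX_K$ in the basis $\gamma+\ob\gamma,\ \phi+\ob\phi,\ \gamma-\ob\gamma,\ \phi-\ob\phi$. It is a direct sum of the presentations of the two isotypic pieces, so its kernel is generated by $\phi+\ob\phi$ and $x(\gamma-\ob\gamma)+y(\phi-\ob\phi)$. Written multiplicatively, these are exactly the two relations in (1).

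The rank is $1+1=2$. This uses $y\neq0$ from Lemma~\ref{lem:calcul_ord}, so that $(x,y)\neq0$ and the $\chi$-part has rank exactly $1$. Substituting $\ob\phi=-\phi$ eliminates $\ob\phi$ and yields the single relation $x(\gamma-\ob\gamma)+2y\phi=0$ on $\gamma,\phi,\ob\gamma$.

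\emph{Part (2).} Quotienting by $\langle\gamma\rangle$ kills the generator $\gamma$ and leaves $\ob\gamma,\phi$ with the unique relation $-x\ob\gamma+2y\phi=0$, that is, $\ob\gamma^{x}=\phi^{2y}$. Since $y\neq0$ this relation is nonzero, so the quotient has rank $2-1=1$.

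It remains to check that $\cX_K/\langle\gamma\rangle$ is the Galois group of the maximal abelian pro-$p$ extension unramified outside $\ob\gp_K$. The completed local units at $\gp_K$ are $1+p\Zp\simeq\Zp$, and their image under $\mathrm{art}$ is the inertia group at $\gp_K$, which is topologically generated by $\gamma$ (since $\lambda(\gamma)=1$). Quotienting $\cX_K$ by it therefore kills exactly the ramification at $\gp_K$.

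\emph{Main obstacle.} I expect the main difficulty to be bookkeeping rather than substance. One has to check that the identifications in Lemma~\ref{lem:description_Xvarphi} really send the generators $p_1(1),p_2(1)$ to the projections $e_\pm\gamma$ and $e_\pm\phi$ with these signs and factors of $2$. One also has to check that $\tau$-conjugation transports the $\gp_K$-data to the $\ob\gp_K$-data. I would settle this by working directly with the second row of \eqref{eq:diagramm_CFT} and tracking the two components $\gp_K,\ob\gp_K$ explicitly.
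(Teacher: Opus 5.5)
Your proof is correct and is essentially the paper's argument. The paper applies the second row of \eqref{eq:diagramm_CFT} to all of $\cX_K$ at once: since $U'_K$ is generated by $u_\chi$ and $p$, it gets $\cX_K\simeq\Zp^4/\langle(x,y,-x,-y),(0,1,0,1)\rangle$ in the basis $(\gamma,\phi,\ob\gamma,\ob\phi)$, which is exactly the presentation you reach by treating the $\chi$-part and the trivial part separately and then recombining. Your bookkeeping is sound (the factor $2$ relating $p_1(1),p_2(1)$ to $e_-\gamma,e_-\phi$ is a common unit and does not change the relation), and so is your identification of $\langle\gamma\rangle$ with the inertia at $\gp_K$.
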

\begin{proof}
Both points are proved by explicitly describing $\cX_K$ and $\cX_K'$.

Note that $U_K'$ is generated over $\Zp$ by $u_\chi$ and $p$. Arguing as in the proof of Lemma \ref{lem:description_Xvarphi} (based on the second row of \eqref{eq:diagramm_CFT}), $\cX_K$ is isomorphic to $\Z_p^4/\langle(x,y,-x,-y),(0,1,0,1)\rangle$. Here, the standard basis of $\Z_p^4$ is sent to the quadruple $(\gamma,\phi,\ob{\gamma},\ob{\phi})$. From this, one can eliminate $\ob{\phi}$, and one sees that $\cX_K'=\cX_K/\langle \gamma\rangle$ is generated by $\ob{\gamma}$ and $\phi$ over $\Zp$, with unique relation $\phi^{2y}=\ob{\gamma}^{x}$. This ends the proof of the lemma.
\end{proof}

\subsection{Weak ordinarity}
We establish in this paragraph a useful descent lemma comparing the ordinarity condition in the definition of $\cD^\ord$ with a weak notion of ordinarity.

\begin{lemma}\label{lem:weak_ordinarity}
	Assume $A'_K[\chi]=A'_K[\chi^{-1}]=\{0\}$ and let $A\in \mathcal{CNL}_\cO$. Assume $A$ is reduced with total ring of fractions $\cK=\prod_j K_j$. Let $\rho \colon G_\Q \to \GL_2(\cK)$ be a Galois representation unramified outside $Np$ and satisfying the minimality condition of Definition \ref{def:ord} at all $\ell|N$. Suppose furthermore that 
	\begin{enumerate}
		\item $\rho$ is irreducible,
		\item $\tr(\rho)$ takes values in $A$ and $\tr(\rho) \equiv \bar{\chi}+1$ mod $\gm_A$, and
		\item there exists a $\cK$-line $L$ of $V:=\cK^2$ stabilized by the $G_{\Qp}$-action given by $\rho$, and $I_p$ acts trivially on $V/L$.
	\end{enumerate}
	Then  the pair $(\rho,L)$ may be conjugated so that it descends to an $A$-point of $\cD^\ord$, and $\rho\equiv (\begin{smallmatrix}\chi& 0 \\ 0 &1 \end{smallmatrix})$ mod $\gm_A$ on $G_\Q$.
\end{lemma}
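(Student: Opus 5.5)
We follow the standard GMA (generalized matrix algebra) strategy of Ribet/Bellaïche--Chenevier, adapted to carry the ordinary line along. Since $\tau$ has eigenvalues $-1,1$, which are distinct modulo $\gm_A$ ($p$ odd), we first choose a $\cK$-basis of $V$ in which $\rho(\tau)=(\begin{smallmatrix}-1&0\\0&1\end{smallmatrix})$. Write
\[\rho(\sigma)=\begin{pmatrix}a(\sigma)&b(\sigma)\\c(\sigma)&d(\sigma)\end{pmatrix}.\]
The diagonal entries are $a=(\tr\rho(\sigma)-\tr\rho(\sigma\tau))/2$ and $d=(\tr\rho(\sigma)+\tr\rho(\sigma\tau))/2$, so they lie in $A$ by (2). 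Moreover $a\equiv\bar\chi$ and $d\equiv 1$ mod $\gm_A$, since the residual pseudo-character $\bar\chi+1$ is multiplicity free.

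Let $B$ and $C$ be the $A$-submodules of $\cK$ generated by the $b(\sigma)$ and $c(\sigma)$. By (1), some $b(\sigma)c(\sigma')$ is nonzero in each factor $K_j$. The standard computation gives $BC\subseteq\gm_A$, because $a(\sigma\sigma')-a(\sigma)a(\sigma')=b(\sigma)c(\sigma')$. The maps $\sigma\mapsto b(\sigma)$ mod $\gm_A B$ and $\sigma\mapsto c(\sigma)$ mod $\gm_A C$ then give $\cO$-linear maps $\Hom_A(B/\gm_AB,k)\to\HH^1(\Q,k(\chi))$ and $\Hom_A(C/\gm_AC,k)\to\HH^1(\Q,k(\chi^{-1}))$. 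These are injective: a nonzero functional killing all $b(\sigma)$ would contradict that the $b(\sigma)$ generate $B$. The classes are unramified outside $p$ by the minimality condition at $\ell\mid N$, so by \eqref{eq:description_H1(Q,chi)} they land in $\Hom(\cX_K[\chi^{\pm1}],k)$.

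The next step is to use condition (3) to bound the ramification at $p$ and force $B$ and $C$ to be cyclic. Write the line $L$ as spanned by $(\begin{smallmatrix}\beta\\ \delta\end{smallmatrix})$ in $\cK$. Stability of $L$ under $G_{\Qp}$, with $I_p$ acting trivially on $V/L$, gives linear relations among $a,b,c,d$ on $G_{\Qp}$, analogous to the conditions (i),(ii) of Lemma \ref{lem:description_C1_in_weight_one}. After rescaling the basis vectors and $L$ appropriately, the reduced cocycles $\bar b,\bar c$ then satisfy: their values on $\gamma$ are determined by a single scalar, and their values on $\phi$ coincide. By Lemma \ref{lem:description_Xvarphi} with $A'_K[\chi^{\pm1}]=0$, $\cX_K[\chi]$ is generated by $\gamma,\phi$. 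Hence $\bar b$ is determined by $(\bar b(\gamma),\bar b(\phi))$, and the local linkage with $\bar c$ should force $\dim_k B/\gm_AB\le1$, and likewise for $C$. By Nakayama, $B=b_0A$ and $C=c_0A$ are cyclic with $b_0c_0\in\gm_A$, and $b_0$ is a non-zero-divisor by (1).

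We then conjugate by $\mathrm{diag}(1,b_0)$, which makes $b\in A$ and $c\in b_0C\subseteq A$, so that $\rho$ takes values in $\GL_2(A)$ with $\rho\equiv\bar\chi\oplus1$ mod $\gm_A$. It remains to show that $L\cap A^2$ is a direct summand reducing to a non-$G_\Q$-stable line. For this we use the freedom to rescale $e_1$ versus $e_2$ so that $L$ is generated by $(\begin{smallmatrix}1\\ u\end{smallmatrix})$ with $u\in A^\times$, and finally rescale $e_2$ to reach the Eisenstein basis of Lemma \ref{lem:eisenstein_basis}.

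\textbf{Main obstacle.} The hardest part is controlling the ordinary line $L$ integrally. A priori its coordinates lie in $\cK$, possibly with components zero or non-units in some factors $K_j$. We also need to show that the ratio of coordinates is a unit of $A$, and it is this genericity that, combined with the hypothesis on $A'_K$, pins $B$ and $C$ down to be cyclic. I would first treat each factor $K_j$: irreducibility forces $L$ not to be a coordinate axis, since a coordinate line would make $b$ or $c$ vanish on $G_{\Qp}$, hence on the whole $\cX_K[\chi^{\pm1}]$ generated by $\gamma,\phi$, contradicting (1). I would then glue across the factors, using the cyclicity of $B$ and $C$.
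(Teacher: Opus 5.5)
Your plan fails at the step claiming $\dim_k B/\gm_AB\le 1$, hence that $B=b_0A$ and $C=c_0A$ are cyclic; this is false in general. The ``local linkage'' you invoke ($b(\gamma)=-c(\gamma)$, $b(\phi)=c(\phi)$) is the linearized condition of Lemma \ref{lem:description_C1_in_weight_one}, which makes sense for first-order deformations where $b$ and $c$ live in the same module. Here $\bar b$ and $\bar c$ take values in the unrelated spaces $B/\gm_AB$ and $C/\gm_AC$. The actual ordinarity relations on $G_{\Qp}$ (once $L=\langle(\begin{smallmatrix}1\\1\end{smallmatrix})\rangle$) are the identities $b=d-\delta$ and $c=a+b-d$ in $\cK$, and they give no relation between $\bar b$ and $\bar c$.

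Since $\cX_K[\chi]$ is generated by $\gamma,\phi$, you only get $\dim_k B/\gm_AB\le 2$, and $2$ occurs. Take $\chi$ quadratic, $A=\Zp\lsem\cX'_K\rsem$ and $\rho=\Ind_K^\Q\boldsymbol{\nu}$ as in \S\ref{sec:CM}. All hypotheses hold, and in the Eisenstein basis $b=c=(\boldsymbol{\nu}^\tau-\boldsymbol{\nu})/2$ on $G_K$. So $B$ is the augmentation ideal and $B/\gm_AB\simeq \cX'_K/p\cX'_K$. For $K=\Q(\sqrt{-107})$ and $p=3$ one has $v_3(x)=2$ and $v_3(y)=1$, so Lemma \ref{lem:lift_characters}(2) gives $\cX'_K\simeq\Z_3\oplus\Z/3\Z$. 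Hence $B$ is non-principal in every basis diagonalizing $\rho(\tau)$.

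There is also a bookkeeping problem. Such bases have only one rescaling freedom, and Lemma \ref{lem:eisenstein_basis} shows it is fixed by $L$. You cannot spend it once on integrality via $\mathrm{diag}(1,b_0)$ and then again on moving $L$ to $\langle(\begin{smallmatrix}1\\u\end{smallmatrix})\rangle$.

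The missing idea is that no cyclicity is needed: normalize by $L$ first, then show $B,C\subseteq\gm_A$ directly.

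\emph{Step 1 (genericity of $L$).} Your argument that $L$ is not a coordinate line in any factor is the paper's, but it must be phrased with the reduced cocycle $\bar\chi^{-1}\bar c\colon \cX_K[\chi^{-1}]\to C/\gm_AC$, since $b,c$ themselves are not homomorphisms. This map kills $\gamma,\phi$, so it vanishes because $A'_K[\chi^{-1}]=0$. Its image spans, so $C=0$ by Nakayama, contradicting irreducibility. Then rescale $e_2$ by some $s\in\cK^\times$ to get $L=\langle(\begin{smallmatrix}1\\1\end{smallmatrix})\rangle$.

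\emph{Step 2 (integrality on $G_{\Qp}$).} Show that the character $\delta$ on $V/L$ takes values in $1+\gm_A$, using $\tr\rho(\phi)=\delta(\phi)+\delta(\phi)^{-1}\in A$. Then $b=d-\delta$ and $c=a+b-d$ put $b(G_{\Qp})$ and $c(G_{\Qp})$ inside $\gm_A$.

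\emph{Step 3 (integrality on $G_\Q$).} Let $C_p\subseteq C$ be generated by $c(G_{\Qp})$. The reduced cocycle $\cX_K[\chi^{-1}]\to C/(C_p+\gm_AC)$ kills $\gamma,\phi$, hence vanishes, so $C=C_p\subseteq\gm_A$ by Nakayama. Likewise $B\subseteq\gm_A$.

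Thus $\rho$ is already $A$-valued in the Eisenstein basis, even though $B$ and $C$ need not be principal.
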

\begin{proof}
	Recall that $\chi(\tau)\not\equiv1$ mod $\gm_A$ since $p$ is odd. Hence, conjugating $\rho$ if necessary, we may assume that $\rho(\tau) = (\begin{smallmatrix}	-1 & 0 \\ 0 & 1	\end{smallmatrix})$.  
	
	Let $\delta \colon G_{\Qp} \to \cK^\times$ be the unramified character giving the $G_{\Qp}$-action on $V/L$. We claim that $\delta$ takes values in $1+\gm_A$. Indeed, $\delta(\phi)$ is in $A$ by continuity, and even in $A^\times$ because $\tr(\rho(\phi))=\delta(\phi)^{-1}+\delta(\phi)\in A$.  We also know that $\delta(\phi)^{-1}+\delta(\phi) \equiv 2 \mod \gm_A$, so $\delta(\phi)\in 1+\gm_A$, which proves our claim.
	
	Let $(\begin{smallmatrix}a&b \\ c&d\end{smallmatrix})$ be the entries of $\rho$. By \cite[Lemme 1]{bellaichechenevierlissite}, the $A$-submodules of $\cK$ generated by $a(G_\Q)$ and by $d(G_\Q)$ are both equal to $A$, and we have the congruences $a \equiv \chi$ mod $\gm_A$ and $d \equiv 1$ mod $\gm_A$. Furthermore, the $A$-submodules $B$ and $C$ generated by $b(G_\Q)$ and $c(G_\Q)$ respectively are fractional ideals, and $BC\subseteq \gm_A$.
	
	We claim that the ordinary line $L \subset V$ is in generic position with respect to the canonical basis $(e_1,e_2)$ of $V$, \textit{i.e.}, it is generated by $(\begin{smallmatrix} 1 \\ s  \end{smallmatrix})$ for some $s\in \cK^\times$. We may and we shall assume $\cK$ is a field.
	
	Assume $s=0$, \textit{i.e.}, $L=\cK e_1$, so that we have $c_{|G_{\Qp}}=0$.  Let $\ob{C}=C/\gm_A C$, and consider the map $\ob{\kappa}=\ob{\chi}^{-1}\ob{c} \colon G_\Q \to \ob{C}$. The multiplicativity of $\rho$ and the above description of $a$ and $d$ modulo $\gm_A$ imply that it satisfies the cocycle condition with respect to $\ob{\chi}^{-1}$. Moreover, $\ob{\kappa}_{|G_K}$ is unramified outside $p$ because of the minimality condition, which indeed forces $\rho_{|G_K}$ to be unramified outside $p$. The same property holds for the class of $\ob{\kappa}$, as $p\nmid [K:\Q]$. Therefore, it induces a map $\ob{\kappa} \colon \cX_K[\chi^{-1}] \to \ob{C}$ (see \eqref{eq:description_H1(Q,chi)}). Since it is also trivial on $G_{\Qp}$ by assumption, it factors through $A'_K[\chi^{-1}]$ by Lemma \ref{lem:description_Xvarphi}, which is $\{0\}$, also by assumption. But the proof of \cite[Lemme 3]{bellaichechenevierlissite} shows that the image of $\ob{\kappa}_{|G_K}$ spans $\ob{C}$, so $\ob{C}=0$, and $C=0$ by Nakayama's lemma. This contradicts the assumption that $\rho$ is irreducible.
	
	Likewise, the fact that $A'_K[\chi]=\{0\}$ shows that $L$ is not generated by $e_2$. Hence, rescaling $e_2$ if necessary, we may assume $L$ is generated by $(\begin{smallmatrix} 1 \\1  \end{smallmatrix})$. A similar matrix calculation as in \eqref{eq:conjugation_matrix} thus shows that $d-b =\delta$ and $c+d-a-b=0$ on $G_{\Qp}$. As $\delta$ takes values in $1+\gm_A$, the first equation shows that $b(G_{\Qp}) \subseteq \gm_A$, and the second one that $c(G_{\Qp}) \subseteq \gm_A$ as well. 
	
	Let $C_p$ be the $A$-submodule of $C$ generated by $c(G_{\Qp})$. A similar argument as the one above, applied to $\ob{C}=C/(C_p+\gm_A C)$, proves that $C_p=C$, and hence, $C\subseteq \gm_A$. Likewise, $B\subseteq \gm_A$. This shows that $\rho$ is indeed $A$-valued, $\rho\equiv (\begin{smallmatrix}\chi & 0 \\ 0 &1 \end{smallmatrix})$ mod $\gm_A$, and $(\rho,\langle(\begin{smallmatrix} 1 \\1  \end{smallmatrix})\rangle_A)\in\cD^\ord(A)$.
	\end{proof}
	
We also include the following lemma which describes some of the reducible points of $\cD^\ord$.
	
\begin{lemma}\label{lem:reducible_ordinary_points}
	Let $A\in \mathcal{CNL}_\cO$. Assume that $A$ is $\cO$-flat and reduced. For any point $(\rho,L)\in\cD^\ord(A)$ with $\rho$ computed in the Eisenstein basis, if $\tr(\rho)=1+\chi$, then 
\[	\rho=\begin{pmatrix}
		\chi & 0 \\ 0 & 1
	\end{pmatrix}.\]
\end{lemma}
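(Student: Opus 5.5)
Write $\rho=(\begin{smallmatrix}a&b\\c&d\end{smallmatrix})$ in the Eisenstein basis, so that $\rho(\tau)=\mathrm{diag}(-1,1)$ and $L=\langle(\begin{smallmatrix}1\\1\end{smallmatrix})\rangle$. The plan is to show first that $a=\chi$, $d=1$ and $bc=0$, and then that $b=c=0$.

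\textbf{Diagonal entries.} Using $\rho(\tau)$, the functions $g\mapsto a(g)$ and $g\mapsto d(g)$ are recovered from traces:
\[
a(g)=\tfrac12\bigl(\tr\rho(g)-\tr\rho(g\tau)\bigr),\qquad d(g)=\tfrac12\bigl(\tr\rho(g)+\tr\rho(g\tau)\bigr).
\]
Since $\tr\rho=1+\chi$ and $\chi(\tau)=-1$, this gives $a=\chi$ and $d=1$. Then $\det\rho=\chi$ forces $bc=0$ pointwise. Multiplicativity gives
\[
a(gh)=a(g)a(h)+b(g)c(h),
\]
so $b(g)c(h)=0$ for all $g,h$. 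We also get the cocycle relations $b(gh)=\chi(g)b(h)+b(g)$ and $c(gh)=c(g)\chi(h)+c(h)$.

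\textbf{Reduction to domains.} Because $A$ is reduced, it embeds in $\prod_{\gp}A/\gp$ over its minimal primes. The Eisenstein basis, and the conditions at $p$ in $L$, are compatible with this map. It therefore suffices to prove $b=c=0$ after passing to each domain $A/\gp$. In a domain, $b(g)c(h)=0$ for all $g,h$ implies that $b\equiv0$ or $c\equiv0$.

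\textbf{The ordinarity condition at $p$.} Conjugating as in \eqref{eq:conjugation_matrix}, the lower-left entry of $\rho^\ord$ on $G_{\Qp}$ is $c+d-a-b=c-b+1-\chi$. Since $\chi_{|G_{\Qp}}=1$, the condition becomes $c=b$ on $G_{\Qp}$. The lower-right entry $d-b=1-b$ must be an unramified character on $G_{\Qp}$.

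\textbf{Case $c\equiv0$.} Then $b_{|G_{\Qp}}=0$, and $b$ is a cocycle for $\chi$, unramified at $\ell\mid N$ by minimality. Over the fraction field $F$ of $A/\gp$, it defines a class in $\HH^1_{\ff,p}(\Q,F(\chi))$ vanishing on $G_{\Qp}$. By \eqref{eq:description_H1(Q,chi)} and Lemma~\ref{lem:description_Xvarphi}, it factors through $A'_K[\chi]$, which is finite. The map $\cX_K[\chi]\to F$ therefore kills the $\cO$-submodule generated by $\gamma,\phi$, whose quotient is finite. Since $A$ is $\cO$-flat, $F$ has characteristic $0$, so the map is zero. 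Hence $b$ is a coboundary, $b(g)=\beta(\chi(g)-1)$. Evaluating at $\tau$, where $b(\tau)=0$ and $\chi(\tau)-1=-2$ is a unit, gives $\beta=0$. So $b=0$. The case $b\equiv0$ is symmetric with $\chi^{-1}$.

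\textbf{Main obstacle.} The delicate step is justifying the passage to domains. The entries $b,c$ are $A$-valued, so the componentwise vanishing recombines to $b=c=0$ in $A$ by reducedness. What needs checking is that the cohomological vanishing argument really takes place in characteristic $0$, where finite $\cO$-torsion modules have no nonzero maps to $F$. This is exactly where $\cO$-flatness enters: it guarantees that each $A/\gp$ over a minimal prime $\gp$ is $\cO$-torsion-free.
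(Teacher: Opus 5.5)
Your proof is correct and follows essentially the same route as the paper: reduce to a domain, show $\rho$ is triangular, use the ordinarity condition to make the off-diagonal cocycle vanish on $G_{\Qp}$, kill it using the finiteness of $A'_K[\chi^{\pm1}]$ and $\cO$-flatness, and conclude by writing it as a coboundary and evaluating at $\tau$. The one difference is how you get triangularity: the paper uses Brauer--Nesbitt and the $\tau$-stability of the invariant line, whereas you read off $a=\chi$ and $d=1$ directly from $\tr\rho(g)$ and $\tr\rho(g\tau)$ and then deduce $b(g)c(h)=0$ from multiplicativity, which is a more explicit version of the same step.
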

\begin{proof}
	Assume, without loss of generality, that $A$ is a domain with fraction field $\cK$. By the Brauer--Nesbitt theorem, $\rho$ is reducible over $\cK$, so there is a $G_{\Q}$-stable line $L'\subset V_\rho \otimes_{A} \cK$. In particular, it must be stable by $\rho(\tau)=(\begin{smallmatrix}-1 & 0 \\ 0 & 1	\end{smallmatrix})$, so $L'=\langle(\begin{smallmatrix}1 \\ 0	\end{smallmatrix})\rangle_\cK$ or $L'=\langle(\begin{smallmatrix}0 \\ 1	\end{smallmatrix})\rangle_\cK$. Therefore, there is a cocycle $\kappa \colon G_{\Q} \to A$ such that either $\rho=(\begin{smallmatrix}\chi & \kappa \\ 0 & 1 	\end{smallmatrix})$ or $\rho=(\begin{smallmatrix}	\chi & 0 \\ \chi\kappa & 1 	\end{smallmatrix})$. 
	
	Assume we are in the first case, the second case being similar. The fact that $\langle(\begin{smallmatrix}1 \\ 1	\end{smallmatrix})\rangle_A$ is $G_{\Qp}$-stable implies that $\kappa_{|G_{\Qp}}=0$. As in the proof of Lemma \ref{lem:weak_ordinarity}, the continuous cocycle class $[\kappa]\in\HH^1(\Q,A(\chi))$ is unramified outside $p$, so its restriction to $G_K$ induces a map $A'_K[\chi]\to A$, which is necessarily trivial as $A$ is $\cO$-flat. Therefore, $\kappa$ is a coboundary of the form $(\chi-1)\cdot a$ for some $a\in A$. Since $\kappa(\tau)=0$, one sees that $a=0$, and thus, $\kappa=0$. 
\end{proof}

\section{Structure of Hecke algebras and modularity}
\subsection{The geometry of the cuspidal eigencurve in Eisenstein weight one}\label{sec:Hecke_algebras_and_eigencurve}
We first review the work of Betina--Dimitrov--Pozzi \cite{betinadimitrovpozzi} and we translate some of its main results into the language of Hida theory.

Let $\kappa \colon \cC \to \cW$ be the \emph{cuspidal} eigencurve of tame level $N$ endowed with its weight map. It is a rigid analytic space over $\Qp$ parametrizing systems of Hecke eigenvalues corresponding to overconvergent cuspidal eigenforms of finite slope (see \cite[\S3.1]{betinadimitrovpozzi}).

As explained in \S3.3 of \textit{loc. cit.}, the $p$-stabilized weight $1$ Eisenstein series 
\[ f(q)=E_1^{(p)}(1,\chi)= \sum_{n\geq 1} \sum_{\substack{d|n \\ p\nmid d}}\chi(d) q^n \]
gives rise to an $E$-point $\mathrm{x}$ of $\cC$. As $f$ has $U_p$-eigenvalue $1$, it lies, in fact, in the ordinary locus $\cC^\ord$ of $\cC$, defined by the open and closed condition $|U_p|=1$. 

\begin{theorem}\label{thm:BDP}
	The weight map $\kappa$ is étale at $\mathrm{x}$. In other words, it induces an isomorphism between the completed local rings of $\cW$ at $\kappa(\mathrm{x})$ and of $\cC^\ord$ at $\mathrm{x}$.
\end{theorem}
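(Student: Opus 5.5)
This is \cite[Theorem~1.1]{betinadimitrovpozzi}, translated to the ordinary setting. I would nevertheless organize the argument as follows, using the results already set up in this section.

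\textbf{Step 1: reduce to a statement about the weight-$1$ fibre.} Let $\cT$ be the completed local ring of $\cC^\ord$ at $\mathrm{x}$, and $\Lambda_{\mathrm{x}}$ that of $\cW$ at $\kappa(\mathrm{x})$; the latter is a discrete valuation ring with uniformizer $X$. By Hida theory, the ordinary Hecke algebra of tame level $N$ is finite and free over $\Lambda$. Hence $\cT$ is finite and torsion-free, so flat, over $\Lambda_{\mathrm{x}}$. Étaleness is then equivalent to unramifiedness, i.e.\ to the fibre
\[\cT/X\cT \simeq E.\]
By Nakayama, this amounts to the vanishing of the Zariski tangent space of $\cT/X\cT$, i.e.\ of the relative tangent space of $\kappa$ at $\mathrm{x}$.

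\textbf{Step 2: bound this tangent space by Galois cohomology.} I would construct a deformation $(\rho_{\cT},L_{\cT})$ of $(\rho_1,L_1)$ over $\cT$: a representation $\rho_{\cT}$ together with a $G_{\Qp}$-stable line with unramified quotient, satisfying the minimality condition at primes dividing $N$, with determinant given by the $\Lambda$-structure, and with trace sending $T_\ell$ to $1+\chi(\ell)\langle\ell\rangle$ at $\mathrm{x}$. The construction follows the proof of Lemma~\ref{lem:weak_ordinarity}, but with $p$ inverted.
\begin{itemize}
\item Generically on the irreducible components through $\mathrm{x}$ the pseudo-character is irreducible, so \cite{bellaichechenevierlissite} provides a generalized matrix algebra, hence a representation, over the total fraction ring.
\item The ordinary line exists by Hida theory.
\item The obstruction to this line being in generic position is a map $\cX_K[\chi^{\pm1}]\to(\text{module})$ that vanishes on $G_{\Qp}$. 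By Lemma~\ref{lem:description_Xvarphi}, such a map factors through $A'_K[\chi^{\pm1}]$, which is finite and therefore dies after tensoring with $E$.
\end{itemize}
Hence the line can be normalized to $\langle(1,1)^{t}\rangle$, and by the universal property we obtain a map $R^\ord\otimes E\to\cT$.

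This map should be surjective after completion at the point, because $\cT$ is topologically generated by the traces $\tr\rho_{\cT}(\mathrm{Frob}_\ell)$ and by the $U_p$-eigenvalue. The latter is read off from the unramified quotient $V/L$ evaluated at $\phi$.

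It follows that the relative tangent space injects into
\[\Hom_\cO\bigl(C^1(\pi_1),\rD\bigr)\otimes_{\cO} E\;=\;\HH^1_\ord\bigl(\Q,\ad^0\rho_1\otimes E\bigr).\]

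\textbf{Step 3: conclude.} By Proposition~\ref{prop:length_C1_R}, $C^1(\pi_1)$ has finite length, since $\cL_\chi+\cL_{\chi^{-1}}\neq0$ by Lemma~\ref{lem:linear_independence_L_invariants}. Therefore the rational tangent space above is zero. So $\cT/X\cT$ has trivial tangent space, hence $\cT/X\cT=E$, and together with flatness from Step~1 this gives the isomorphism $\Lambda_{\mathrm{x}}\simeq\cT$.

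\textbf{Main obstacle.} The delicate point is Step~2. One must show that the Galois representation over $\cT$ exists, carries an ordinary line in generic position, and generates $\cT$ — in particular, that $U_p$ is recovered from the Galois side, which is where non-$p$-distinguishedness bites. I would handle this using the generalized matrix algebra argument of Lemma~\ref{lem:weak_ordinarity}, run with $p$ inverted, and by checking that $U_p-1$ is expressed through the character $\delta$ on $V/L$.
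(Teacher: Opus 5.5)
Your outline is correct, but it takes a different route from the paper. The paper does not prove this statement; it imports it as \cite[Theorem A(i)]{betinadimitrovpozzi}. You instead reprove it with the paper's own tools, in effect running the later integral $R^\ord_1\to\T_1$ argument with $p$ inverted. Your three ingredients are:
flatness of $\mathcal{T}$ over $E\lsem X\rsem$ from Hida theory;
the GMA argument of Lemma~\ref{lem:weak_ordinarity} over $\mathcal{T}$, where the finite module $A'_K[\chi^{\pm1}]$ maps trivially to the $E$-vector space $C/\gm_{\mathcal{T}}C$, so no hypothesis on $A'_K$ is needed;
and the vanishing of $\HH^1_\ord(\Q,\ad^0\rho_1\otimes_\cO E)$. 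The last is the computation of Proposition~\ref{prop:length_C1_R} with $E$-coefficients: the $2\times 2$ system there has determinant $-(\cL_\chi+\cL_{\chi^{-1}})\ord_p(u_\chi)\ord_p(u_{\chi^{-1}})$, which is nonzero by Lemmas~\ref{lem:calcul_ord} and~\ref{lem:linear_independence_L_invariants}.

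The citation buys brevity and the full generality of \cite{betinadimitrovpozzi}. Your route buys self-containedness and shows exactly where things enter. The only arithmetic input is the transcendence statement behind Lemma~\ref{lem:linear_independence_L_invariants}. It also explains why étaleness holds unconditionally, while $R^\ord\simeq\T$ needs $A'_K[\chi^{\pm1}]=0$.

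Three points need repair.

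\textbf{The displayed tangent space.} The module $\Hom_\cO(C^1(\pi_1),\rD)\otimes_\cO E$ vanishes for trivial reasons, since $\rD\otimes_\cO E=0$, so it is not the rational tangent space. The right object is $\Hom_\cO(C^1(\pi_1),E)\simeq\HH^1_\ord(\Q,\ad^0\rho_1\otimes_\cO E)$. Its vanishing is where the finite length of $C^1(\pi_1)$ is actually used.

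\textbf{The map $R^\ord\otimes E\to\mathcal{T}$.} $R^\ord$ pro-represents a functor on $\mathcal{CNL}_\cO$, while $\mathcal{T}$ is an $E$-algebra, so the universal property gives no such map directly. Integrally, such a map is exactly what Proposition~\ref{prop:surjection_R_to_T} needs $A'_K[\chi^{\pm1}]=0$ for. You can either invoke Kisin's comparison for the completion of $R^\ord[1/p]$ at $\pi_1$, or bypass $R^\ord$ altogether:
send a tangent vector $t\colon\mathcal{T}/X\mathcal{T}\to E[\varepsilon]$ to the class of the cocycle $\xi$ with $t\circ\rho_{\mathcal{T}}=(1+\varepsilon\xi)\rho_1$.
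If this class is zero, then $\xi(\tau)=0$ forces the coboundary to come from a diagonal matrix, hence $\xi=0$ because $\rho_1$ is diagonal.
Then $t$ is trivial on $T_\ell=\tr\rho_{\mathcal{T}}(\Frob_\ell)$ and on $U_p=\delta(\phi)$ (and on the $U_q$, $q\mid N$, as in Proposition~\ref{prop:surjection_R_to_T}). These generate $\mathcal{T}/X\mathcal{T}$, so $t$ is trivial and the tangent space injects.

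\textbf{Reducedness.} Record that $\mathcal{T}$ is reduced, being the completion of the excellent reduced ring $\T_\gp$. You need this to apply \cite{bellaichechenevierlissite}.
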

\begin{proof}
	This is \cite[Theorem A(i)]{betinadimitrovpozzi}.
\end{proof}

As explained in \cite[\S6]{pilloni2013formes}, the map $\kappa_{|\cC^\ord}\colon \cC^\ord \to \cW$ admits a canonical integral model afforded by Hida theory. More precisely, it is the rigid analytic generic fiber of 
\[\operatorname{Spf}(\T(N)) \to \operatorname{Spf}(\Lambda(N)),\]
where $\Lambda(N)=\Zp\lsem \tilde{\Gamma}\rsem$, $\tilde{\Gamma}=(1+p\Zp) \times  (\Z/Np\Z)^\times$, is a ring of diamond operators $\langle a\rangle$, $a\in \tilde{\Gamma}$, acting faithfully on the space $\bS(N)$ of cuspidal Hida families of tame level $N$, and $\T(N)\subseteq \End_{\Lambda(N)}(\bS(N))$ is the $\Lambda(N)$-adic Hecke algebra generated by $T_\ell$ (for $\ell\nmid Np$) and $U_q$ (for $q|Np$).
We normalize the action of diamond operators so that $\langle \gamma\rangle$ acts on weight $k$ specialization of Hida families as $\gamma^{k-1}$ for $k\in\Z$.

Let
\begin{equation}\label{eq:maximal_ideal_eisenstein}
	\gm=\langle p,\ T_\ell-(1+\chi(\ell)),\ U_q-1,\ \langle a \rangle - \chi(a) \ \colon \ \ell\nmid N,\ q|Np,\ a\in \tilde{\Gamma} \rangle, 
\end{equation}
be the maximal ideal of $\T(N) \otimes_\Zp \cO$ attached to $f$ mod $p$, and let
\begin{equation}\label{eq:definition_Hida_Hecke_algebra}
	\T = (\T(N) \otimes_\Zp \cO)_\gm. 
\end{equation}
Then $\T$ acts faithfully on $\bS=(\bS(N)\otimes_{\Zp}\cO)_\gm$, and Hida theory implies that $\T$ is finite flat over $\Lambda=\cO\lsem 1+p\Z_p\rsem$.
It is also a reduced ring. Indeed, since $N$ is the Artin conductor of $\ob{\chi}$, we may identify $\T$ with the localization at a certain maximal ideal of the sub-algebra $\T'(N)$ of $\T(N) \otimes \cO$ generated by all Hecke operators excluding $U_q$ for $q| N$ (see \cite[Proposition 2.4.2]{emerton2006variation} with $\Sigma=\emptyset$). Since $\T'(N)$ is reduced, so is $\T$.

Recall that $\Lambda\simeq \cO\lsem X \rsem$ via $[\gamma] \mapsto 1+X$. For any $k\in\Z$,  the finite flat $\cO$-module $\bS_k:=\bS/(1+X-(1+p)^{k-1})\bS$ coincides with the module $S_k^\ord(N,\cO)_\gm$ of weight $k$ ordinary $p$-adic cusp forms with Fourier coefficients in $\cO$, localized at $\gm$. Of course, if $k\geq2$, this module entirely consists of classical cusp forms by Hida's control theorem, but in general, we only know that its elements are overconvergent (again by \cite{pilloni2013formes}). For instance, the $q$-expansion of $f\in S_1$ is not the one of a classical cusp form of weight $1$.

Let $\T_k=\T/(1+X-(1+p)^{k-1})\T$; it is a finite flat local $\cO$-algebra. By Hida--Hecke duality for cuspidal Hida families, 
\[\T \simeq \Hom_\Lambda(\bS,\Lambda), \qquad T \mapsto (F\mapsto a_1(TF))\]
as $\T$-modules. We deduce that $\T_k\simeq \Hom_\cO(\bS_k,\cO)$ for all $k\in\Z$ as $\bS$ and $\T$ are $\Lambda$-free.

\begin{proposition}\label{prop:finite_length_C}\
\begin{enumerate}
	\item Let $\gp \subseteq \T$ be the kernel of the system of Hecke eigenvalues of $f$. Then $\T_\gp$ is a discrete valuation ring, hence a domain. In particular, there exists a unique minimal prime ideal $\ga\subseteq \T$ contained in $\gp$. 
	\item Let $\pi_1\colon \T_1 \to \cO$ be the system of Hecke eigenvalues of $f$. Then $C^0(\pi_1)$ is of finite length over $\cO$.
\end{enumerate}
\end{proposition}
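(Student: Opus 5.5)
For (1), we will translate Theorem \ref{thm:BDP} into the language of Hida theory. The prime $\gp$ is the kernel of $\T\to\cO$, and it lies over the prime $\gp_\Lambda=(X,\varpi)\cap\ldots$; more precisely $\gp\cap\Lambda=(X)$, since $f$ has weight $1$ and $\langle\gamma\rangle$ acts on $f$ through $\gamma^{0}=1$. By Pilloni's comparison, $\cC^\ord$ is the rigid generic fiber of $\operatorname{Spf}\T(N)\to\operatorname{Spf}\Lambda(N)$. The point $\mathrm{x}$ then corresponds to the $E$-point $\T\to\cO\subset E$, and the completed local ring of $\cC^\ord$ at $\mathrm{x}$ is the completion $\widehat{(\T\otimes_\cO E)}_{\gp}=\widehat{\T_\gp}$. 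The completed local ring of $\cW$ at $\kappa(\mathrm{x})$ is $\widehat{\Lambda_{(X)}}\simeq E\lsem X\rsem$.

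Theorem \ref{thm:BDP} says that $\widehat{\Lambda_{(X)}}\to\widehat{\T_\gp}$ is an isomorphism. Hence $\widehat{\T_\gp}$ is a DVR. Since $\T_\gp$ is noetherian local with regular completion of dimension $1$, it is itself regular of dimension $1$, that is, a DVR and in particular a domain. The minimal primes of $\T$ contained in $\gp$ correspond to the minimal primes of $\T_\gp$. Since $\T_\gp$ is a domain, there is only $(0)$, which gives the unique $\ga$.

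The main care point is the bookkeeping between $\T(N)\otimes\cO$ and its localization at $\gm$, and the tame diamond operators in $\Lambda(N)$. Localizing at $\gm$ fixes the tame character to $\chi$, so $\Lambda(N)$ acts through $\Lambda$ near $\mathrm{x}$. Also, $\operatorname{Spf}\T$ is a connected component whose generic fiber contains $\mathrm{x}$. So the local rings agree.

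For (2), we will apply Lemma \ref{lem:congruence_module_facts}(1) to $\pi_1\colon\T_1\to\cO$. Here $\T_1=\T/X\T$ is finite flat over $\cO$, as noted above. It remains to check that $\T_1$ is étale over $\cO$ at $\gp_1=\ker\pi_1$, i.e.\ that $(\T_1)_{\gp_1}=(\T_\gp/X\T_\gp)$ equals $E$. Since $\Lambda_{(X)}\to\T_\gp$ becomes an isomorphism after completion, and $\T_\gp$ is a DVR finite over the DVR $\Lambda_{(X)}$ with trivial residue extension, $X$ is a uniformizer of $\T_\gp$. Therefore $\T_\gp/X\T_\gp$ is the residue field $E$, which gives étaleness, and Lemma \ref{lem:congruence_module_facts}(1) yields finite length of $C^0(\pi_1)$.
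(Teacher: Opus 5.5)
Your proposal is correct and follows essentially the same route as the paper: Theorem~\ref{thm:BDP} makes $\T_\gp$ étale over the DVR $\Lambda_{(X)}$, which you phrase via completions and descent of regularity. Hence $\T_\gp$ is a DVR with uniformizer $X$, and reducing modulo $X$ gives $(\T_1)_{\gp_1}=E$, so Lemma~\ref{lem:congruence_module_facts}(1) applies. One small slip: $\T_\gp$ is a localization of the finite $\Lambda_{(X)}$-algebra $\T\otimes_\Lambda\Lambda_{(X)}$ and need not itself be finite over $\Lambda_{(X)}$; this is harmless, since $\gp\T_\gp=X\T_\gp$ already follows from $\widehat{\T_\gp}\simeq E\lsem X\rsem$ by faithful flatness of completion.
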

\begin{proof}
	By Theorem \ref{thm:BDP}, $\T_\gp$ is étale over the localization $\Lambda_{(X)}$ of $\Lambda$ at $(X)$, which is a DVR. Therefore, $\T_\gp$ is itself a DVR, hence proving (1).
	
	Consider (2). Modding out by $X$ shows that $\T_{1}$ is étale over $\cO$ at the prime  $\gp_1=\ker\pi_1$. Lemma \ref{lem:congruence_module_facts}(1) then implies that $C^0(\pi_1)$ is indeed of finite length.
\end{proof}

\begin{definition}\label{def:congruence_number_of_f}
	Any congruence number $c_f$ attached to the system $\pi_1 \colon \T_1 \to \cO$ of Hecke eigenvalues of $f$ will be called (cuspidal) congruence number of $f$.
\end{definition}

We refer to Definition \ref{def:C} for a definition of congruence number attached to $\pi_1$; it is necessarily nonzero by Proposition \ref{prop:finite_length_C}(2).

\subsection{Congruences between modular forms}\label{sec:construction_congruences}
We study in this paragraph the length of the congruence module $C^0(\pi_1)$ for the system of Hecke eigenvalues $\pi_1 \colon \T_1 \to \cO$ attached to $f$, which is finite by Proposition \ref{prop:finite_length_C}(2).

For $a,b\in E^\times$, let $\lcm(a,b)$ be $a$ if $a/b\in\cO$, and $b$ otherwise. Also let $\den(a)=a^{-1}$ if $a\not\in\cO$, and $\den(a)=1$ otherwise. This paragraph is devoted to the proof of the following theorem, which will be separated according to whether $\chi$ is quadratic or not.

\begin{theorem}\label{thm:congruence_bound}
	Let $\pi_1 \colon \T_1 \to \cO$ be the system of Hecke eigenvalues of $f$. 
	\begin{enumerate}
		\item If $\chi$ is quadratic, then $\length C^0(\pi_1)=\length \cO/(\cL_\chi \cdot L(\chi,0)^2)$.
		\item In general, $\length C^0(\pi_1)\geq\length \cO/(c)$, where 
		\begin{equation}\label{eq:congruence_number_lower_bound}
			c=(\cL_\chi+\cL_{\chi^{-1}}) \cdot \lcm\left(\den(\cL_{\chi^{-1}}) \cdot L(\chi,0),\den(\cL_{\chi}) \cdot L(\chi^{-1},0)\right)
		\end{equation}
	\end{enumerate}
\end{theorem}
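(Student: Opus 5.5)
We bound $C^0(\pi_1)$ from below through Lemma \ref{lem:congruence_module_facts}(3). Let $\bS_1=S_1^{\ord}(N,\cO)_\gm$, so that $S=\Hom_\cO(\T_1,\cO)\simeq \bS_1$ by Hida duality, with $\pi_1$ corresponding to $f$. Then $\length C^0(\pi_1)\geq n$ as soon as we find an overconvergent ordinary weight-$1$ cuspidal form $g\in \bS_1$ with $e(g)=0$ and $g\equiv f \pmod{p^n}$. Such a $g$ is one that is orthogonal to the $f$-eigencomponent, e.g. a combination of eigenforms different from $f$ over $E$.

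To produce congruences, we use the two $\Lambda$-adic Eisenstein families through $f$: $\cE(1,\chi)$ and $\cE(\chi,1)$, the latter suitably $p$-stabilized. Both specialize to $f$ in weight $1$ because $\chi(p)=1$. Their constant terms are $\tfrac12 L_p(\chi,\cdot)$-type Iwasawa functions, and at the cusps $\infty$ and $0$ the other family has constant term $\tfrac12 L_p(\chi^{-1},\cdot)$, up to units. Both vanish at $X=0$ since we are in the exceptional case.

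Consider a combination $\cG=A\,\cE(1,\chi)+B\,\cE(\chi,1)$ with $A,B\in\Lambda$, chosen so that all constant terms at all cusps cancel to first order in $X$. Since $\cG$ is not exactly cuspidal, pass instead to the derivative. Take the weight-$1$ form $g_0=\tfrac{d}{dX}\cG|_{X=0}$, or equivalently the generalized eigenform direction. Dividing by the order of vanishing, we get a form whose constant terms are governed by $L_p'(\chi,0)$ and $L_p'(\chi^{-1},0)$. By the Gross--Stark formula (Dasgupta--Darmon--Pollack/Ventullo), these are $\cL_\chi L(\chi,0)$ and $\cL_{\chi^{-1}}L(\chi^{-1},0)$, up to units.

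Concretely, we aim to find $\Lambda$-adic forms $\cF_1,\cF_2$ that are cuspidal modulo a controlled ideal and have $a_1=1$, with Hecke eigenvalue systems differing from $f$'s. Then $f-\cF_i|_{X=0}$ is divisible by the relevant quantities. By taking the lcm over the two possibilities (which Eisenstein family one "cuspidalizes" first), we get the factor $\lcm(\den(\cL_{\chi^{-1}})L(\chi,0),\den(\cL_\chi)L(\chi^{-1},0))$. The factor $(\cL_\chi+\cL_{\chi^{-1}})$ arises as the derivative of the difference of the $U_p$-eigenvalues or weight-characters of the two families along $X$. Indeed, $\cE(1,\chi)$ and $\cE(\chi,1)$ have $T_\ell$-eigenvalues $1+\chi(\ell)\langle\ell\rangle$ versus $\chi(\ell)+\langle\ell\rangle$, whose difference is divisible by $X$ with derivative $(1-\chi(\ell))\lambda(\ell)$. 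Ordinarity at the cuspidal level links this to $\cL_\chi+\cL_{\chi^{-1}}$ via the GMA/Lemma \ref{lem:description_Xvarphi} computations.

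For (1), in the quadratic case the two families coincide, and this direct construction degenerates. We would instead use the upper bound $\length C^0\leq \length C^1(\pi_1,R^\ord_1)$. Theorem \ref{thm:numerical_criterion} applies once a map $R^\ord\to\T$ exists, and Proposition \ref{prop:length_C1_R} with $\chi=\chi^{-1}$ then gives $2\cL_\chi L(\chi,0)^2$. For the lower bound we would identify $c_f$ with a value of Katz's $p$-adic $L$-function, via CM families of $K$ through $f$ and the adjoint $L$-value/congruence formula of Hida--Tilouine. The $p$-adic Kronecker limit formula then evaluates this value as $\lambda(u_K)\cdot L(\chi,0)$-type quantities, which equal $\cL_\chi L(\chi,0)^2$ up to units by Lemma \ref{lem:calcul_ord} and Theorem \ref{thm:refined_CNF}. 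The main obstacle is this step: making the Katz-value-equals-congruence-number identification precise in the non-$p$-distinguished, weight-$1$ setting without assuming $A'_K=0$, likely by working over the CM component $\T^{\CM}$ and using étaleness (Theorem \ref{thm:BDP}) to isolate $f$.
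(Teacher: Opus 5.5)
\textbf{Part (2).} Your framework is right: bound $\length C^0(\pi_1)$ from below via Lemma \ref{lem:congruence_module_facts}(3) by exhibiting $h\in\bS_1$ with $e_1(h)=0$ and $h\equiv f \bmod c$. But the construction of such an $h$ is missing, and the one you sketch cannot work.

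\emph{Why your construction fails.} Any weight-$1$ form extracted at first order from the Eisenstein families (or from $\cF$) is a generalized $\pi_1$-eigenvector in $\M_1\otimes E$. If it is cuspidal, it lies in $e_1(\bS_1\otimes E)=Ef$ by étaleness, so it is never a candidate for a form killed by $e_1$. Likewise, any cuspidal family congruent to $\cE_{1,\chi}$ modulo $\zeta_\chi$ specializes to $f$ exactly, since $\zeta_\chi(0)=0$. So nothing is gained from ``$f-\cF_i|_{X=0}$''.

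\emph{What the paper uses instead.} There are three inputs you do not have.
(a) Lafferty's \emph{non-eigen} cuspidal family $\cG_{1,\chi}\in\bS$ with $\cE_{1,\chi}\equiv\cG_{1,\chi}\bmod\zeta_\chi$ \cite{lafferty}. This supplies cusp forms with components outside the $f$-eigenspace.
(b) A first-order comparison between $f'=\den(\cL_{\chi^{-1}})\frac{\cL_\chi+\cL_{\chi^{-1}}}{\cL_\chi}\cdot\frac{\cF-\cE_{1,\chi}}{X}\big|_{X=0}\in\M_1$ and the analogous $g'\in\bS_1$ built with $\cG_{1,\chi}$. Their difference is that same scalar times $(\zeta_\chi/X)|_{X=0}=-\cL_\chi L(\chi,0)$ (Ferrero--Greenberg) times an integral form. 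Hence $g'\equiv f'\bmod c$, in the case where the first term of the $\lcm$ is the relevant one; the other case uses $\cE_{\chi,1},\cG_{\chi,1}$. Integrality and non-vanishing mod $p$ of $f'$ come from the explicit coefficients $a_p=\cL_{\chi^{-1}}$ and $a_\ell=(1-\chi(\ell))\lambda(\ell)$ of \cite{betinadimitrovpozzi}.
(c) A Hecke operator $t\in\gp_1$ with $t(f')=f$. It exists because $f+Xf'$ is an eigenform modulo $X^2$, and then $h=t(g')$ works.

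\emph{Where $\cL_\chi+\cL_{\chi^{-1}}$ comes from.} It enters through the tangent direction of the cuspidal family $\cF$ at $f$, via the normalization making $f'$ integral and primitive. It does not come from the two Eisenstein families: both have $U_p$-eigenvalue identically $1$ (as $\chi(p)=1$) and the same weight character.

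\textbf{Part (1).} Your upper bound needs a surjection $R^\ord\to\T$. Proposition \ref{prop:surjection_R_to_T} provides this only when $A'_K=\{0\}$, whereas (1) assumes nothing on $A'_K$. The lower bound, which is the substance, is left as an unresolved obstacle.

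The paper proves the equality directly and $\Lambda$-adically, with no deformation ring and no hypothesis on $A'_K$:
\begin{enumerate}
\item The congruence ideal $\eta\subseteq\I$ of the CM family $\cF$ coincides with the cuspidal congruence ideal of \cite{burungale2024zetaelementsellipticcurves}. By Theorem 4.20 there, its reflexive closure in $\breve{\I}$ is generated by $\frac{h_K}{w_K}\cL^{\mathrm{Katz}}_K$.
\item Since $f$ is étale over weight space (Proposition \ref{prop:finite_length_C}), the idempotent of $f$ in $\T_1\otimes E$ lifts to that of $\cF$, giving $\spe_f(\eta)=\eta_1$.
\item Katz's $p$-adic Kronecker limit formula gives $\spe_f(\cL^{\mathrm{Katz}}_K)\sim\lambda(\ob{u})\sim h_K\cL_\chi$, and $h_K\sim L(\chi,0)$.
\item Localizing at the height-one prime $\ker\spe_f$ of the normal ring $\I$ removes the reflexive closure.
\end{enumerate}

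Finally, $\cE_{1,\chi}$ and $\cE_{\chi,1}$ do not coincide when $\chi^2=1$. The argument of (2) still runs in that case, but it only yields a weaker bound than (1).
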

We remark that, if $\varphi\in\{\chi,\chi^{-1}\}$, then $\cL_\varphi \cdot L(\varphi,0)\in \cO$ by Lemma \ref{lem:calcul_ord} and Theorem \ref{thm:refined_CNF}. Henceforth, $c$ is indeed in $\cO$.

The following notation is in force. We let $\gp$ be the prime of $\T$ attached to $f$, and $\ga\subseteq \T$ the unique minimal prime contained in $\gp$, as in Proposition \ref{prop:finite_length_C}(1). Let $\I$ be the integral closure of $\T/\ga$ in its fraction field $\K$. We think of $\I$ as a coefficient ring, and we consider the finite flat $\I$-algebra $\T_\I=\T \otimes_{\Lambda} \I$. The natural map $\T \twoheadrightarrow \T/\ga$ induces an $\I$-algebra homomorphism $\pi \colon \T_\I \twoheadrightarrow \I$ which is an $\I$-retraction. We let $\cF\in\bS_\I=\bS\otimes_{\Lambda}\I$ be the $\I$-adic primitive Hida family corresponding to $\pi$.

The system of Hecke eigenvalues $\T \twoheadrightarrow \cO$ of $f$ gives rise to a map $\spe_f \colon I \twoheadrightarrow \cO$ which we call specialization at $f$. When applied to the $q$-expansion of $\cF$, one gets back the $q$-expansion of $f$, \textit{i.e.}, $\cF$ passes through $f$.

We first treat the quadratic case, which is based on the explicit computation of the congruence ideal of CM Hida families in terms of Katz's $p$-adic $L$-function. Indeed, assuming $\chi^2=1$, $f$ is a CM form, and $\cF$ is a CM Hida family. Note that, in this case, $T/\ga$ is already integrally closed, since it is a power series ring, but we will not use this fact (see \cite[p. 235]{hidaLFE} and \S\ref{sec:CM} for a detailed description of CM components of big Hecke algebras).

\begin{proof}[Proof of Theorem \ref{thm:congruence_bound}(1)]
	Assume $\chi^2=1$, \textit{i.e.}, $K$ is quadratic imaginary. Let $\gp=\ker \pi$, and define the congruence ideal $\eta=\pi(\Ann_{\T_\I}(\gp))\subseteq \I$ attached to $\cF$. We first compare $\eta$ with the alternative definition given in \cite[\S4.3.1]{burungale2024zetaelementsellipticcurves}.
	
	Let $\T_\K=\T_\I \otimes_{\I}\K$ and $\bS_\K=\bS_\I \otimes_{\I}\K$. Since $\cF$ is primitive of level $N$, we have a decomposition $\T_\K=\K \times Y$ of $\K$-algebras, where the projection onto the first factor is induced by $\pi$ (see \cite[Theorem 4.2]{hidaAIF}). Letting $e=(1,0)\in \T_\K$ be the idempotent of $\cF$, we have $e(\bS_\K)=\K \cF$ and $\bS_\I \cap e(\bS_\I)=\I \cF$. Arguing as in the proof of Lemma \ref{lem:congruence_module_facts}, we see that $e(\bS_\I)/\I \cF \simeq e(\T_\I) / (e(\T_\I) \cap \T_\I)\simeq \I/\eta$. In particular, the unique $\I$-linear map $\bS_\I \to \K$ factoring through $e(\bS_\I)$ and sending $\cF$ to $1$ has image $\eta^{-1}$, so $\eta$ coincides with what is denoted by $I^\textrm{cusp}_{\mathbf{h}_v}$ in \cite[\S4.3.1]{burungale2024zetaelementsellipticcurves}.
	
	As a next step, we remark that $\spe_f(\eta)=\eta_1$, where $\eta_1=\pi_1(\Ann_{\T_1}\gp_1)$ with $\gp_1=\ker(\pi_1 \colon \T_1 \to \cO)$. Indeed, the idempotent $e_1\in \T_1$ attached to $\pi_1$ in Lemma \ref{lem:congruence_module_facts}(2) lifts to $\T_\I$, and it must be equal to $e$ as $\ga \subseteq \gp$. In particular, $e(\T_\I) \otimes_{\I,\spe_f} \cO \simeq e_1(\T_1)$, inducing isomorphisms
\begin{equation}\label{eq:specialization_congruence_module}
	\I/\eta \otimes_{\I,\spe_f} \cO \simeq e(\T_\I)/(e(\T_\I)\cap \T_\I) \otimes_{\I,\spe_f} \cO \simeq e_1(\T_1)/(e(\T_1) \cap \T_1)\simeq \cO/\eta_1. 
\end{equation}
	Therefore, $\spe_f(\eta)=\eta_1$. 
	
	Let now $\breve{\cO}$ be the completion of the maximal unramified extension of $\cO$, let $\breve{\I}=\I \hat{\otimes} \breve{\cO}$, and let $\tilde{\eta}$ be the reflexive closure of $\eta$ inside $\breve{\I}$. Then, by \cite[Theorem 4.20]{burungale2024zetaelementsellipticcurves}, $\tilde{\eta}$ is generated by $\frac{h_K}{w_K}\cdot \cL^\textrm{Katz}_K$, where $h_K$ is the class number of $K$, $w_K=\#\mu(K)$, and $\cL^\textrm{Katz}_K\in \breve{\I}$ is the Katz $p$-adic $L$-function over $K$ for the trivial character branch. 
	
	Note that $w_K$ is a $p$-adic unit, as $p$ is odd and unramified in $K$. Moreover, $h_K=L(\chi,0)\cdot w_K/2$ by the class number formula for $K$. On the other hand, Katz's $p$-adic analogue of the Kronecker limit formula for the trivial character reads  
	\[ \spe_f(\cL^\textrm{Katz}_K)= \frac{4}{w_K}\left(1-\frac{1}{p}\right)\log_p(\ob{u}), \]
	where $\ob{u}$ is a generator of $\ob{\gp}^{h_K}_K$, $\ob{\gp}_K$ being the conjugate of the distinguished prime $\gp_K$ of $K$ above $p$ (see \cite[end of p.90]{grossfactorization}). Notice that $u=p^{h_K}/\ob{u}$ is a $\gp_K$-unit of $K$, and $u/\ob{u}$ is a multiple of the normalized Gross--Stark unit for $\chi$ (see Definition \ref{def:GS_unit}). In particular, 
\[	\cL_{\chi}=-\dfrac{\lambda(u/\ob{u})}{\ord_p(u/\ob{u})}=2\cdot \dfrac{\lambda(\ob{u})}{h_K}. \]
	Therefore,
	\[\spe_f(\cL^\textrm{Katz}_K)\sim p^{-1}\log_p(\ob{u}) \sim \lambda(\ob{u})\sim h_K \cL_{\chi} \sim L(\chi,0)\cL_{\chi},  \]
	where $a\sim b$ means that $a$ and $b$ have the same $p$-adic valuation. This proves that $\breve{\I}/\tilde{\eta} \otimes_{\breve{\I},\spe_f}\breve{\cO}$ has $\breve{\cO}$-length equal to $\length \cO/(\cL_\chi \cdot L(\chi,0)^2)$.
	
	To conclude, we only have to check that $\breve{\I}/\tilde{\eta} \otimes_{\breve{\I},\spe_f}\breve{\cO}$ and $C^0(\pi_1)$ have the same length. Since $P=\ker \spe_f \subset \I$ is of height $1$ and $\I$ is normal, $\tilde{\eta}_P=\eta_P \hat{\otimes} \breve{\cO}$ as ideals of the localized ring $\breve{\I}_P$, so the length of $\breve{\I}/\tilde{\eta} \otimes_{\breve{\I},\spe_f}\breve{\cO}$ is equal to the length of $\I/\eta \otimes_{\I,\spe_f} \cO$, which is the one of $\cO/\eta_1=C^0(\pi_1)$ by \eqref{eq:specialization_congruence_module}.
\end{proof}

The proof of the second part of Theorem \ref{thm:congruence_bound} consists in explicitly constructing a congruence between $f$ and another weight one cusp form in $\T_1$ using the Eisenstein families passing through $f$. 

Let $\M\supseteq \bS$ be the localization at the Eisenstein maximal ideal $\gm$ defined in \eqref{eq:maximal_ideal_eisenstein} of the space of $\Lambda(N)$-adic Hida families of tame level $N$, and let $\tilde{\T}$ be the full Hecke algebra acting faithfully on $\M$. There are two Eisenstein families $\cE_{1,\chi}$ and $\cE_{\chi,1}$ in $\M$ which are Hecke eigenforms for $\tilde{\T}$ and whose associated Galois representations are $1\oplus \chi \varepsilon^\textrm{u}$ and $\chi\oplus \varepsilon^\textrm{u}$ respectively. We refer to \cite[\S3.2]{lafferty} for their $q$-expansion; however, we warn the reader that we follow the notation in \cite{betinadimitrovpozzi}, which is different from the one in \cite{lafferty}: our $\cE_{1,\chi}$ is precisely what is denoted by $\cE_{\chi,1}$ in \cite{lafferty}. In particular, $\cE_{1,\chi}$ has a nonzero residue at the cusp $\infty$.

\begin{proof}[Proof of Theorem \ref{thm:congruence_bound}(2)]
For $\varphi\in\{\chi,\chi^{-1}\}$, let $\zeta_\varphi\in \Lambda$ be the Iwasawa power series such that $\zeta_\varphi((1+p)^{-s}-1)$ equals the Kubota--Leopoldt $p$-adic $L$-function $L_p(\varphi\omega,s)$ for $s\in \Zp$. The Ferrero--Greenberg formula then reads 
\begin{equation}\label{eq:ferrero_greenberg}
	 \zeta_\varphi \equiv -\frac{L_p'(\varphi\omega,0)}{\log_p(1+p)}X \equiv -\cL_\varphi L(\varphi,0)X \mod X^2. 
\end{equation}

We now make use of the work of Lafferty \cite{lafferty}. It is shown that the values at all cusps of $\cE_{1,\chi}$ (resp. of $\cE_{\chi,1}$) are $\Lambda$-multiples of $\zeta_\chi$ (resp. of $\zeta_{\chi^{-1}}$, see also the proof of \cite[Proposition 4.7]{betinadimitrovpozzi}). In fact, more is known: by \cite[\S5]{lafferty} and, in particular, (29) of \textit{loc. cit.}, there exist cusp forms $\cG_{1,\chi},\cG_{\chi,1}\in \bS$ such that
\begin{equation}\label{eq:construction_G}
	 \cE_{1,\chi} \equiv \cG_{1,\chi} \mod \zeta_\chi, \qquad \cE_{\chi,1} \equiv \cG_{\chi,1} \mod \zeta_{\chi^{-1}}. 
\end{equation}
(Our cusp forms $\cG_{1,\chi},\cG_{\chi,1}$ are the negative of the ones in loc. cit., sent onto $\bS$ via the idempotent associated to the local factor $\T$ of $\T(N) \otimes \cO$.) 

We will use these cuspidal families to produce a congruence modulo $c$ (defined in \eqref{eq:congruence_number_lower_bound}) between $f$ and an adequate cusp form. 
More precisely, if $c=(\cL_{\chi}+\cL_{\chi^{-1}})\cdot \den(\cL_{\chi^{-1}})\cdot L(\chi,0)$, then we use $\cE_{1,\chi},\cG_{1,\chi}$ in our argument, and if $c=(\cL_{\chi}+\cL_{\chi^{-1}})\cdot \den(\cL_{\chi})\cdot L(\chi^{-1},0)$, then we use the two others. Since the arguments are similar, we focus on the first case. 

Recall that $\I$ is the normalization of the branch $\T/\ga$ defining $\cF$, and let $P=\ker \spe_f \subseteq \I$. By Proposition \ref{prop:finite_length_C}(1), the completion $\I_P^\wedge$ of $\I_P$ is an unramified extension of $\Lambda_{(X)}^\wedge=E\lsem X \rsem$. Since $\I_P^\wedge$ has residue field $E$, it follows that $\I_P^\wedge=E\lsem X\rsem$ canonically. 

Since $\cE_{1,\chi}$ and $\cG_{1,\chi}$ both specialize to $f$ at $X=0$, we may define
\[  f_{1,\chi} =\dfrac{\cL_{\chi}+\cL_{\chi^{-1}}}{\cL_{\chi}}\cdot \dfrac{\cF-\cE_{1,\chi}}{X}_{\big| X=0}, \quad g_{1,\chi} =\dfrac{\cL_{\chi}+\cL_{\chi^{-1}}}{\cL_{\chi}}\cdot \dfrac{\cF-\cG_{1,\chi}}{X}_{\big| X=0}, \]
the differences of the families being taken in $\M \otimes_{\Lambda} E\lsem X \rsem$.
We have $f_{1,\chi}\in \M_1 \otimes_{\cO}E$ and $g_{1,\chi}\in \bS_1 \otimes E$, where we have put $\M_1=\M/X\M$. Moreover, $f^\dagger_{1,\chi}:=\log_p(1+p)f_{1,\chi}$ is the form defined in \cite[(53)]{betinadimitrovpozzi}, and whose Fourier coefficients are computed in p. 262 of \textit{loc. cit.}. In particular, one obtains
\[a_0(f_{1,\chi})=\dfrac{c}{2\cdot \den(\cL_{\chi^{-1}})}, \qquad a_p(f_{1,\chi})=\cL_{\chi^{-1}},  \]
and for $\ell\neq p$,
\begin{equation*}\label{eq:a_ell_coeff_f_dagger}
	 a_\ell(f_{1,\chi})=(1-\chi(\ell))\lambda(\ell).
\end{equation*}
Choose any prime $\ell$ such that $\ell\equiv 1 \mod p$ and $\ell \not\equiv 1 \mod p^2$. Then we have $a_\ell(f_{1,\chi})\in\cO^\times$. An inspection of the Fourier coefficients $a_n(f_{1,\chi})$ (using (54) of \textit{loc. cit.} if $n$ is not a prime) shows that the form $f'=\den(\cL_{\chi^{-1}})\cdot f_{1,\chi}$ belongs to $\M_1$. Furthermore, $a_\ell(f')$ or $a_p(f')$ belongs to $\cO^\times$, so $f'$ does not vanish mod $p$.

Let $g'=\den(\cL_{\chi^{-1}})\cdot g_{1,\chi}$. Then $g'\in\bS_1$, and the congruences \eqref{eq:ferrero_greenberg} and \eqref{eq:construction_G} imply that $g' \equiv f' \mod c$.

To show that $\length C^0(\pi_1)\geq \length \cO/(c)$, we are going to apply Lemma \ref{lem:congruence_module_facts}(3) with $T=\T_1$ and $\pi=\pi_1$. Let $e_1\in \T_1 \otimes_{\cO} E$ be the idempotent attached to $\pi_1$. We will construct a Hecke operator $t\in \gp_1=\ker\pi_1$ such that $t(f')=f$. Since $e_1t=0$ in $\T_1\otimes_{\cO}E$, this will show that the form $h:=t(g')\in \bS_1$ satisfies $h \equiv f$ mod $c$ and $e(h)=0$. 

It is well known that $f'$ is a generalized eigenform for $\pi_1$. Indeed, one easily checks from the definition of $f'$ that $\tilde{f}:=f+Xf' \mod X^2$ is an eigenform for $\tilde{\T}/(X^2)$.  Hence, given any Hecke operator $T\in \tilde{\T}/(X^2)$, we have 
\[T\tilde{f}=(a_1(Tf)+Xa_1(Tf'))\tilde{f}.\]
In particular, if $a_1(Tf')\in \cO^\times$, then $t(f')=f$ for 
\[t=(T-a_1(Tf))/a_1(Tf')\in\gp_1. \]
If $a_p(f')\in\cO^\times$, then we may take $T=U_p$. Otherwise, $a_\ell(f')\in\cO^\times$ for the prime $\ell$ fixed earlier, and $T=T_\ell$ works. 
\end{proof}

The following conjecture, which holds by Theorem \ref{thm:congruence_bound}(1) if $\chi$ is quadratic, will be further justified by analytic means in \S\ref{sec:trivial_zero_conjecture}.

\begin{conjecture}\label{conj:length_C0}
	Let $\pi_1 \colon \T_1 \to \cO$ be the system of Hecke eigenvalues of $f$ as in Theorem \ref{thm:congruence_bound}. 
	Then 
	\[\length C^0(\pi_1)=\length \cO/((\cL_\chi +\cL_{\chi^{-1}})\cdot L(\chi,0)\cdot L(\chi^{-1},0)).\]
\end{conjecture}

\subsection{Modularity and Gorensteinness}
We are now ready to formulate and prove the main modularity theorem of this paper, comparing $\T$ with the universal deformation ring $R^\ord$ of Proposition \ref{prop:representability}.

\begin{proposition}\label{prop:surjection_R_to_T}
	Assume $A'_K[\chi]=A'_K[\chi^{-1}]=\{0\}$. Then there exists a natural surjective map $\varphi\colon R^\ord \twoheadrightarrow \T$ in $\mathcal{CNL}_{\cO}$.
\end{proposition}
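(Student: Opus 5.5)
We construct a Galois representation with values in $\T$, or rather over its total ring of fractions, and apply Lemma \ref{lem:weak_ordinarity} to descend it to a $\T$-point of $\cD^\ord$. Since $\T$ is reduced and finite flat over $\Lambda$, its total ring of fractions is $\cK=\prod_j K_j$, a product over the minimal primes $\ga_j$ of $\T$. Each $K_j$ corresponds to a (Galois orbit of a) cuspidal ordinary Hida family of tame level $N$ passing through $\gm$.

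\textbf{Step 1: representations on each branch.} For each branch, Hida/Wiles theory gives a continuous representation $\rho_j\colon G_\Q\to\GL_2(\ob{K_j})$ that is unramified outside $Np$ with $\tr\rho_j(\Frob_\ell)=T_\ell$ and $\det=\chi\varepsilon^{\mathrm u}$. Its restriction to $G_{\Qp}$ has an invariant line with unramified quotient on which $\phi$ acts by $U_p$. We may take the values in $K_j$ itself (after conjugation) since $\rho_j(\tau)$ has distinct eigenvalues $\pm1$. Each $\rho_j$ is irreducible because the family is cuspidal. The minimality condition at $\ell\mid N$ follows because $N$ is the conductor of $\chi$ and the nebentypus is $\chi$, so on $I_\ell$ the representation is $\chi\oplus1$ by local–global compatibility at arithmetic specializations of weight $k\ge2$, which are Zariski dense. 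Taking the product gives $\rho=\prod_j\rho_j\colon G_\Q\to\GL_2(\cK)$ with trace in $\T$, since $\T$ is generated by the $T_\ell$ (Chebotarev plus continuity). Modulo $\gm$ this trace is $\ob\chi+1$ by \eqref{eq:maximal_ideal_eisenstein}, and $\rho$ has an ordinary line $L=\prod_j L_j$ with $I_p$ acting trivially on the quotient.

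\textbf{Step 2: descent.} All hypotheses of Lemma \ref{lem:weak_ordinarity} hold with $A=\T$. The irreducibility hypothesis should be read componentwise, and the lemma's proof reduces to the case where $\cK$ is a field, so this is harmless. The lemma, using $A'_K[\chi]=A'_K[\chi^{-1}]=\{0\}$, gives a conjugate of $(\rho,L)$ that defines a point of $\cD^\ord(\T)$. By Proposition \ref{prop:representability} this yields a map $\varphi\colon R^\ord\to\T$ in $\mathcal{CNL}_\cO$. The map is natural because the Eisenstein basis of Lemma \ref{lem:eisenstein_basis} is canonical.

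\textbf{Step 3: surjectivity.} The image of $\varphi$ contains $\tr\rho(\Frob_\ell)=T_\ell$ for $\ell\nmid Np$, and it contains $\langle a\rangle$ via $\det\rho$ (the $\Lambda$-structures are compatible). For $U_p$, we use that $\phi$ acts on $V/L$ by the unramified character $\delta$ with $\delta(\phi)=U_p$, and $\delta(\phi)$ is expressible from the universal pair, e.g.\ as $d(\phi)-b(\phi)$ in the Eisenstein basis. For $U_q$ with $q\mid N$ it suffices to use the identification, recalled above via \cite[Proposition 2.4.2]{emerton2006variation}, of $\T$ with the localization of $\T'(N)$, which is generated without the $U_q$. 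Hence $\varphi$ is surjective, since a complete local subalgebra containing topological generators equals $\T$.

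The main obstacle is Step 2's input: verifying carefully that the branchwise representations meet the minimality and ordinarity hypotheses of Lemma \ref{lem:weak_ordinarity} over $\cK$, and the precise surjectivity onto $U_p$. The latter needs a residual check, namely that the descended $\delta$ matches the $U_p$-eigenvalue branchwise, which holds by density of classical points.
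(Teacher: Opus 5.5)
Your proposal is correct and follows essentially the same route as the paper: you take the Hida--Wiles representation over the total ring of fractions $\cK$ of $\T$, descend it to a $\T$-point of $\cD^\ord$ via Lemma \ref{lem:weak_ordinarity}, and check surjectivity on the generators $T_\ell$, $U_p$ (the $\phi$-eigenvalue on $V^\ord/L^\ord$) and $U_q$. The differences are minor. You derive minimality at $\ell\mid N$ from local--global compatibility at a dense set of classical points, where the paper cites \cite[Lemma 6.3(ii)]{dimitrov2014local}. You get $U_q$ from the identification $\T\simeq\T'(N)_{\gm'}$, where the paper reads it off as the $\Frob_q$-eigenvalue on the unramified summand of $V^\ord$. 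Finally, your ``residual check'' for $U_p$ is unnecessary, since the quotient character on $V/L$ is unchanged by the conjugation in Lemma \ref{lem:weak_ordinarity}.
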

\begin{proof}
	We are going to apply Lemma \ref{lem:weak_ordinarity}. Recall from \S\ref{sec:Hecke_algebras_and_eigencurve} that $\T$ is reduced; let $\cK$ be its total ring of fractions. By results of Hida and Wiles (\cite{hida1986galois} and \cite[Theorem 2.2.2]{wiles1988ordinary}), there exists an irreducible Galois representation $\rho \colon G_\Q \to \GL_2(\cK)$ unramified outside $Np$, and an ordinary $\cK$-line $L$ of $V=V_\rho$ such that the Frobenius element $\phi$ acts on $V/L$ by $U_p\in \T^\times$. Moreover, $\tr(\rho)(\Frob_\ell)=T_\ell$ for $\ell\nmid Np$, so it is clear from the description of $\gm$ in \eqref{eq:maximal_ideal_eisenstein} that $\tr(\rho)\equiv 1+\chi \mod \gm$.  
	
	Finally, $\rho$ is minimally ramified at $q|N$, as $\T$ has tame level $N$. More precisely, \cite[Lemma 6.3(ii)]{dimitrov2014local} implies that $\rho_{|G_{\Q_q}}\simeq \delta_1 \oplus \chi\delta_2$ with $\delta_i$ unramified characters of $G_{\Q_q}$ such that $\delta_1(\Frob_q)=U_q$. Therefore, $(\rho,L)$ can be conjugated so that it gives rise to a $\T$-point of $\cD^\ord$.
	
	Let $\varphi \colon R^\ord \to \T$ be the morphism induced by $(\rho,L)\in\cD^\ord(\T)$. To show that it is surjective, it is enough to check that its image contains $T_\ell$, $U_q$ and $U_p$ for $\ell\nmid p$ and $q|N$. Let $(\rho^\ord,L^\ord)$ be the universal deformation, so that $\varphi \circ \rho^\ord=\rho$ and $\varphi(L^\ord)=L$. If $\ell\nmid Np$, then $T_\ell$ is the image under $\varphi$ of $\tr(\rho^\ord)(\Frob_\ell)$. If $q|N$, then $U_q$ is the image of the eigenvalue of $\Frob_q$ acting on the unramified rank-$1$ direct summand of $V^\ord$. Finally, $U_p$ is the image of the eigenvalue $\phi$ acting on $V^\ord/L^\ord$. This proves that $\varphi$ is surjective.
\end{proof}

\begin{theorem}\label{thm:modularity}
	Assume $A'_K[\chi]=A'_K[\chi^{-1}]=\{0\}$. If $\chi$ is not quadratic, further assume that $\lambda(u_\chi)$ or $\lambda(u_{\chi^{-1}})$ is a $p$-adic unit, where $u_{\chi},u_{\chi^{-1}}$ are the normalized Gross--Stark units in Definition \ref{def:GS_unit}, or assume Conjecture \ref{conj:length_C0}.
		
		Then the surjective map $\varphi \colon R^\ord \twoheadrightarrow \T$ is an isomorphism of local complete intersections. In particular, $\T$ is Gorenstein, and for all $k\in\Z$, we have an isomorphism $R^\ord_k\simeq\T_k$ of local complete intersections.
\end{theorem}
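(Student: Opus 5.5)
The plan is to apply the numerical criterion (Theorem \ref{thm:numerical_criterion}) in weight $1$, and then lift the conclusion from $R^\ord_1\simeq\T_1$ to $R^\ord\simeq\T$ by a Nakayama-type argument over $\Lambda$.

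First, Proposition \ref{prop:surjection_R_to_T} gives a surjection $\varphi\colon R^\ord\twoheadrightarrow\T$ of $\Lambda$-algebras. Since $\det\rho$ is the universal lift of $\chi$, the $\Lambda$-structure on $R^\ord$ is compatible with the diamond action on $\T$. Reducing modulo $X$ therefore gives a surjection $\varphi_1\colon R^\ord_1\twoheadrightarrow\T_1$ compatible with the maps $\pi_1$ to $\cO$ attached to $(\rho_1,L_1)$ and to $f$. Here $\T_1$ is finite flat over $\cO$, by Hida theory, and $C^0(\pi_1,\T_1)$ has finite length by Proposition \ref{prop:finite_length_C}(2). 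Theorem \ref{thm:numerical_criterion} then yields $\length C^1(\pi_1,R^\ord_1)\geq\length C^0(\pi_1,\T_1)$, with equality if and only if $\varphi_1$ is an isomorphism of complete intersections. It therefore suffices to prove the reverse inequality.

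By Proposition \ref{prop:length_C1_R}, under $A'_K[\chi^{\pm1}]=0$ the length of $C^1$ equals $v_p\big((\cL_\chi+\cL_{\chi^{-1}})L(\chi,0)L(\chi^{-1},0)\big)$. We compare this with $C^0$ case by case.

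\emph{Quadratic case.} Here $\cL_{\chi^{-1}}=\cL_\chi$, so the length is $v_p(2\cL_\chi L(\chi,0)^2)=v_p(\cL_\chi L(\chi,0)^2)$, which is exactly $\length C^0$ by Theorem \ref{thm:congruence_bound}(1).

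\emph{Conjecture \ref{conj:length_C0} assumed.} Equality holds directly.

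\emph{Unit case.} Suppose $\lambda(u_\chi)\in\cO^\times$. Then $\cL_{\chi^{-1}}=-\lambda(u_\chi)/\ord_p(u_\chi)$ has $\den(\cL_{\chi^{-1}})=\ord_p(u_\chi)$ up to a unit, which is nontrivial or trivial as appropriate. By Lemma \ref{lem:calcul_ord} and Theorem \ref{thm:refined_CNF}, with $A'_K=0$ we have $v_p(\ord_p(u_\chi))=\length A_K[\chi]=v_p(L(\chi^{-1},0))$. Hence $\den(\cL_{\chi^{-1}})L(\chi,0)\sim L(\chi^{-1},0)L(\chi,0)$, and the lower bound $c$ of Theorem \ref{thm:congruence_bound}(2) is at least $v_p((\cL_\chi+\cL_{\chi^{-1}})L(\chi,0)L(\chi^{-1},0))$, since the lcm only increases valuation. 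Combined with the inequality from the criterion, this forces equality. The symmetric case $\lambda(u_{\chi^{-1}})\in\cO^\times$ is identical.

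The bookkeeping in the unit case (making sure $\den$ picks up the full $\ord_p(u_\chi)$ and that the valuations match) is the main delicate point. I expect it to go through, because $\lambda(u_\chi)$ being a unit gives $v_p(\cL_{\chi^{-1}})=-v_p(\ord_p(u_\chi))\le 0$.

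It remains to lift to $\Lambda$. Since $\varphi_1=\varphi\otimes_\Lambda\Lambda/X$ is an isomorphism, $\ker\varphi/X\ker\varphi$ vanishes provided $\mathrm{Tor}_1^\Lambda(\T,\Lambda/X)=0$, which holds because $\T$ is $\Lambda$-free. Nakayama then gives $\ker\varphi=0$, noting that $R^\ord$ is finite over $\Lambda$ since $R^\ord/X\simeq\T_1$ is finite over $\cO$.

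Next, $X$ is a nonzerodivisor on $\T\simeq R^\ord$, and $R^\ord/X$ is a complete intersection. Hence $R^\ord$ is a complete intersection: lift a presentation $\cO\lsem x_1,\dots,x_n\rsem/(f_i)$ to $\cO\lsem X,x_i\rsem/(\tilde f_i)$, a quotient by a regular sequence. It follows that $\T$ is Gorenstein.

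Finally, for general $k$, since $1+X-(1+p)^{k-1}$ is a nonzerodivisor on the complete intersection $R^\ord\simeq\T$, the quotients $R^\ord_k\simeq\T_k$ are again complete intersections.
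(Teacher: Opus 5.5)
Your proposal is correct and follows the paper's proof essentially step by step. You apply the numerical criterion to $\varphi_1\colon R^\ord_1\twoheadrightarrow\T_1$, with $C^1$ computed by Proposition \ref{prop:length_C1_R} and $C^0$ controlled by Theorem \ref{thm:congruence_bound}; your unit-case bookkeeping is right, and the one-sided valuation inequality you obtain suffices where the paper asserts equality. You then lift via $\mathrm{Tor}_1^\Lambda$ and Nakayama, and use the regular-element argument for the complete intersection property, as the paper does.

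The one step you state without justification is that $\pi_1\circ\varphi_1$ is the point $(\rho_1,L_1)$, i.e.\ that the representation of $f$ is diagonal in the Eisenstein basis rather than a non-split extension with trace $1+\chi$. The paper gets this from Lemma \ref{lem:reducible_ordinary_points}, and it is needed before Proposition \ref{prop:length_C1_R} can be applied.
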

\begin{proof}
	Let $\varphi_1  \colon R_1^\ord \twoheadrightarrow \T_1$ be the weight $1$ specialization of $\varphi$, obtained by quotienting by $X$.  We first prove that $\varphi_1$ is an isomorphism of local complete intersections by applying the numerical criterion (Theorem \ref{thm:numerical_criterion}) to $R=R^\ord_1$, $T=\T_1$, and to the system of Hecke eigenvalues $\pi_{1,T}\colon T \to \cO$ attached to $f$.  
	
	Note that the point $(\rho_1,L_1)\in\cD^\ord(\cO)$ coming from $\pi_{1,R}=\pi_{1,T} \circ \varphi_1 \colon R \to \cO$ must satisfy $\rho_1=(\begin{smallmatrix}\chi & 0 \\ 0 & 1\end{smallmatrix})$ in the Eisenstein basis, by Lemma \ref{lem:reducible_ordinary_points}. Therefore, Proposition \ref{prop:length_C1_R} tells us that 
	\[\length C^1(\pi_{1,R})=\length \cO/(c_1), \]
	where $c_1=(\cL_{\chi}+\cL_{\chi^{-1}})\cdot L(\chi,0)\cdot L(\chi^{-1},0)$ is the congruence number appearing in Conjecture \ref{conj:length_C0}. 
	
	Without assuming Conjecture \ref{conj:length_C0}, we may still apply Theorem \ref{thm:congruence_bound}, and we only have to check that, in the non-quadratic case, the congruence number $c$ of \eqref{eq:congruence_number_lower_bound} and $c_1$ have the same $p$-adic valuation. If $\lambda(u_\chi)$ is a unit, then $\den(\cL_{\chi^{-1}})=\ord_p(u_\chi)=L(\chi^{-1},0)$ up to $\cO$-units by Lemma \ref{lem:calcul_ord} and Theorem \ref{thm:refined_CNF}. It is then clear that $v_p(c)= v_p(c_1)$. The case where $\lambda(u_{\chi^{-1}})$ is a unit is similar, and we deduce that $\varphi_1$ is an isomorphism.
	
	Let now $J=\ker( \varphi \colon R^\ord \twoheadrightarrow \T)$. Then there is an exact sequence of finitely generated $\cO$-modules of the form
	\[\T[X] \longrightarrow J/XJ \longrightarrow R_1^\ord \overset{\varphi_1}{\longrightarrow} \T_1 \longrightarrow 0. \]
	Since $\T$ is $\Lambda$-flat and $\varphi_1$ is injective, we deduce that $J/XJ=\{0\}$, and hence, $J=\{0\}$ by Nakayama's lemma. 
	
	The property of $\T$ being a local complete intersection follows from the one of $\T_1$ and the fact that $X$ is a nonzero divisor in $\T$. The same is true for $\T_k=\T/(1+X-(1+p)^{k-1})$ for the same reason. This ends the proof of the theorem.
\end{proof}

\begin{corollary}
	Let $\rho \colon G_{\Q} \to \GL_2(\ob{\Q}_p)$ be a continuous irreducible Galois representation that is unramified outside $Np$ and such that the semisimplification $\ob{\rho}^\textrm{ss}$ of its residual representation is $1\oplus \ob{\chi}$. Further assume that $\rho$ is minimally ramified at primes $\ell|N$ and it is ordinary at $p$. Then, under the hypotheses of Theorem \ref{thm:modularity} on $\chi$, $\rho$ is the Galois representation attached to an overconvergent $p$-adic cusp form.
\end{corollary}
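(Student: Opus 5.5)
The plan is to combine Theorem \ref{thm:modularity} ($R^\ord\simeq\T$) with Lemma \ref{lem:weak_ordinarity}, which turns $\rho$ into a $\ob{\Q}_p$-point of $\cD^\ord$. First I reduce to a finite extension. By compactness, $\rho$ takes values in $\GL_2(\cO')$ for some finite extension $\cO'$ of $\cO$; after enlarging $\cO'$ I may assume it is totally ramified over $\cO$ with residue field $k$. If the residue field is larger than $k$, I would work over $W(k')\otimes\cO$ throughout, and the base-change arguments are harmless. Let $\cK'=\Frac(\cO')$.

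Next I apply Lemma \ref{lem:weak_ordinarity} with $A=\cO'$. The lemma is stated over $\mathcal{CNL}_\cO$, and $\cO'$ is such an object: it is local, complete, noetherian, has residue field $k$, and is a domain. Its hypotheses hold as follows.
\begin{enumerate}
\item $\rho$ is irreducible by assumption.
\item $\tr\rho$ is $\cO'$-valued, and it reduces to $1+\ob{\chi}$ because $\ob{\rho}^{ss}=1\oplus\ob{\chi}$.
\item Ordinarity supplies a $G_{\Qp}$-stable line $L$ with $I_p$ acting trivially on the quotient. For the twist, I interpret ``ordinary'' for a representation of determinant $\chi\varepsilon$-type exactly as in Definition \ref{def:ord}.
\end{enumerate}
The minimality at $\ell\mid N$ is assumed, and $\rho$ is unramified outside $Np$. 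Since $A'_K[\chi^{\pm1}]=0$ under the hypotheses of Theorem \ref{thm:modularity}, the lemma applies and yields a point $(\rho,L)\in\cD^\ord(\cO')$, after conjugation. This is a local $\cO$-algebra map $x\colon R^\ord\to\cO'$.

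Now I compose with Theorem \ref{thm:modularity}. The map $x\circ\varphi^{-1}\colon\T\to\cO'$ is an $\cO'$-valued system of Hecke eigenvalues. Under it, $T_\ell\mapsto\tr\rho(\Frob_\ell)$ for $\ell\nmid Np$, since $\varphi$ sends traces of Frobenius to $T_\ell$, as in the proof of Proposition \ref{prop:surjection_R_to_T}. Also $U_p$ maps to the unramified eigenvalue on $V/L$. The weight is $\kappa=x\circ\varphi^{-1}|_\Lambda$, which is determined by $\det\rho$.

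By Hida duality $\T\simeq\Hom_\Lambda(\bS,\Lambda)$, a $\Lambda$-algebra map $\T\to\cO'$ over the weight $\kappa$ corresponds to a normalized eigenform in $\bS\otimes_{\Lambda,\kappa}\cO'$. This is an ordinary $p$-adic cusp form of weight $\kappa$, and it is overconvergent by \cite{pilloni2013formes}. Its $T_\ell$-eigenvalues are $\tr\rho(\Frob_\ell)$. By Chebotarev and Brauer--Nesbitt, its associated Galois representation has the same semisimplification as $\rho$. Since $\rho$ is irreducible, the two representations are isomorphic.

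The main obstacle I expect is the descent step: checking that Lemma \ref{lem:weak_ordinarity} applies as stated to $A=\cO'$, which need not equal $\cO$, and matching the paper's normalization of ordinarity and determinant. In particular, $\det\rho$ must factor as $\chi$ times a character of $\Gamma$ so that it is a lift of $\chi$ compatible with the $\Lambda$-structure. I would check this using minimality at $\ell\mid N$ together with the fact that $\det\rho$ is unramified outside $Np$. Its prime-to-$p$ part is then $\chi$, and the remaining pro-$p$ part factors through $\Gamma$.
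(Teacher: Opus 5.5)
Your proposal is correct and is essentially the paper's proof. You descend $\rho$ to an $\cO'$-point of $\cD^\ord$ via Lemma~\ref{lem:weak_ordinarity}, transport it to $\T$ through the isomorphism $R^\ord\simeq\T$ of Theorem~\ref{thm:modularity}, and read off an overconvergent eigenform, using Hida duality and \cite{pilloni2013formes} where the paper uses points of the eigencurve $\cC$; this amounts to the same thing.

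The one slip is the claim that enlarging $\cO'$ lets you assume residue field $k$, since enlarging cannot shrink the residue field. Your fallback of base-changing to the unramified extension of $\cO$ with residue field $k'$ is the right fix, and the paper implicitly does the same.
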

\begin{proof}
	We may conjugate $\rho$ so that it takes values in a finite extension $E'$ of $E$. By Lemma \ref{lem:weak_ordinarity}, it can further be conjugated so that it gives rise to an $\cO_{E'}$-point of $\cD^\ord$, and hence, an $\cO_{E'}$-point of $\T$ thanks to Theorem \ref{thm:modularity}. Such points give rise to $E'$-points of the cuspidal eigencurve $\cC$, and hence, to an overconvergent $p$-adic cusp form.
\end{proof}

We now derive from Theorem \ref{thm:modularity} some structural results on Hecke algebras, separating the quadratic case from the non-quadratic case. 

Recall the notation $\iota_p \colon K\hookrightarrow \Qp$ for the fixed $p$-adic embedding, and $\gp_K$ the distinguished prime of $K$ above $p$. In the next corollaries, we promote the Hecke algebra $\T_1$ to an augmented $\cO$-algebra using the map $\pi_1 \colon \T_1 \to \cO$ attached to $f$. Rings of the form $\cO\lsem x_1,\ldots,x_n\rsem/(g_1,\ldots,g_n)$, $g_i(0)=0$, will also be considered as augmented $\cO$-algebras via the specialization map $x_1=\cdots =x_n=0$.

\begin{corollary}\label{coro:structure_T_1_chi_quadratic}
	Assume $\chi$ is quadratic and $A'_K=\{0\}$. Choose any generator $u_K\in \cO_K[1/p]^\times$ of $\gp_K^{h_K}$, where $h_K$ is the class number of $K$, and let 
	\[u=\iota_p(u_K)\in\Q_p^\times.\]
	\begin{enumerate}
		\item  $\T_1=\Zp$ if and only if $p\nmid L(\chi,0)$ and $p\nmid \lambda(u)$.
		\item There exists a presentation $\T_1\simeq \Zp\lsem x\rsem/(g(x))$, $g(0)=0$, as augmented $\Zp$-algebras if and only if $p\nmid L(\chi,0)$ or $p\nmid \lambda(u)$. If these equivalent conditions hold, then $g'(0)=\lambda(u)\cdot L(\chi,0)$ up to a unit.
		\item  In general, there exists a presentation $\T_1\simeq \Zp\lsem x_1,x_2\rsem/(g_1,g_2)$, $g_i(0)=0$, as augmented $\Zp$-algebras. If $g_1(x,y)=ax+by$ and $g_2(x,y)=cx+dy$ modulo higher terms, then $\alpha=\gcd(\lambda(u),L(\chi,0))$ and $\beta=\lambda(u)\cdot L(\chi,0)/\alpha$ are the elementary divisors of the matrix $(\begin{smallmatrix}
			a & b \\ c & d
		\end{smallmatrix})$.
	\end{enumerate}
	
\end{corollary}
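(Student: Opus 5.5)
The plan is to use Theorem \ref{thm:modularity}, which applies since $\chi$ is quadratic and $A'_K=\{0\}$. It gives $\T_1\simeq R_1^\ord$, and this is a local complete intersection finite flat over $\Zp$. Write $\gp_1=\ker\pi_1$ and put $n=\dim_{\F_p}\gp_1/(\gp_1^2,p)$. Then $n$ is the minimal number of generators of the augmentation ideal. By a standard argument, $\T_1$ admits a presentation as an augmented algebra $\Zp\lsem x_1,\dots,x_n\rsem/(g_1,\dots,g_n)$ with $g_i(0)=0$. One can check that $n$ variables suffice by lifting generators of $\gp_1$. For the count of relations, the kernel is generated by $n$ elements because the ring is a complete intersection of relative dimension $0$ over $\Zp$; one also uses that the embedding dimension is minimal, so the number of relations equals the number of variables. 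Finally, $\gp_1/\gp_1^2=C^1(\pi_1)$ is the cokernel of the linear part $\Zp^n\to\Zp^n$ given by the Jacobian matrix at $0$. It is also the Pontryagin dual of $\Hom(C^1(\pi_1),\rD)$, which Proposition \ref{prop:length_C1_R} identifies.

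The key computation is therefore Proposition \ref{prop:length_C1_R} in the quadratic case. There $A'_K=0$ and $\chi=\chi^{-1}$, so $u_\chi=u_{\chi^{-1}}$ and $x=x'=\lambda(u_\chi)$, $y=y'=\ord_p(u_\chi)$. This gives $C^1(\pi_1)\simeq\Zp/(\alpha)\times\Zp/(\beta)$ with $\alpha=\gcd(x,y)$ and $\beta=2xy/\alpha$. I will translate this into the terms of the statement. By Lemma \ref{lem:calcul_ord} and Theorem \ref{thm:refined_CNF} with $A'_K=0$, we get $y\sim L(\chi,0)$. For $x$, use the computation in the proof of Theorem \ref{thm:congruence_bound}(1): $u_K/\ob{u}_K$, or equivalently $u_K^2/p^{h_K}$, is a multiple of $u_\chi$. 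Since $A'_K=0$, the group $U'_K[\chi]$ is generated by the $\chi$-part of $u_K$, and the index is prime to $p$ up to the factor $2$. Hence $\lambda(u_\chi)\sim\lambda(u)$, using $\lambda(p)=0$. One must be careful to ensure that the element $u_K$, rather than some root of it, is a generator of the $\chi$-component; that $\ord_p$ of its $\chi$-part is $h_K\sim L(\chi,0)$ is consistent with this via Lemma \ref{lem:calcul_ord}. So $\alpha=\gcd(\lambda(u),L(\chi,0))$ and $\beta=\lambda(u)L(\chi,0)/\alpha$ up to units.

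The three parts then follow. For (3), take $n\le 2$ as given by $C^1$ having at most two cyclic factors; pad with a dummy variable and relation if needed. The linear parts of $g_1,g_2$ form a matrix with cokernel $C^1(\pi_1)$, so its elementary divisors are $\alpha,\beta$. For (2), a one-variable presentation exists iff $C^1$ is cyclic iff $\alpha$ is a unit iff $p\nmid\lambda(u)$ or $p\nmid L(\chi,0)$. In that case $C^1\simeq\Zp/(g'(0))$, so $g'(0)\sim\beta=\lambda(u)L(\chi,0)$. For (1), $\T_1=\Zp$ iff $C^1=0$ iff $\alpha\beta$ is a unit iff both are prime to $p$. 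Here one can also apply the numerical criterion directly, since $\T_1$ is flat and $\gp_1=0$.

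The main obstacle is the precise identification $v_p(\lambda(u_\chi))=v_p(\lambda(u))$. This requires showing that the $\chi$-part of $u_K$ generates $U'_K[\chi]$ under $A'_K=0$. I would verify this via the exact sequence in the proof of Lemma \ref{lem:calcul_ord}, comparing indices in $B_K[\chi]$.
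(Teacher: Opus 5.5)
Your proposal is correct and follows the paper's proof. It uses Theorem \ref{thm:modularity} and the l.c.i.\ property to reduce all three parts to the $\Zp$-module structure of $C^1(\pi_1)\simeq\Zp/(\alpha)\times\Zp/(\beta)$ from Proposition \ref{prop:length_C1_R}, then identifies $\lambda(u_\chi)\sim\lambda(u)$ and $\ord_p(u_\chi)\sim L(\chi,0)$; your index check that $u_K/\ob{u}_K$ generates $U'_K[\chi]$ (compare $v_p(h_K)$ with $v_p(\ord_p(u_\chi))=\length A_K$ via Lemma \ref{lem:calcul_ord}) supplies the justification for the paper's one-line assertion that $u/\ob{u}$ is a valid normalized Gross--Stark unit.
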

\begin{proof}
		Let $\gp=\ker\pi_1 \subseteq \T_1$. Since $\T_1$ is $\gp$-adically complete and a local complete intersection, the minimal number of generators of $C^1(\pi_1)=\gp/\gp^2$ as a $\Zp$-module is the minimal integer $n\geq0$ such that there exists a presentation $\T_1\simeq \Zp\lsem x_1,\ldots,x_n\rsem/(g_1,\ldots,g_n)$, $g_i(0)=0$, as augmented $\Zp$-algebras. The prime $\gp$ is then generated by $x_1,\ldots,x_n$, and it is easy to see that, if $g_i=a_{i1}x_1+\ldots+a_{in}x_n$ modulo higher terms, then
		\[\coker((a_{ij})_{1\leq i,j \leq n} \colon \Z_p^n \to \Z_p^n) \simeq \gp/\gp^2.\]
		 
		 By Theorem \ref{thm:modularity} and Proposition \ref{prop:length_C1_R}, the integer $n$ is precisely the number of generators of $\Zp/(\alpha)\times\Zp/(\beta)$, where $\alpha=\gcd(\lambda(u_{\chi}),\ord_p(u_\chi))$ and $\beta=\lambda(u_{\chi})\cdot \ord_p(u_\chi)/\alpha$. We further notice that $\alpha$ divides $\beta$, so $n\leq1$ if and only if $\alpha$ is a unit.
		
		Note that $u/\ob{u}$ is a valid choice of a normalized Gross--Stark unit $u_{\chi}$, where $\ob{u}=\iota_p(\ob{u}_K)=\pm p^{h_K}/u$ is the complex conjugate of $u$. Hence, $\lambda(u_{\chi})=\lambda(u/\ob{u})=2\lambda(u)$, which is a unit multiple of $\lambda(u)$. Also note that, by Theorem \ref{thm:refined_CNF} and Lemma \ref{lem:calcul_ord}, $L(\chi,0)=\ord_p(u_\chi)$ up to a $p$-adic unit. The corollary easily follows from these observations.
\end{proof}

\begin{corollary}\label{coro:structure_T_1_chi_non_quadratic}
	Assume $\chi$ is not quadratic, and that the hypotheses of Theorem \ref{thm:modularity} hold. Let $\ell$ be the $p$-adic valuation of $\cL_{\chi}+\cL_{\chi^{-1}}$, and let $v,v',w$ be the $p$-adic valuations of $L(\chi,0)$, $L(\chi^{-1},0)$ and $\gcd(\lambda(u_{\chi}),\lambda(u_{\chi^{-1}}),L(\chi,0),L(\chi^{-1},0))$ respectively.
	\begin{enumerate}
		\item $\T_1=\cO$ if and only if $v+v'+\ell=0$.
		\item There exists a presentation $\T_1\simeq \cO\lsem x\rsem/(g(x))$, $g(0)=0$, as augmented $\cO$-algebras if and only if $w=0$ or $w=v+v'+\ell$. If these equivalent conditions hold, then $g'(0)=(\cL_{\chi}+\cL_{\chi^{-1}})L(\chi,0)L(\chi^{-1},0)$ up to an $\cO$-unit.
		\item In general, there exists a presentation $\T_1\simeq \cO\lsem x_1,x_2\rsem/(g_1,g_2)$, $g_i(0)=0$, as augmented $\cO$-algebras. If $g_1(x,y)=ax+by$ and $g_2(x,y)=cx+dy$ modulo higher terms, then $\alpha$ and $\beta$ in \eqref{eq:alpha} and \eqref{eq:beta} are the elementary divisors of the matrix $(\begin{smallmatrix}
			a & b \\ c & d
		\end{smallmatrix})$.
	\end{enumerate}
\end{corollary}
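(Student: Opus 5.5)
The plan is to follow the proof of Corollary \ref{coro:structure_T_1_chi_quadratic} closely. Let $\gp=\ker\pi_1\subseteq\T_1$. By Theorem \ref{thm:modularity}, $\T_1\simeq R^\ord_1$ is a local complete intersection and $\gp$-adically complete. Hence the minimal number $n$ of generators of $C^1(\pi_1)=\gp/\gp^2$ over $\cO$ is the minimal $n$ admitting a presentation $\T_1\simeq\cO\lsem x_1,\ldots,x_n\rsem/(g_1,\ldots,g_n)$ with $g_i(0)=0$, as augmented $\cO$-algebras. For such a presentation, writing $g_i\equiv\sum_j a_{ij}x_j$ modulo degree $\geq2$ terms gives $\coker((a_{ij}))\simeq\gp/\gp^2$.

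Next I compute $\gp/\gp^2$. Under the assumption $A'_K[\chi]=A'_K[\chi^{-1}]=\{0\}$, Proposition \ref{prop:length_C1_R} (using Lemma \ref{lem:reducible_ordinary_points} to identify the point as $\rho_1$) gives $\Hom_\cO(C^1(\pi_1),\rD)\simeq\cO/(\alpha)\times\cO/(\beta)$. Pontryagin duality of finite modules then yields $C^1(\pi_1)\simeq\cO/(\alpha)\times\cO/(\beta)$. Item (3) follows at once: $n\leq2$, and the matrix $(a_{ij})$ has cokernel $\cO/(\alpha)\times\cO/(\beta)$. To pass from $\alpha,\beta$ as elements to them being the elementary divisors, I need $\alpha\mid\beta$. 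This holds because $\alpha$ is the gcd of the entries of $(\begin{smallmatrix}x&y\\-x'&y'\end{smallmatrix})$ and $\alpha\beta$ is its determinant, so the Smith normal form gives the divisibility.

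For (1) and (2), I translate $\alpha$ and $\beta$ into valuations. By Lemma \ref{lem:calcul_ord} and Theorem \ref{thm:refined_CNF} with $A'_K[\varphi]=0$, we have $v_p(\ord_p(u_\varphi))=v_p(L(\varphi^{-1},0))$. So $\{\ord_p(u_\chi),\ord_p(u_{\chi^{-1}})\}$ and $\{L(\chi,0),L(\chi^{-1},0)\}$ agree up to units, which gives $v_p(\alpha)=w$. Likewise $\alpha\beta=(\cL_\chi+\cL_{\chi^{-1}})\ord_p(u_\chi)\ord_p(u_{\chi^{-1}})$ has valuation $\ell+v+v'$. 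Then:
\begin{itemize}
\item $\T_1=\cO$ exactly when $C^1=0$, i.e.\ when $v_p(\alpha\beta)=0$. This is $v+v'+\ell=0$, which gives (1).
\item A one-variable presentation exists exactly when $C^1$ is cyclic, i.e.\ when $\alpha$ is a unit or $\beta$ is a unit. Since $\alpha\mid\beta$, $\beta$ is a unit only if $\alpha\beta$ is, which is covered by the case $w=v+v'+\ell=0$. So the condition is $w=0$ or $w=v+v'+\ell$. In this case, with $n=1$, $\cO/(g'(0))\simeq C^1$, so $g'(0)=\alpha\beta$ up to a unit, which gives (2).
\end{itemize}

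The only delicate point I expect is the notational one in (2). The stated criterion ``$w=v+v'+\ell$'' must be read as forcing $\beta$ to be a unit, equivalently $v_p(\alpha)=v_p(\alpha\beta)$. Since $v_p(\beta)=v+v'+\ell-w$, this is a tautology once $v_p(\alpha)=w$ is established. The rest is the same bookkeeping as in the quadratic corollary.
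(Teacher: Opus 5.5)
Your proposal is correct and takes the route the paper intends: the paper's proof of this corollary just says it is similar to that of Corollary \ref{coro:structure_T_1_chi_quadratic}. That means combining Theorem \ref{thm:modularity} with Proposition \ref{prop:length_C1_R} to get $\gp/\gp^2\simeq\cO/(\alpha)\times\cO/(\beta)$ with $\alpha\mid\beta$, and converting $v_p(\ord_p(u_{\chi^{\pm1}}))$ into $v_p(L(\chi^{\mp1},0))$ via Lemma \ref{lem:calcul_ord} and Theorem \ref{thm:refined_CNF}, exactly as you do (for (3) one also pads a shorter presentation with dummy variables, a trivial point).
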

\begin{proof}
	It is similar to the proof of Corollary \ref{coro:structure_T_1_chi_quadratic}.
\end{proof}

\begin{remark}\label{rem:invariance_twist_chi}
	Many of the results and hypotheses regarding the structure of $R^\ord$ and $\T$ are invariant under exchanging $\chi$ and $\chi^{-1}$. The following observation explains this phenomenon: if one denotes by $R^{\ord,\#}$ and $\T^\#$ the universal deformation ring and the Hecke algebras constructed with $\chi^{-1}$ instead of $\chi$, then there exist canonical isomorphisms $R^\ord \simeq R^{\ord,\#}$ and $\T\simeq\T^\#$. The first one is induced by the natural transformation $(\rho,L) \mapsto (\rho \otimes \chi^{-1},L)$, and the second one is induced by $F \mapsto F \otimes \chi^{-1}$ at the level of cuspidal Hida families (as described e.g. in \cite[p.438]{hsiehAJM}).
\end{remark}

\subsection{Failure of Gorensteinness of Hecke algebras}
Numerical calculations in \S\ref{sec:examples} consistently suggest the following converse to Theorem \ref{thm:modularity}.

\begin{conjecture}\label{conj:converse_gorenstein}
	If $\T$ is Gorenstein, then $A'_K[\chi]=A'_K[\chi^{-1}]=\{0\}$.
\end{conjecture}

For the remainder of this section, we provide two theoretical results towards this conjecture. We first establish it in the quadratic case, and we then discuss some connections to a conjecture that originates in the work of McCallum--Sharifi.

\begin{theorem}\label{thm:converse_gorenstein}
	Assume $\chi$ is quadratic and let $m$ be the $\Zp$-length of $A'_K$. If $A'_K\neq\{0\}$, then $\T$ is not Gorenstein and it has $\Lambda$-rank at least $1+2(p^m-1)$. In particular, Conjecture \ref{conj:converse_gorenstein} holds for $\chi$.
\end{theorem}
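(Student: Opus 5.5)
The plan is to follow the overview in the introduction: produce a weight $1$ point of $\Spec(\T)$ other than $f$, namely a theta series $\theta$ that is irregular at $p$, and then use the local non-Gorensteinness of the eigencurve at $\theta$ due to Betina--Dimitrov \cite{betinadimitrovKatz}. Since Gorensteinness passes to localizations, if $\T$ were Gorenstein then $\T_{\gp_\theta}$ would be too, which would be a contradiction.

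\emph{Step 1: construct the theta series.} Since $A'_K\neq\{0\}$ has length $m$, the $p$-part of the quotient $\Cl(K)/\langle[\gp_K],[\ob{\gp}_K]\rangle$ is a nontrivial finite $p$-group of order $p^m$. Because $\tau$ acts by $-1$ on $A_K$ and fixes the image of $\{\gp_K,\ob{\gp}_K\}$ up to sign, every finite order character $\psi$ of $A'_K$ (of which there are $p^m$) is a class group character of $K$ with $\psi(\gp_K)=\psi(\ob{\gp}_K)=1$ and $\psi\circ\tau=\psi^{-1}$. For $\psi\neq1$, the theta series $\theta_\psi$ is a weight $1$ newform of level $N$, because $\psi$ is unramified. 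Its Galois representation $\Ind_K^\Q\psi$ has semisimple reduction $1\oplus\ob{\chi}$, since $\psi$ has $p$-power order. It is also trivial on $G_{\Qp}$, since $\psi(\gp_K)=\psi(\ob{\gp}_K)=1$. Consequently $\theta_\psi$ has a unique $p$-stabilization with $U_p$-eigenvalue $1$, which is ordinary and lies in $\bS_1$ localized at $\gm$, and it is irregular at $p$. This gives a weight $1$ point $x_\psi$ of $\Spec(\T)$. The points $x_\psi$ and $x_{\psi^{-1}}$ coincide, so there are $(p^m-1)/2$ distinct such points.

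\emph{Step 2: local structure at $x_\psi$.} Here I invoke Betina--Dimitrov: at an irregular weight $1$ theta point with $p$ split in $K$, the eigencurve has exactly three components through the point. These are two CM components (coming from the Hecke characters of infinity type along $\gp_K$ and along $\ob{\gp}_K$ that specialize to $\psi$ and $\psi^{-1}$) and one non-CM component. Moreover, the local ring is not Gorenstein in the case where these components meet pairwise transversally. I expect the main obstacle to be checking that the hypotheses of their theorem hold in our setting, in particular the condition on $\cL$-invariants or units which forces the non-CM component to exist and the local ring to be non-Gorenstein. I would first try to verify that condition directly from $\psi(\gp_K)=1$ together with Lemma~\ref{lem:linear_independence_L_invariants}.

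\emph{Step 3: conclude.} Localization of a Gorenstein ring is Gorenstein, so $\T$ is not Gorenstein. For the rank bound, the generic fiber over $\Lambda[1/p]$ at weight $1$ contains $f$, which contributes $1$ because $\T$ is étale there by Theorem~\ref{thm:BDP}. Each of the $(p^m-1)/2$ points $x_\psi$ contributes at least the $4$ branches passing through it as counted in $\T\otimes_\Lambda\Frac(\Lambda)$ via multiplicities, or at least $\dim_E$ of the weight $1$ fiber, which is at least $4$ from Step 2. Alternatively, one can count components: the CM families attached to distinct $\psi$ are distinct. I would bound the $\Lambda$-rank by the $E$-dimension of $\T_1\otimes E$ (the fiber at $X=0$), which is at least $1+4\cdot(p^m-1)/2=1+2(p^m-1)$.
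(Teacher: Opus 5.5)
Your proposal is correct and follows the paper's proof essentially step by step. Nontrivial characters $\psi$ of $A'_K$ give $p$-irregular theta series whose unique $p$-stabilizations are weight one points of $\Spec(\T)$. Non-Gorensteinness at these points (from Betina--Dimitrov) propagates to $\T$, because Gorensteinness passes to localizations and completions. The rank bound is $\dim_E \T_1\otimes_{\cO}E\geq 1+4\cdot\frac{p^m-1}{2}$, using that $\psi$ and $\psi^{-1}$ give the same form.

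The one thing to tidy up is Step~2. The paper simply invokes \cite[Proposition 5.4]{betinadimitrovKatz}, which gives both the failure of Gorensteinness and the fact that the completed local ring of the eigencurve at $\theta^{(p)}$ has rank $4$ over that of weight space. It is this rank statement, not the count of three components (which does not by itself determine the rank), that justifies the $4$ you use in Step~3.
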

\begin{proof}
	Recall from \cite[Theorems 18.2 and 18.3]{matsumura} that, if the local noetherian ring $\T$ is Gorenstein and $\gp\in\Spec(\T)$, then the localization-completion $\T_\gp^\wedge$ at $\gp$ is Gorenstein as well. Therefore, assuming that $A'_K$ is \emph{not} $\{0\}$, it is sufficient to find a height one prime $\gp$ for which we know $\T_\gp^\wedge$ is \emph{not} Gorenstein.
	
	Assume $\Psi =\{\psi \colon A'_K \to \mu_{p^\infty}(\ob{\Q}_p)\}\setminus\{1\}$ is not empty, and fix a character $\psi \in \Psi$ of exact order $p^r$, which we see as an unramified Hecke character of $G_K$ of finite order. A classical theorem of Hecke attaches to $\psi$ a weight $1$ newform
	\[\theta = \sum_{\substack{\ga \subseteq \cO \\ \ga\neq 0}} \psi(\ga)q^{\mathbf{N}(\ga)}\]
	of level $|\mathrm{Disc}(K)|=N$, where $\mathbf{N}$ is the norm map of $K/\Q$. Let $\gp_K\ob{\gp}_K$ be the factorization of $p\cO_K$. The condition that $\psi$ factors through $A'_K$ implies that $\psi(\gp_K)=\psi(\ob{\gp}_K)=1$. Hence, under the terminology of \cite{bellaiche2016eigencurve}, $\theta$ is $p$-irregular, with a unique $p$-stabilization
		\[\theta^{(p)} = \sum_{\substack{0\neq\ga \subseteq \cO \\ \ob{\gp}_K\nmid \ga}} \psi(\ga)q^{\mathbf{N}(\ga)}.\]
	Moreover, we have a congruence $\psi \equiv 1$ mod $\varpi$ over $G_K$, for $\varpi$ a uniformizer of $\cO[\zeta_{p^r}]$ and $\zeta_{p^r}$ a primitive $p^r$-th root of unity. Hence, for any integer $n\geq1$ we have $a_n(f)\equiv a_n(\theta^{(p)})$ mod $\varpi$, meaning that $\theta^{(p)}$ belongs to $\T_1$.
	
	Let $\gp_{\psi}\subseteq \T$ and $\gp_{1,\psi}\subseteq \T_1$ (resp. let $\mathrm{y}$) be the height one prime (resp. the point of the cuspidal eigencurve $\cC$ of \S\ref{sec:Hecke_algebras_and_eigencurve}) corresponding to $\theta^{(p)}$. By \cite[Proposition 5.4]{betinadimitrovKatz}, $\cC$ is not Gorenstein at $\mathrm{y}$, and the completed local ring $\cO_{\cC,\mathrm{y}}^\wedge$ has rank $4$ over $\cO^\wedge_{\cW,{\kappa(y)}}$. By the discussion in \S\ref{sec:Hecke_algebras_and_eigencurve}, this implies that $\T_{\gp_\psi}^\wedge$ is not Gorenstein. Moreover, $\dim_E (\T_{1})_{\gp_{1,\psi}}\geq4$. 
	
	To obtain the bound on the rank of $\T_1$, note that any two characters $\psi,\psi'\in \Psi$ as above give rise to the same theta series if and only if $\psi'=\psi^{\pm1}$. Therefore, 
	\[\T_1 \otimes E \simeq \prod_{\gp} (\T_{1})_{\gp} \twoheadrightarrow E \times \prod_{\psi^{\pm1} \in \Psi} (\T_{1})_{\gp_{1,\psi}}, \]
	where the first factor corresponds to $f$. Since there are $(p^m-1)/2$ pairs $\{\psi,\psi^{-1}\}$ of elements in $\Psi$, we indeed obtain $\rk_\Lambda \T=\rk_\cO \T_1 \geq 1+2(p^m-1)$.
\end{proof}

\subsection{Link to McCallum--Sharifi's conjecture}\label{sec:sharifi}
We close this section with a further theoretical justification of Conjecture \ref{conj:converse_gorenstein}, assuming a conjecture that goes back to McCallum--Sharifi's work \cite{mccallumsharifi}. We mainly follow Shih--Wang's exposition in \cite[\S\S2-3]{shihwangTAMS}.

Assume in this paragraph that $p\geq 5$ and $p\nmid\phi(N)$. Given any $\Zp[G_{\Q}]$-module $M$, let $M^\pm$ be the submodule of $M$ where the complex conjugation acts by $\pm1$. If $M$ has a structure of module over $\Lambda=\cO\lsem\Gamma\rsem$, we denote by $M(i)=M\otimes_{\Zp}\Zp(i)$ the $i$-th Tate twist of $M$, which we make into a $\Lambda$-module by letting $\Gamma$ act diagonally (and by $\chi_{\cyc}^i=\varepsilon_{\cyc}^i\omega^i$ on $\Zp(i)$). 

For $r\geq1$, let $\HH_r=\HH^1_{\text{ét}}(X_1(Np^r)_{/\ob{\Q}},\Zp)^\ord$ be the ordinary part of the étale cohomology of the closed modular curve of level $\Gamma_1(Np^r)$, and consider the continuous cohomology group $S_r=\HH^2(\Z[\zeta_{Np^r},\tfrac{1}{p}],\Zp(2))$. In \cite[Proposition 5.7]{sharifi}, Sharifi constructed a map 
\[\varpi_r \colon \HH_r^-(1) \to \HH^2(\Z[\zeta_{Np^r},\tfrac{1}{Np}],\Zp(2))^+,\]
and whose image is contained in $S_r$ by \cite[Theorem 5.3.5]{fukayakato}. Roughly speaking, it maps certain cycles (seen in $\HH_r$ via Poincaré duality) to cup products of cyclotomic units. The maps $\varpi_r$ are $\Zp[(\Z/Np^r)^\times]$-linear, and form a compatible system inducing a map
\[\varpi \colon \HH^-(1) \to S,\]
where $\HH=\varprojlim_r \HH_r$ and $S=\varprojlim_r S_r$. 

Recall that $\HH$ has a natural action of the \textit{dual} big Hecke algebra $\T^*(N)$ generated by Hecke operators $T_\ell^*$ and $U_q^*$, $\ell\nmid Np$ and $q|Np$. It was conjectured in Conjecture 5.8 of \textit{loc. cit.} (and established in \cite[Theorem 5.2.3]{fukayakato}), that $\varpi$ factors through $\HH^-(1)/I^*\HH^-(1)$, where $I^*$ is the Eisenstein ideal generated (with our conventions) by $T^*_\ell-(1+\langle\ell\rangle^{-1})$ for $\ell\nmid Np$ and $U^*_q-1$ for $q|Np$.

 Let $\varphi$ be an $\cO$-valued Dirichlet character of modulus $Np$. Then $\varphi$ induces a homomorphism $\Zp\lsem \tilde{\Gamma} \rsem \to \Lambda$, where $\tilde{\Gamma}=(1+p\Zp) \times (\Z/Np\Z)^\times$. Given a $\Zp\lsem \tilde{\Gamma}\rsem$-module $M$, let $M_\varphi= M \otimes_{ \Zp\lsem \tilde{\Gamma} \rsem,\varphi} \Lambda$ be the $\varphi$-component of $M$. Since $p\nmid \phi(N)$, we may identify $M_\varphi$ with $(M\otimes_{\Zp} \cO)[\varphi]$.

\begin{conjecture}[McCallum--Sharifi]\label{conj:mccallum_sharifi}
	Assume $p\geq5$ and $p\nmid N\phi(N)$ as before and let $\theta$ be an $\cO$-valued primitive even Dirichlet character of conductor $Np$. Then the homomorphism $\varpi$ induces a surjective homomorphism of $\Lambda$-modules
	\[\varpi_\theta \colon H_\theta^-(1)/I^*H_\theta^-(1) \twoheadrightarrow S_\theta. \]
\end{conjecture}

This is the content of \cite[Conjecture 1.1]{shihwangTAMS}. The main observation in our paper regarding this conjecture is that it implies Conjecture \ref{conj:converse_gorenstein}.

\begin{theorem}\label{thm:sharifi}
	Assume that $p\geq5$ and $p\nmid N\phi(N)$ as before, and assume Conjecture \ref{conj:mccallum_sharifi} holds for $\theta=\chi\omega$ and $\theta=\chi^{-1}\omega$. Then Conjecture \ref{conj:converse_gorenstein} holds for $\chi$ and $\chi^{-1}$, and the maps $\varpi_{\chi\omega}$ and $\varpi_{\chi^{-1}\omega}$ are isomorphisms.
\end{theorem}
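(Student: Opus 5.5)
Suppose $\T$ is Gorenstein. The plan is to show that this forces the Iwasawa modules $S_{\chi\omega}$ and $S_{\chi^{-1}\omega}$ to be cyclic over $\Lambda$. We then descend to $K$ to conclude $A'_K[\chi^{\pm1}]=\{0\}$, following the argument in the introduction inspired by Wake \cite{wake}.

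\textbf{First step.} Relate the Eisenstein quotient $H_\theta^-(1)/I^*H_\theta^-(1)$, for $\theta=\chi\omega$, to our local factor $\T$. Since $\omega$ corresponds to the twist between $\Zp(1)$ and the weight normalization, the $\theta$-component localized at the Eisenstein maximal ideal is a module over the dual Hecke algebra at $\gm$. This algebra is isomorphic to $\T$ via the usual duality $T_\ell\leftrightarrow T_\ell^*$ twisted by diamond operators. Ohta's work identifies $\HH^-$ with the cuspidal Hida module $\bS$, or with its dual $\Hom_\Lambda(\bS,\Lambda)\simeq\T$.

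If $\T$ is Gorenstein, then $\Hom_\Lambda(\T,\Lambda)\simeq\T$, so $\HH^-_\theta$ localized is free of rank one over $\T$. Hence $\HH^-_\theta(1)/I^*$ is a cyclic $\T/I$-module, and $\T/I$ is a quotient of $\Lambda$. The surjection $\varpi_\theta$ of Conjecture \ref{conj:mccallum_sharifi} then makes $S_\theta$ a cyclic $\Lambda$-module, of the form $\Lambda/\xi$ for some $\xi$. In parallel, $\T/I\simeq\Lambda/(\zeta)$, where $\zeta$ is the relevant Kubota--Leopoldt power series, via the constant term of the Eisenstein family. Since the characteristic ideal of $S_\theta$ is also generated by $\zeta$ (Iwasawa main conjecture, Mazur--Wiles), the surjection $\Lambda/\zeta\twoheadrightarrow S_\theta\simeq\Lambda/\xi$ of modules with equal characteristic ideals is an isomorphism. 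This gives the claimed isomorphism $\varpi_\theta$. The same argument applies with $\theta=\chi^{-1}\omega$, or by Remark \ref{rem:invariance_twist_chi}.

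\textbf{Second step (descent).} Via Poitou--Tate duality and Kummer theory, $S_\theta$ is identified with a twist of the $\chi^{-1}$-eigenspace (or the $\chi$-eigenspace, depending on normalizations) of the unramified-outside-$p$ Iwasawa module $X_\infty=\varprojlim$ of $p$-split class groups, or of $\cX$, along the cyclotomic tower of $K$. Cyclicity of this module, together with the trivial zero coming from $\chi(p)=1$, should give the conclusion as follows. The coinvariants at the bottom layer surject onto $\cX_K[\chi^{\pm1}]$ up to a twist. By Lemma \ref{lem:description_Xvarphi}, $\cX_K[\varphi]$ contains $\cZ$, which has $\cO$-rank $1$ and contains the image of $\gamma$, and $\cX_K[\varphi]$ surjects onto $A'_K[\varphi]$. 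A cyclic quotient of $\Lambda$ has cyclic coinvariants, and the rank-one part generated by $\phi$ and $\gamma$ already accounts for the generator. Hence $A'_K[\varphi]$ must vanish: a cyclic $\cO$-module surjecting onto $A'_K[\varphi]$ with kernel $\cZ$ of rank $1$ forces $A'_K[\varphi]=0$, provided the generator can be taken inside $\cZ$. Justifying this is precisely where the contribution of the decomposition group at $p$, i.e. the trivial zero, enters.

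\textbf{Main obstacle.} The delicate point is this descent, in particular keeping track of the twists $(1)$ versus $(2)$ and of the exact comparison between $S_\theta$ at finite level and $A'_K$. The step where I would spend the most effort is showing that the image of the local Frobenius $\phi$ generates a non-torsion quotient. The plan there is to use that $\Lambda/\zeta$ has $\zeta$ divisible by $X$ exactly once (the Ferrero--Greenberg formula \eqref{eq:ferrero_greenberg}, with $\cL_\varphi\neq0$ by Lemma \ref{lem:linear_independence_L_invariants}). Then $(\Lambda/\zeta)/X\simeq\cO$, which is torsion-free of rank one. Consequently the torsion submodule $A'_K[\varphi]$ of the descended module must vanish.
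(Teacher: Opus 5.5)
Your first step is essentially the paper's argument. Gorensteinness gives freeness of $\HH(1)_{\gm^*}$ over $\T^*$ (Ohta). Lafferty's isomorphism $\T/I\simeq\Lambda/(\zeta_{\chi^{-1}})$ then identifies the Eisenstein quotient with a cyclic module $\Lambda/(\xi)$. Comparing characteristic ideals, which the paper computes from $0\to X'_\chi(1)\to S_\theta\to(\Lambda/(X))(1)\to 0$ and Mazur--Wiles, shows that $\ker\varpi_\theta$ is a pseudo-null submodule of $\Lambda/(\xi)$, hence zero.

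\textbf{The gap is in the descent to $A'_K[\chi^{\pm1}]$.}

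In $\cX_K[\varphi]$, the group $A'_K[\varphi]$ is a \emph{quotient} of a rank-one module by $\cZ$, so cyclicity cannot force it to vanish. If $\gcd(x_\varphi,y_\varphi)$ is a unit, then $\cZ\simeq\cO$. Nothing in Lemma \ref{lem:description_Xvarphi} then prevents $\cX_K[\varphi]\simeq\cO$ with $\cZ=p\cO$ and $A'_K[\varphi]\simeq\cO/(p)$.

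Your ``main obstacle'' argument treats $A'_K[\varphi]$ as the torsion submodule of a torsion-free module, which it is not in this picture. Moreover, $(\Lambda/(\zeta))/X\simeq\cO$ holds merely because $X\mid\zeta$, so the simple-zero input does no work.

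Descending honestly through the exact sequence above does not rescue this either. Cyclicity of $S_\theta$ makes $X'_\chi$ cyclic, with finite cyclic coinvariants, and this at best shows that $A'_K[\chi]$ is cyclic.

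Also, $S_\theta$ is not literally a twist of an eigenspace of $\mathfrak{X}$ or of $X'$. It is (a twist of) the Iwasawa adjoint $E^1(\mathfrak{X}_{\theta^{-1}\omega^2}(-2))$. Only an identification of that precise kind, not a pseudo-isomorphism, transports cyclicity.

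\textbf{The missing idea} is a module whose $\Gamma$-(co)invariants contain $A'_K[\chi]$ as a \emph{direct summand} next to a free rank-one piece.

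The paper takes $A=\varinjlim_r A_{\Q(\zeta_{Np^r})}$ and uses Federer--Gross: $A_\chi^\Gamma\simeq E/\cO\times A'_K[\chi]$. The factor $E/\cO$ is where the trivial zero genuinely enters. Dually, $((A_\chi)^\vee)_\Gamma\simeq\cO\times A'_K[\chi]^\vee$, so $\Lambda$-cyclicity of $(A_\chi)^\vee$ forces $A'_K[\chi]=0$.

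Cyclicity is transferred from $S_\theta$ through $(A_\chi)^\vee\simeq E^1(S_\theta(-1))$. This isomorphism combines four ingredients:
\begin{itemize}
\item Jannsen's duality $S_\theta\simeq E^1(\mathfrak{X}_{\theta^{-1}\omega^2}(-2))$;
\item the reflection isomorphism $\mathfrak{X}_{\theta^{-1}\omega^2}(-1)\simeq(A_\chi)^\vee$;
\item the absence of nonzero finite submodules, so the double adjoint is the identity;
\item the fact that the adjoint of a cyclic module is cyclic.
\end{itemize}
The case of $\chi^{-1}$ then follows from Remark \ref{rem:invariance_twist_chi}, as you say.
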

\begin{proof}
	Assume $\T$ is Gorenstein; we will prove that $A'_K[\chi]=\{0\}$ and $\varpi_{\chi\omega}$ is injective. By Remark \ref{rem:invariance_twist_chi}, the Hecke algebra $\T^\#$ constructed like $\T$ with $\chi$ replaced with $\chi^{-1}$ is isomorphic to $\T$, hence Gorenstein, so our argument applied to $\T^\#$ will prove that $A'_K[\chi^{-1}]=\{0\}$ and $\varpi_{\chi^{-1}\omega}$ is injective as well. This will prove Conjecture \ref{conj:converse_gorenstein} for $\chi$ and $\chi^{-1}$.
	
	Recall that the Hecke algebras $\T(N)$ and $\T^*(N)$ are canonically isomorphic, and $T_\ell$ (resp. $U_q$, $\langle \ell\rangle$) is identified with $T^*_\ell$ (resp. $U^*_q$, $\langle \ell\rangle^{-1}$). In particular, $\T\simeq \T^*$, where $\T^*$ is the localization at the maximal ideal $\gm^*$ of $\T^*(N) \otimes \cO$ corresponding to $\gm$. We remark that the $\Lambda$-module structure on $\T^*$ is a twist by $\iota$ of that of $\T$, where $\iota \colon \Lambda \to \Lambda$ is the automorphism sending $[a]$ to $[a^{-1}]$ for $a\in \Gamma$. 
	
	The ideal $\gm^*$ is the unique maximal ideal of $\T^*(N) \otimes \cO$ containing $I^*$. Therefore, we have \[\HH(1)^+_{\gm^*}/I^*\HH(1)^+_{\gm^*}\simeq \HH(1)^+_\chi/I^*\HH(1)^+_\chi=\HH^-_\theta(1)/I^*\HH^-_\theta(1)\]
	as $\T^*/I^*$-modules. On the other hand, $ \T^*$ being Gorenstein implies that $\HH(1)_{\gm^*}$ is free of rank $2$ over $\T^*$; this follows from Ohta's construction of an Eichler--Shimura isomorphism and a Poincaré duality for cuspidal Hida families (see e.g. \cite[\S7.1]{BMP}). Hence, $\HH(1)_{\gm^*}^+\simeq \T^*(1)$, and so \[\HH^-_\theta(1)/I^*\HH^-_\theta(1)\simeq (\T^*/I^*)(1)\]
	as $\Lambda$-modules. 
	
	We remark that $I=\iota(I^*)$ is the image in $\T$ of the annihilator of the Eisenstein family denoted $\cE_{\chi,1}$ in the proof of Theorem \ref{thm:congruence_bound}(2). Hence, by \cite[Proposition 13]{lafferty}, the system of Hecke eigenvalues of $\cE_{\chi,1}$ induces an isomorphism $\T/I\simeq \Lambda/(\zeta_{\chi^{-1}})$ of $\Lambda$-modules, where $\zeta_{\chi^{-1}}\in\Lambda$ is the Iwasawa power series attached to $\chi^{-1}$ as in \S\ref{sec:construction_congruences}. This shows that 
	\[\HH^-_\theta(1)/I\HH^-_\theta(1)\simeq (\Lambda/(\zeta_{\chi^{-1}}^\iota))(1)=\Lambda/(\xi) \]
	as $\Lambda$-modules, where $\xi(X)=\zeta_{\chi^{-1}}^\iota((1+p)(1+X)-1)$.
	
	We now turn to the description of $S_\theta$, where $\theta=\chi\omega$. By \cite[(3.3)]{shihwangTAMS}, there exists a short exact sequence 
	\begin{equation}\label{eq:Iwasawa_ses}
		0 \to X'_\chi(1) \to S_\theta \to (\Lambda/(X))(1) \to 0 
	\end{equation}
	of $\Lambda$-modules, where $X'_\chi=\varprojlim_r A'_{\Q(\zeta_{Np^r})}[\chi]$ is the $\chi$-component of the $p$-split unramified Iwasawa module for $\Q(\zeta_{Np})$. Using the multiplicativity of characteristic ideals, Lemma 3.1 of \textit{loc. cit.} and  Mazur--Wiles' proof of the Iwasawa main conjecture \cite{mazurwiles} show that $\char_{\Lambda} X'_\chi = (\zeta_{\chi^{-1}}^\iota(X)/X)$, so \eqref{eq:Iwasawa_ses} yields
	\[\char_\Lambda S_\theta= (\xi)=\char_\Lambda\HH^-_\theta(1)/I\HH^-_\theta(1).\]
	Since $\varpi_\theta$ is surjective by assumption, its kernel must be a pseudo-null $\Lambda$-submodule of the cyclic module $\HH^-_\theta(1)/I\HH^-_\theta(1)\simeq \Lambda/(\xi)$, hence it is zero. This proves that $\varpi_\theta$ is an isomorphism and, in particular, $S_\theta$ is $\Lambda$-cyclic.
	
	To show that $A'_K[\chi]=\{0\}$, consider the Iwasawa module $A=\varinjlim_r A_{\Q(\zeta_{Np^r})}$. Then $A_\chi$ is a co-finitely generated cotorsion $\Lambda$-module, and taking the $\Gamma$-invariants yields 
	\[A_\chi^\Gamma\simeq E/\cO \times A'_K[\chi] \]
	by \cite[Proposition (3.4).3]{federergross}. Therefore, it is enough to prove that $(A_\chi)^\vee$ is cyclic over $\Lambda$, where $M^\vee=\Hom_{\cO}(M,E/\cO)$ is the Pontryagin dual of $M$, endowed with the $\Gamma$-action given by $\gamma . h(x)=h(\iota(\gamma)x)$.
	
	Let $E^1(M)=\operatorname{Ext}_\Lambda^1(M,\Lambda)$ be the Iwasawa adjoint of a $\Lambda$-module $M$. We refer to \cite[\S4]{wake} for some of the properties of $E^1$ we will make use of. We claim that 
	\[(A_\chi)^\vee \simeq E^1(S_\theta(-1)).\]
	Since the adjoint  of a cyclic module is cyclic, this will prove that $(A_\chi)^\vee$ is cyclic (and, in fact, isomorphic to $\Lambda/(\zeta_{\chi^{-1}})$).
	
	Let $\Q(\zeta_{Np^\infty})=\cup_r \Q(\zeta_{Np^r})$ and let $\mathfrak{X}$ be the Galois group of the maximal abelian pro-$p$ extension of $\Q(\zeta_{Np^\infty})$ which is unramified outside of $p$. By the proof of \cite[Corollary 4.4]{wake} (which is based on \cite[Corollary 4]{jannseniwasawaadjoints} applied to the discrete module $\Qp/\Zp(2)$), we have 
	\[S_\theta\simeq E^1(\mathfrak{X})(2)_\theta=E^1(\mathfrak{X}_{\theta^{-1}\omega^2}(-2)) \]
	as $\Lambda$-modules.
	To conclude, we use that, by \cite[Propositions 3.4.7 and 3.2.14]{sharifiiwasawa}, $\mathfrak{X}_{\theta^{-1}\omega^2}(-1)\simeq (A_\chi)^\vee$, and these $\Lambda$-modules are torsion and have no nontrivial finite $\Lambda$-submodules. Therefore, they are isomorphic to their double adjoint by \cite[Proposition 4.2(5)]{wake}, yielding
	\[E^1(S_\theta(-1))\simeq E^1(E^1(\mathfrak{X}_{\theta^{-1}\omega^2}(-1)))\simeq \mathfrak{X}_{\theta^{-1}\omega^2}(-1)\simeq (A_\chi)^\vee. \]
	This ends the proof of the theorem.
\end{proof}

\section{Locally split Galois representations}\label{sec:CG}
The focus of this section is on a certain closed subscheme $\Spec(\T^{\CM})$ of $\Spec(\T)$ capturing modular forms with complex conjugation, and its comparison with other loci capturing split Galois representations via the identification $R^\ord=\T$ of Theorem \ref{thm:modularity}.

Throughout this section, $\chi$ is assumed to be quadratic. Then $\cO=\Zp$, $K$ is an imaginary quadratic field, $N=|\mathrm{Disc}(K)|$, and $p$ splits as $\gp_K\ob{\gp}_K$ in $K$.

\subsection{CM forms}\label{sec:CM}
Recall that a $p$-ordinary overconvergent cuspidal eigenform $g$ has complex multiplication by $K$ if $a_\ell(g)=\chi(\ell)a_\ell(g)$ for all but finitely many primes $\ell$. This amounts to the property that $p$-adic Galois representation $\rho \colon G_\Q \to \GL_2(\ob{\Q}_p)$ is isomorphic to the induction $\Ind^K_{\Q} \nu$, where $\nu \colon G_K \to \ob{\Q}_p^\times$ is a character unramified at $\gp_K$. 

If $g$ is contributing to $\T$, then $\rho$ is minimally ramified in the sense of Definition \ref{def:ord}, so $\nu$ is unramified outside $\ob{\gp}_K$. Moreover, $\nu$ factors through a pro-$p$ quotient of $G_K$, since $\nu$ is residually trivial. In other words, the CM character $\nu$ of $g$ must factor through $\cX_K'$, the Galois group of the maximal abelian pro-$p$ extension of $K$ unramified outside $\ob{\gp}_K$. Furthermore,
\[g(q)=\sum_{\substack{0\neq\ga \subseteq \cO \\ \ob{\gp}_K\nmid \ga}} \nu(\ga)q^{\mathbf{N}(\ga)}\]
where $\mathbf{N}$ is the norm map of $K/\Q$. 

Let $\T^{\CM}:=\Zp\lsem \cX_K'\rsem$ and consider the universal character
\[\boldsymbol{\nu} \colon G_K \twoheadrightarrow \cX_K' \hookrightarrow (\T^{\CM})^\times. \]
We endow $\T^{\CM}$ with the structure of a $\Lambda$-algebra induced by the natural map $\Gamma \to \cX_K \to \cX_K'$.
The universal CM form 
\[\mathbf{G}(q)=\sum_{\substack{0\neq\ga \subseteq \cO \\ \ob{\gp}_K\nmid \ga}} \boldsymbol{\nu}(\ga)q^{\mathbf{N}(\ga)}\]
is a $\T^{\CM}$-adic cuspidal eigenform, which induces a natural surjective map $\T \twoheadrightarrow \T^{\CM}$ of $\Lambda$-algebras. Note that the specialization of $\boldsymbol{\nu}$ at the trivial character specializes $\mathbf{G}$ to $f$.

The next proposition characterizes Galois representations attached to CM forms in terms of their splitting behavior on $G_{\Qp}$. For $k\in\Z$, let $\T^{\CM}_k=\T^{\CM}/(1+X-(1+p)^{k-1})$ be the weight $k$ specialization of $\T^{\CM}$. Recall from  \S\ref{sec:split_CM} that $R^{1-\spl}$ represents $\cD^{1-\spl}$.

\begin{proposition}\label{prop:modularity_1_spl}\ 
	\begin{enumerate}
		\item There is a natural surjection $R^{1-\spl} \twoheadrightarrow \T^{\CM}$ in $\mathcal{CNL}_{\cO}$, which is an isomorphism if and only if $A'_K=\{0\}$. 
		\item Assume $A'_K=\{0\}$. Then $\T=\T^{\CM}$ if and only if $\lambda(u_\chi)$ is a $p$-adic unit.
		\item  Assume $A'_K\neq\{0\}$. Then $\T\neq \T^{\CM}$. 
	\end{enumerate}
\end{proposition}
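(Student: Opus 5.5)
We first check that the universal CM family lies in the $1$-split locus, which gives the map $R^{1-\spl}\to\T^{\CM}$. The Galois representation of $\mathbf{G}$ is $\rho_{\mathbf{G}}=\Ind_K^{\Q}\boldsymbol{\nu}$. In the dihedral basis we have $\rho_{\mathbf{G}}(\tau)=(\begin{smallmatrix}0&1\\1&0\end{smallmatrix})$, and $\rho_{\mathbf{G}|G_{\Qp}}=\mathrm{diag}(\boldsymbol{\nu},\boldsymbol{\nu}^\tau)$, where $\boldsymbol{\nu}^\tau$ is unramified because $\boldsymbol{\nu}$ is unramified at $\gp_K$. Conjugating by $P_1^{-1}$ puts $\rho_{\mathbf{G}}$ in the Eisenstein basis, with ordinary line $\langle(\begin{smallmatrix}1\\1\end{smallmatrix})\rangle$; by \eqref{eq:conjugation_tau_split} this is precisely the $1$-split normalization. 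The representation is minimally ramified, since $\boldsymbol{\nu}$ is unramified outside $\ob{\gp}_K$, and it is residually $1\oplus\ob{\chi}$. It therefore defines a $\T^{\CM}$-point of $\cD^{1-\spl}$. Surjectivity follows as in Proposition \ref{prop:surjection_R_to_T}, because $\T^{\CM}=\Zp\lsem\cX'_K\rsem$ is topologically generated by the values $\boldsymbol{\nu}(g)$, and these are recovered as the eigenvalues of $\rho_{|G_{\Qp}}$ on the split lines.

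For the isomorphism in (1), we argue as follows. Given a $1$-split point $(\rho,L)\in\cD^{1-\spl}(A)$, the representation $\rho^{\spl}=P_1^{-1}\rho P_1$ has $\rho^{\spl}(\tau)$ equal to the antidiagonal swap. We would like to show that $\rho_{|G_K}$ is diagonal in this basis, so that $\rho=\Ind\nu$ with $\nu$ factoring through $\cX'_K$. That gives the inverse map $\T^{\CM}\to R^{1-\spl}$.

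A cleaner route, when $A'_K=\{0\}$, is to compare tangent spaces. Proposition \ref{prop:tangent_space_split} with $\mu=1$ gives $\Hom(C^1(\pi_1,R_1^{1-\spl}),\rD)\simeq \Zp/(\gcd(0,y,y'))=\Zp/(\ord_p(u_\chi))$. By Lemma \ref{lem:lift_characters}(2), $\T_1^{\CM}=\Zp\lsem\cX'_K\rsem/(\gamma-1)$ is a group ring of the cyclic group $\cX'_K/\langle\gamma\rangle\simeq\Zp/(2y)$. Its augmentation ideal has $\gp/\gp^2\simeq\Zp/(y)$, and it is a complete intersection $\Zp\lsem t\rsem/((1+t)^{p^{v}}-1)$. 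Lenstra's criterion (Theorem \ref{thm:numerical_criterion}) needs $\length C^0$ for $\T_1^{\CM}$. For a group ring of a cyclic $p$-group of order $p^v$, the annihilator of the augmentation ideal is the norm element, so $\length C^0=v=\length C^1$. Hence $R_1^{1-\spl}\simeq\T_1^{\CM}$. We then lift to $R^{1-\spl}\simeq\T^{\CM}$ by the Nakayama argument used in Theorem \ref{thm:modularity}, which applies because $\T^{\CM}$ is $\Lambda$-flat.

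Conversely, if $A'_K\neq\{0\}$, then the tangent space of $R_1^{1-\spl}$ contains $A'_K\times A'_K$ by Proposition \ref{prop:tangent_space_split}. That of $\T_1^{\CM}$ is the cyclic module $\gp/\gp^2$ of the cyclic-type group ring, of rank at most one since $\cX'_K/\langle\gamma\rangle$ modulo $\phi$ is $A'_K$. Comparing minimal numbers of generators, or lengths, then shows the map is not injective.

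For (2), we combine Theorem \ref{thm:modularity}, which gives $R^\ord\simeq\T$, with (1). We have $\T=\T^{\CM}$ iff $R^\ord_1\to R^{1-\spl}_1$ is an isomorphism, and both sides are complete intersections, so this is iff the tangent lengths agree. By Proposition \ref{prop:length_C1_R} in the quadratic case, $\length C^1(R_1^\ord)=v_p(x)+v_p(y)$, using $\cL_\chi+\cL_{\chi^{-1}}=2\cL_\chi$ and $y=y'$. The CM side has length $v_p(y)$. So the two agree iff $v_p(\lambda(u_\chi))=0$. For (3), Theorem \ref{thm:converse_gorenstein} shows that $\T$ is not Gorenstein, whereas $\T^{\CM}=\Zp\lsem\cX'_K\rsem$ is. The latter holds because $\cX'_K\simeq\Zp\times(\text{finite})$ and group rings are Gorenstein. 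Hence $\T\neq\T^{\CM}$.

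The main obstacle is the ``only if'' direction of (1), and specifically the computation of $\T_1^{\CM}$'s tangent space when $A'_K\neq0$. There $\cX'_K$ has torsion and the group ring is not cyclic-generated, so I would verify the generator count directly from Lemma \ref{lem:description_Xvarphi}.
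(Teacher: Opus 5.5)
Your treatment of (1) and (2) follows the paper: realize $\Ind_K^{\Q}\boldsymbol{\nu}$ as a $\T^{\CM}$-point of $\cD^{1-\spl}$, then apply Theorem \ref{thm:numerical_criterion} in weight one against $\T_1^{\CM}$, with Propositions \ref{prop:tangent_space_split} and \ref{prop:length_C1_R} giving the deformation-side lengths. Your explicit $\Lambda$-flatness/Nakayama lift is a detail the paper leaves implicit.

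There are two small slips in the construction of the map.
\begin{itemize}
\item On $G_{\Qp}$ it is $\boldsymbol{\nu}$, not $\boldsymbol{\nu}^\tau$, that is unramified. So you need $\rho^{\CM}_{|G_K}=\mathrm{diag}(\boldsymbol{\nu}^\tau,\boldsymbol{\nu})$ for $\langle(\begin{smallmatrix}1\\1\end{smallmatrix})\rangle$ to be an ordinary line.
\item Eigenvalues on $G_{\Qp}$ only produce $\boldsymbol{\nu}(\phi)$ and $\boldsymbol{\nu}(\ob{\gamma})$, which generate $\T^{\CM}$ only when $A'_K=\{0\}$. For surjectivity in general, note that for $g\in G_K$ the Eisenstein-basis matrix is $\rho(g)=\tfrac12\bigl(\begin{smallmatrix}a+b&a-b\\a-b&a+b\end{smallmatrix}\bigr)$, with $a=\boldsymbol{\nu}^\tau(g)$ and $b=\boldsymbol{\nu}(g)$. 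Hence every $\boldsymbol{\nu}(g)$ lies in the image.
\end{itemize}

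The genuine gap is the ``only if'' half of (1). You assert that the tangent space of $\T_1^{\CM}$ is cyclic. But Lemma \ref{lem:lift_characters} assumes $A'_K=\{0\}$, so it is unavailable exactly in this case.

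In general the weight variable acts through the determinant, i.e.\ $[\gamma]\mapsto[\ob{\gamma}]$; note that $\gamma$ itself is already trivial in $\cX'_K$. By class field theory, $\cX'_K/\langle\ob{\gamma}\rangle=\cX_K/\langle\gamma,\ob{\gamma}\rangle\simeq A_K$. So $\T_1^{\CM}\simeq\Zp[A_K]$ and $\gp/\gp^2\simeq A_K$, which need not be cyclic. For example, take $A_K\simeq(\Z/p\Z)^2$ with $[\gp_K]\neq0$, so that $A'_K\simeq\Z/p\Z$. Then your lower bound ``$A'_K\times A'_K$ sits in the $R$-side'' gives only two generators, no more than $A_K$ has, and the generator count does not conclude.

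The length comparison you mention in passing is the right repair. It is the paper's argument and gives both directions at once, but it needs the correct CM side.
\begin{itemize}
\item For any finite abelian $p$-group, $\Zp[A_K]$ is a complete intersection, and its augmentation ideal has annihilator generated by the norm element. Hence $\length C^0=\length C^1=\length A_K$.
\item On the deformation side, Proposition \ref{prop:tangent_space_split} with $\mu=1$ and $y=y'=\ord_p(u_\chi)$, together with Lemma \ref{lem:calcul_ord}, gives $\length C^1(\pi_1,R^{1-\spl}_1)=2\length A'_K+v_p(y)=\length A_K+\length A'_K$.
\end{itemize}
Theorem \ref{thm:numerical_criterion} then shows that $R^{1-\spl}_1\to\T^{\CM}_1$ is an isomorphism if and only if $A'_K=\{0\}$.

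For (3) your route differs from the paper's and is valid. You combine two facts:
\begin{itemize}
\item Theorem \ref{thm:converse_gorenstein}, proved independently of this proposition, says $\T$ is not Gorenstein.
\item $\T^{\CM}\simeq\Zp\lsem\Zp\times F\rsem$, with $F$ the torsion of $\cX'_K$, is a complete intersection.
\end{itemize}
So the two rings are not even abstractly isomorphic. The paper instead cites \cite[Theorem 3.3(ii)]{betinadimitrovKatz} to place the irregular theta point $\theta^{(p)}$ on a component without CM by $K$.

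Both arguments ultimately rest on Betina--Dimitrov's local analysis at $\theta^{(p)}$. Yours reduces (3) to a result already established plus the Gorensteinness of group rings. The paper's exhibits a non-CM Hida family in $\Spec(\T)$ directly; since $\T$ is reduced, this is what $\T\neq\T^{\CM}$ amounts to anyway.
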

\begin{proof}
	Let us prove (1). We first construct the surjection, and we let $\rho^{\CM}=\Ind_K^\Q \boldsymbol{\nu} \colon G_\Q \to \GL_2(\T^{\CM})$ be the Galois representation attached $\mathbf{G}$, computed in a dihedral basis. Then $\rho^{\CM}(\tau)=(\begin{smallmatrix} 0 & 1 \\ 1 & 0\end{smallmatrix})$, and swapping the two vectors of the basis if necessary, $\rho^{\CM}_{|G_K}=(\begin{smallmatrix} \boldsymbol{\nu}^\tau & 0 \\ 0 & \boldsymbol{\nu}\end{smallmatrix})$. 
	
	We let $\rho = P_1 \cdot \rho^{\CM} \cdot  P_1^{-1}$. Then one readily checks that $\rho(\tau)=(\begin{smallmatrix} -1 & 0\\ 0 & 1\end{smallmatrix})$. Moreover, using that $G_{\Qp} \subset G_K$, the $\T^{\CM}$-line  $\langle(\begin{smallmatrix}1 \\ 1 \end{smallmatrix})\rangle$ is an ordinary line for $\rho$, and in fact, $(\rho,\langle(\begin{smallmatrix}1 \\ 1 \end{smallmatrix})\rangle)\in\cD^{1-\spl}(\T^{\CM})$. 
	
	We deduce the existence of a homomorphism $\varphi \colon R^{1-\spl}\to \T^{\CM}$. By Lemma \ref{lem:lift_characters}(2), $\cX'_K$ is generated as a $\Zp$-module by $\phi$ and $\ob{\gamma}:=\tau\gamma\tau$, so $\T^{\CM}$ is topologically generated as a complete $\Zp$-algebra by $\boldsymbol{\nu}(\phi)$ and $\boldsymbol{\nu}(\ob{\gamma})=\boldsymbol{\nu}^\tau(\gamma)$. These two values are images under $\varphi$ of eigenvalues of $\rho^{1-\spl}(\phi)$ and $\rho^{1-\spl}(\gamma)$ respectively, where $\rho^{1-\spl}$ is the universal $1$-split Galois representation. Hence, $\varphi$ is surjective.
	
	Let $\pi_1 \colon \T^{\CM}_1=\T^{\CM}/(X) \twoheadrightarrow \Zp$ be the morphism attached to $f$. Its associated Galois representation  is the specialization of $\rho^{\CM}$ at the trivial character, \textit{i.e.}, it is given by $\rho_1$ as in \eqref{eq:rho_f}. 
	
	As $X$ corresponds to $[\gamma]-1$ in $\T^{\CM}$, we have $\T_1^{\CM}\simeq \Zp[A_K]$ as augmented $\Zp$-algebra, in the sense that $\pi_1$ is identified with the augmentation morphism $\Zp[A_K] \twoheadrightarrow \Zp$. Moreover, if we decompose $A_K$ as $\prod_{i=1}^n \Z/(p^{\ell_i})$, then $\T_1^{\CM}=\Zp[A_K]\simeq \Zp\lsem t_1,\ldots,t_n\rsem/((1+t_i)^{p^{\ell_i}}-1)_i$ as augmented $\Zp$-algebras. In particular, $\T_1^{\CM}$ is a finite flat local complete intersection over $\Zp$, and 
	\[C^1(\pi_1,\T_1^{\CM})=(t_i)_i / (t_i^2,(1+t_i)^{p^{\ell_i}}-1)_i\simeq A_K.\]
	Applying Wiles' numerical criterion (Theorem \ref{thm:numerical_criterion}) to the identity map on $\T_1^{\CM}$ shows that both $C^0(\pi_1,\T_1^{\CM})$ and $A_K$ have the same length. On the other hand, Proposition \ref{prop:tangent_space_split} and Lemma \ref{lem:calcul_ord} show that $C^1(\pi_1,R^{1-\spl}_1)$ has the same length as $A_K \times A_K'$. Therefore, $\varphi$ cannot be an isomorphism if $A'_K\neq\{0\}$, and it is otherwise by Theorem \ref{thm:numerical_criterion}, hence proving (1).

	To prove (2), we apply Theorem \ref{thm:numerical_criterion} to the surjection $\T_1 \twoheadrightarrow \T_1^{\CM}$ of augmented $\cO$-algebras: it is an isomorphism if and only if  $C^1(\pi_1,\T_1)$ and $A_K$ have the same length. But Theorem \ref{thm:modularity} and its proof, which apply as $\chi$ is quadratic and $A_K'=\{0\}$ by assumption, show that $\length C^1(\pi_1,\T_1) = \length \Zp/(\cL_\chi\cdot L(\chi,0)^2)$. Applying Lemma \ref{lem:calcul_ord} and Theorem \ref{thm:refined_CNF} then proves that this length is $\length \Zp/(\lambda(u_{\chi}))\times A_K$. The equivalence to prove in (2) follows easily.
		
	Assume $A_K'$ is non-trivial. Then we may construct as in the proof of Theorem \ref{thm:converse_gorenstein} a $p$-irregular $p$-stabilized theta series $\theta^{(p)}$ of weight $1$ contributing to $\T$. By \cite[Theorem 3.3(ii)]{betinadimitrovKatz}, $\theta^{(p)}$ belongs to an irreducible component of the eigencurve $\cC$ that does not complex multiplication by $K$. In particular, $\T\neq \T^{\CM}$, which proves (3).
\end{proof}

\subsection{Strong splitness and the Coleman--Greenberg conjecture}
We recall the content of the Coleman--Greenberg conjecture, specialized to the setting of this paper.

\begin{conjecture}\label{conj:CG}
	Let $g$ be a classical eigenform of integer weight $k\geq 2$ whose system of Hecke eigenvalues appears in $\T_k$. If the restriction to $G_{\Qp}$ of the $p$-adic Galois representation attached to $g$ splits as the sum of two characters, then $g$ has complex multiplication.
\end{conjecture}
Assume $A'_K=\{0\}$ and let $\cO'$ be the finite (totally ramified) extension of $\Zp$ generated by the Fourier coefficients of $g$. By Lemma \ref{lem:weak_ordinarity}, 
the Galois representation $\rho$ of $g$ defines an $\cO'$-point of $\cD_k^\ord$. Conjecture \ref{conj:CG} then says that, if $\rho$ contributes to $R^{\mu-\spl}_k$ for some $\mu\in\cO'$, then $g$ contributes to $\T^{\CM}_k$. 

We will employ Galois-theoretic arguments which prove, under certain hypotheses, a strong form of Conjecture \ref{conj:CG}. The main hypothesis, along with $A'_K=\{0\}$, will be the following.

\begin{definition}\label{def:strongly_split}
	Let $\cO'$ be a finite totally ramified extension of $\Zp$ with uniformizer $\varpi$. Fix $n\in \N \cup\{\infty\}$ and put $A=\cO'/(\varpi^n)$, where $\varpi^n=0$ if $n=\infty$. Let $(\rho,L)\in\cD^{\ord}(A)$ be an ordinary Galois representation. Call $\alpha\in 1+\varpi A$ the eigenvalue of the Frobenius element $\phi\in G_\Qp$ acting on the ordinary quotient $V_\rho/L$, and $\beta\in 1+\varpi A$ the value $\det(\rho)(\gamma)$.
	
	We say that $\rho$ is \emph{strongly split} on $G_\Qp$ if it splits on $G_\Qp$ and $\alpha-1$ divides $\beta-1$ in $A$.
\end{definition}
Our convention is that, if $0$ divides $\beta-1$, then $\beta=1$. This definition connects to induced representations in the following way.
\begin{lemma}\label{lem:strongly_split_reducible_case}
	Let $(\rho,L)\in\cD^{\ord}(A)$ with $A$ as in Definition \ref{def:strongly_split}, and assume that $\rho\simeq \Ind_K^\Q\nu$ for some character $\nu \colon \cX'_K \to 1+\varpi A$. Also assume that the valuation $v_p(\beta-1)$ of $\beta-1$ is not of the form $v_p(\zeta-1)$ for some root of unity $\zeta$.
	
	If the $\cL$-invariant $\cL_\chi\in\Qp$ has non-positive $p$-adic valuation, then $\rho$ is strongly split on $G_\Qp$.
\end{lemma}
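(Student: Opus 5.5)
The idea is to make everything explicit using the presentation of $\cX'_K$ in Lemma \ref{lem:lift_characters}(2). Then strong splitness reduces to a comparison of $p$-adic valuations of $\alpha-1$ and $\beta-1$, constrained by the single relation $\ob{\gamma}^{x}=\phi^{2y}$, where $x=\lambda(u_\chi)$ and $y=\ord_p(u_\chi)$.

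\emph{Step 1: splitness and identification of $\alpha,\beta$.} Since $p$ splits in $K$, we have $G_{\Qp}\subset G_K$, so $\rho_{|G_{\Qp}}\simeq \nu_{|G_{\Qp}}\oplus\nu^\tau_{|G_{\Qp}}$, which is split. The character $\nu$ factors through $\cX'_K=\cX_K/\langle\gamma\rangle$, so $\nu(\gamma)=1$. Hence $\nu_{|G_{\Qp}}$ is unramified and is the ordinary quotient character, which gives $\alpha=\nu(\phi)$. Since $\chi(\gamma)=1$, we get $\beta=\det\rho(\gamma)=\nu(\gamma)\nu(\ob{\gamma})=\nu(\ob{\gamma})$. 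I would check that the ordinary line $L$ is the $\nu^\tau$-line, i.e.\ that the quotient is $\nu$, using the generic-position normalisation from the proof of Proposition \ref{prop:modularity_1_spl}. Applying $\nu$ to the relation of Lemma \ref{lem:lift_characters}(2) then yields
\[\beta^{x}=\alpha^{2y}\quad\text{in } A .\]

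\emph{Step 2: valuation bookkeeping.} Write $a=v_p(\alpha-1)$ and $b=v_p(\beta-1)$, computed in $\cO'$ after lifting. The case $n<\infty$ needs care: I would work with representatives in $\cO'$, or first treat the case where $\alpha-1$ or $\beta-1$ is $0$ in $A$. For $z\in 1+\varpi\cO'$ and $k\ge0$, set $g_k(v(z-1)):=v(z^{p^k}-1)$. The standard estimate $v(z^p-1)=\min(v(z-1)+1,\,p\,v(z-1))$ holds away from the critical values $v(z-1)=1/(p^{j}(p-1))$, which are exactly the values $v(\zeta-1)$ for $\zeta$ a $p$-power root of unity. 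Away from these values, the $g_k$ are well-defined and strictly increasing functions, and $g_k\ge g_j$ pointwise for $k\ge j$. Since $2$ and prime-to-$p$ parts of $x,y$ do not change valuations, we get
\[g_{v_p(x)}(b)=v_p(\beta^x-1)=v_p(\alpha^{2y}-1)=g_{v_p(y)}(a).\]

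\emph{Step 3: conclusion.} For quadratic $\chi$ we have $\cL_\chi=-x/y$, so $v_p(\cL_\chi)\le0$ means $v_p(x)\le v_p(y)$. Then $g_{v_p(x)}(b)=g_{v_p(y)}(a)\ge g_{v_p(x)}(a)$. Strict monotonicity of $g_{v_p(x)}$ at $b$ gives $b\ge a$, i.e.\ $\alpha-1\mid\beta-1$; this is where the hypothesis that $b$ avoids the values $v_p(\zeta-1)$ is used. When $\alpha=1$ in $A$, the relation forces $\beta^x=1$, and the same monotonicity argument should give $\beta=1$, in accordance with the convention.

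The main obstacle is Step 2. Specifically, one must make the monotonicity of $z\mapsto v(z^{p^k}-1)$ rigorous near the critical valuations, and handle the truncation modulo $\varpi^n$, where valuations are only known up to $n$. I would first try to reduce to $n=\infty$ by lifting $\nu$, or argue directly with the ultrametric expansion of $(1+t)^{p^k}-1$.
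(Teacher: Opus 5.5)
Your Step 1 matches the paper's proof. There too, $\alpha=\nu(\phi)$ and $\beta=\nu(\ob{\gamma})$ are read off from the $1$-split shape of $\Ind_K^\Q\nu$, the relation $\beta^x=\alpha^{2y}$ comes from Lemma \ref{lem:lift_characters}(2), and $v_p(\cL_\chi)\le0$ becomes $x\mid y$ in $\Zp$.

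\textbf{Where the routes diverge.} The paper never analyses $v_p(z^{p^k}-1)$. Since $2y/x\in\Zp$, it sets $\zeta=\beta\alpha^{-2y/x}$, so $\zeta^x=1$. It then combines $v_p(\beta-\zeta)=v_p(\alpha^{2y/x}-1)\ge v_p(\alpha-1)$ with the ultrametric equality $v_p(\beta-\zeta)=\min(v_p(\beta-1),v_p(\zeta-1))$; this equality is the only place the hypothesis on $\beta$ is used. You instead take valuations of both sides and use the growth of $z\mapsto z^{p^k}$.

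\textbf{The case $n=\infty$.} Your argument works, with one repair. The value $a=v_p(\alpha-1)$ is not assumed non-critical, so $g_k(a)$ need not be well defined. Use instead the generic function $G_k$, the $k$-fold iterate of $s\mapsto\min(ps,s+1)$. It is continuous and strictly increasing, and $v_p(z^{p^k}-1)\ge G_k(v_p(z-1))$ always, with equality when $v_p(z-1)$ is non-critical. With $k=v_p(x)\le v_p(y)$, if $b<a$ then
\[
v_p(\alpha^{p^k}-1)\ge G_k(a)>G_k(b)=v_p(\beta^{p^k}-1)=v_p(\alpha^{2y}-1)\ge v_p(\alpha^{p^k}-1),
\]
a contradiction. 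You also need $x\neq0$, which holds because $\cL_\chi\neq0$. Your route shows exactly which values must be avoided, namely $v_p(\zeta_{p^j}-1)$ for $1\le j\le v_p(x)$. This is the same set the paper uses implicitly, since its $\zeta$ has order dividing $p^{v_p(x)}$. The paper's route is shorter and never leaves $A$.

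\textbf{The case $n<\infty$.} This is the part you leave open, and lifting $\nu$ to characteristic zero will not work. Under your reading of the hypothesis (only the values $v_p(\zeta_{p^j}-1)$ excluded), the relation does not imply the conclusion. Suppose $1\le v_p(x)<n$ and take $A=\Z/p^n\Z$, $\alpha=1$, $\beta=1+p^{\,n-v_p(x)}$.
\begin{itemize}
\item These satisfy $\beta^x=\alpha^{2y}$.
\item $v_p(\beta-1)\ge1$ is non-critical.
\item Yet $\alpha-1\nmid\beta-1$.
\end{itemize}
In particular, your claim that $\alpha=1$ forces $\beta=1$ fails for finite $n$.

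The paper's argument goes through in $A$ only because it applies the hypothesis directly to $\zeta=\beta\alpha^{-2y/x}$, a root of unity of $A$. Read that way, the case $n<\infty$ is vacuous, since $\beta$ is itself a root of unity of the finite ring $A$. So the substantive case is $n=\infty$, which your argument handles.
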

\begin{proof}
	Without loss of generality, we assume $\rho$ is computed in the Eisenstein basis. The proof of Proposition \ref{prop:modularity_1_spl}(1) shows that $\rho$ is $1$-split, and $P_1^{-1}\cdot \rho \cdot P_1=(\begin{smallmatrix}
		\nu^\tau & 0 \\ 0 & \nu	\end{smallmatrix})$ on $G_K$. In particular, $\alpha=\nu(\phi)$, and $\beta=\nu^\tau(\gamma)=\nu(\ob{\gamma})$. By Lemma \ref{lem:lift_characters}(2), $\beta^x=\alpha^{2y}$, where $x=\lambda(u_{\chi})$ and $y=\ord_p(u_{\chi})$. As $\cL_\chi=-x/y$, the assumption on the $\cL$-invariant then says that $x$ divides $y$ in $\Zp$.
		
		Write $\beta$ as $\zeta\cdot \alpha^{2y/x}$ for some root of unity $\zeta$. We note that 
		\[v_p(\beta-\zeta)=v_p(\alpha^{2y/x}-1)\geq v_p(\alpha-1).\]
		On the other hand, $v_p(\beta-\zeta) = \min(v_p(\beta-1),v_p(\zeta-1))$ by our assumption on $v_p(\beta-1)$. Hence, $v_p(\beta-1)\geq v_p(\alpha-1)$, as wanted.
\end{proof}

We now state and prove the main theorem of this section.

\begin{theorem}\label{thm:CG}
	Assume $\chi^2=1$ and $A'_K=\{0\}$. Let $\cO'\in\mathcal{CNL}_{\Zp}$ be a finite extension of $\Zp$ with uniformizer $\varpi$. Fix $n\in \N \cup\{\infty\}$ and let $A=\cO'/(\varpi^n)$, where $\varpi^n=0$ if $n=\infty$. Finally let $(\rho,L)\in\cD^{\ord}(A)$.

	If $\rho$ is strongly split on $G_\Qp$, then $\rho(G_K)$ is abelian.
\end{theorem}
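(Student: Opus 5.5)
Since $\rho_{|G_{\Qp}}$ is split and $A$ is a quotient of $\cO'$, there is some $\mu\in\cO'$ for which $\rho$ is $\mu$-split (remark after Definition \ref{def:split}), so $(\rho,L)$ is an $A$-point of $\cD^{\mu-\spl}$. The plan is to show that strong splitness forces $\rho$ to also be $1$-split. Then $(\rho,L)$ is an $A$-point of $\cD^{1-\spl}$. By Proposition \ref{prop:modularity_1_spl}(1), which applies because $A'_K=\{0\}$, we have $R^{1-\spl}\simeq\T^{\CM}=\Zp\lsem\cX'_K\rsem$, and the universal $1$-split representation is $P_1\,\Ind_K^\Q\boldsymbol{\nu}\,P_1^{-1}$. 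Hence $\rho$ is a specialization of $\Ind_K^\Q\boldsymbol\nu$, and $\rho_{|G_K}$ is conjugate to $\mathrm{diag}(\nu^\tau,\nu)$, which has abelian image.

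The core step is comparing $\mu$ with $1$. Write $\rho^{\ord}_{|G_{\Qp}}=(\begin{smallmatrix}\eta_1&\kappa\\0&\eta_2\end{smallmatrix})$, with $\eta_2$ unramified, $\eta_2(\phi)=\alpha$ and $\eta_1\eta_2=\det\rho$. The $\mu$-split condition reads $2\kappa=\mu(\eta_1-\eta_2)$. I would then compute the entries of $\rho$ in the Eisenstein basis on $G_{\Qp}$ in terms of $\eta_1,\eta_2,\mu$. For this, conjugate $\mathrm{diag}(\eta_1,\eta_2)$ by $P_\mu$, so that the entries $b,c$ become explicit multiples of $\eta_1-\eta_2$ with coefficients depending on $\mu$.

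Next, use the global structure. Following the proof of Lemma \ref{lem:weak_ordinarity}, the $1$-cocycle-type functions $b$ and $c$ restricted to $G_K$ factor, modulo suitable ideals, through $\cX_K[\chi^{\pm1}]$. By Lemma \ref{lem:description_Xvarphi} with $A'_K=0$, the module $\cX_K[\chi]$ is generated by $\gamma$ and $\phi$, subject only to the relation $x\gamma+y\phi=0$. Evaluating this relation at $\gamma$ and $\phi$ gives a relation of the shape $(\mu-1)\cdot(\text{something involving }\alpha-1\text{ and }\beta-1)=0$ in $A$. Roughly, $(\eta_1-\eta_2)(\phi)=\beta'\alpha^{-1}-\alpha$ relates to $\alpha-1$, and $(\eta_1-\eta_2)(\gamma)=\beta-1$.

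The condition $(\alpha-1)\mid(\beta-1)$ should make $\alpha-1$ the generator of the ideal generated by $(\eta_1-\eta_2)(G_{\Qp})$. This would let us cancel it and deduce that $\mu\equiv 1$ modulo the annihilator of that ideal, which is exactly what is needed for the $\mu$-split and $1$-split conditions to coincide. This mirrors the tangent space computation in Proposition \ref{prop:tangent_space_split}, where the constraint $(\mu-1)b(\phi)=0$ appears.

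I expect the main obstacle to be this cancellation in a non-reduced Artinian ring $A$, where one cannot simply divide. The case $\alpha=1$ (so $\beta=1$ by the convention) is degenerate: $\eta_1=\eta_2$, $\rho_{|G_{\Qp}}$ is scalar, and every $\mu$ works, so I would pick $\mu=1$ directly. In general, my first attempt is an induction on $n$, working modulo $\varpi^{m}$ for increasing $m$ and lifting the identity of the $1$-split condition step by step, with $n=\infty$ handled by passing to the limit. If the global relation does not propagate cleanly, I would instead argue on the whole deformation: show that $R^{\mu-\spl}\otimes A$ already factors through $R^{1-\spl}$ once the strong splitness identity $\beta-1\in(\alpha-1)$ is imposed.
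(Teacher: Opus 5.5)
Your reduction is sound and matches the paper's own motivation: if $\rho$ is $1$-split, Proposition \ref{prop:modularity_1_spl}(1) finishes the proof. Writing $\nu=\mu-1$ and $\pi=\alpha-1$, and using $\pi\mid\beta-1$, being $1$-split amounts to $\nu\pi=0$ in $A$.

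\textbf{The relation you use does not see $\mu$.} The gap is in the core step. In the Eisenstein basis you have $b=\tfrac{\mu}{2}(\eta_1-\eta_2)$ and $c=\tfrac{2-\mu}{2}(\eta_1-\eta_2)$ on $G_{\Qp}$. Feeding the relation $x\gamma+y\phi=0$ of $\cX_K[\chi]$ into $b$ and $c$ gives, to first order, $\mu\,(x(\beta-1)-2y(\alpha-1))\approx 0$ and $(2-\mu)(x(\beta-1)-2y(\alpha-1))\approx 0$. Since $\mu$ and $2-\mu$ generate the unit ideal, this is just $\beta^x\approx\alpha^{2y}$. That relation holds for every CM representation (Lemma \ref{lem:lift_characters}(2)). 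No factor $\mu-1$ appears, so there is nothing to cancel.

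The constraint on $\mu$ lives instead in the diagonal entries, i.e.\ in the $\tau$-invariant part of $\cX_K$. It comes from the relation $\phi\ob{\phi}=1$ in $\cX_K$: the image of $\phi$ in $\Gal(\Q_{\cyc}/\Q)$ is trivial because $\varepsilon_{\cyc}(\phi)=1$. In the $\mu$-split basis, put $g=\rho(\phi)=\mathrm{diag}(\alpha^{-1},\alpha)$ and $t=\rho(\tau)$. Then $\rho(\phi\ob{\phi})=gtgt$ has lower-left entry $-\nu(\alpha^2-1)$, which is a unit times $\nu\pi$.

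\textbf{The error terms are not controlled.} Even with the right relation, you cannot cancel in $A$ as you propose. Let $\Gamma_K$ be the Galois group of the maximal pro-$p$ extension of $K$ unramified outside $p$, and $\Gamma'_K$ its closed derived subgroup. Since $\phi\ob{\phi}$ is only trivial modulo $\Gamma'_K$, you only learn that $\nu\pi$ lies in the ideal generated by the entries of $\rho(\Gamma'_K)-1$. A crude bound such as $\rho(\Gamma'_K)\subseteq 1+\pi^2\rM$ gives $\nu\pi\in(\pi^2)$, and an induction on $n$ does not improve this by itself.

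The paper's mechanism runs as follows.
\begin{enumerate}
\item Since $A'_K=0$, $\Gamma_K$ is topologically generated by $\gamma,\ob{\gamma},\phi$ (Lemma \ref{lem:topological_generators}).
\item Strong splitness puts the images of these generators in $1+\pi\rM$, so $P'=\rho(\Gamma'_K)\subseteq 1+\pi^2\rM$.
\item The commutators $[d,td't]$ generating $P'$, with $d,d'$ diagonal powers of $g$ and of (a rescaling of) $\rho(\gamma)$, are congruent to $1$ modulo $\pi^2\nu\rM+\pi^{2\ell}\rM$ whenever $P'\subseteq 1+\pi^\ell\rM$. They carry a factor $\nu$ because for $\nu=0$ the matrix $t$ normalizes the diagonal torus.
\item Hence $P'\subseteq 1+\pi^\ell\rM$ gives $\pi\nu\equiv c\,\pi^2\nu \bmod \pi^{2\ell}$, so $\pi^{2\ell}\mid \pi\nu$ and $P'\subseteq 1+\pi^{2\ell}\rM$.
\end{enumerate}
Iterating yields $P'=\{1\}$ directly, with $\pi\nu=0$ as a by-product, and without needing Proposition \ref{prop:modularity_1_spl}. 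This self-improving estimate, together with the correct choice of relation, is what your proposal is missing.
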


We henceforth assume $A'_K=\{0\}$ and we fix $(\rho,L)\in\cD^{\ord}(A)$ that is strongly split on $G_\Qp$ as in Theorem \ref{thm:CG}. Since $A$ is a quotient of $\cO'$, the discussion following Definition \ref{def:split} shows the existence of $\mu\in\cO'$ such that $\rho$ is $\mu$-split.

If we knew that $\mu=1$, then the result would immediately follow from Proposition \ref{prop:modularity_1_spl}(1). Our proof consists in showing that $\mu=1$ by successive approximations, unless $\rho\sim\rho_1=(\begin{smallmatrix}
	\chi & 0 \\ 0 & 1
\end{smallmatrix})$, in which case $\rho(G_K)$ is clearly abelian.

The following additional notation will be in force. Unlike in the rest of the paper, we will assume $\rho$ is computed in the $\mu$-split basis: $\rho=P^{-1}_\mu\cdot \rho^\textrm{Eis} \cdot P_\mu$, where $\rho^\textrm{Eis} \colon G_\Q \to \GL_2(A)$ is our representation computed in the Eisenstein basis. Recall from \eqref{eq:conjugation_tau_split} the simple identity 
\begin{equation}\label{eq:value_rho_tau}
t:=\rho(\tau)=\begin{pmatrix}
	\mu-1 & \mu(2-\mu) \\ 1 & 1-\mu
\end{pmatrix}.
\end{equation}
Let $\Gamma_K$ be the Galois group of the maximal pro-$p$ extension of $K$ unramified outside $p$, and let $\Gamma'_K=\ob{[\Gamma,\Gamma]}$ be the closure of its derived subgroup. As $\rho$ is minimally ramified and $\rho_{|G_K}$ is trivial modulo $\varpi$, $\rho_{|G_K}$ factors through $\Gamma_K$. We see our distinguished elements $\phi,\gamma\in G_{\Qp}$ as elements of $\Gamma_K$; we also let $\ob{\gamma}=\tau\gamma\tau$ and $\ob{\phi}=\tau\phi\tau$.

Let $x=\lambda(u_\chi)\in\Zp$ and $y=\ord_p(u_\chi)\in\Zp$, where $u_\chi$ is any normalized Gross-Stark unit. We start with the following useful observation.

\begin{lemma}\label{lem:topological_generators}
	$\Gamma_K$ is topologically generated by $\gamma,\ob{\gamma},\phi$, and we have the relations $\gamma^x\ob{\gamma}^{-x}\phi^{2y}\in\Gamma'_K$, and $\phi\ob{\phi}\in\Gamma_K'$. 
\end{lemma}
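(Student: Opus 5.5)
The plan is to pass to the abelianization and apply Lemma \ref{lem:lift_characters}(1), which describes $\cX_K$ under $A'_K=\{0\}$. The first step is to identify $\Gamma_K^{\mathrm{ab}}=\Gamma_K/\Gamma'_K$ with $\cX_K$. Indeed, $\Gamma_K^{\mathrm{ab}}$ is the Galois group of the maximal abelian pro-$p$ extension of $K$ unramified outside $p$. Since $\cO=\Zp$ when $\chi$ is quadratic, this group is exactly $\cX_K$. Under this identification, the images of $\gamma,\ob{\gamma},\phi,\ob{\phi}$ in $\Gamma_K^{\mathrm{ab}}$ are the elements with the same names in Lemma \ref{lem:lift_characters}. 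That lemma tells us that $\cX_K$ is generated by $\gamma,\ob{\gamma},\phi$, with the relations $\phi\ob{\phi}=1$ and $\gamma^x\ob{\gamma}^{-x}\phi^{2y}=1$. Lifting these relations back to $\Gamma_K$ gives exactly the stated memberships $\phi\ob{\phi}\in\Gamma'_K$ and $\gamma^x\ob{\gamma}^{-x}\phi^{2y}\in\Gamma'_K$.

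The second step handles topological generation. $\Gamma_K$ is a pro-$p$ group, so by the Burnside basis theorem for pro-$p$ groups, a subset topologically generates $\Gamma_K$ as soon as its image generates the Frattini quotient $\Gamma_K/\Phi(\Gamma_K)=\Gamma_K^{\mathrm{ab}}/p$. The images of $\gamma,\ob{\gamma},\phi$ generate $\Gamma_K^{\mathrm{ab}}=\cX_K$ over $\Zp$, hence they also generate the quotient by $p$. Consequently the closed subgroup they generate is all of $\Gamma_K$.

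The only point requiring care is that the choices of distinguished elements must be compatible. The $\gamma,\phi$ used in Lemma \ref{lem:lift_characters} are the images of the fixed $\gamma,\phi\in G_{\Qp}\subset G_K$, and $\ob{\gamma}=\tau\gamma\tau$, $\ob{\phi}=\tau\phi\tau$ in both places. Conjugation by $\tau$ commutes with passage to the abelianization, so the two sets of notation agree. I also need to check that $\Gamma_K^{\mathrm{ab}}$ equals $\Gal(M_K/K)$ with no extra $\otimes\cO$; this holds because $\cO=\Zp$ here. I expect nothing deeper than this bookkeeping to be needed.
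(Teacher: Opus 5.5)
Your proposal is correct and is essentially the paper's proof: you identify $\Gamma_K/\Gamma'_K$ with $\cX_K$ (using $\cO=\Zp$), read off the generators $\gamma,\ob{\gamma},\phi$ and the two relations from Lemma~\ref{lem:lift_characters}(1), and lift topological generation to $\Gamma_K$. For that last step the paper invokes the pro-$p$ analogue of Nakayama's lemma from \cite{serrecohomologie}, which is the same statement as your Burnside basis theorem (Frattini quotient) argument.
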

\begin{proof}
	The abelianization $\Gamma_K/\Gamma'_K$ is canonically isomorphic to the $\Zp$-module $\cX_K$ of \S\ref{sec:galois_cohomology}. By Lemma \ref{lem:lift_characters}(1), $\cX_K$ is generated as a $\Zp$-module by $\gamma,\ob{\gamma},\phi\in\cX_K$, and we have the relations $\gamma^x\ob{\gamma}^{-x}\phi^{2y}=1$ and $\phi\ob{\phi}=1$ in $\cX_K$. 
	
	We now make use of the analogue in \cite[Chap. I \S4. Proposition 23bis]{serrecohomologie} of Nakayama's lemma for pro-$p$ groups:  $\Gamma_K$ is topologically generated by $\gamma,\ob{\gamma},\phi$, because the same is true with its abelianization $\cX_K$. The lemma follows easily.
\end{proof}

Note that we have the following explicit description of the values taken by $\rho$:
\begin{equation}\label{eq:matrices_split}
	\begin{array}{lcr}
	g:=\rho(\phi) = \begin{pmatrix} \alpha^{-1} & 0 \\ 0 & \alpha \end{pmatrix}, && \rho(\ob{\phi}) = t g t, \\
		h:=\rho(\gamma) = \begin{pmatrix} \beta & 0 \\ 0 & 1 \end{pmatrix}, && \rho(\ob{\phi}) = t h t.
\end{array}
\end{equation}
The calculation of the upper-left entry of $\rho(\phi)$ uses that $\varepsilon_{\cyc}(\phi)=1$ by our choice of $\phi$. Notice that, if $\mu=1$, then $t=(\begin{smallmatrix}
	0 & 1 \\ 1 & 0\end{smallmatrix})$, and $tgt=g^{-1}$ and $tht=\beta\cdot h^{-1}$.

Without loss of generality, we may assume that $\alpha\neq 1$. Indeed, if $\alpha=1$, then $\beta=1$ by strong splitness of $\rho$, forcing $\rho$ to be trivial on $G_K$. Write $\alpha,\beta$ as
\[\alpha=1+\pi, \qquad \beta=1+\pi'.\]
Hence, $\pi,\pi'\in A$ have $\varpi$-adic valuation between $1$ and $n$, $\pi$ is nonzero and divides $\pi'$. In particular, $\rho$ induces a continuous map
\[\rho_{|G_K} \colon \Gamma_K \to 1+\pi \rM, \]
where $\rM=M_2(A)$, and $P:=\rho(\Gamma_K)$ is a pro-$p$ group. A simple but important consequence is that
\begin{equation}\label{eq:def_P'}
	P':= \rho(\Gamma_K') \subseteq 1+\pi^2\rM.
\end{equation}
Our ultimate goal is to prove that this subgroup is trivial. 

\begin{proposition}\label{prop:proof_CG}
	Let $\nu=\mu-1\in A$. For any integer $\ell\geq1$, if $P' \subseteq 1+\pi^\ell \rM$, then $\pi^{2\ell}$ divides $\pi\nu$ in $A$ and, in fact, $P' \subseteq 1+\pi^{2\ell} \rM$. 
\end{proposition}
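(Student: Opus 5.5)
The plan is to use the two relations of Lemma~\ref{lem:topological_generators}, which say that $\phi\ob{\phi}\in\Gamma'_K$ and $\gamma^x\ob{\gamma}^{-x}\phi^{2y}\in\Gamma'_K$, together with the explicit matrices \eqref{eq:value_rho_tau} and \eqref{eq:matrices_split}. First I would convert the hypothesis $P'\subseteq 1+\pi^\ell\rM$ into a divisibility statement on $\nu$. Then I would feed that statement back into a commutator estimate to improve the bound on $P'$.

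\emph{Step 1 (divisibility).} Write $\mu=1+\nu$, so that
\[t=\begin{pmatrix}\nu & 1-\nu^2\\ 1 & -\nu\end{pmatrix},\qquad t^2=1.\]
A direct computation with $g=\mathrm{diag}(\alpha^{-1},\alpha)$ gives the off-diagonal entries of $tgt$ as $\nu(1-\nu^2)(\alpha^{-1}-\alpha)$ and $\nu(\alpha^{-1}-\alpha)$. The diagonal entries are $g^{-1}$ plus $\nu^2(\alpha-\alpha^{-1})$ corrections. In particular the lower-left entry of $\rho(\phi\ob{\phi})=g\,tgt$ equals $\nu(1-\alpha^2)$. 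Since $p$ is odd and $\alpha=1+\pi$, this is $-2\pi\nu$ times a unit. Since $\phi\ob{\phi}\in\Gamma'_K$, the hypothesis gives $\pi^\ell\mid\pi\nu$. I would also check the analogous entry of $\rho(\gamma^x\ob{\gamma}^{-x}\phi^{2y})$. It should give the same kind of constraint, involving $\pi'\nu$ and $\pi\nu$; this is where strong splitness $\pi\mid\pi'$ enters, to make everything a multiple of $\pi\nu$.

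\emph{Step 2 (structure of $P$).} From Step~1 and the formulas above, every generator of $P$ has the following shape. We have $\rho(\phi)=g$ and $\rho(\gamma)=h$, which are diagonal. We also have $\rho(\ob{\phi})=g^{-1}(1+X)$ and $\rho(\ob{\gamma})=\beta h^{-1}(1+Y)$, where $X,Y\in\pi\nu\,\rM$; for $\rho(\ob{\gamma})$ one uses $\pi\mid\pi'$. So $P$ is topologically generated by diagonal matrices congruent to $1$ mod $\pi$ and by diagonal matrices perturbed by elements of $\pi\nu\rM$. Since $P'$ is the closed normal subgroup generated by commutators of these generators, I would estimate such commutators. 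Diagonal matrices commute exactly, so only commutators involving a perturbation contribute. A commutator $[D,1+X]$ with $D\equiv 1\pmod{\pi}$ diagonal equals $1+(DXD^{-1}-X)+\dots$, and the off-diagonal part of $DXD^{-1}-X$ is scaled by $d_1/d_2-1\in\pi A$. Products of two perturbations are $O((\pi\nu)^2)$. Conjugation by $P\subseteq 1+\pi\rM$ preserves each filtration level $1+\pi^m\rM$.

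\emph{Step 3 (bootstrapping).} The naive estimate from Step~2 only yields $P'\subseteq 1+\pi^{\ell+1}\rM$ from $\pi\nu\in\pi^\ell A$. That is the main obstacle. To reach $\pi^{2\ell}$ I would try to show that the leading term of every commutator is controlled by $\pi\nu$ \emph{times} an element already in $P'$-level, i.e.\ by $(\pi\nu)^2$. The idea is to express $X,Y$ themselves as elements of $\rho(\Gamma'_K)-1$ up to diagonal factors, using $\phi\ob{\phi}\in\Gamma'_K$ and the second relation. If this fails, I would fall back on the weaker conclusion $\ell\mapsto\ell+1$ and iterate Steps~1--2. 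Iterating still gives $P'\subseteq 1+\pi^{m}\rM$ for all $m$, which suffices for Theorem~\ref{thm:CG}. Concretely, I would first test the sharp version on the diagonal entries of $g\,tgt$, which are $1+O(\nu^2\pi)$. These suggest that the quadratic term in $\nu$ is exactly what governs the improved bound.
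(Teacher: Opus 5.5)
\textbf{There is a genuine gap: Step~3 does not establish the statement.} Your computations in Steps~1 and~2 are correct. The problem is that the doubling, $\pi^{2\ell}\mid\pi\nu$ and $P'\subseteq 1+\pi^{2\ell}\rM$, is exactly what Step~3 leaves open. Your fallback $\ell\mapsto\ell+1$ is strictly weaker. It would still suffice for Theorem~\ref{thm:CG}, but it is not this proposition.

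\textbf{Where the information is lost.} In Step~1 you compare the lower-left entry $\nu(1-\alpha^2)=-\pi\nu(2+\pi)$ of $g\,tgt-1$ only with the hypothesis level $\pi^\ell$. The missing idea is that $g\,tgt=\rho(\phi\ob{\phi})$ itself lies in $P'$. So it must be compared with the size of \emph{commutators}, not with $\pi^\ell$.

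\textbf{How the paper does it.} The paper linearizes.
\begin{itemize}
\item Because $P'\subseteq 1+\pi^\ell\rM$, the map $\delta\colon 1+E\mapsto E$ is a homomorphism from $P'$ to the additive group $\pi^\ell\rM/\pi^{2\ell}\rM$.
\item $P'$ is generated by commutators $[d,td't]$, with $d,d'$ powers of $g$ or of $\beta^{-1/2}h$. A direct computation shows that every entry of $[d,td't]-1$ is a multiple of $\pi_a\pi_b\nu$, hence of $\pi^2\nu$. This is where $\pi\mid\pi'$ enters.
\item By additivity, $\pi\nu\equiv c\,\pi^2\nu \pmod{\pi^{2\ell}}$. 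Hence $\pi\nu(1-c\pi)\in\pi^{2\ell}A$, and so $\pi^{2\ell}\mid\pi\nu$.
\item Feeding this back into the commutator formula puts every generator, hence $P'$, in $1+\pi^{2\ell}\rM$.
\end{itemize}

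\textbf{Your Step~2 already suffices if you keep it as an ideal.} The loss happens only when you convert its output into a power of $\pi$. You showed that $g$, $h$, $\rho(\ob{\phi})$ and $\rho(\ob{\gamma})$ commute pairwise modulo the ideal $J=\pi^2\nu A$. Indeed, off-diagonal entries get scaled by elements of $\pi A$, and products of perturbations lie in $(\pi\nu)^2\rM\subseteq J\rM$. Since $1+J\rM$ is a closed normal subgroup of $\GL_2(A)$, this gives $P'\subseteq 1+\pi^2\nu\rM$, with no hypothesis on $\ell$.

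Comparing with $g\,tgt\in P'$ then gives $\pi\nu(2+\pi)\in\pi^2\nu A$, that is, $\pi\nu(2+\pi-c\pi)=0$ for some $c\in A$. Since $2+\pi-c\pi$ is a unit, $\pi\nu=0$ and $P'=\{1\}$. This implies the proposition for every $\ell$ at once and makes the iteration unnecessary. As written, however, Step~3 replaces $J$ by $\pi^{\ell+1}A$, so it cannot reach the stated bound.
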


\begin{proof}
	The proof will only use that $\pi$ divides $\pi'$ and that $gtgt\in P'$ by Lemma \ref{lem:topological_generators}. Fix an integer $\ell\geq1$ such that $P' \subseteq 1+\pi^\ell \rM$. The map sending $1+E\in 1+\pi^\ell \rM$ to $E$ induces a continuous morphism
	\begin{equation}\label{eq:map_from_P'_to_additive}
		\delta \colon P' \to \pi^\ell\rM/\pi^{2\ell}\rM,
	\end{equation}
	where the target is seen as an (additive) finitely generated $\Zp$-module. We will describe the image of $P'$ and $gtgt \in P'$ under $\delta$.
	
	First notice that, by Lemma \ref{lem:topological_generators}, $P'$ is topologically generated as a pro-$p$ group by elements of the form $[d,td't]$, where $d,d'$ are of the form $g^k$ or $h^m$, where $k,m\in\Zp$ are arbitrary. We may replace $h$ in $h^m$ with $\beta^{-1/2}h=\mathrm{diag}(\beta^{1/2},\beta^{-1/2})$, since commutators are invariant under homothety.
	
	Let $d=\mathrm{diag}(a^{-1},a)$ and $d'=\mathrm{diag}(b^{-1},b)$, where $a,b$ are of the form $\alpha^{k}$ or $\beta^{-m/2}$ for some $k,m\in\Zp$. Write $a$ and $b$ as $a=1+\pi_a$, $b=1+\pi_b$. Then $\pi_a,\pi_b$ are divisible by $\pi$ or $\pi'$, and hence by $\pi$ because $\pi$ divides $\pi'$. We have
\begin{equation}\label{eq:dtd't}
	dtd't=\frac{1}{ab}\begin{pmatrix}
		1-(\nu^2-1)(b^2-1) & \nu(\nu^2-1)(b^2-1) \\
		-a^2\nu(b^2-1) & a^2(1+\nu^2(b^2-1))
	\end{pmatrix}.
\end{equation}
For $d=d'=g$, that is, $a=b=\alpha$, one already obtains
\begin{equation}\label{eq:gtgt}
\delta(gtgt)=gtgt - 1 \sim_{2\ell} \begin{pmatrix} * & * \\ \pi\nu & *\end{pmatrix},
\end{equation}	
where $E\sim_{2\ell} F$ means that the entries $E_{ij}$ and $F_{ij}$ have the same valuation modulo $\pi^{2\ell}$ for all $1\leq i,j\leq2$. Here, $c,c'\in \cO'$ are said to have the same valuation modulo $\pi^{2\ell}$ if either they are both divisible by $\pi^{2\ell}$, or they have the same $\varpi$-adic valuation (less than the one of $\pi^{2\ell}$). 

Since $gtgt\in P'$, we already see from \eqref{eq:def_P'} that $\pi^2$ divides $\pi\nu$, so $\nu$ is not a unit, and hence, $\nu^2-1$ is a unit.
This remark and the identity \eqref{eq:dtd't} allow us, for general $d,d'$, to prove by a direct calculation of $[d,td't]=(dtd't)(d^{-1}t(d')^{-1}t)$ that
\begin{equation}\label{eq:commutator}
	\delta([d,td't])=[d,td't]-1 \sim_{2\ell} \begin{pmatrix}
		\pi_a\pi_b^2\nu^2 & \pi_a\pi_b\nu \\ 
		\pi_a\pi_b\nu & \pi_a\pi_b^2\nu^2
	\end{pmatrix}.
\end{equation}
Now, as the map $\delta$ is additive, it follows from \eqref{eq:gtgt} and \eqref{eq:commutator} that $\pi\nu$ mod $\pi^{2\ell}$ is a $\Zp$-linear combination of elements of the form $\pi_a\pi_b\nu$, for $a,b$ some powers of $\alpha$, $\beta$. As $\pi$ divides $\pi_a,\pi_b$, there is $c\in A$ such that $\pi\nu\equiv c\pi^2\nu \mod \pi^{2\ell}$, which implies that $\pi^{2\ell}$ divides $\pi\nu$. Hence, the coefficients of the right-hand-side of \eqref{eq:commutator} are divisible by $\pi^{2\ell}$, proving that $P' \subseteq 1+\pi^{2\ell}\rM$.
\end{proof}

\begin{proof}[Proof of Theorem \ref{thm:CG}]
	We keep the notation as before. By a repeated use of Proposition \ref{prop:proof_CG}, starting with $\ell=1$, we see that $P'=\{1\}$. Therefore, $\rho_{|G_K}$ factors through an abelian quotient of $G_K$.
\end{proof}

\begin{corollary}\label{coro:CG}
	Assume $\chi^2=1$ and $A_K'=\{0\}$, and let $g$ be an ordinary overconvergent cuspidal eigenform of arbitrary $p$-adic weight and $p$-adic Galois representation $\rho$. Assume that:
	\begin{enumerate}
		\item $g$ has tame level $N$, and
		\item $\tr(\ob{\rho})=\ob{\chi}+1$.
	\end{enumerate}
	If $\lambda(u_\chi)$ is a $p$-adic unit, or if $\rho$ is strongly split on $G_{\Qp}$, then $g$ has complex multiplication by $K$.
\end{corollary}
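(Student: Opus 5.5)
We first realise $g$ as a point of the universal ordinary deformation ring and then handle the two hypotheses separately. Let $\cO'$ be the ring of integers of the finite extension of $\Qp$ generated by the Hecke eigenvalues of $g$. Since $g$ has tame level $N$ and its system of eigenvalues reduces to that of $f$ by (2), it gives a map $\T \to \cO'$, using $\Lambda$-flatness and the localization at $\gm$. The Galois representation $\rho$ is irreducible because $g$ is cuspidal; if it were reducible, its trace would be that of an Eisenstein family. As in the proof of the Corollary following Theorem \ref{thm:modularity}, Lemma \ref{lem:weak_ordinarity} then lets us conjugate $(\rho,L)$ into an $\cO'$-point of $\cD^\ord$, with $A'_K=\{0\}$ used here. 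By Theorem \ref{thm:modularity}, which applies since $\chi$ is quadratic, this point corresponds to the given map $\T\to\cO'$.

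\emph{Case $\lambda(u_\chi)\in\Zp^\times$.} Proposition \ref{prop:modularity_1_spl}(2) gives $\T=\T^{\CM}$. Hence the map $\T\to\cO'$ factors through $\T^{\CM}=\Zp\lsem\cX'_K\rsem$, so the eigenvalues of $g$ are those of a specialization of the universal CM form $\mathbf{G}$. Therefore $a_\ell(g)=0$ whenever $\chi(\ell)=-1$, and $g$ has CM by $K$.

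\emph{Case $\rho$ strongly split.} Apply Theorem \ref{thm:CG} with $n=\infty$, i.e. $A=\cO'$, to get that $\rho(G_K)$ is abelian. Over $\Frac(\cO')$, $\rho_{|G_K}$ is then a semisimple or unipotent-type abelian representation. Since $\rho$ is irreducible and $G_K$ has index $2$ in $G_\Q$, Clifford theory shows that $\rho_{|G_K}$ is either irreducible, which is impossible for an abelian image over an algebraic closure, or a sum of two characters $\nu\oplus\nu^\tau$. To rule out the non-semisimple possibility, note that $\rho_{|G_K}$ is semisimple because it is the restriction of an irreducible representation to a finite-index normal subgroup. If $\nu=\nu^\tau$, then $\rho$ would extend $\nu$ and be reducible or a twist, contradicting irreducibility. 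Hence $\rho\simeq\Ind_K^\Q\nu$. This gives $\tr\rho(\Frob_\ell)=0$ for $\ell$ inert in $K$, so $a_\ell(g)=0=\chi(\ell)a_\ell(g)$, and $g$ has CM by $K$.

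The main obstacle is the first step. We must check that Lemma \ref{lem:weak_ordinarity} applies to a single $\cO'$-valued representation of arbitrary $p$-adic weight: it needs the ordinary line with unramified quotient coming from the ordinarity of $g$, together with minimality at $\ell\mid N$, which follows from tame level $N$ as in Proposition \ref{prop:surjection_R_to_T}. We must also check that the resulting point is strongly split in the sense of Definition \ref{def:strongly_split}, which is invariant under conjugation since $\alpha$ and $\beta$ are intrinsic. The Clifford-theoretic step is routine.
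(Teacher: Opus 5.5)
Your proof has a genuine gap in its first step. You claim that $\rho$ is irreducible ``because $g$ is cuspidal'', but that is false in exactly this setting. The form $f=E_1^{(p)}(1,\chi)$ is itself an ordinary overconvergent cuspidal eigenform of tame level $N$ with $\tr(\ob{\rho})=\ob{\chi}+1$, yet $\rho_f=\chi\oplus 1$. More generally, any point of $\Spec(\T)$ lying on one of the Eisenstein families $\cE_{1,\chi},\cE_{\chi,1}$ has reducible $\rho$. Having an Eisenstein trace does not contradict cuspidality here; that is the whole point of the exceptional Eisenstein setting.

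Both places where you use irreducibility then break. Lemma~\ref{lem:weak_ordinarity} requires $\rho$ to be irreducible, and so does your Clifford-theoretic deduction of $\rho\simeq\Ind_K^\Q\nu$. So the strongly split case is not proved when $\rho$ is reducible. The case $\lambda(u_\chi)\in\Zp^\times$ is unaffected, since it only uses $\T=\T^{\CM}$.

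The paper avoids this entirely. Instead of descending $\rho_g$ via Lemma~\ref{lem:weak_ordinarity}, it specializes the $\T$-valued point $(\rho,L)\in\cD^\ord(\T)$ of Proposition~\ref{prop:surjection_R_to_T} along $\T\to\cO'$. This gives a point of $\cD^\ord$ with no irreducibility hypothesis. It also resolves what you flagged as the main obstacle, and makes the appeal to Theorem~\ref{thm:modularity} unnecessary.

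Theorem~\ref{thm:CG} then applies, and the induced shape of $\rho$ can be read off from its proof rather than from Clifford theory:
\begin{enumerate}
\item either $\alpha=1$, in which case $\beta=1$ and $\rho_{|G_K}$ is trivial;
\item or $\pi\nu=0$ with $\pi\neq0$ in the domain $\cO'$, which forces $\mu=1$. Then, in the $1$-split basis, $\rho(G_K)$ is generated by the diagonal matrices $\rho(\phi),\rho(\gamma),\rho(\ob{\gamma})$, while $\rho(\tau)$ is antidiagonal.
\end{enumerate}
In both cases $\tr\rho(\Frob_\ell)=0$ for every $\ell$ inert in $K$, so $g$ has CM by $K$ whether or not $\rho$ is irreducible.

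A smaller point: hypothesis (2) by itself only controls $T_\ell$ for $\ell\nmid Np$ and the diamond operators. To conclude that $g$ contributes to $\T$, you also need $U_q\equiv 1 \bmod \gm$ for $q\mid Np$. This follows from minimality together with local--global compatibility at $q\mid N$, and from ordinarity at $p$, as in the proof of Proposition~\ref{prop:surjection_R_to_T}.
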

\begin{proof}
	Since $g$ has tame level $N$, $g$ contributes to a local factor of the Hida--Hecke algebra $\T(N)$ of tame level $N$. The assumption on the trace of $\ob{\rho}$ shows that, in fact, $g$ contributes to $\T$. Indeed, as $\ob{\chi}$ has conductor $N$, $g$ is minimally ramified, and the congruence condition between Fourier coefficients of $f$ and $g$ encoded in the maximal ideal $\gm$ in \eqref{eq:maximal_ideal_eisenstein} can be checked at $q\mid Np$ using \cite[Lemma 6.3(ii)]{dimitrov2014local}.
	
	If $\lambda(u_\chi)$ is a $p$-adic unit, then $\T=\T^{\CM}$ by Proposition \ref{prop:modularity_1_spl}(2), so there is nothing to prove. 
	
	By Proposition \ref{prop:surjection_R_to_T}, $\rho$ defines a point of $\cD^\ord$ valued in a finite totally ramified extension $\cO'$ of $\Zp$. If $\rho$ is strongly split on $G_{\Qp}$, then we deduce from Theorem \ref{thm:CG} that $\rho(G_K)$ is abelian, so $\rho\simeq\Ind_K^\Q\nu$ for some Hecke character $\nu$. This proves that $g$ has complex multiplication by $K$.
\end{proof}

\section{Congruence numbers and adjoint $p$-adic $L$-functions}\label{sec:trivial_zero_conjecture}
The goal of this section is to give further evidence to Conjecture \ref{conj:length_C0} through its connection with $p$-adic $L$-functions. Since the conjecture holds if $\chi$ is quadratic by Theorem \ref{thm:congruence_bound}(1), we assume in this section that $\chi$ is not quadratic for simplicity.

\subsection{Genuine $p$-adic $L$-functions}\label{sec:genuine}
Let $g$ be any $p$-ordinary classical eigenform of weight $k\geq2$ contributing to $\T$, and let $L_p(\ad g,s)$ be the adjoint $p$-adic $L$-function attached to $g$. It is a bounded $p$-adic analytic function of the variable $s\in\Zp$, defined as a shift of Schmidt's symmetric square $p$-adic $L$-function of $g$ (\cite{schmidt}), whose interpolation formula has the shape
\begin{equation}\label{eq:interpolation_L_p_ad}
	L_p(\ad g,j) = (*)\dfrac{L(\ad g \otimes \omega^{j-1},j)}{\pi^{2j+k-1}\langle g,g\rangle}, 
\end{equation}
for $2-k\leq j\leq 0$ even or $1\leq j\leq k-1$ odd. Here, $(*)$ is an explicit factor, $L(\ad g \otimes \psi,s)$ is the $L$-function of the Gelbart--Jacquet lift of $g$ twisted by a Dirichlet character $\psi$, and $\langle g,g\rangle$ the Petersson norm of $g$.
We refer to \cite[\S1.3]{dasguptapadicartin} for a precise definition of $L_p(\ad g,s)$. We will follow Dasgupta's normalization of $p$-adic $L$-functions, such as in \cite[Theorems 3.4 and 3.6]{dasguptapadicartin}.

Let $\cO_g$ be the extension of $\cO$ generated by the Fourier coefficients of $g$. When $\chi^2\neq1$, the integrality property of $L_p(\ad g,s)$ is clarified in the main theorem of \cite{hidaGL3}: there exists a power series $L_p(\ad g)(T)\in \Qp \otimes_{\Zp}\cO_g\lsem T\rsem$ such that 
\[L_p(\ad g,s)=L_p(\ad g)((1+p)^{s-1}-1)\]
for all $s\in\Z_p$. Moreover, $c_g\cdot L_p(\ad g)(T)\in \cO_g\lsem T\rsem$ for any congruence number $c_g\in\cO_g$, that is, any element $c_g$ such that $C^0(\pi_g)\simeq \cO_g/(c_g)$, where $\pi_g \colon \T_k \otimes_{\cO}\cO_g \twoheadrightarrow \cO_g$ denotes the system of Hecke eigenvalues of $g$ and the $C^0$ is defined in Definition \ref{def:C}.

In his monograph \cite{hidagenuine}, Hida pointed out that the ``correct'' $p$-adic $L$-function attached to the adjoint motive of $g$ as in the Coates--Perrin-Riou conjecture \cite{coatesperrinriou} should be 
\begin{equation}\label{eq:motivic_p_adic_L_g}
	L_p^\textrm{mot}(\ad g,s):=c_g\cdot L_p(\ad g,s),
\end{equation}
where $c_g\in\cO_g$ is any congruence number as above. While it is true that the multiplication by $c_g$ introduces an ambiguity (which is also present in the conjectural definition of motivic $p$-adic $L$-functions), Hida's early work on congruence numbers shows that $c_g$ is closely related to the ratio of the automorphic period $\pi^{2j+k-1}\langle g,g\rangle$ in \eqref{eq:interpolation_L_p_ad} by the product $\Omega_g^+\Omega_g^-$ of Shimura periods, the latter being the period appearing in the Coates--Perrin-Riou conjecture. We do not attempt to formulate a precise conjecture in our (residually Eisenstein, non-$p$-distinguished) setting but we refer to the introduction of \cite{maksoud_adjoint} for an elaboration on this remark.

\subsection{The exceptional zero formula in weight $1$}
Let $\cF$ be the unique Galois-conjugacy class of Hida families passing through $f$, as in \S\ref{sec:construction_congruences}. It has coefficients in the integral closure $\I$ of $\T/\ga$, where $\ga$ is the unique minimal ideal contained in $\gp_f$ (see Proposition \ref{prop:finite_length_C}). We let $\T_\I=\T \otimes_{\Lambda} \I$ and call $\pi\colon \T_\I \twoheadrightarrow \I$ the system of Hecke eigenvalues of $\cF$. As in \S\ref{sec:construction_congruences}, we let $\spe_f \colon \T_\I \to \cO$ be the specialization morphism attached to $f$. 

Let $L_p(\ad F,s)$ be the adjoint $p$-adic $L$-function attached by Hida to $F$ in \cite{hidaGL3} (again normalized as in \cite{dasguptapadicartin}). It interpolates $L_p(\ad g,s)$ for any specialization $g$ of weight $k\geq2$ of $F$. If $C^0(\pi)=\I/\eta_\cF$ is the congruence module of $\cF$ with $\eta_\cF=\pi(\mathrm{ann}_{\T_\I}(\ker\pi))$, then $L_p(\ad F,s)$ is of the form $L_p(\ad F)((1+p)^{s-1}-1)$ for some $L_p(\ad F)(T)\in \Frac(\I) \otimes_{\I} \I\lsem T\rsem$ with 
\[\eta_\cF \cdot L_p(\ad F)(T)\subseteq\I\lsem T\rsem.\]
By Proposition \ref{prop:finite_length_C}, the specialization of $\eta_\cF$ along $\spe_f$ is a nonzero ideal $(c_f)$ of $\cO$ with $c_f$ the congruence number of $f$, and we may define 
the adjoint $p$-adic $L$-function of $f$ as the specialization
\begin{equation}\label{eq:def:p:adic:L:ad:f}
	L_p(\ad f)(T):=L_p(\ad F)_{\big|\cF=f}(T)\in c_f^{-1} \cdot \cO\lsem T\rsem
\end{equation}
at $f$. We also  define its ``motivic'' avatar as 
\[L_p^\textrm{mot}(\ad f)(T):=c_f \cdot L_p(\ad f)(T) \in \cO\lsem T\rsem, \]
and, for $s\in\Z_p$, we put 
\begin{equation}\label{eq:motivic_p_adic_L}
L_p^\textrm{mot}(\ad f,s):=L_p^\textrm{mot}(\ad f)((1+p)^{s-1}-1).
\end{equation}
By analogy with higher weight as in \eqref{eq:motivic_p_adic_L_g}, the $p$-adic $L$-function in \eqref{eq:motivic_p_adic_L} should be the correct candidate for the motivic adjoint $p$-adic $L$-function of $f$. The adjoint motive attached to $f$ decomposes as the sum 
\[\ad f = E \oplus E(\chi) \oplus E(\chi^{-1}), \]
so it is reasonable to expect that $L_p^\textrm{mot}(\ad f,s)$ factors as a product of three Kubota--Leopoldt $p$-adic $L$-functions, hence following a $p$-adic Artin formalism. We warn the reader that, however, the case of reducible motives is usually not covered in general conjectures on $p$-adic $L$-functions, and $p$-adic Artin formalism is \emph{a fortiori} not well understood (see, however, \cite[Remark 3.30]{himc} for a discussion around $p$-adic Artin formalism for Artin motives). We will justify that our expectations on $L_p^\textrm{mot}(\ad f,s)$ are compatible with Conjecture \ref{conj:length_C0}. 

In the following exceptional zero formula, we let, for $\varphi\in\{\chi,\chi^{-1}\}$, 
\[\cL(\varphi):=\log_p(1+p)\cdot \cL_\varphi=-\log_p(u_{\varphi^{-1}})/\ord_p(u_{\varphi^{-1}}) \]
be the usual $\cL$-invariant of $\varphi$, in contrast with the $p$-normalized $\cL$-invariant $\cL_\chi$. We also assume Conjecture \ref{conj:length_C0} and choose $c_f$ to be 
	\begin{equation}\label{eq:expr_c_f}
	c_f = \left(1-\dfrac{1}{p}\right) \cdot (\cL(\chi)+\cL(\chi^{-1}))\cdot L(\chi,0)\cdot L(\chi^{-1},0).
\end{equation}
This is indeed a valid choice of congruence number of $f$, as $\log_p(1+p)^{-1}$ and $(1-p^{-1})$ have the same $p$-adic valuation.

\begin{theorem}\label{thm:trivial_zero_formula}
	Assume $\chi^2\neq1$ as before, and assume Conjecture \ref{conj:length_C0} holds for $\chi$. Then $L_p^\textrm{mot}(\ad f,s)$ vanishes at $s=1$, and 
	\begin{equation}\label{eq:trivial_zero}
		\dfrac{\mathrm{d}}{\mathrm{d}s} L_p^\textrm{mot}(\ad f,s)_{\big|s=1}=\left(1-\dfrac{1}{p}\right)\cL(\chi)\cL(\chi^{-1})L(\chi,0)L(\chi^{-1},0),
	\end{equation}
	up to an explicit rational number that is a unit in $\cO$. 
	
	Moreover, the right hand side of \eqref{eq:trivial_zero} is also the first derivative at $s=1$ of the product $\zeta_p(s)L_p(\chi\omega,1-s)L_p(\chi^{-1}\omega,1-s)$ of the $p$-adic Riemann $\zeta$-function and Kubota--Leopoldt $p$-adic $L$-functions attached to $\chi\omega$ and $\chi^{-1}\omega$ respectively.
\end{theorem}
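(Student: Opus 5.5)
The proof will reduce everything to a known exceptional zero formula for the \emph{non-motivic} adjoint $p$-adic $L$-function $L_p(\ad f,s)$ at $s=1$, obtained by specializing Hida's $L_p(\ad\cF,s)$ along $\spe_f$. Once that is in hand, multiplying by the chosen $c_f$ from \eqref{eq:expr_c_f} gives the result.

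\textbf{Leading term of $L_p(\ad f,s)$.} The value $L_p(\ad\cF)$ at weight $k$, $s=1$ is essentially the $\cF$-component of the inverse of the congruence ideal. More precisely, along the arithmetic specializations it is related, via Hida's theory, to the constant term of the $\Lambda$-adic Petersson pairing. The key input I would use is an analogue of Rivero--Rotger's computation \cite[Theorem A]{riveroadjoint}, or equivalently of Dasgupta's factorization argument \cite{dasguptapadicartin}, adapted to the Eisenstein setting. Near $f$, the family $\cF$ is étale over $\Lambda$ by Theorem \ref{thm:BDP}, so $\I_P^\wedge=E\lsem X\rsem$. The adjoint $L$-function restricted to the diagonal $s=1$ then becomes the ratio $\mathrm{(ad\text{-}L\ at\ }s{=}1)/\eta_\cF$, which is a unit multiple of $1/c_f$. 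I would show $L_p(\ad f,s)$ has at most a simple pole, or a nonzero value, at $s=1$ with explicit residue. To do this, I would pass to the variable $T=(1+p)^{s-1}-1$ and use the factorization
\[\zeta_p(s)\,L_p(\chi\omega,1-s)\,L_p(\chi^{-1}\omega,1-s),\]
which holds in weights $k\geq2$ only up to the congruence-number periods. The goal is to compute $\tfrac{d}{ds}\big(c_f L_p(\ad f,s)\big)|_{s=1}$ as $c_f$ times the leading term.

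\textbf{Evaluating the $p$-adic Artin product.} For the second statement, I would compute directly:
\begin{itemize}
\item $\zeta_p(s)$ has a simple pole at $s=1$ with residue $1-1/p$.
\item $L_p(\chi^{\pm1}\omega,1-s)$ vanish at $s=1$, since $L_p(\chi\omega,0)=(1-\chi(p))L(\chi,0)=0$.
\item By the Ferrero--Greenberg formula \eqref{eq:ferrero_greenberg}, $L_p'(\chi^{\pm1}\omega,0)=\cL(\chi^{\pm1})L(\chi^{\pm1},0)$ up to sign.
\end{itemize}
The product therefore has a simple zero at $s=1$, and its derivative there equals $(1-1/p)\cL(\chi)\cL(\chi^{-1})L(\chi,0)L(\chi^{-1},0)$ up to sign. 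This part is routine.

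\textbf{Main obstacle.} The hardest step is the first: establishing the exceptional zero of $L_p(\ad f,s)$ itself, with leading coefficient $\cL(\chi)\cL(\chi^{-1})/(\cL(\chi)+\cL(\chi^{-1}))$. Combined with \eqref{eq:expr_c_f}, that is exactly what gives \eqref{eq:trivial_zero}. To get this, I would express $L_p(\ad\cF)$ at $s=1$ through the derivative along $X$ of the constant terms of the two Eisenstein families $\cE_{1,\chi},\cE_{\chi,1}$, namely $\zeta_\chi$ and $\zeta_{\chi^{-1}}$, using the generalized eigenform $f_{1,\chi}$ of the proof of Theorem \ref{thm:congruence_bound}(2) and the Betina--Dimitrov--Pozzi dual form computations.

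The mechanism is as follows. The ratio $\cL_\chi\cL_{\chi^{-1}}/(\cL_\chi+\cL_{\chi^{-1}})$ arises because $a_p(f_{1,\chi})=\cL_{\chi^{-1}}$ and the normalizing factor $(\cL_\chi+\cL_{\chi^{-1}})/\cL_\chi$ appear in $f_{1,\chi}$. The vanishing of the ``first-order'' term then follows from $a_p(\cF)'(0)$ being a combination of the two $\cL$-invariants. I expect the delicate part to be controlling the Euler-like factor and the rational unit ambiguity; I would handle this by tracking Dasgupta's normalizations.
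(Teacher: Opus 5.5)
Your argument for the second statement is correct and is the paper's. The simple pole of $\zeta_p$ at $s=1$ has residue $1-p^{-1}$. It meets the simple zeros of $L_p(\chi^{\pm1}\omega,1-s)$, whose derivatives come from the Ferrero--Greenberg formula \eqref{eq:ferrero_greenberg}.

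The main part, however, has a genuine gap. You correctly identify the target, $L_p'(\ad f,1)=\cL(\chi)\cL(\chi^{-1})/(\cL(\chi)+\cL(\chi^{-1}))$ up to a rational unit. Together with the choice \eqref{eq:expr_c_f} of $c_f$, this gives \eqref{eq:trivial_zero}. But nothing in your plan connects $L_p(\ad\cF,s)$ to this quantity, and three steps are wrong:
\begin{enumerate}
\item The value of $L_p(\ad\cF,s)$ at $s=1$ is not ``the adjoint $L$-value divided by $\eta_\cF$'', a unit multiple of $c_f^{-1}$. It vanishes at every classical weight, a trivial zero coming from the Euler-type factor at $p$. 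So aiming for ``at most a simple pole, or a nonzero value'' contradicts the statement you are proving.
\item The factorization $\zeta_p(s)L_p(\chi\omega,1-s)L_p(\chi^{-1}\omega,1-s)$ does not hold in weights $k\geq2$, even up to periods. Since $\chi^2\neq1$, $\cF$ has no CM, so the trace-zero adjoint of a classical specialization $g$ is irreducible. This factorization at $f$ is exactly the conjectural $p$-adic Artin formalism the theorem is meant to support, so it cannot be an input.
\item The constant terms $\zeta_{\chi^{\pm1}}$ of $\cE_{1,\chi},\cE_{\chi,1}$ and the $q$-expansion of $f_{1,\chi}$ only encode the geometry of $\cF$ near $f$. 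They have no link to the adjoint $p$-adic $L$-function.
\end{enumerate}

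The missing bridge is Dasgupta's exceptional zero formula for $\ad g$ in weights $k\geq2$, transported to $k=1$ by continuity along $\cF$. The paper's steps are:
\begin{enumerate}
\item[(i)] $L_p(\ad\cF,1)$ vanishes at every classical specialization of weight $k\geq2$ \cite[\S1.3]{dasguptapadicartin}, hence at $f$ by density of classical weights.
\item[(ii)] For such a specialization $g$ with Nebentype $\chi$, Dasgupta's factorization of Hida's Rankin--Selberg $p$-adic $L$-function $L_p(\cF,\cF_{\chi^{-1}},\cdot)$ yields $L_p'(\ad\cF,1)_{|\cF=g}=(1-p^{-1})^{-1}L_p(\cF,\cF_{\chi^{-1}},k)_{|\cF=g}=u\cdot\bigl(a_p'(\cF)/a_p(\cF)\bigr)_{|\cF=g}$. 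Here $u\in\Q^\times\cap\cO^\times$ is explicit; see formulas (13) and (78) of \emph{loc.\ cit.}
\item[(iii)] The étaleness of Theorem~\ref{thm:BDP}, refined to an embedding $\I\hookrightarrow\cO\lsem X/p^r\rsem$, lets you regard $a_p(\cF)$ as an analytic function of $k$ near $k=1$. You can then let $k\to1$ along classical weights $k\equiv1\bmod (p-1)p^r$.
\item[(iv)] Since $a_p(f)=1$, the limit is $\log_p(1+p)\cdot\bigl((a_p(\cF)-1)/X\bigr)_{|X=0}$. This equals $\cL(\chi)\cL(\chi^{-1})/(\cL(\chi)+\cL(\chi^{-1}))$ by \cite[Proposition 5.8]{betinadimitrovpozzi}.
\end{enumerate}
Your remark about $a_p(f_{1,\chi})=\cL_{\chi^{-1}}$ is essentially a repackaging of step (iv). Without (i)--(iii), though, it says nothing about $L_p(\ad f,s)$ or its derivative at $s=1$.
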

\begin{proof}
	The last statement follows from the residue formula $\res_{s=1}\zeta_p(s)=1-p^{-1}$ for $\zeta_p$ and the Ferrero--Greenberg formula for $\chi$ and $\chi^{-1}$ (which is recalled in \eqref{eq:ferrero_greenberg}).
	
	Given the choice of $c_f$ made in \eqref{eq:expr_c_f}, we are led to prove that $L_p(\ad f,s)$ vanishes at $s=1$ and
\begin{equation}\label{eq:formula_to_prove_L_p_ad}
	 L'_p(\ad f,1)\overset{?}{=}\dfrac{\cL(\chi)\cdot \cL(\chi^{-1})}{\cL(\chi)+\cL(\chi^{-1})}, 
\end{equation}
	up to an explicit element in $\Q^\times \cap \cO^\times$. We argue as in the proof by Dasgupta in \cite[Theorem 4]{dasguptapadicartin} of the exceptional zero formula for $\ad g$, with $g$ a weight $k\geq2$ member of $\cF$, and we let $k$ tend to $1$.
	
	By definition, one has $L_p(\ad\cF,s)=L_p(\Symm^2\cF\otimes\chi^{-1},s+k-1)$ with the symmetric square $p$-adic $L$-function being the one in Theorem 3.6 of \textit{loc. cit.}. As explained in \S1.3 of \textit{loc. cit.}, $L_p(\ad\cF,1)$ vanishes at any classical weight $k\geq2$ specialization $g$ of $\cF$, so vanishes at $f$ as well by density of classical weights. Furthermore, if one lets
	\[u:=-\dfrac{\prod_{\ell|N}(1-\ell^{-1})}{N\varphi(N)} \in \Q^\times \cap \cO^\times,\]
	then one has
\begin{equation}\label{eq:derivative_p_adic_L}
		L'_p(\ad \cF,1)_{\big|\cF=g} = \left(1-p^{-1}\right)^{-1} \cdot L_p(\cF,\cF_{\chi^{-1}},k)_{\big|\cF=g} =u\cdot\dfrac{a'_p(\cF)}{a_p(\cF)}\phantom{)}_{\big|\cF=g},
\end{equation}
	for any classical weight $k\geq2$ specialization $g$ of $\cF$ with Nebentype $\chi$. Here, the $p$-adic $L$-function in the middle is Hida's three-variable Rankin--Selberg $p$-adic $L$-function of $\cF$ and its twist $\cF_{\chi^{-1}}$, evaluated at  $(g,g_{\chi^{-1}},k)$, and the first equality is (13) of \textit{loc. cit.}, which is a simple consequence of Dasgupta's factorization theorem. The second equality is stated in (78) of \textit{loc. cit.}, and $a_p'(\cF)_{\big|\cF=g}$ is the derivative at $g$ of the $p$-th Fourier coefficient of $\cF$.
	
	Note that $a_p(\cF)_{|\cF=f}=1$. To compute the limit of $a_p'(\cF)_{|\cF=g}$ when $g$ tends to $f$, we use $k$ as a local parameter of $\cF$ around $f$. More precisely, recall from Proposition \ref{prop:finite_length_C} that the localization-completion at the height $1$ prime $\gp_f$ of $\I$ attached to $f$ is étale over $\Lambda_{(X)}$, hence of the form $E\lsem X\rsem$. We obtain an injection of $\Lambda$-algebras $\I \hookrightarrow E\lsem X\rsem$ whose image lands in a ring of the form $\cO\lsem X/p^r\rsem$ for some integer $r\geq1$ by \cite[Lemme 4.18 and Proposition 4.19]{maks}. We may therefore specialize $\cF$ at $X=(1+p)^{k-1}-1$ for any $k\geq2$, $k\equiv1 \mod (p-1)p^r$, to obtain a cusp form $g$ of weight $k$ and Nebentype $\chi$, and see $a_p(\cF)$ as an analytic function $a_p(k)$. Then $a_p'(\cF)_{\big|\cF=g}$ is, by definition, the derivative at $g$ of $a_p(k)$ with respect to $k$. We therefore obtain
	\[a_p'(1)= \log_p(1+p)\cdot \dfrac{a_p(\cF)-1}{X}\phantom{.}_{\big|X=0} = \dfrac{\cL(\chi)\cdot \cL(\chi^{-1})}{\cL(\chi)+\cL(\chi^{-1})}, \]
	the second equality being established in \cite[Proposition 5.8]{betinadimitrovpozzi}. Taking a limit $k \to 1$ in \eqref{eq:derivative_p_adic_L} then yields the equality \eqref{eq:formula_to_prove_L_p_ad} up to multiplication by the unit $u$, as wanted.
\end{proof}
\begin{remark}
	Without assuming Conjecture \ref{conj:length_C0}, the proof of Theorem \ref{thm:trivial_zero_formula} and, in particular, Formula \eqref{eq:derivative_p_adic_L} shows that 
	\[L'_p(\ad f,1)= \dfrac{\log_p(u_{\chi})\cdot \log_p(u_{\chi^{-1}})}{\log_p(u_{\chi}) + \log_p(u_{\chi^{-1}})}\]
	up to an (explicit) rational number. The left-hand side of this expression coincides, up to a rational number, with the $p$-adic iterated integral attached to the pair $(f,f^*)$ of Eisenstein series, where $f^*=E_1^{(p)}(1,\chi^{-1})$, in Darmon--Lauder--Rotger's variant of the elliptic Stark conjecture \cite[Conjecture 1.2]{DLR2} (see e.g. \cite[\S5.1]{riveroadjoint}). As for the right-hand side, it is equal up to a rational factor to the $p$-adic period $\Omega_{f,f^*}$ attached to $(f,f^*)$ in \cite{DLR2} (which is made explicit in (11) of \textit{loc.cit.}). This shows that Darmon--Lauder--Rotger's conjecture indeed holds for $(f,f^*)$.
	\end{remark}

\section{Examples}\label{sec:examples}
We illustrate with examples and numerical data the main results and conjectures that have been discussed in the previous sections. They all rely on Wiese's algorithm \cite{wiesealgorithm}, implemented in Magma, that allows us to explicitly compute the Artinian $k$-algebra 
\[\ob{\T}:=\T/(X,p)=\T_1/(p) \]
for a given Dirichlet character $\chi$.
\subsection{Numerical data}\label{sec:numerical_data}

\subsubsection{Embedding dimension and Gorensteinness}
Denote by $\EmbDim \ob{\T}$ the embedding dimension of $\ob{\T}$ at its maximal ideal $\ob{\gm}$. By definition, we have
\[\EmbDim\ob{\T} = \dim_k \ob{\gm}/\ob{\gm}^2. \] 
Hence, if $n=\EmbDim \ob{\T}$ then $\ob{\T}\simeq k\lsem x_1,\ldots,x_n\rsem/(g_1,\ldots g_m)$ with $m\geq n$ and none of the $x_i$'s or $g_j$'s can be eliminated. Moreover, $n=m$ if  $\ob{\T}$ is a local complete intersection (l.c.i.), which is automatically the case if $n\leq1$. Also note that $\ob{\T}$ cannot be l.c.i. if $\ob{\T}$ is not Gorenstein.

Based on Corollary \ref{coro:structure_T_1_chi_quadratic} and Theorem \ref{thm:converse_gorenstein}, we counted in Table \ref{table1}  the number of imaginary quadratic fields $K$ with fundamental discriminant $-N$, $1<N<100\,000$, in which $p=3$ splits, and with prescribed values of $\EmbDim \ob{\T}$.

\begin{table}[htbp]
	\centering
	\caption{$\chi^2=1$, $p=3$ and $N < 100\,000$}\label{table1}
	\begin{tabular}{lr}
		\toprule
		\textbf{Property} & \textbf{Number of occurrences} \\
		\midrule
		$\EmbDim \ob{\T}=0$ & 4\,709 \\
		$\EmbDim \ob{\T}=1$ & 2\,271 \\
		$\EmbDim \ob{\T}=2$ and $\ob{\T}$ l.c.i. & 3\,252 \\
		non-Gorenstein $\ob{\T}$ & 1\,172 \\
		Any $\ob{\T}$ & 11\,404 \\ 
		\midrule
		\textbf{Percentage of Gorenstein $\ob{\T}$} & 89,72\,\% \\
		\bottomrule
	\end{tabular}
\end{table}

We also mention the following result by Wood \cite{wood}, building on classical work of Davenport--Heilbronn \cite{davenportheilbronn}.

\begin{proposition}
	Let $p=3$, and call $M(X)$ (resp $M'(X)$) the set of imaginary quadratic fields $K$ of absolute discriminant less than $X$ in which $p$ splits (resp. and such that $A'_K=\{0\}$). Then 
	\[\liminf_{X \to \infty}\#M'(X)/\#M(X)\geq5/6.\]
\end{proposition}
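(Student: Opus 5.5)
The plan is to translate the condition $A'_K=\{0\}$ into a statement about the $3$-torsion of the class group $\Cl(K)[3]$ and the classes of primes above $3$, and then invoke Wood's refinement of Davenport--Heilbronn. Wood computes the average size of $\Cl(K)[3]$ for imaginary quadratic fields with prescribed local behaviour at finitely many primes, here $3$ split. She also computes the distribution of the image of $\gp_K$ in $\Cl(K)/3\Cl(K)$, which is equidistributed in a suitable sense.

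First step: observe that $A'_K=\{0\}$ if and only if $A_K$ is cyclic and generated by the class of $\gp_K$. Indeed $[\ob{\gp}_K]=-[\gp_K]$, so the subgroup generated by primes above $3$ is cyclic. By Nakayama this is equivalent to $\Cl(K)/3\Cl(K)$ being spanned by $[\gp_K]$, i.e. $\dim_{\F_3}\Cl(K)[3]\le 1$ with $[\gp_K]\notin 3\Cl(K)$ whenever $\Cl(K)[3]\neq0$.

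Second step: use Davenport--Heilbronn as refined by Wood. She treats the count of index-$3$ subgroups (equivalently nontrivial $3$-torsion characters) together with splitting conditions on the corresponding cubic fields, with local conditions at $3$. A nontrivial character $\psi\colon \Cl(K)\to\mu_3$ corresponds to a non-Galois cubic field $F$ of discriminant $-N$. The condition $\psi(\gp_K)=1$ corresponds to $3$ being totally split (or of a specified splitting type) in the sextic closure, i.e. a local condition at $3$ on $F$. Let $B(K)$ be the number of nontrivial $\psi$ killing $\gp_K$, so that $A'_K\neq0$ iff $B(K)\ge 2$ ($\psi,\psi^{-1}$ pair). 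Then
\[\#\{K: A'_K\ne 0\}\le \tfrac12\sum_K B(K).\]
Wood's local densities give the average of $B(K)$ over $K$ with $3$ split. I expect the average of $\#\Cl(K)[3]-1$ to be $1$ (Davenport--Heilbronn, unchanged by splitting conditions). The proportion of cubic fields in which the relevant prime condition holds should be $1/3$, giving average $B=1/3$ and hence $\liminf\ge 1-\tfrac12\cdot\tfrac13=5/6$.

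The main obstacle is the precise local density at $3$. One has to correctly identify which cubic local algebras at $3$ correspond to $\psi(\gp_K)=1$ given that $3$ splits in $K$: these are $3$ totally split in $F$ versus inert. One must then check from Wood's tables that the conditional proportion is $1/3$; this is where I would first try computing the Davenport--Heilbronn local masses ($1/6$ versus $1/3$ weighted by the mass of $K_3=\Q_3\times\Q_3$). Uniformity (Wood's theorem allows finitely many local conditions) then justifies passing to the limit.
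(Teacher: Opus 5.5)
Your proposal is correct and is essentially the paper's proof. Your $B(K)$ is exactly the paper's $N_K$, the number of surjections $A_K\twoheadrightarrow\Z/3\Z$ killing $[\gp_K]$, which vanishes iff $A'_K=\{0\}$ and is otherwise at least $2$; the paper gets its average $1/3$ by citing Wood's Theorem 1.6 (using $[\gp_K][\ob{\gp}_K]=1$), and your local-mass computation ($1/6$ for $\Q_3^3$ against $1/3$ for the unramified cubic extension, with the average of $\#\Cl(K)[3]-1$ staying $1$ under the splitting condition) does give exactly $1/3$, so the step you hedged on holds.
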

\begin{proof}
	For any imaginary quadratic field $K$ in which $p$ splits as $\gp_K\ob{\gp}_K$, let $N_K$ be the number of surjections $A_K \twoheadrightarrow \Z/3\Z$ sending $[\gp_K]$ to $0$. Then $N_K=0$ if and only if $A_K'=\{0\}$, and $N_K\geq2$ otherwise. It follows from \cite[Theorem 1.6]{wood} (and that $[\gp_K][\ob{\gp}_K]=1$ in $A_K$) that $(\#M(X))^{-1}\cdot \sum_{K\in M(X)}N_K$ tends to $1/3$ when $X \to \infty$. The sum is bounded below by $2 \cdot \#( M(X)\setminus M'(X))$, so the result follows.
\end{proof}

More generally, Cohen--Lenstra heuristics with local conditions \cite[Conjecture 1.5]{wood} predict that, for any odd $p$, the density $\#M'(X)/\#M(X)$ should tend to $\prod_{i\geq2}(1-p^{-i})$. In particular, this density is around $84\%$ for $p=3$.

\subsubsection{Ranks} It would be interesting to find a simple criterion for determining when the $k$-dimension of $\ob{\T}$ is as small as possible, assuming $\EmbDim \ob{\T}\geq1$. We present in Tables \ref{table2} and \ref{table3} the values of the $k$-dimension for $p=3$, $K$ quadratic of fundamental discriminant $N<5\,000$, focusing on the case when $\EmbDim\ob{\T}\in\{2,3\}$. We note that, in this quadratic setting, $\ob{\T}$ is always l.c.i. if $\EmbDim \ob{\T}=2$, and never Gorenstein if $\EmbDim\ob{\T}\geq3$. We will present in \S\ref{ex:f=181} a counterexample to this observation when $\chi^2\neq1$.

\begin{table}[htbp]
	\centering
	\caption{$\chi^2=1$, $p=3$ and $N<5\,000$: list of $N$'s with $\EmbDim \ob{\T}=2$}\label{table2}
	\begin{tabular}{c c l}
		\toprule
		\textbf{$\dim_k\ob{\T}$} & \textbf{Number of $N$'s} & \textbf{$N$} \\
		\midrule
		5  & 7 & 107, 503, 1583, 1619, 2036, 4103, 4283 \\
		7  & 7 & 1007, 1208, 1355, 1895, 2468, 2852, 2879 \\
		9  & 9 & 863, 1427, 1547, 2408, 2555, 2627, 2915, 3899, 4808 \\
		11 & 5 & 419, 1448, 1871, 2132, 3359 \\
		13 & 1 & 1319 \\
		\bottomrule
	\end{tabular}
\end{table}

\begin{table}[htbp]
	\centering
	\caption{$\chi^2=1$, $p=3$ and $N<5\,000$: list of $N$'s with $\EmbDim \ob{\T}\geq3$}\label{table3}
	\begin{tabular}{c c c l}
		\toprule
		\textbf{$\EmbDim \ob{\T}$} & \textbf{$\dim_k \ob{\T}$} & \textbf{Number of $N$'s} & \textbf{$N$} \\
		\midrule
		3 & 5  & 12 & 239, 743, 815, 1751, 1844, 1931, 2219, \\
		&    &    & 2591, 2687, 2819, 3119, 4091 \\
		3 & 7  & 4  & 2423, 4295, 4904, 4952 \\
		3 & 9  & 1  & 3935 \\
		3 & 11 & 1  & 971 \\
		\midrule
		5 & 9  & 1  & 2183 \\
		5 & 15 & 1  & 1967 \\
		5 & 17 & 1  & 1691 \\
		5 & 23 & 1  & 3896 \\
		5 & 43 & 1  & 3299 \\
		\bottomrule
	\end{tabular}
\end{table}

\subsection{Mod $p$ Hecke algebras}
We illustrate our results by explicitly describing $\ob{\T}$ in some examples where $\EmbDim \ob{\T}\geq2$. We recall that in the non-quadratic examples below, the Hecke algebras $\ob{\T}$ defined for $\chi$ and $\chi^{-1}$ are isomorphic by Remark \ref{rem:invariance_twist_chi}.

\subsubsection{$K=\Q(\sqrt{-107})$ and $p=3$}
In this quadratic case, one finds 
\[\ob{\T}\simeq \F_3\lsem x,y\rsem/(x^3+y^2,xy),\]
which is a local complete intersection. The relevant class groups are $A'_K=\{0\}$, $A_K=\Z/3\Z$. The relevant valuations are $v_p(\lambda(u_{\chi}))=2$ and $v_p(\ord_p(u_{\chi}))=1$, so $v_p(\cL_{\chi})=1$.

\subsubsection{$K=\Q(\sqrt{-239})$ and $p=3$}
In this quadratic case, one finds
\[\ob{\T}\simeq \F_3\lsem x,y,z\rsem/(x^2 - z^2,xy,xz - z^2,y^2,yz,z^3),\] 
which is not Gorenstein: its socle $\ob{\T}[\ob{\gm}]$ has $\F_3$-dimension greater than $1$ and, in fact, has basis $\{y,z^2\}$. The class groups are $A_K=A'_K=\Z/3\Z$, and $v_p(\lambda(u_{\chi}))=0$ and $v_p(\ord_p(u_{\chi}))=0$, so $v_p(\cL_{\chi})=0$.

\subsubsection{$K=\Q(\sqrt{-362+18\sqrt{181}})$ and $p=5$}\label{ex:f=181}
For $\chi$ one of the two $\Z_5$-valued quartic characters of conductor $N=181$, one finds
\[\ob{\T}\simeq \F_5\lsem x,y\rsem/(x^2,xy,y^2),\] 
which is not Gorenstein. The class groups are $A_K=\Z/25\Z$ and $A'_K=\Z/5\Z$ respectively. Since the intermediate field $F=\Q(\sqrt{181})$ has class number one, one sees that, exchanging $\chi$ and $\chi^{-1}$ if necessary, $A_K[\chi]=A_K$ and $A'_K[\chi]=A'_K$, whereas $A_K[\chi^{-1}]=A'_K[\chi^{-1}]=\{0\}$. As for the other numerical invariants, one finds 
\[v_p(\lambda(u_{\chi}))=0, \quad v_p(\lambda(u_{\chi^{-1}}))=2, \quad v_p(\ord_p(u_{\chi}))=1, \quad v_p(\ord_p(u_{\chi^{-1}}))=0, \]
so $\cL_{\chi}$, $\cL_{\chi^{-1}}$ and $\cL_{\chi}+\cL_{\chi^{-1}}$ have respective valuations $2,-1,-1$.

\subsubsection{$K=\Q(\sqrt{-3530+66\sqrt{1765}})$ and $p=3$}
For $\chi$ one of the two $\Z_9$-valued quartic characters of conductor $N=1765$, one finds
\[\ob{\T}\simeq \F_9\lsem x,y,z\rsem/(x^2, xy, xz, y^3, yz, z^2),\] 
which is not Gorenstein. Letting $F=\Q(\sqrt{1765})$, we have $A_F=\Z_9/(3)$ and $A_K=(\Z_9/(3))^3$. An inspection of the action of $\Gal(K/\Q)$ over $A_K$ reveals that $A_K[\chi^{\pm1}]=A'_K[\chi^{\pm1}]=\Z_9/(3)$. Moreover,
\[v_p(\lambda(u_{\chi}))=v_p(\lambda(u_{\chi^{-1}}))=v_p(\ord_p(u_{\chi}))=v_p(\ord_p(u_{\chi^{-1}}))=0, \]
and $\cL_{\chi}$, $\cL_{\chi^{-1}}$ and $\cL_{\chi}+\cL_{\chi^{-1}}$ have respective valuations $0,0,1$.

\subsection*{Acknowledgments}
I began working on exceptional zero formulas for weight $1$ forms in 2022, and I would like to thank the organizers of the conference in ``$p$-adic $L$-functions and Eigenvarieties'' at the University of Notre Dame, where a very preliminary version of this work could be presented. I would also like to thank Adel Betina, Mike Daas, Shaunak Deo, Mladen Dimitrov, Emmanuel Lecouturier, Lo\"ic Merel, Alice Pozzi, Rob Pollack, and Jun Wang for stimulating conversations from which this work greatly benefited. I am especially thankful to Gabor Wiese for fruitful discussions around theoretical and computational aspects of Hecke algebras. This work has been financially supported by the Max--Planck Institute for Mathematics, which I would like to thank for its hospitality.

\subsection*{AI disclosure}
The use of AI (free versions of ChatGPT and Gemini) has been limited to the help with \LaTeX \, and Magma coding, finding references in the literature, and improving some parts in the writing. All mathematical results and arguments are the work of the author.

\bibliographystyle{amsalpha}
\bibliography{bib}

\end{document}